\definecolor{red}{rgb}{0.7,0.15,0.15}
\definecolor{green}{rgb}{0,0.5,0}
\definecolor{blue}{rgb}{0,0,0.7}
\makeatletter \@addtoreset{equation}{section}
\newtheorem{theorem}{Theorem}[section]
\newtheorem{assumption}[theorem]{Assumption}
\newtheorem{corollary}[theorem]{Corollary}
\newtheorem{example}[theorem]{Example}
\newtheorem{lemma}[theorem]{Lemma}
\newtheorem{proposition}[theorem]{Proposition}
\newtheorem{definition}[theorem]{Definition}
\newtheorem{remark}[theorem]{Remark}
\newcommand\cA{\mathcal A}
\newcommand\cB{\mathcal B}
\newcommand\cC{\mathcal C}
\newcommand\cE{\mathcal E}
\newcommand\cF{\mathcal F}
\newcommand\cG{\mathcal G}
\newcommand\cH{\mathcal H}
\newcommand\cI{\mathcal I}
\newcommand\cK{\mathcal K}
\newcommand\cL{\mathcal L}
\newcommand\cM{\mathcal M}
\newcommand\cN{\mathcal N}
\newcommand\cO{\mathcal O}
\newcommand\cP{\mathcal P}
\newcommand\cQ{\mathcal Q}
\newcommand\cS{\mathcal S}
\newcommand\cT{\mathcal T}
\newcommand\cU{\mathcal U}
\newcommand\cX{\mathcal X}
\newcommand\cZ{\mathcal Z}
\newcommand{\smalltext}[1]{\text{\fontsize{4}{4}\selectfont$#1$}}
\newcommand{\tinytext}[1]{\text{\fontsize{3}{3}\selectfont$#1$}}
\newcommand{\vertiii}[1]{{\left\vert\kern-0.25ex\left\vert\kern-0.25ex\left\vert #1 \right\vert\kern-0.25ex\right\vert\kern-0.25ex\right\vert}}
\newcommand\fH{\mathfrak H}
\newcommand\sgn{\text{sgn}}
\newcommand\ff{\mathfrak f}
\newcommand\fg{\mathfrak g}
\def \E{\mathbb{E}}
\def \F{\mathbb{F}}
\def \G{\mathbb{G}}
\def \H{\mathbb{H}}
\def \L{\mathbb{L}}
\def \N{\mathbb{N}}
\def \P{\mathbb{P}}
\def \Q{\mathbb{Q}}
\def \R{\mathbb{R}}
\def \S{\mathbb{S}}
\def\Ac{\mathcal{A}}
\def\Pc{\mathcal{P}}
\newcommand{\brs}[1]{\prescript{\ast}{}{#1}}
\newcommand{\bcdot}{\boldsymbol{\cdot}}
\newcommand{\1}{\mathbf{1}}
\def\d{\mathrm{d}}
\DeclareMathOperator*{\esssup}{ess\,sup}
\begin{document}

\title{Reflections on BSDEs}

\author{Dylan {\sc Possama\"{i}}\footnote{ETH Z\"urich, Department of Mathematics, Switzerland, dylan.possamai@math.ethz.ch. This author gratefully acknowledges the support of the SNF project MINT 205121-219818.}\and Marco {\sc Rodrigues}\footnote{ETH Z\"urich, Department of Mathematics, Switzerland, marco.rodrigues@math.ethz.ch.}}

\date{December 12, 2024}

\maketitle

\begin{abstract}

We prove well-posedness results for backward stochastic differential equations (BSDEs) and reflected BSDEs with an optional obstacle process in the case of appropriately weighted $\mathbb{L}^2$-data when the generator is integrated with respect to a possibly purely discontinuous process. This leads to a unified treatment of discrete-time and continuous-time (reflected) BSDEs. We compare our well-posedness results with the current literature and highlight that our results are sharp and cannot be improved within the framework presented here. Finally, we provide sufficient conditions for a comparison principle.

\vspace{5mm}
\noindent{\bf Key words:}  BSDEs, reflections, general filtrations\vspace{5mm}
\end{abstract}
%\setlength{\parindent}{0pt}

%\tableofcontents

\section{Introduction}\label{sec::introduction}

The primary motivation for this work is to develop a unified approach to discrete-time and continuous-time (reflected) BSDEs with jumps. The formulation we employ here permits the integration of the generator with a potentially purely discontinuous process. Informally, for the time being, the (reflected) BSDE studied here is of the form
\begin{numcases}{}
	\displaystyle Y_t = \xi_T + \int_t^T f_s\big(Y_s,Y_{s-},Z_s, U_s(\cdot)\big) \d C_s - \int_t^T Z_s \d X_s - \int_t^T\int_E U_s(x)(\mu-\nu)(\d s,\d x) - \int_t^T \d N_s + \int_t^T \d K_s, \nonumber \\
	\displaystyle Y \geq \xi, \nonumber \\
	\displaystyle \int_0^T(Y_{s-} - \xi_{s-}) \d K^r_s + \int_0^T(Y_s-\xi_s)\d K^\ell_s = 0, \; \text{where} \; K^\ell \coloneqq \sum_{s \in [0,\cdot]}(K_{s+}-K_s) \; \text{and} \; K^r \coloneqq K- K^\ell_-. \nonumber
\end{numcases}
Here, $T$ is a stopping time, the process $\xi$ is the obstacle, $f$ is the generator with stochastic Lipschitz coefficient $\alpha$, $X$ is the driving martingale, $\mu$ the driving random measure, and $C$ is a non-decreasing, predictable process that dominates the predictable quadratic variation $\langle X \rangle$ of $X$ and the predictable compensator $\nu$ of $\mu$. Note that the above system reduces to a BSDE in case $\xi = -\infty$ on $[0,T)$. We are then seeking to find a class of processes within which there exists a unique collection $(Y,Z,U,N,K)$ of adapted processes that solve the above system. In this work, we prove well-posedness of these BSDEs and reflected BSDEs under appropriately integrable $\L^2$-data $(\xi,f)$ in case $\alpha^2\Delta C \leq \Phi \in (0,1)$.

\medskip
Although not yet strictly referred to as BSDEs, the first appearance of these types of objects in case the generator $f$ is linear was in the works of \citeauthor*{bismut1973analyse} \cite{bismut1973analyse,bismut1973conjugate}, and even earlier by \citeauthor*{davis1973dynamic} \cite{davis1973dynamic}. In the former, BSDEs were introduced as adjoint equations that originated from an application of the Pontryagin stochastic maximum principle, and in the latter, they were used to study stochastic control problems with drift control. What has been recognised already in \cite{davis1973dynamic} is that value process of certain stochastic control problems can be recasted as the $Y$-component of a BSDE, which through comparison principles allows one to deduce characterisations of optimal control responses to stochastic systems. The first systematic study of BSDEs in the non-linear setting with Lipschitz generator was carried out by \citeauthor{pardoux1990adapted} \cite{pardoux1990adapted}, and the same authors later connected BSDEs to quasi-linear PDEs through Feynman--Kac--type formulas in \cite{pardoux1992backward}. The seminal survey article by \citeauthor*{el1997backward} \cite{el1997backward} collected properties of BSDEs and showed how they may be applied to solve problems in mathematical finance. Reflected BSDEs were introduced later by \citeauthor*{el1997reflected} \cite{el1997reflected} directly in a Lipschitz setting, where the immediate connection to optimal stopping problems and obstacle problems for parabolic PDEs was drawn. Concomitantly to these early contributions, reflected BSDEs have also been applied to hedging problems of American options by \citeauthor*{el1997non} \cite{el1997non}, and \citeauthor*{el1997reflected2} \cite{el1997reflected2}, and to an optimal control problem with consumption by \citeauthor*{el1998optimization} \cite{el1998optimization}, see also \citeauthor*{bally2002reflected} \cite{bally2002reflected}.

\medskip
With time, BSDEs and reflected BSDEs were applied in many areas. In finance, as mentioned above, for pricing of financial derivatives, see \citeauthor*{el1997backward} \cite{el1997backward} and \cite{el1997non,el1997reflected2}, or for utility maximisation problems in \citeauthor*{hu2005utility} \cite{hu2005utility}. One can also use BSDEs to construct recursive utilities as in \citeauthor*{duffie1992asset} \cite{duffie1992asset,duffie1992stochastic}. There have been works on applications to zero-sum games by \citeauthor*{hamadene1995zero} \cite{hamadene1995zero} and to Dynkin games by \citeauthor*{cvitanic1996backward} \cite{cvitanic1996backward}. Recently, based on some new backward propagation of chaos techniques appearing in \citeauthor*{lauriere2019backward} \cite{lauriere2019backward}, BSDEs have also been applied to mean-field games by \citeauthor*{possamai2021non} \cite{possamai2021non} to deduce convergence rates of the $N$-player game to its mean-field counterpart.

\medskip
Ever since the seminal works \cite{pardoux1990adapted} and \cite{el1997reflected}, the theory of BSDEs and reflected BSDEs has expanded rapidly, and there have been various forms of generalisation of well-posedness results for these systems. \citeauthor*{kobylanski1997resultats} \cite{kobylanski1997resultats} and \citeauthor*{tevzadze2008solvability} \cite{tevzadze2008solvability} studied well-posedness of BSDEs with generators that are quadratic in the $z$-variable, see also the works of \citeauthor*{jackson2022characterization} \cite{jackson2022characterization}, \citeauthor*{zheng2023one} \cite{zheng2023one} and the references therein, and \citeauthor*{kazi2015quadratic1} \cite{kazi2015quadratic1,kazi2016quadratic} as well as \citeauthor*{karoui2016quadratic} \cite{karoui2016quadratic}, \citeauthor*{jeanblanc2012robust} \cite{jeanblanc2012robust}, or \citeauthor*{matoussi2019exponential} \cite{matoussi2019exponential} for BSDEs with jumps in the quadratic case. Reflected BSDEs with quadratic growth have also been considered by \citeauthor*{kobylanski2002reflected} \cite{kobylanski2002reflected}, \citeauthor*{lepeltier2007reflected} \cite{lepeltier2007reflected}, and \citeauthor*{bayraktar2012quadratic} \cite{bayraktar2012quadratic}. Another possible direction of generalisation is to consider BSDEs and reflected BSDEs on random time horizon. Here the first well-posedness result for BSDEs was established by \citeauthor*{peng1991probabilistic} \cite{peng1991probabilistic}, and then extended by \citeauthor*{darling1997backwards} \cite{darling1997backwards}. Other works on BSDEs with random terminal time include \citeauthor*{briand1998stability} \cite{briand1998stability} and \citeauthor*{royer2004bsdes} \cite{royer2004bsdes}. More recently, \citeauthor*{lin2020second} \cite{lin2020second} complemented this theory and proved well-posedness of random horizon BSDEs, 2BSDEs and reflected BSDEs with $\L^p$-data to which we also refer the interested reader for more references on the subject of random horizon systems. Other than the aforementioned works, there have also been works by \citeauthor*{rozkosz2012solutions} \cite{rozkosz2012solutions}, \citeauthor*{klimsiak2012reflected2} \cite{klimsiak2012reflected2} and \citeauthor*{klimsiak2019reflected} \cite{klimsiak2019reflected} on reflected BSDEs with $\L^p$-data and by \citeauthor*{alsheyab2023optimal} \cite{alsheyab2023optimal} on random horizon reflected BSDEs. Furthermore, \citeauthor*{el2023infinite} \cite{el2023infinite}, \citeauthor*{li2023multi} \cite{li2023multi} and \citeauthor*{qian2023multi} \cite{qian2023multi} study multidimensional reflected BSDEs.

\medskip
What most of the above references have in common is that they consider one driving Brownian motion of the system. Works that considered also a driving Poisson random measure include \citeauthor*{barles1997backward} \cite{barles1997backward}, \citeauthor*{royer2006backward} \cite{royer2006backward}, \citeauthor*{quenez2013bsdes} \cite{quenez2013bsdes}, \citeauthor*{becherer2019monotone} \cite{becherer2019monotone} in the BSDE case, and \citeauthor*{hamadene2003reflected} \cite{hamadene2003reflected}, \citeauthor*{crepey2008reflected} \cite{crepey2008reflected}, \citeauthor*{quenez2014reflected} \cite{quenez2014reflected}, \citeauthor*{perninge2023optimal} \cite{perninge2023optimal,perninge2024optimal} in the reflected BSDE case, to name but a few. On the other hand, there are very few results that go beyond the case of Brownian motion to more general martingales. In the BSDE case there is work by \citeauthor*{buckdahn1993backward} \cite{buckdahn1993backward}, \citeauthor*{elkaroui1997general} \cite{elkaroui1997general}, \citeauthor*{carbone2008backward} \cite{carbone2008backward}, \citeauthor*{confortola2014backward} \cite{confortola2014backward}, \citeauthor*{bandini2015existence} \cite{bandini2015existence}, \citeauthor*{cohen2012existence} \cite{cohen2012existence} and the more recent work by \citeauthor*{papapantoleon2018existence} \cite{papapantoleon2018existence}, to which we also refer the reader interested in further history for BSDEs. The results obtained in \cite{papapantoleon2018existence} have been applied to well-posedness results for backward stochastic Volterra integral equations with jumps by \citeauthor*{popier2021backward} \cite{popier2021backward}, to an optimal reinsurance problem by \citeauthor*{brachetta2022optimal} \cite{brachetta2022optimal}, and to a stochastic control problem involving Lévy processes by \citeauthor*{dinunno2022stochastic} \cite{dinunno2022stochastic}. Reflected BSDEs with jumps have also been considered by \citeauthor*{nie2022reflected} \cite{nie2022reflected}, see also the references therein. However, the well-posedness result in \cite{nie2022reflected} relies on assumptions that are too restrictive for the applications we have in mind since, for example, the integrator $C$ in \cite{nie2022reflected} is assumed to be continuous. This immediately excludes piecewise constant integrators, which we want to cover to some extent at least. In other recent contributions, \citeauthor*{aksamit2023generalized} \cite{aksamit2023generalized} and \citeauthor*{li2022well} \cite{li2022well} study `generalised' BSDEs and reflected BSDEs with a view towards applications to pricing of vulnerable options. Additionally, \citeauthor*{gu2023mean} \cite{gu2023mean,gu2023reflected} and \citeauthor*{lin2024mean} \cite{lin2024mean} study reflected BSDEs driven by a marked point process.

\medskip
In the reflected BSDE case, the regularity of the obstacle has also been lifted over the years. There are works considering obstacles that are  c\`adl\`ag, see \citeauthor*{hamadene2002reflected} \cite{hamadene2002reflected}, \citeauthor*{lepeltier2005penalization} \cite{lepeltier2005penalization}, \citeauthor*{hamadene2003reflected} \cite{hamadene2003reflected}, or merely measurable, see \citeauthor*{peng2010reflected} \cite{peng2005smallest,peng2010reflected}, \citeauthor*{klimsiak2013bsdes} \cite{klimsiak2013bsdes,klimsiak2015reflected} and \citeauthor*{klimsiak2019reflected} \cite{klimsiak2019reflected}. See also the works of \citeauthor*{kobylanski2012optimal} \cite{kobylanski2012optimal} on a general approach to optimal stopping problems. Recently, in a series of two inspiring papers, \citeauthor*{grigorova2017reflected} \cite{grigorova2017reflected} and \citeauthor*{grigorova2020optimal} \cite{grigorova2020optimal} considered reflected BSDEs driven by Brownian motion and a Poisson random measure whose obstacle is merely an optional process. Other than proving well-posedness of the corresponding reflected BSDE, the aforementioned reference also draws the connection to the corresponding optimal stopping problem with respect to the induced $f$-expectation. Results related to the aforementioned works were obtained by \citeauthor{baadi2017reflected} \cite{baadi2017reflected,baadi2018reflected}, \citeauthor*{akdim2020strong} \cite{akdim2020strong} and \citeauthor*{bouhadou2022rbsdes} \cite{bouhadou2022rbsdes}. The case of predictable obstacles has also been studied by \citeauthor*{bouhadou2018non} \cite{bouhadou2018non,bouhadou2018optimal}.

\medskip
Although not entirely related to what we study here, we would like to mention that there have also been works on doubly reflected BSDEs, where the BSDEs are constrained to stay within an upper and lower obstacle process. Here the first well-posedness study was carried out by \citeauthor*{cvitanic1996backward} \cite{cvitanic1996backward}, where they also connected the $Y$-component of the corresponding doubly reflected BSDE to the value of a Dynkin game. Other results were then obtained by \citeauthor*{hamadene1997double} \cite{hamadene1997double}, \citeauthor*{lepeltier2004backward} \cite{lepeltier2004backward}, \citeauthor*{hamadene2005bsdes} \cite{hamadene2005bsdes,hamadene2006bsdes}, \citeauthor*{hamadene2006mixed} \cite{hamadene2006mixed}, \citeauthor*{crepey2008reflected} \cite{crepey2008reflected}, \citeauthor*{pham2013some} \cite{pham2013some}, \citeauthor*{essaky2013generalized} \cite{essaky2013generalized}, \citeauthor*{dumitrescu2016generalized} \cite{dumitrescu2016generalized}, \citeauthor*{grigorova2018doubly} \cite{grigorova2018doubly}, \citeauthor*{nie2022reflected} \cite{nie2022reflected}, \citeauthor*{klimsiak2013bsdes} \cite{klimsiak2013bsdes,klimsiak2015reflected,klimsiak2021non}, \citeauthor*{klimsiak2020reflected} \cite{klimsiak2020reflected}, \citeauthor*{klimsiak2021reflected} \cite{klimsiak2021reflected,klimsiak2022nonlinear}, \citeauthor*{arharas2021reflected} \cite{arharas2021reflected}, \citeauthor*{baadi2022reflected} \cite{baadi2022reflected}, \citeauthor*{li2024propagation} \cite{li2024propagation} and \citeauthor*{li2023backward} \cite{li2023backward}.

\medskip
Our main contributions are two well-posedness results, one for BSDEs and another for reflected BSDEs. These well-posedness results, as presented, turn out to be sharp within the framework we lay out and cannot be improved with our methods, due to counterexamples to existence and uniqueness in \citeauthor*{confortola2014backward} \cite{confortola2014backward}. This also refines the BSDE results in \cite{papapantoleon2018existence}. The method of proof we use is based on a fixed-point argument and \emph{a priori} estimates that we establish by techniques that are inspired by \cite{elkaroui1997general}, although we do not use exponential weights like in \cite{papapantoleon2018existence}, but stochastic exponential weights like in \cite{bandini2015existence,cohen2012existence}. With the techniques in \cite{elkaroui1997general}, we can exploit the $\L^2$-structure of our problem, allowing us to circumvent a reliance on It\^o's formula. Our BSDE results thus extend straightforwardly to BSDEs with a multi-dimensional generator and terminal condition. In the reflected BSDE case, we will also use a fixed-point argument and \emph{a priori} estimates. Classically, these estimates are derived by an application of It\^o’s formula. However, in the generality we are aiming for, this would necessitate imposing additional assumptions on the integrator $C$. We thus approach this problem differently, and apply it in combination with the methods in \cite{elkaroui1997general} and \cite{grigorova2020optimal} to deduce the desired \emph{a priori} estimates. To the best of our knowledge, the well-posedness result we will present in the reflected BSDE case is the first of its kind. Let us mention here that after completing the first version of this manuscript, we became aware that \citeauthor*{papapantoleon2024existence} \cite{papapantoleon2024existence} independently obtained similar results for BSDEs.

\medskip
The link between discrete-time BSDEs and control theory has already been mentioned in \citeauthor*{cohen2012existence} \cite{cohen2010general2, cohen2011backward, cohen2012existence}. In continuous-time, BSDEs and reflected BSDEs are intimately connected to control problems with drift control only, as the dynamic programming equation in this context is semi-linear (thus quasi-linear). However, for stochastic control problems with drift and volatility control, the dynamic programming equation is fully non-linear and can thus not be analysed by classical BSDEs. This fact was the starting point for the new notion of second-order BSDEs (2BSDEs) introduced by \citeauthor*{cheridito2007second} \cite{cheridito2007second} and \citeauthor*{soner2011quasi} \cite{soner2011quasi,soner2012wellposedness}. Here, the $Y$-component of these 2BSDE corresponds to the classical value process of a control problem with drift and volatility control. This fact has been recently applied to principal--agent problems in \cite{cvitanic2018dynamic}. For an excellent and comprehensive introduction to BSDEs, we can refer the interested reader to the books by \citeauthor*{touzi2013optimal} \cite{touzi2013optimal} and \citeauthor*{zhang2017backward} \cite{zhang2017backward}. The latter reference also covers 2BSDEs. In the previously mentioned seminal works \cite{soner2011quasi,soner2012wellposedness} on 2BSDEs the terminal random variable $\xi_T$ had to satisfy strong regularity conditions for existence and uniqueness to hold. This assumption has been lifted by \citeauthor*{possamai2018stochastic} \cite{possamai2018stochastic}. In future work, we seek to combine the results of this work and the techniques in \cite{possamai2018stochastic} to show well-posedness of 2BSDEs with jumps that go beyond the case of Poisson random measures in \citeauthor*{kazi2015second} \cite{kazi2015second,kazi2015second2}.

\medskip
The remainder of this paper is structured as follows: in \Cref{sec::preliminaries}, we recall preliminaries on (vector) stochastic integration and orthogonal decompositions of martingales. We also fix the data and formulate the BSDE and reflected BSDE. In \Cref{sec::main_results}, we formulate our main results, separately, for the reflected BSDE first, and then for the BSDE. We also compare our assumptions with the current literature. In \Cref{sec::optimal_stopping}, we revisit the Snell envelope and optimal stopping theory, with which we solve the reflected BSDE in case of a generator not depending on the solution. In \Cref{sec::a_priori}, we establish the \emph{a priori} estimates, which we will use in the contraction argument, separately in the reflected BSDE case first, and then in the BSDE case. We then establish the two well-posedness results in \Cref{sec::existence_rbsde}. Finally, we prove a comparison result for our BSDE in \Cref{sec::comparison}. The appendices contain proofs of technical results and some auxiliary results we make use of throughout this work. 

\medskip
{\small\textbf{Notations:} {throughout this work, we fix a positive integer $m$. Let $\N$ and $\R$ denote the non-negative integers and real numbers, respectively. For $(a,b) \in [-\infty,\infty]^2$, we write $a \lor b \coloneqq \max\{a,b\}$, $a \land b \coloneqq \min\{a,b\}$ and $a^+ \coloneqq a \lor 0 = \max\{a,0\}$. We write $|a|$ for the modulus of $a \in [-\infty,\infty]$, and for $b \in \R^m$, we write $\|b\|$ for the Euclidean norm of $b \in \R^m$. For a finite-dimensional matrix $M$, we denote by $M^\top$ its transpose. For a subset $V$ of a Hilbert space $H$, we write $V^\perp$ for its orthogonal in $H$; for two subspaces $W$ and $W^\prime$ of $H$ with $W \cap W^\prime = \{0_H\}$, we write $W \oplus W^\prime$ for the internal direct sum of $W$ and $W^\prime$ in $H$.  For a set $\Omega$ and $A \subseteq \Omega$, we denote by $\mathbf{1}_A$ its indicator function defined on $\Omega$. We abuse notation and sometimes also write $\mathbf{1}_{\{x \in A\}}$ for $\mathbf{1}_{A}(x)$. For a nonempty set $\cZ$, we denote the Dirac-measure at $z \in \cZ$ by $\boldsymbol{\delta}_z$. For two measurable spaces $(\Omega,\cF)$ and $(\Omega^\prime,\cF^\prime)$, we denote by $\cF \otimes \cF^\prime$ the product-$\sigma$-algebra on the product space $\Omega \times \Omega^\prime$. For $t \in (0,\infty]$, a limit of the form $s \uparrow\uparrow t$ means that $s \rightarrow t$ along $s < t$. Analogously, for $t \in [0,\infty)$, a limit of the form $s \downarrow\downarrow t$ means that $s \rightarrow t$ along $s > t$. For a stochastic process $Y$ indexed by $[0,\infty)$ or $[0,\infty]$, we let $Y^T \coloneqq Y_{\cdot \land T}$.  Let $\mathsf{y} : [0,\infty) \rightarrow \R$ be a l\`adl\`ag function, that is, $\mathsf{y}$ admits limits from the right on $[0,\infty)$ and from the left on $(0,\infty)$. We define $\mathsf{y}_{t-} \coloneqq \lim_{s \uparrow\uparrow t} \mathsf{y}_s$, $t \in (0,\infty)$, and analogously $\mathsf{y}_{t+} \coloneqq \lim_{s \downarrow\downarrow t} \mathsf{y}_s$, $t \in [0,\infty)$. Then $\Delta \mathsf{y} : [0,\infty) \rightarrow \R$ is defined by $\Delta \mathsf{y}_t \coloneqq \mathsf{y}_t - \mathsf{y}_{t-}$ if $t \in (0,\infty)$ and $\Delta\mathsf{y}_0 = 0$. Similarly, we define $\Delta^+ \mathsf{y} : [0,\infty) \rightarrow \R$ by $\Delta^+ \mathsf{y}_t \coloneqq \mathsf{y}_{t+} - \mathsf{y}_t$ for $t \in [0,\infty)$. If $\mathsf{y}$ is additionally right-continuous (thus c\`adl\`ag) and non-decreasing, the functions $\mathsf{y}^c$ and $\mathsf{y}^d$ denote the continuous part and purely discontinuous part of $\mathsf{y}$, respectively. They are defined through the formulas $\mathsf{y}^d_t \coloneqq \sum_{s \in (0,t]} \Delta\mathsf{y}_s$ and $\mathsf{y}^c \coloneqq \mathsf{y} - \mathsf{y}^d$. Note that $\mathsf{y}^c_0 = \mathsf{y}_0$. In case $\mathsf{y}$ is defined on $[0,\infty]$, we additionally define $\mathsf{y}_{\infty-} \coloneqq \lim_{s \uparrow\uparrow \infty}\mathsf{y}_s$, $\Delta \mathsf{y}_\infty = \mathsf{y}_\infty - \mathsf{y}_{\infty-}$, $\mathsf{y}^d_{\infty} = \sum_{s \in (0,\infty]} \Delta \mathsf{y}_s$ and $\mathsf{y}^c_\infty \coloneqq \mathsf{y}_\infty - \mathsf{y}^d_\infty$.}}

\section{Preliminaries and formulation of the reflected BSDE}\label{sec::preliminaries}

This section will lay the foundations for the analysis that follows. We recall the construction and some properties of the vector stochastic integral and compensated stochastic integral with respect to an integer-valued random measure. We then present the assumptions we impose on the data and the formulation of our reflected BSDE.

\subsection{Stochastic basis}\label{sec::stochastic_basis}

We fix once and for all a probability space $(\Omega,\cG,\P)$ and a right-continuous filtration $\G = (\cG_t)_{t \in [0,\infty)}$. We denote by $\cG_\infty \coloneqq \cG_{\infty-}$ the $\sigma$-algebra on $\Omega$ generated by the sets in $\cup_{t \in [0,\infty)} \cG_t$. We denote the $\P$-augmentation of $\G$ by $\G^\P = (\cG^\P_t)_{t \in [0,\infty)}$, that is, each $\cG^\P_t$, $t \in [0,\infty)$, is generated by $\cG_t \lor \cN^\P$, where $\cN^\P$ is the collection of $(\P,\cG)$--null sets. The universal completion of a $\sigma$-algebra $\cA$ is the $\sigma$-algebra $\cA^U \coloneqq \cap_{\P^\prime\in\Pc(\Omega,\Ac)} \cA^{\P^\prime}$, where the intersection is over the set $\Pc(\Omega,\Ac)$ of all probability measures $\P^\prime$ on $(\Omega,\cA)$. We will also assume for simplicity that $\cG^U_\infty \subseteq \cG$. Unless stated otherwise, probabilistic notions requiring a filtration or a probability measure will implicitly refer to $\G$ or $\P$, respectively. 

\begin{remark}\label{rem::measurability_of_sup}
	The reason we suppose that $\cG^U_\infty$ is included in $\cG$ is that this ensures the $\cG$-measurability of $\sup_{s \in [0,\infty]} \xi_s$ for a product-measurable process $\xi : \Omega \times [0,\infty] \longrightarrow [-\infty,\infty]$, see {\rm\cite[Proposition 2.21]{karoui2013capacities}}.
\end{remark}

For two stopping times $S$ and $T$, we denote by $\cT_{S,T}$ the collection of all stopping times $\tau$ satisfying $\P[S \leq \tau \leq T] = 1$. Note that $\cT_{S,T}$ is empty if $\P[S > T] > 0$. We denote by $\cG_T$ the $\sigma$-algebra of all $A \in \cG_\infty$ for which $A \cap \{T \leq t\} \in \cG_t$ for all $t \in [0,\infty)$, and by $\cG_{T-}$ the $\sigma$-algebra generated by $\cG_0$ and and all sets of the form $A \cap \{t < T\}$ for $A \in \cG_t$ and $t \in [0,\infty)$.  If $\cC$ is a collection of processes indexed by $[0,\infty)$, we define $\cC_\mathrm{loc}$ as the collection of processes $X = (X_t)_{t \in [0,\infty)}$ for which there is a sequence of stopping times $(\tau_n)_{n \in \N}$ that is $\P$--a.s. increasing to infinity, with $X_{\cdot \land \tau_\smalltext{n}} \in \cC$ for each $n \in \N$.

\medskip
A real-valued martingale indexed by $[0,\infty)$ on our filtered probability space always has a $\P$-modification for which \emph{all} its paths are real-valued and right-continuous, and \emph{$\P$--almost all} its paths have left-limits on $(0,\infty)$ (see \citeauthor*{weizsaecker1990stochastic} \cite[Theorem 3.2.6]{weizsaecker1990stochastic}).  We will always choose such a modification of a martingale. We denote by $\cM$ the space of uniformly integrable martingales and by $\cH^2$ the space of square-integrable martingales indexed by $[0,\infty)$. We denote by $\cH^2_0$ the space of all $M \in \cH^2$ for which $M_0 = 0$, $\P$--almost surely. Note that $\cH^2_0$ is a closed subspace of $\cH^2$. For $(M,N) \in \cH^2_\mathrm{loc} \times \cH^2_\mathrm{loc}$, we denote by $\langle M, N \rangle$ the predictable quadratic covariation between $M$ and $N$ in the sense of \cite[Theorem I.4.2]{jacod2003limit} and let $\langle M \rangle \coloneqq \langle M,M \rangle$. We endow $\cH^2$ with the scalar product $( M , N )_{\cH^{\smalltext{2}}} \coloneqq \E[M_0N_0] + \E[\langle M, N \rangle_\infty] = \E[M_\infty N_\infty],$ which turns it into a Hilbert space by identifying processes whose paths agree $\P$--almost surely. We denote the norm associated to $(\cdot,\cdot)_{\cH^\smalltext{2}}$ by $\|\cdot\|_{\cH^\smalltext{2}}$. A stable subspace of $\cH^2$ is a closed linear subspace $\cX$ of $\cH^2$ such that $\mathbf{1}_A M_{\cdot \land T} \in \cX$ for each $T \in \cT_{0,\infty}$, each $A \in \cG_0$ and each $M \in \cX$. A very thorough study of stable subspaces can be found in \citeauthor*{cohen2015stochastic} \cite{cohen2015stochastic}, \citeauthor*{jacod1979calcul} \cite{jacod1979calcul}, and \citeauthor*{protter2005stochastic} \cite{protter2005stochastic}. Let us mention though that by \cite[Proposition 4.26]{jacod1979calcul}, if $M \in \cH^2$ is orthogonal to a stable subspace $\cX$ for the scalar product $(\cdot,\cdot)_{\cH^\smalltext{2}}$, then $\langle M,N\rangle = 0$ for all $N \in \cX$. 

\medskip
An element $M \in \cM_\mathrm{loc}$ has by \cite[Theorem I.4.18]{jacod2003limit} a, up to $\P$-evanescence, unique decomposition $M = M_0 + M^c +M^d$, where $M^c \in \cH^2_\mathrm{loc}$ has $\P$--a.s. continuous paths and satisfies $M^c_0 = 0$, while $M^d \in \cM_\mathrm{loc}$ is purely discontinuous in the sense that $M^d_0 = 0$ and $M^d N \in \cM_\mathrm{loc}$ for every $N \in \cH^2_\mathrm{loc}$ with $\P$--a.s. continuous paths. The processes $M^c$ and $M^d$ are referred to as the continuous and purely discontinuous local martingale parts of $M$, respectively. It is immediate that if $M \in \cH^2_\mathrm{loc}$, we also have that $M^d \in \cH^2_\mathrm{loc}$, and that we can write $\langle M \rangle = \langle M^c \rangle + \langle M^d \rangle$. Thus $(M^c,M^d) \in \cH^2 \times\cH^2$ if $M \in \cH^2$ by \cite[Th\'eor\`eme 2.34]{jacod1979calcul}. Lastly, we denote by $[X,Y]$ the optional quadratic covariation between semi-martingales $X$ and $Y$ in the sense of \cite[Definition I.4.45]{jacod2003limit}. In particular, if $M \in \cH^2$, then $[M]_\infty = [M,M]_\infty$ is integrable, $[M] - \langle M \rangle$ is a uniformly integrable martingale, and $[M^c] = \langle M^c \rangle$ since $[M]_t = \langle M^c \rangle_t + \sum_{s \in (0,t]} (\Delta M_s)^2, \; t \in [0,\infty),$ $\P$--a.s. Note that this implies $[M] = [M^c] + [M^d] = \langle M^c \rangle + [M^d]$ since $[M^d] = \sum_{s \in (0,\cdot]}(\Delta M_s)^2$.

\medskip
We now introduce the optional and predictable $\sigma$-algebras induced by our filtration $\G$. The optional $\sigma$-algebra $\overline\cO(\G)$ on $\Omega \times [0,\infty]$ is generated by all $\G$-adapted processes $\xi : \Omega \times [0,\infty] \longrightarrow \R$ that are right-continuous on $[0,\infty)$ and admit left-hand limits on $(0,\infty]$. The optional $\sigma$-algebra on $\Omega \times [0,\infty)$ is given by the trace--$\sigma$-algebra $\cO(\G) \coloneqq \overline\cO(\G) \cap (\Omega \times [0,\infty))$. We have that $\cO(\G)$ is generated by $\G$-adapted, real-valued, c\`adl\`ag processes defined on $\Omega \times [0,\infty)$. The predictable $\sigma$-algebra $\overline\cP(\G)$ on $\Omega \times [0,\infty]$ is generated by all $\G$-adapted processes $\xi : \Omega \times [0,\infty] \longrightarrow \R$ that are continuous on $[0,\infty]$. The predictable $\sigma$-algebra on $\Omega \times [0,\infty)$ is the trace-$\sigma$-algebra $\cP(\G) \coloneqq \overline\cP(\G) \cap (\Omega \times [0,\infty))$, which is also generated by all $\G$-adapted, real-valued, continuous processes defined on $\Omega \times [0,\infty)$. If no confusion may arise, we simply write $\cP \coloneqq \cP(\G)$, $\overline\cP \coloneqq \overline\cP(\G)$, $\cO \coloneqq \cO(\G)$ and $\overline\cO \coloneqq \overline\cO(\G)$. We agree to use the following convention: if not stated otherwise, a process indexed by $[0,\infty]$ is optional (resp. predictable), if it is measurable with respect to $\overline\cO$ (resp. $\overline\cP$), and a process indexed by $[0,\infty)$ is optional (resp. predictable), if it is measurable with respect to $\cO$ (resp. $\cP$). Finally, a stopping time $T$ is predictable, if $\llbracket 0,T\llbracket \coloneqq \{(\omega,t) \in \Omega \times [0,\infty) : t < T(\omega)\}$ is in $\cP$, and we denote the collection of predictable stopping times by $\cT^p_{0,\infty}$.

\medskip
The following result appears in \cite[Proposition 1.1]{jacod1979calcul} and \cite[Lemma I.1.19 and Lemma I.2.17]{jacod2003limit}. It will allow us to use results proved under the usual conditions in our setting where the filtration is not assumed to be $\P$-complete. Of course, a similar result holds for the optional or predictable $\sigma$-algebra on $\Omega \times [0,\infty]$.

\begin{lemma}\label{lem::compl_filt}
	$(i)$ Suppose that $T$ is a $\G^\P$-stopping time $($resp. $\G^\P$-predictable stopping time$)$. There exists a $\G$-stopping time $($resp. $\G$-predictable stopping time$)$ $T^\prime$ such that $T = T^\prime$, $\P$--almost surely.
	
	\medskip
	$(ii)$ Suppose that $X$ is $\cO(\G^\P)$-measurable $($resp. $\cP(\G^\P)$-measurable$)$, then there exists an $\cO(\G)$-measurable $($resp. $\cP(\G)$-measurable$)$ process $X^\prime$ such that $X = X^\prime$ up to $\P$-indistinguishability.
\end{lemma}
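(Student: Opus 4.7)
The plan is to exploit the explicit description $\cG^\P_t = \cG_t \lor \cN^\P$: every $A \in \cG^\P_t$ has the form $A' \triangle N$ for some $A' \in \cG_t$ and some $N \in \cN^\P$. This single observation drives both parts.

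For $(i)$ in the optional case, I would first approximate $T$ from above by the dyadic, countably-valued $\G^\P$-stopping times
\[
T_n \coloneqq \sum_{k \geq 0} (k+1)2^{-n}\mathbf{1}_{\{k2^{-n} \leq T < (k+1)2^{-n}\}} + \infty \, \mathbf{1}_{\{T = \infty\}},
\]
which decrease to $T$ pointwise. For each value $s$ of $T_n$, pick $B_{n,s} \in \cG_s$ with $B_{n,s} \triangle \{T_n = s\} \in \cN^\P$; disjointifying the $B_{n,s}$'s (an operation that preserves $\cG_s$-measurability) and setting the leftover set to $\{\infty\}$ yields a $\G$-stopping time $T'_n$ with $T'_n = T_n$ $\P$-a.s. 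The limit $T' \coloneqq \lim_n T'_n$, redefined as $0$ on the $\P$-null set where the sequence $(T'_n)$ fails to converge, is a $\G$-stopping time $\P$-a.s.\ equal to $T$. For the predictable case, one announces $T$ by a sequence $(S_n)$ of $\G^\P$-stopping times with $S_n < T$ on $\{T > 0\}$, applies the optional case to each $S_n$ (and to $T$), modifies the resulting $\G$-stopping times on a common $\P$-null set to enforce the strict increase and convergence pathwise, and concludes via the characterisation of predictability by graphs.

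For $(ii)$ I would invoke the functional monotone class theorem. Let $\cH$ denote the vector space of bounded $\cO(\G^\P)$-measurable processes on $\Omega \times [0,\infty)$ that admit an $\cO(\G)$-measurable $\P$-indistinguishable version; it contains constants and is closed under bounded monotone convergence. It suffices to show that every bounded, $\G^\P$-adapted, c\`adl\`ag process $X$ is in $\cH$, since such processes generate $\cO(\G^\P)$. For each rational $q$, fix $X'_q \in \cG_q$ with $X_q = X'_q$ $\P$-a.s., consolidating the exceptional sets into a single $N \in \cN^\P$. Setting $X'_t \coloneqq \limsup_{q \downarrow\downarrow t,\, q \in \Q} X'_q$ whenever this limit lies in $\R$, and $X'_t \coloneqq 0$ otherwise, produces a process that is right-continuous and $\G$-adapted by the right-continuity of $\G$, and that coincides with $X$ off $N$ by the right-continuity of $X$. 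Truncation extends the conclusion from bounded to arbitrary $\cO(\G^\P)$-measurable processes, and the predictable case is identical upon replacing c\`adl\`ag generators by continuous $\G^\P$-adapted generators of $\cP(\G^\P)$.

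The main technical obstacle is the predictable half of $(i)$: one cannot naively modify an announcing sequence coordinate-wise while simultaneously preserving the strict inequality $S_n < T$ on $\{T > 0\}$ and the $\P$-a.s.\ monotone convergence $S_n \uparrow T$, so a careful pathwise clean-up on a common $\P$-null set is required. Everything else is a routine monotone class argument built on the single fact that sets in $\cG^\P_t$ have $\cG_t$-representatives modulo $\cN^\P$.
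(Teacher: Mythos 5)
The paper does not actually prove this lemma; it just cites \cite[Proposition 1.1]{jacod1979calcul} and \cite[Lemmas I.1.19 and I.2.17]{jacod2003limit}, so your proposal has to be measured against the classical arguments there. Your starting point --- every set in $\cG^\P_t$ is of the form $A^\prime \triangle N$ with $A^\prime \in \cG_t$ and $N \in \cN^\P$ --- is the right one, and the dyadic discretisation in the optional half of $(i)$ is sound. The recurring flaw is that you repeatedly ``modify on a $\P$--null set'': the exceptional sets lie in $\cN^\P$ but in general not in $\cG_0$, so altering a candidate stopping time or process on such a set destroys its $\G$-measurability, which is the entire content of the lemma. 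In the optional half of $(i)$ this is harmless --- drop the redefinition and take $T^\prime \coloneqq \liminf_n T^\prime_n = \sup_m \inf_{n \geq m} T^\prime_n$, which is a $\G$-stopping time by right-continuity of $\G$ and is already $\P$--a.s. equal to $T$. In the predictable half it is fatal: after the proposed pathwise clean-up the announcing times are no longer $\G$-stopping times, and without it you only obtain a $\G$-stopping time that is \emph{almost surely} foretold by $\G$-stopping times, which for a raw (non-complete) filtration does not imply $\G$-predictability. That half genuinely requires the finer argument of the cited references and is not closed by your sketch.

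Part $(ii)$ has a similar gap. For $\omega$ in the exceptional null set $N$, the path $t \longmapsto X^\prime_t = \limsup_{q \downarrow\downarrow t}X^\prime_q$ need not be right-continuous (right upper semicontinuity is all the construction gives), so $X^\prime$ is $\G$-adapted but not visibly $\cO(\G)$-measurable, and you cannot reset it on $N$ for the reason above. The robust route is a set-level Dynkin argument: $\cO(\G^\P)$ is generated by the $\pi$-system of stochastic intervals $\llbracket T,\infty\llbracket$ with $T$ a $\G^\P$-stopping time (resp. $\cP(\G^\P)$ by the sets $A \times (s,t]$ with $A \in \cG^\P_s$ together with $A \times \{0\}$, $A \in \cG^\P_0$); part $(i)$ (resp. a $\cG_s$-representative of $A$) matches each generator with an $\cO(\G)$-set (resp. $\cP(\G)$-set) up to an evanescent set; the family of sets admitting such a match is a $\sigma$-algebra; and one passes from sets to processes by simple-function approximation. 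With these repairs the overall strategy works, but as written neither the predictable case of $(i)$ nor the optionality of $X^\prime$ in $(ii)$ is established.
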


\subsection{A lot of integrals}\label{sec::integrals}

This part is purely for completeness as we recall the construction and some results of (stochastic) integration theory. The integrals in this work are always constructed on $[0,\infty)$, and the corresponding value at infinity of the (stochastic) integrals are determined by taking the limit $[0,\infty) \ni t \uparrow\uparrow \infty$. This allows us to consider them as optional processes indexed by $[0,\infty]$ which are additionally left-continuous at infinity.

\subsubsection{Lebesgue--Stieltjes integral}\label{subsec::integrals}
We have collected some results on Lebesgue--Stieltjes integrals in \Cref{lem::leb_stj}.\footnote{While we do not rely on \Cref{lem::leb_stj} in this work, we have included it in order to correct \cite[Lemma B.1]{papapantoleon2018existence}.} Suppose that $C = (C_t)_{t \in [0,\infty)}$ is an optional process for which $\P$--almost every path is right-continuous, non-decreasing and $[0,\infty)$--valued. Let $f = (f_u)_{s \in [0,\infty)}$ be an optional process with values in $[0,\infty]$, or with values in $[-\infty,\infty]$ and satisfying $\int_{[0,t]} |f_u| \d C_u < \infty$, $\P$--a.s., $t \in [0,\infty)$. Here, the measure $\d C_u$ charges $\{0\}$ with mass $C_0$. We denote by $\int_0^\cdot f_u \d C_u = (\int_0^t f_u \d C_u)_{t \in [0,\infty)}$ the, up to $\P$--indistinguishability, unique, optional process with $\P$-a.s. right-continuous paths satisfying  $\int_0^t f_u \d C_u = \int_{[0,t]} f_u \d C_u$, $t \in [0,\infty),$ $\P$--a.s. Note that $\int_0^{t-} f_u \d C_u = \int_{[0,t)} f_u \d C_u, \; \text{$\P$--a.s.}$ We denote by $\int_0^\infty f_u \d C_u$ the $\P$--a.s. unique $\cG_\infty$-measurable random variable satisfying $\int_0^\infty f_u \d C_u = \lim_{t \uparrow\uparrow \infty} \int_0^t f_u \d C_u = \int_{[0,\infty)} f_u \d C_u$, $\P$--a.s., in case $f$ is non-negative or $\int_{[0,\infty)} |f_u| \d C_u < \infty$, $\P$--a.s.
For two stopping times $S$ and $T$, we then use the convention $\int_S^T f_u \d C_u \coloneqq \int_0^T f_u \d C_u - \int_0^{S \land T} f_u \d C_u = \int_{[0,\infty)} \mathbf{1}_{(S,T]}(u) f_u \d C_u,$ $\P$--a.s. We also note that in case $C_0 = 0$, $\P$--a.s., we have $\int_{[0,t]} f_u \d C_u = \int_{(0,t]} f_u \d C_u$. This then implies $\int_S^T f_u \d C_u = \int_{(0,\infty)} \mathbf{1}_{(S,T]}(u) f_u \d C_u$, $\P$--a.s. In case $C$ and $f$ are both predictable, the integral process $\int_0^\cdot f_u \d C_u$ can be chosen to be predictable as well. Finally, even when $C = (C_t)_{t \in [0,\infty]}$ has a well-defined value $C_\infty$ at infinity, which does not necessarily correspond to its left limit $C_{\infty-}$, we never include $\infty$ in the domain of the integration. So we always have $\int_S^T f_u \d C_u = \int_{[0,\infty)}\1_{(S,T]}(u) f_u \d C_u$ and $\int_{S-}^T f_u \d C_u = \int_{[0,\infty)}\1_{[S,T]}(u) f_u \d C_u$ up to a $\P$--null set.

\subsubsection{Vector stochastic integral}\label{sec::vector_si}

In this part, we recall the $\L^2$-theory of the vector stochastic integral and refer the reader to \cite[Section III.6]{jacod2003limit} or \cite[Section IV.2]{jacod1979calcul} for details. Let $X = (X)_{t \in [0,\infty)}$ be an $\R^m$-valued process with components in $\cH^2_\mathrm{loc}$ with $X_0 = 0$, $\P$--a.s., and denote by $\H^{2,0}(X)$ the linear space of $\R^m$-valued, predictable processes $Z=(Z_t)_{t \in [0,\infty)}$ for which each component $Z^i$ satisfies
\begin{equation*}
	\E\bigg[ \int_{(0,\infty)} |Z^i_s|^2 \d \langle X^i, X^i \rangle_s\bigg] < \infty,\; i\in\{1,\dots,m\}.
\end{equation*}
Let $\cL^{2,0}(X)$ denote the linear subspace of $\cH^2$ consisting of the component-wise stochastic integrals $\sum_{i = 1}^m \int_{(0,\cdot]} Z^i_s \d X^i_s$ for $Z \in \H^{2,0}(X)$. We endow $\H^{2,0}(X)$ with the semi-norm
\begin{equation*}
	\|Z\|^2_{\H^{\smalltext{2}\smalltext{,}\smalltext{0}}(X)} \coloneqq \E \bigg[ \sum_{i= 1}^m\sum_{j=1}^m \int_{(0,\infty)} Z^i_s Z^j_s \d \langle X^i,X^j \rangle_s \bigg],
\end{equation*}
which is finite on $\H^{2,0}(X)$ by the Kunita--Watanabe inequality. The map
\begin{equation}\label{eq::isometry}
	\H^{2,0}(X) \ni Z \longmapsto \sum_{i = 1}^m \int_{(0,\cdot]} Z^i_s \d X^i_s \in \cH^2,
\end{equation}
is an isometry between the semi-normed space $\H^{2,0}(X)$ and the Hilbert space $\cH^2$ since
\begin{equation*}
	\bigg\langle \sum_{i = 1}^m \int_{(0,\cdot]} Z^i_s \d X^i_s, \sum_{j = 1}^m \int_{(0,\cdot]} Z^j_s \d X^j_s \bigg\rangle = \sum_{i=1}^m\sum_{j=1}^m \int_{(0,\cdot]} Z^i_s Z^j_s \d \langle X^i, X^j \rangle_s.
\end{equation*}
Note that for $(Z,Z^\prime) \in \H^{2,0}(X) \times \H^{2,0}(X)$ with $\|Z-Z^\prime\|_{\H^{\smalltext{2}\smalltext{,}\smalltext{0}}}=0$, we have $\int_{(0,\cdot]} Z_s \d X_s = \int_{(0,\cdot]} Z^\prime_s \d X_s$, up to $\P$-indistinguishability. In general, the space $\H^{2,0}(X)$ is not complete, and therefore $\cL^{2,0}(X)$ is not closed in $\cH^2$. In what follows, we will construct the completion of $\H^{2,0}(X)$, which leads to the notion of the vector stochastic integral whose collection thereof is a stable subspace of $\cH^2$, and thus, in particular, closed.

\medskip
Let $C = (C_t)_{t \in [0,\infty)}$ be a predictable process which is $\P$--a.s. right-continuous, non-decreasing and starts from zero. Consider a predictable process $c = (c^{i,j})_{(i,j) \in \{1,\ldots,m\}^\smalltext{2}}$ with values in the space of positive semi-definite matrices that satisfies
\begin{equation}\label{eq::dis_x_circ}
	\langle X^i, X^j \rangle_\cdot = \int_{(0,\cdot]} c^{i,j}_s \d C_s, \, \text{$\P$--a.s.}
\end{equation}

\begin{remark}
	We borrow the following construction of the pair $(c,C)$ from {\rm\cite{nutz2015robust}}. Consider $C \coloneqq \sum_{i = 1}^m \langle X^i, X^i \rangle$, and let
	\begin{equation*}
		c^{i,j}_t \coloneqq \hat c^{i,j}_t \mathbf{1}_{\{\hat c^{i,j}_s \in \S^\smalltext{m}_\smalltext{+}\}}, \; \text{\rm where} \; \hat c^{i,j}_t \coloneqq \limsup_{n \rightarrow \infty} \frac{\langle X^i,X^j \rangle_t - \langle X^i,X^j\rangle_{(t-1/n)\lor 0}}{C_t - C_{(t-1/n) \lor 0}},
	\end{equation*}
	and $\S^m_+$ is the space of positive semi-definite, real-valued $m \times m$ matrices, and where we used the convention $0 \coloneqq 0 / 0$.
\end{remark}

The linear space of all predictable processes $Z = (Z_t)_{t \in [0,\infty)}$ with values in $\R^m$ satisfying
\begin{equation*}
	\|Z\|^2_{\H^\smalltext{2}(X)} \coloneqq \E \bigg[ \int_{(0,\infty)}  \sum_{i=1}^m\sum_{j=1}^m Z^i_s c^{i,j}_s Z^j_s  \d C_s \bigg] < \infty,
\end{equation*}
is denoted by $\H^2(X)$. Note that $\H^2(X)$ does not depend on the choice of the pair $(c,C)$ for which \eqref{eq::dis_x_circ} holds and that $\|\cdot\|_{\H^\smalltext{2}(X)}$ is a semi-norm which coincides with $\|\cdot\|_{\H^{\smalltext{2}\smalltext{,}\smalltext{0}}}$ on $\H^{2,0}(X)$. The space $\H^2(X)$ together with $\|\cdot\|_{\H^\smalltext{2}(X)}$ is the semi-norm completion of $\H^{2,0}(X)$ by \cite[Th\'eor\`eme 4.35]{jacod1979calcul}.\footnote{To be precise, $\mathbb{H}^{2}(X)$, together with $\|\cdot \|_{\mathbb{H}^{\text{\fontsize{4}{4}\selectfont $2$}}(X)}$, forms a complete semi-normed space containing $\mathbb{H}^{2,0}(X)$ as a dense subset.} The isometry in \eqref{eq::isometry} thus extends uniquely to an isometry
\begin{equation}\label{eq::isometry2}
	\H^2(X) \ni Z \longmapsto \int_{(0,\cdot]} Z_s \d X_s \in \cH^2,
\end{equation}
between the semi-normed space $\H^2(X)$ and the Hilbert space $\cH^2$. Since by continuity, for two elements $Z$ and $Z^\prime$ in $\H^2(X)$ satisfying $\|Z - Z^\prime\|_{\H^\smalltext{2}(X)} = 0$, we have $\int_{(0,\cdot]} Z_s \d X_s = \int_{(0,\cdot]} Z^\prime_s \d X_s,$ we can turn \eqref{eq::isometry2} into an isometry between Banach spaces after identifying processes in $\H^2(X)$ with $\|Z-Z^\prime\|_{\H^\smalltext{2}(X)} = 0$. For each $Z$ in $\H^2(X)$, $\int_{(0,\cdot]} Z_s \d X_s = ( \int_{(0,t]} Z_s \d X_s)_{t \in [0,\infty)},$ is the \emph{vector stochastic integral} of $Z$ with respect to $X$. It is the, up to $\P$-indistinguishability, unique process in $\cH^2$ that starts from zero, $\P$--a.s., and satisfies
\begin{equation*}
	\bigg\langle \int_{(0,\cdot]} Z_s \d X_s , N \bigg\rangle_\cdot = \int_{(0,\cdot]}\bigg( \sum_{i = 1}^m H^i_s c^{N,i}_s \bigg) \d C_s, \; \text{$\P$--a.s.,}
\end{equation*}
where $c^{N,i}$ is a predictable process with $\langle N, X^i \rangle_\cdot = \int_{(0,\cdot]} c^{N,i}_s \d C_s$. Similarly to the stochastic integral of one-dimensional processes, for $Z \in \H^2(X)$ and $T \in \cT_{0,\infty}$, the predictable process $\Omega \times [0,\infty) \ni (\omega,t) \longmapsto Z_s(\omega)\mathbf{1}_{[0,T(\omega)]}(s) \in \R^m$ is in $\H^2(X)$ and $\int_{(0,\cdot]} Z_s \mathbf{1}_{[0,T]}(s) \d X_s = \int_{(0,\cdot \land T]} Z_s \d X_s,$ $\P$--a.s. Moreover, for $W$ in $\H^2 \big(\int_{(0,\cdot]}Z_s \d X_s \big)$, the product $W Z$ is in $\H^2(X)$ and
		\begin{equation*}
			\int_{(0,\cdot]} W_sZ_s \d X_s = \int_{(0,\cdot]} W_s \d \bigg( \int_{(0,\cdot]} Z_u \d X_u \bigg)_s.
		\end{equation*}
As the space of vector stochastic integrals $\cL^2(X) \coloneqq \big\{ \int_{(0,\cdot]} Z_s \d X_s : Z \in \H^2(X) \big\}$ is the image of an isometry defined on a Banach space, it is a closed subspace of $\cH^2$. Additionally, it is also a stable subspace of $\cH^2$ (see also \cite[Definition 4.4 and Theorem 4.35]{jacod1979calcul}). Let us stress that for $Z \in \H^2(X)$, we have the following equalities
\begin{equation*}
	\|Z\|^2_{\H^\smalltext{2}(X)} = \E \bigg[ \int_{(0,\infty)}  \sum_{i=1}^m\sum_{j=1}^m Z^i_s c^{i,j}_s Z^j_s  \d C_s \bigg] = \bigg\| \int_{(0,\cdot]} Z_s \d X_s \bigg\|^2_{\cH^\smalltext{2}}.
\end{equation*}

We close this part by agreeing on adopting a useful convention that will ease the notation in what follows. First, we agree to write $\int_0^t Z_s \d X_s \coloneqq \int_{(0,t]} Z_s \d X_s, \; t \in [0,\infty).$ Since the vector stochastic integral is in $\cH^2$, it will have a $\P$--a.s. unique $\cG_\infty$-measurable, real-valued, square-integrable limit at infinity which we denote by $
 	\int_0^\infty Z_s \d X_s.$ For two stopping times $S$ and $T$, we use the convention $\int_S^T Z_u \d X_u \coloneqq \int_0^T Z_u \d X_u - \int_0^{S\land T} Z_u \d X_u.$

\subsubsection{Stochastic integral with respect to a compensated integer-valued random measure}\label{sec::compensated_si}

Here, we recall the construction of the stochastic integral with respect to compensated integer-valued random measures in the sense of \cite{cohen2015stochastic,jacod1979calcul,jacod2003limit}. Before doing so, we need to introduce some terminology and notation. The \emph{graph} of a stopping time $T$ is the set $\llbracket T \rrbracket \coloneqq \{(\omega,t) \in \Omega \times [0,\infty): T(\omega)= t\},$ and an optional set $D \subseteq \Omega \times [0,\infty)$ is \emph{thin} if there exists a sequence of stopping times $(T_n)_{n \in \N}$ such that $D = \cup_{n \in \N} \llbracket T_n \rrbracket.$ If $\llbracket T_m \rrbracket \cap \llbracket T_n \rrbracket = \varnothing$ for $m \neq n$, then $(T_n)_{n \in \N}$ is referred to as an \emph{exhausting sequence} of stopping times for $D$. Note that every thin set admits an exhausting sequence of stopping times (see \cite[Lemma I.1.31]{jacod2003limit}). Recall from \Cref{sec::stochastic_basis} that a \emph{predictable stopping time} is a stopping time $T$ for which $\llbracket 0,T\llbracket$ is a predictable subset of $\Omega \times [0,\infty)$.

\medskip
We now turn to random measures. Let $(E,\cE)$ be a Blackwell space in the sense of \citeauthor*{dellacherie1978probabilities} \cite[Definition III.24]{dellacherie1978probabilities} and let $\widetilde\Omega \coloneqq \Omega \times [0,\infty) \times E$.\footnote{One can think for simplicity of $E$ being a Polish space together with its Borel-$\sigma$-algebra $\cE = \cB(E)$.} We consider two $\sigma$-algebrae on $\widetilde\Omega$, the predictable one given by $\widetilde\cP \coloneqq \cP \otimes \cE$ and the optional one given by $\widetilde\cO \coloneqq \cO \otimes \cE$. Let $\mu = \{\mu(\omega; \d t, \d x) : \omega \in \Omega\}$ be a random measure on $\cB([0,\infty))\otimes \cE$. For an $\cG \otimes \cB([0,\infty)) \otimes \cE$-measurable function $U : \widetilde\Omega \longrightarrow \R$, we define the process $U \star \mu = (U \star \mu_t)_{t \in [0,\infty)}$ by
\begin{equation}\label{eq::def_int_rm}
	U \star \mu_t(\omega) \coloneqq  
	\begin{cases}
		\displaystyle \int_{(0,t] \times E} U_s(\omega;x) \mu(\omega; \d s, \d x), \; \text{if} \; \int_{(0,t] \times E} |U_s(\omega;x)| \mu(\omega; \d s, \d x) < \infty, \\
		\infty, \; \text{otherwise.}
	\end{cases}
\end{equation}
We suppose that $\mu$ is an integer-valued random measure in the sense of \cite{jacod2003limit}, that is, there exists a $\widetilde\cP$-measurable function $V > 0$ satisfying $\E[V \star\mu_\infty]<\infty$, an $\cO$--measurable, $E$--valued process $\varrho = (\varrho)_{t \in [0,\infty)}$, and a thin set $D$ with an exhausting sequence of stopping times $(T_n)_{n \in \N}$ such that
\begin{equation*}
	\mu(\omega; \d t, \d x) = \sum_{(\omega,s) \in D} \boldsymbol{\delta}_{(s,\varrho_\smalltext{s}(\omega))}(\d t, \d x) = \sum_{n \in \N} \mathbf{1}_{\{T_n < \infty\}}(\omega)\boldsymbol{\delta}_{(T_\smalltext{n}(\omega),\varrho_{\smalltext{T}_\tinytext{n}\smalltext{(}\smalltext{\omega}\smalltext{)}}(\omega))}(\d t, \d x).
\end{equation*}
Here $\boldsymbol{\delta}_{(s,z)}(\d t, \d x)$ denotes the Dirac measure at $(s,z)$. Note that $U \star \mu$ is an optional processes for any $\widetilde\cO$-measurable function $U : \widetilde\Omega \longrightarrow \R$.
\begin{example}
	An example of an integer-valued random measure is the jump measure of an adapted, c\`adl\`ag process $X$
	\begin{equation*}
		\mu(\omega; \d t, \d x) = \sum_{s \in (0,\infty)} \mathbf{1}_{\{\Delta X_\smalltext{s}(\omega) \neq 0\}} {\boldsymbol{\delta}}_{(s,\Delta X_\smalltext{s}(\omega))}(\d t, \d x).
	\end{equation*}
\end{example}

The predictable compensator of $\mu$ is the random measure $\nu = \{\nu(\omega; \d t, \d x) : \omega \in \Omega\}$, which is (up to $\P$--null sets) uniquely characterised by the following: $\nu(\omega; \{0\} \times E) = 0$ for each $\omega \in \Omega$, the process $U \star \nu$ is $\cP$-measurable and satisfies $\E[U \star \mu_\infty] = \E[U \star \nu_\infty]$, for each non-negative, $\widetilde\cP$-measurable function $U$ (see \cite[Proposition II.1.28]{jacod2003limit}). Moreover, we choose a version of $\nu$ that satisfies $\nu(\omega; \{t\} \times E) \leq 1$ for every $(\omega,t) \in \Omega \times [0,\infty)$ and such that $\{(\omega,t) \in \Omega \times [0,\infty): \nu(\omega; \{t\} \times E) > 0\}$ can be exhausted by a sequence of predictable stopping times (see \cite[Proposition II.1.17]{jacod2003limit}). Next, there exists (see \cite[Theorem II.1.8]{jacod2003limit} together with \cite[Lemma 6.5.10]{weizsaecker1990stochastic}) a right-continuous, $\P$--a.s. non-decreasing, predictable process $C = (C_t)_{t \in [0,\infty)}$ and a transition kernel $K$ from $(\Omega \times [0,\infty),\cP)$ to $(E,\cE)$ such that
\begin{equation}
	\nu(\omega; \d s, \d x) = K_s(\omega; \d x) \d C_s(\omega),\; \text{$\P$--a.e.}
\end{equation} 
For any $\cG \otimes \cB([0,\infty)) \otimes \cE$-measurable function $U : \widetilde\Omega \longrightarrow \R$, we define the process $\widehat U = (\widehat U_t)_{t \in [0,\infty)}$ by
\begin{equation*}
	\widehat U_t(\omega) \coloneqq
	\begin{cases}
		\displaystyle \int_{\{t\} \times E} U_t(\omega;x) \nu(\omega; \d t, \d x), \; \text{if} \; \int_{\{t\} \times E} |U_t(\omega;x)| \nu(\omega; \d t, \d x) < \infty, \\
		\infty, \; \text{otherwise,}
	\end{cases}
\end{equation*}
and $\widetilde U = (\widetilde U_t)_{t \in [0,\infty)}$ by $\widetilde U_t(\omega) \coloneqq U_t(\omega,\varrho_t(\omega)) \mathbf{1}_D(\omega,t) - \widehat U_t(\omega).$ Note that $\widetilde U$ is an optional process and that for each $\omega \in \Omega$, the collection $\{t \in [0,\infty): \widetilde U_t(\omega) \neq 0\}$ is at most countable, and thus $\{(\omega,t) \in \Omega \times [0,\infty) : \widetilde U_t(\omega) \neq 0\}$ admits an exhausting sequence of stopping times (see \cite[Theorem IV.88]{dellacherie1978probabilities}). The sum $\sum_{s \in (0,\infty)} |\tilde U_s|^2$ is thus well-defined and $\cG$-measurable.

\medskip
We now turn to the construction of the compensated stochastic integral with respect to $\mu$. We denote by $\H^2(\mu)$ the linear space of $\widetilde\cP$-measurable functions $U : \widetilde\Omega \longrightarrow \R$ satisfying $\E [ \sum_{s \in (0,\infty)} |\widetilde U_s|^2 ] < \infty.$ For each $U \in \H^2(\mu)$, there exists a, up to $\P$--indistinguishability, unique purely discontinuous $U \star \tilde\mu \in \cM_\mathrm{loc}$ whose jumps are given by $\Delta (U \star \tilde\mu) = \widetilde U$ up to $\P$-evanescence (see \cite[Theorem I.4.56]{jacod2003limit}). Note that $(U + U^\prime) \star \tilde\mu = U \star \tilde\mu + U^\prime \star \tilde\mu$, $\P$--a.s., for $(U,U^\prime) \in \H^2(\mu) \times \H^2(\mu)$. Since
\begin{equation*}
	[U \star \tilde\mu]_\cdot = \sum_{s \in (0,\cdot]} |\Delta (U \star \tilde\mu)_s|^2 = \sum_{s \in (0,\cdot]} |\widetilde U_s|^2, \; \text{$\P$--a.s.,}
\end{equation*}
we find that $[U \star \tilde\mu]_\infty \in \L^1(\cG_\infty)$ and thus $U \star \tilde\mu \in \cH^2$ (see \cite[Proposition I.4.50]{jacod2003limit}). By \cite[Theorem II.1.33]{jacod2003limit}, the predictable quadratic variation of the process $U \star \tilde\mu$ is given by
\begin{align*}
	\langle U \star \tilde\mu \rangle_t(\omega) &= (U - \widehat U)^2\star \nu_t(\omega) + \sum_{s \in (0,t]} \big( 1-\zeta_s(\omega) \big) |\widehat U_s(\omega)|^2 \\
	&= \int_{(0,\cdot]} \int_{E}\big(U_s(\omega; x) - \widehat U_s(\omega)\big)^2 K_s(\omega; \d x) \d C_s(\omega) + \int_{(0,t]} \big(1 - \zeta_s(\omega)\big) \bigg( \int_{E} U_s(\omega;x) K_s(\omega; \d x) \bigg)^2 \Delta C_s(\omega) \d C_s(\omega) \\
	&= \int_{(0,\cdot]} \Bigg( \int_{E}\big(U_s(\omega; x) - \widehat U_s(\omega)\big)^2 K_s(\omega; \d x)  + \big(1 - \zeta_s(\omega)\big) \bigg( \int_{E} U_s(\omega;x) K_s(\omega; \d x) \bigg)^2 \Delta C_s(\omega) \Bigg) \d C_s(\omega) \\
	&\eqqcolon \int_{(0,\cdot]} \big(\vertiii{U_s(\omega;\cdot)}_s(\omega) \big)^2 \d C_s(\omega), \; t \in [0,\infty), \; \text{$\P$--a.e. $\omega \in \Omega$},
\end{align*}
where $\zeta = (\zeta_s)_{s \in [0,\infty)}$ is the predictable process defined by $\zeta_s(\omega) \coloneqq \nu(\omega ; \{s\} \times E) \in [0,1]$. Note that we can choose a predictable version of $\Delta C$ that is $[0,\infty)$-valued, and agrees with the jump process of $C$ up to a $\P$--null set. Indeed, we define
\begin{equation*}
	\Delta C_t \coloneqq {\Delta C}^\prime_t \mathbf{1}_{\{0 \leq{\Delta C}^\smalltext{\prime}_\smalltext{t} < \infty\}}, \;\text{where}\;  {\Delta C}^\prime_t(\omega) \coloneqq \limsup_{n \uparrow\uparrow \infty} \big(C_t - C_{(t-1/n)\lor 0}\big).
\end{equation*}
With this construction of the jump process of $C$, the map $\Omega \times [0,\infty) \ni (\omega,s) \longmapsto \vertiii{U_s(\omega;\cdot)}_s(\omega) \in [0,\infty]$ becomes predictable. Conversely, if we start with a $\widetilde\cP$-measurable function $U$ such that $\|U\|^2_{\H^\smalltext{2}(\mu)} \coloneqq \E [ \int_{(0,\infty)} \vertiii{U_s(\cdot)}_s ^2 \d C_s ] < \infty,$ then $U \in \H^2(\mu)$ and thus $U \star \tilde\mu \in \cH^2$ (see \cite[Theorem II.1.33]{jacod2003limit}). Note that for $(U,V) \in \H^2(\mu) \times \H^2(\mu)$ satisfying $\|U - V\|_{\H^\smalltext{2}(\mu)} = 0$, we have $U \star \tilde\mu = V \star \tilde\mu$. We therefore identify $U$ und $V$ in $\H^2(\mu)$ in this case, which turns $\H^2(\mu)$ into a normed space. The space of compensated stochastic integrals $\cK^2(\mu) \coloneqq \{U \star \tilde\mu : U \in \H^2(\mu)\}$ is a stable subspace of $\cH^2$ by \cite[Proposition 3.71 and Theorem 4.46]{jacod1979calcul} and thus closed in $\cH^2$. We end up with the following result, whose proof we defer to \Cref{app::proofs_preliminaries}.

\begin{proposition}\label{prop::completeness_ivm}
	The space $\H^2(\mu)$ endowed with the norm $\|\cdot\|_{\H^\smalltext{2}(\mu)}$ is a Banach space. Moreover, for each $U \in \H^2(\mu)$,
	\begin{equation*}
		\|U\|^2_{\H^\smalltext{2}(\mu)} = \E \bigg[ \int_{(0,\infty)} \big(\vertiii{U_s(\cdot)}_s \big)^2 \d C_s \bigg] = \E\big[\langle U \star \tilde\mu\rangle_\infty \big] = \E\big[ [ U \star \tilde\mu]_\infty \big].
	\end{equation*}
\end{proposition}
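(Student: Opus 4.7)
The plan is to first establish the three norm equalities, and then deduce completeness of $\H^2(\mu)$ by transferring completeness from the closed subspace $\cK^2(\mu) \subseteq \cH^2$ along the isometry $U \longmapsto U \star \tilde\mu$. Since the hard analytic work, namely identifying $\langle U \star \tilde\mu\rangle$ and verifying that $U \star \tilde\mu \in \cH^2$, has already been carried out in the discussion preceding the proposition, the proof will essentially be an assembly of facts.

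For the equality $\|U\|^2_{\H^2(\mu)} = \E[\langle U \star \tilde\mu\rangle_\infty]$, I would simply invoke the explicit formula for the predictable quadratic variation computed just before the proposition, namely
\[
	\langle U \star \tilde\mu\rangle_t = \int_{(0,t]} \big(\vertiii{U_s(\cdot)}_s\big)^2 \, \d C_s, \; t \in [0,\infty), \; \text{$\P$--a.s.},
\]
letting $t \uparrow \uparrow \infty$ via monotone convergence and then taking expectations. For the remaining equality $\E[\langle U \star \tilde\mu\rangle_\infty] = \E[[U \star \tilde\mu]_\infty]$, I would recall from \Cref{sec::stochastic_basis} that, for any $M \in \cH^2$, the difference $[M] - \langle M \rangle$ is a uniformly integrable martingale starting at zero. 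Applied to $M = U \star \tilde\mu \in \cH^2$, this yields the desired identity.

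For completeness, consider the map $\iota : \H^2(\mu) \longrightarrow \cH^2$ defined by $\iota(U) \coloneqq U \star \tilde\mu$. By the definition $\cK^2(\mu) = \{U \star \tilde\mu : U \in \H^2(\mu)\}$, the map $\iota$ is surjective onto $\cK^2(\mu)$, and after the identification already performed above $\H^2(\mu)$ (quotienting out those $U$ with $\|U\|_{\H^2(\mu)}=0$) it is also injective. Combining the two norm identities just established with $(U \star \tilde\mu)_0 = 0$ gives
\[
	\|U\|^2_{\H^2(\mu)} = \E\big[\langle U \star \tilde\mu \rangle_\infty\big] = \|U \star \tilde\mu\|^2_{\cH^2},
\]
so $\iota$ is an isometric isomorphism between the normed space $\H^2(\mu)$ and $\cK^2(\mu)$. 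Since $\cK^2(\mu)$ is already known to be a closed subspace of the Hilbert space $\cH^2$, it is itself a Banach space, and completeness of $\H^2(\mu)$ follows by pulling back Cauchy sequences along $\iota^{-1}$: given $(U^n)_{n \in \N}$ Cauchy in $\H^2(\mu)$, the sequence $(U^n \star \tilde\mu)_{n \in \N}$ is Cauchy in $\cK^2(\mu)$, hence converges to some $M = U \star \tilde\mu$ for a unique $U \in \H^2(\mu)$, with $\|U^n - U\|_{\H^2(\mu)} = \|U^n \star \tilde\mu - M\|_{\cH^2} \longrightarrow 0$.

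I do not anticipate any serious obstacle; the only point that deserves a line of care is the surjectivity of $\iota$ onto $\cK^2(\mu)$, which is tautological from the very definition of $\cK^2(\mu)$, and the mild verification that injectivity holds after the norm-zero identification already made on $\H^2(\mu)$.
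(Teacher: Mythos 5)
Your proposal is correct and follows essentially the same route as the paper: the norm identities are read off from the explicit formula for $\langle U\star\tilde\mu\rangle$ and the fact that $[M]-\langle M\rangle$ is a uniformly integrable martingale, and completeness is transferred from the closed stable subspace $\cK^2(\mu)\subseteq\cH^2$ via the isometry $U\longmapsto U\star\tilde\mu$. The paper's proof is just a more terse version of the same argument (it takes the norm equalities as immediate from the preceding discussion and writes out only the Cauchy-sequence transfer), so there is nothing to add.
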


For $(\omega,t) \in \Omega \times [0,\infty)$, let $\mathfrak{H}_{\omega,t}$ denote the collection of $\cE$-measurable maps $\cU : E \longrightarrow \R$ satisfying $\vertiii{\cU(\cdot)}_t(\omega) < \infty$. Define $\fH$ as the collection of $\widetilde\cP$-measurable functions $U: \widetilde\Omega \longrightarrow \R$ such that $U_t(\omega; \cdot) \in \fH_{\omega,t}$ for each $(\omega,t) \in \Omega \times [0,\infty)$. Since for $U \in \H^2(\mu)$, we have $\vertiii{U_t(\omega;\cdot)}_t(\omega) < \infty$ for $\d \P \times \d C$--a.e. $(\omega,t) \in \Omega \times [0,\infty)$, we can define $U^\prime_t(\omega;x) \coloneqq U_t(\omega;x) \mathbf{1}_{N^c}(\omega,t),$ where $N = \{(\omega,t) \in \Omega \times [0,\infty) : \vertiii{U_t(\omega;\cdot)}_t(\omega) = \infty\} \in \cP$, which yields a version of $U$ in $\H^2(\mu)$ which is also in $\fH$. We thus always choose a version of $U \in \H^2(\mu)$ that is also in $\fH$. The space $\fH$ will be fundamental in our formulation of reflected BSDEs in \Cref{sec::formulation}.

\medskip
Let us close this part by a agreeing on a convention similar to the one we made about vector stochastic integrals. Note first that since $U \star \tilde \mu \in \cH^2$ for $U \in \H^2(X)$, there's a well-defined limit at infinity $U \star \mu_\infty$. In the sequel, we will denote the process $U \star\tilde\mu$ by $\int_0^t \int_{E} U_s(x) \d \tilde\mu(\d s, \d x) \coloneqq U \star \tilde\mu_t,$ $t \in [0,\infty]$, and for two stopping times $S$ and $T$, we write
\begin{equation*}
	\int_S^T \int_{E} U_s(x) \d \tilde\mu(\d s, \d x) \coloneqq \int_0^T \int_{E} U_s(x) \d \tilde\mu(\d s, \d x) - \int_0^{S \land T} \int_{E} U_s(x) \d \tilde\mu(\d s, \d x) = U\star\tilde\mu_T - U\star\tilde\mu_{S \land T}.
\end{equation*}

\subsection{Orthogonal decomposition}

Let $X$ be the $\R^m$-valued process with components in $\cH^2_\mathrm{loc}$ from \Cref{sec::vector_si} and $\mu$ be the integer-valued random measure from \Cref{sec::compensated_si}. We are not working under the assumption of martingale representation, and thus want to find conditions on $X$ and $\mu$ that allow us to decompose a square-integrable martingale uniquely along $X$, $\mu$ and another square-integrable martingale $N$ appropriately orthogonal to $X$ and $\mu$. We mentioned in \Cref{sec::vector_si} and \Cref{sec::compensated_si} that the spaces $\cL^2(X)$ and $\cK^2(\mu)$ of stochastic integrals with respect to $X$ and $\mu-\nu$, respectively, are stable (and thus closed) subspaces of $\cH^2$. We give sufficient conditions on $X$ and $\mu$ under which $\cL^2(X) \cap \cK^2(\mu)$ is the null space in $\cH^2$. This allows us to write
\[
	\cH^2 = \cL^2(X) \oplus \cK^2(\mu) \oplus \cH^{2,\perp}(X,\mu) \; \text{with} \;\cH^{2,\perp}(X,\mu) \coloneqq \big(\cL^2(X) \oplus \cK^2(\mu)\big)^\perp.
\]
This part is based on \cite{papapantoleon2018existence} and we borrow notations from \cite{jacod2003limit}. Let $M_{\mu}$ be the Dol\'eans measure defined by
\begin{equation*}
	M_{\mu} [W] \coloneqq\int W(\omega,s,x)M_{\mu}(\d \omega, \d s, \d x) \coloneqq \E [W \star \mu_\infty],
\end{equation*}
for each $\cG \otimes \cB\big([0,\infty)\big) \otimes \cE$-measurable function $W : \widetilde\Omega \longrightarrow [0,\infty]$. Recall that there exists a $\widetilde\cP$ measurable function $V > 0$ with $\E[V \star \mu_\infty] < \infty$. Thus the restriction of $M_\mu$ to $\widetilde\cP$ is a $\sigma$-finite measure. By the Radon–Nikod\'ym theorem, there exists for every $\cG \otimes \cB([0,\infty)) \otimes \cE$-measurable function $W : \widetilde \Omega \longrightarrow [0,\infty]$, a $M_{\mu}$--a.e. unique, $\widetilde\cP$-measurable function $M_{\mu}[W| \widetilde\cP] \coloneqq W^\prime : \widetilde\Omega \longrightarrow [0,\infty]$ satisfying $M_{\mu}[W^\prime U] = M_{\mu}[WU],$ for each $\widetilde\cP$-measurable function $U : \widetilde\Omega \longrightarrow [0,\infty]$. For a general measurable function $W : \widetilde\Omega \longrightarrow [-\infty,\infty]$, we use the same convention as in \cite{jacod2003limit} to define $M_{\mu}[W| \widetilde\cP]$, namely
\begin{equation*}
	M_{\mu}\big[W\big| \widetilde\cP\big]\coloneqq
	\begin{cases}
		M_{\mu}\big[\max\{W,0\}\big| \widetilde\cP\big] - M_{\mu}\big[\max\{-W,0\}\big| \widetilde\cP\big], \; \text{on the set where $M_{\mu}\big[|W|\big| \widetilde\cP\big] < \infty$}, \\
		\infty, \; \text{otherwise.}
	\end{cases}
\end{equation*}
The following is the main result about orthogonal decompositions along $X$ and $\mu$, whose proof we defer to \Cref{app::proofs_preliminaries}.
\begin{proposition}\label{prop::orthogonal_decomposition}
Suppose that $M_{\mu}[\Delta X^i | \widetilde\cP] = 0$ for every $i \in \{1,\ldots,m\}$. For each $M \in \cH^2$, there exists a unique pair $(Z,U) \in \H^2(X) \times \H^2(\mu)$ such that $N =(N_t)_{t \in [0,\infty)} \in \cH^2$ defined by
	\begin{equation*}
		N_t \coloneqq M_t - \int_0^t Z_s \d X_s - \int_0^t\int_E U_s(x) \tilde\mu(\d s, \d x),
	\end{equation*}
	satisfies $\langle N, X^i \rangle = 0$ for each $i \in \{1,\ldots,m\}$ and $M_{\mu}[\Delta N | \widetilde\cP] = 0$.
\end{proposition}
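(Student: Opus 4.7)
The plan is to realise the decomposition as an orthogonal projection in the Hilbert space $\cH^2$. From the discussions in \Cref{sec::vector_si} and \Cref{sec::compensated_si}, both $\cL^2(X)$ and $\cK^2(\mu)$ are closed stable subspaces of $\cH^2$. The crux of the argument is to show that, under the hypothesis $M_{\mu}[\Delta X^i | \widetilde\cP] = 0$, these two stable subspaces are mutually orthogonal in $\cH^2$. Granted this, their sum $\cL^2(X) \oplus \cK^2(\mu)$ is itself a closed stable subspace, and one obtains $Y_1 + Y_2$ as the orthogonal projection of $M$ onto it, with $Y_1 \in \cL^2(X)$ and $Y_2 \in \cK^2(\mu)$; setting $N \coloneqq M - Y_1 - Y_2$ yields the required decomposition.

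To establish the orthogonality, I take $Y = \int_0^\cdot Z_s \d X_s \in \cL^2(X)$ with $Z \in \H^2(X)$ and $Y' = U \star \tilde\mu \in \cK^2(\mu)$ with $U \in \H^2(\mu)$. Since both are in $\cH^2$, I have $(Y,Y')_{\cH^\smalltext{2}} = \E[\langle Y, Y' \rangle_\infty]$. Using that $Y'$ is purely discontinuous, $\langle Y, Y'\rangle$ depends only on the common jump behaviour of $Y$ and $Y'$, which is governed by the joint behaviour of $\Delta X$ and $U$ through the Dol\'eans measure $M_\mu$. A direct computation, using $\Delta Y = \sum_{i} Z^i \Delta X^i$ and $\Delta Y' = \widetilde U = U(\varrho) \mathbf{1}_D - \widehat U$, reduces the covariation to an expression involving $M_\mu[U \sum_i Z^i \Delta X^i | \widetilde\cP]$; by the hypothesis $M_\mu[\Delta X^i | \widetilde\cP] = 0$ together with the predictability of $Z$, this projection vanishes $M_\mu$--a.e., from which $\langle Y, Y' \rangle = 0$ follows. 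This simultaneously gives $\cL^2(X) \cap \cK^2(\mu) = \{0\}$, so the sum is an actual direct sum of closed subspaces.

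Given the existence of the projection, uniqueness of $(Z,U) \in \H^2(X) \times \H^2(\mu)$ follows: the isometries \eqref{eq::isometry2} and $\H^2(\mu) \ni U \longmapsto U \star \tilde\mu \in \cK^2(\mu)$ (after quotienting by the respective semi-norm kernels) are injective, and the components $Y_1, Y_2$ are themselves uniquely determined because the sum is orthogonal. For the claimed orthogonality properties of $N$, first $N \perp \cL^2(X)$ in $\cH^2$ combined with the fact that $\cL^2(X)$ is a stable subspace yields, via \cite[Proposition 4.26]{jacod1979calcul} cited in \Cref{sec::stochastic_basis}, that $\langle N, Y \rangle = 0$ for all $Y \in \cL^2(X)$, and in particular $\langle N, X^i \rangle = 0$ for each $i \in \{1,\dots,m\}$. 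Second, $N \perp \cK^2(\mu)$ in $\cH^2$ means $\E[\langle N, U \star \tilde\mu\rangle_\infty] = 0$ for every $U \in \H^2(\mu)$; rewriting this covariation through the Dol\'eans measure as $M_\mu\big[U \cdot M_\mu[\Delta N\,|\, \widetilde\cP]\big] = 0$ for all bounded $U$ forces $M_\mu[\Delta N \,|\, \widetilde\cP] = 0$, as required.

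The main obstacle is the careful verification of the orthogonality of $\cL^2(X)$ and $\cK^2(\mu)$, because it requires manipulating the predictable covariation of a vector stochastic integral with a compensated integer-valued random measure integral in terms of the Dol\'eans measure, and invoking the hypothesis at exactly the right step. The translation of orthogonality in the $\cH^2$-inner product into the pointwise statement $M_\mu[\Delta N|\widetilde\cP]=0$ is also a delicate measure-theoretic point, but is standard once the Dol\'eans-measure formalism of \cite{jacod2003limit} is in place; I expect to import this characterisation rather than rederive it.
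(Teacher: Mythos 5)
Your approach is essentially the paper's: establish that $\cL^2(X)$ and $\cK^2(\mu)$ are orthogonal stable subspaces of $\cH^2$ (the paper phrases this as showing their intersection is trivial, via the same computation $M_\mu[Z^\top\Delta X\,|\,\widetilde\cP]=\sum_{i}Z^i M_\mu[\Delta X^i\,|\,\widetilde\cP]=0$ together with the characterisation of orthogonality to $\cK^2(\mu)$ from \cite[Lemme 7.39]{jacod1979calcul}), then project $M$ onto $\cL^2(X)\oplus\cK^2(\mu)$ and read off the properties of the residual $N$ from \cite[Proposition 4.26]{jacod1979calcul} and \cite[Theorem III.4.20]{jacod2003limit}. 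The existence part of your argument is sound.

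The one incomplete point is uniqueness. You argue that the components of the orthogonal decomposition are uniquely determined, but the proposition asserts uniqueness over all pairs $(Z',U')$ whose residual $N'$ satisfies $\langle N',X^i\rangle=0$ and $M_\mu[\Delta N'\,|\,\widetilde\cP]=0$, and \emph{a priori} such an $N'$ is not known to lie in $\big(\cL^2(X)\oplus\cK^2(\mu)\big)^\perp$. To close this you must run your two characterisations in the converse direction: $\langle N',X^i\rangle=0$ for all $i$ forces $\big\langle N',\int_{(0,\cdot]}H_s\,\d X_s\big\rangle=0$ for every $H\in\H^2(X)$, via the formula $\big\langle N',\int_{(0,\cdot]}H_s\,\d X_s\big\rangle=\int_{(0,\cdot]}\sum_i H^i_s c^{N',i}_s\,\d C_s$ with $c^{N',i}\,\d C=\d\langle N',X^i\rangle=0$; and $M_\mu[\Delta N'\,|\,\widetilde\cP]=0$ forces $N'\perp\cK^2(\mu)$ by the same lemma you already invoke. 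Only then is $M=\int_{(0,\cdot]}Z'_s\,\d X_s+U'\star\tilde\mu+N'$ an orthogonal decomposition, hence equal to the canonical one, so that $(Z',U')=(Z,U)$ by the injectivity of the two isometries. This is precisely the step the paper's proof carries out explicitly; the tools are present in your argument, but the implications you state go only in the direction needed for existence.
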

\begin{corollary}
	Under the assumptions of {\rm\Cref{prop::orthogonal_decomposition}}, we have $\cH^2 = \cL^2(X) \oplus \cK^2(\mu) \oplus \cH^{2,\perp}(X,\mu),$ where $\cH^{2,\perp}(X,\mu) \coloneqq \big(\cL^2(X) \oplus \cK^2(\mu)\big)^\perp.$ In particular, $\cH^{2,\perp}(X,\mu) = \big\{N \in \cH^2 : M_{\mu}[\Delta N | \widetilde\cP] = 0,\; \langle N, X^i \rangle = 0 \text{ \rm for $i \in \{1,\ldots,m\}$}\big\}.$
\end{corollary}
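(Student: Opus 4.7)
The plan is to derive the corollary directly from Proposition 3.4 by first reinterpreting the two analytic conditions ``$\langle N,X^i\rangle=0$ for all $i$'' and ``$M_\mu[\Delta N\,|\,\widetilde\cP]=0$'' as, respectively, $\cH^2$-orthogonality of $N$ to $\cL^2(X)$ and to $\cK^2(\mu)$. Once this reinterpretation is in place, the decomposition part of Proposition 3.4 immediately yields the sum $\cH^2 = \cL^2(X) + \cK^2(\mu) + \cH^{2,\perp}(X,\mu)$, the uniqueness part yields that the sum is direct, and the two equivalences yield the explicit description of $\cH^{2,\perp}(X,\mu)$.

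The first equivalence is standard: as $\cL^2(X)$ is a stable subspace of $\cH^2$, a martingale $N\in\cH^2$ lies in $\cL^2(X)^\perp$ if and only if $\langle N,M\rangle=0$ for every $M\in\cL^2(X)$, by the result from \cite{jacod1979calcul} recalled in \Cref{sec::stochastic_basis}. Localising each $X^i$ along a sequence $(T_n)_{n\in\N}$ for which $\mathbf{1}_{[0,T_\smalltext{n}]}\mathbf{e}_i\in\H^{2,0}(X)$, so that $X^{i,T_\smalltext{n}}\in\cL^2(X)$, shows this is in turn equivalent to $\langle N,X^i\rangle^{T_\smalltext{n}} = \langle N,X^{i,T_\smalltext{n}}\rangle = 0$ for each $n$ and $i$, hence to $\langle N,X^i\rangle=0$ for all $i\in\{1,\ldots,m\}$. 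Conversely, the Kunita--Watanabe identity shows that under this condition $\langle N,\int_0^\cdot Z_s\,\d X_s\rangle = 0$ for every $Z\in\H^2(X)$.

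The second equivalence is where the main work sits. Using that $\cK^2(\mu)$ is a stable subspace, orthogonality of $N\in\cH^2$ to $\cK^2(\mu)$ in $\cH^2$ is equivalent to $\langle N, U\star\tilde\mu\rangle=0$ for every $U\in\H^2(\mu)$. Since $U\star\tilde\mu$ is purely discontinuous, only $N^d$ contributes, and the predictable covariation $\langle N, U\star\tilde\mu\rangle$ can be expressed in terms of the Dol\'eans measure $M_\mu$ and the predictable projection via the standard formula for integrals against compensated integer-valued random measures (see \cite[Theorem II.1.33]{jacod2003limit}). Testing against $U$ of the form $W\,\mathbf{1}_{A\times[0,T]}$ with $A\in\cE$, $T$ a stopping time, and $W$ bounded and $\widetilde\cP$-measurable, one concludes that $\langle N, U\star\tilde\mu\rangle=0$ for all $U\in\H^2(\mu)$ if and only if the signed Dol\'eans measure associated with the jumps of $N$ vanishes on $\widetilde\cP$, i.e. $M_\mu[\Delta N\,|\,\widetilde\cP]=0$. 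This is the technically delicate step, and it is where the definition of the conditional Dol\'eans projection from \Cref{sec::compensated_si} is invoked.

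Having identified $\cH^{2,\perp}(X,\mu) = \{N\in\cH^2 : \langle N, X^i\rangle=0,\ i=1,\ldots,m,\ M_\mu[\Delta N\,|\,\widetilde\cP]=0\}$, the remaining content is now immediate. For any $M\in\cH^2$, Proposition 3.4 delivers $(Z,U,N)\in\H^2(X)\times\H^2(\mu)\times\cH^{2,\perp}(X,\mu)$ with $M = \int_0^\cdot Z_s\,\d X_s + U\star\tilde\mu + N$, giving the summation $\cH^2 = \cL^2(X)+\cK^2(\mu)+\cH^{2,\perp}(X,\mu)$. For directness: if $M\in\cL^2(X)\cap\cK^2(\mu)$, then $M$ admits the two decompositions $M = \int_0^\cdot Z_s\,\d X_s + 0 + 0$ and $M = 0 + U\star\tilde\mu + 0$ with $0\in\cH^{2,\perp}(X,\mu)$; the uniqueness statement in Proposition 3.4 forces $Z=0$ in $\H^2(X)$ and $U=0$ in $\H^2(\mu)$, whence $M=0$. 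The remaining intersections $(\cL^2(X)+\cK^2(\mu))\cap\cH^{2,\perp}(X,\mu)$ vanish by construction of the orthogonal complement, and the direct sum decomposition follows.
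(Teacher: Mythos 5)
Your proof is correct and follows essentially the same route as the paper, which folds all of the corollary's content into the proof of \Cref{prop::orthogonal_decomposition}: the two defining conditions are recognised as orthogonality in $\cH^2$ to $\cL^2(X)$ and to $\cK^2(\mu)$ respectively (for the second, delicate equivalence — the one you only sketch — the paper simply invokes {\rm\cite[Lemme 7.39]{jacod1979calcul}} and {\rm\cite[Theorem III.4.20]{jacod2003limit}}, which is an acceptable substitute for your testing argument), and the decomposition then follows. The only organisational difference is that you deduce $\cL^2(X)\cap\cK^2(\mu)=\{0\}$ from the uniqueness statement of \Cref{prop::orthogonal_decomposition}, whereas the paper proves it directly by computing $\langle M,M\rangle=0$ for $M$ in the intersection; both arguments are valid.
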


\subsection{Data and the corresponding weighted spaces}\label{sec::weighted_spaces}

In this section we fix the data of the reflected BSDE and the weighted spaces in which we will construct its solution. The obstacle and terminal condition are described by a single optional process $\xi$ as in \cite{grigorova2017reflected,grigorova2020optimal}. From \Cref{rem::after_formulation}, this is without loss of generality. Throughout this work, we fix once and for all the data $(X, \mu,\G,T,\xi,f,C)$, where
\begin{enumerate}[{\bf(D1)}, leftmargin=0.9cm]
	\item \label{data::filtration} $\G$ is the filtration in the stochastic basis;
	\item \label{data::martingale} $X = (X_t)_{t \in [0,\infty)}$ is an $\R^m$-valued process whose components are in $\cH^2_\mathrm{loc}$ with $X_0 = 0$, $\P$--a.s., $\mu$ is an integer-valued random measure on $\R_+ \times E$, where $(E,\cE)$ is some Blackwell space, and $M_\mu[\Delta X^i | \widetilde\cP] = 0$ for each $i \in \{1,\ldots,m\}$;
	\item $C = (C_t)_{t \in [0,\infty)}$ is a real-valued, predictable process with $\P$--a.s. right-continuous and non-decreasing paths starting from zero that satisfies 
		\begin{equation*}
			\d\langle X^i, X^j\rangle_s(\omega) = c^{i,j}_s\d C_s(\omega), \; \text{and} \; \nu(\omega;\d s, \d x) = K_s(\omega,\d x)\d C_s(\omega), \; \text{for $\P$--a.e. $\omega \in \Omega$, for each $(i,j) \in \{1,\ldots,d\}^2$},
		\end{equation*}
		where each $c^{i,j}=(c^{i,j}_t)_{t \in [0,\infty)}$ is a predictable process with values in the set of positive, semi-definite, symmetric matrices, and $K$ is a transition kernel from $(\Omega \times [0,\infty),\cP)$ into $(E,\cE)$;
	\item $T$ is a $\G$--stopping time;
	\item \label{data::expectation_sup}$\xi = (\xi_t)_{t \in [0,\infty]}$ is a $[-\infty,\infty)$-valued, optional process satisfying\footnote{Recall from \Cref{rem::measurability_of_sup} that the expectation is well-defined as $\sup_{s \in [0,T)}\xi^+_s = \sup_{s \in [0,\infty]}\xi^+_s\mathbf{1}_{\{s < T\}}$.} $\E[|\xi_T|^2] + \E \big[ \sup_{s \in [0,T)} |\xi^+_{s}|^2 \big] < \infty;$
	\item \label{data::generator}$f : \bigsqcup_{(\omega,t) \in \Omega \times [0,\infty)} \big(\R \times \R \times \R^m \times \fH_{\omega,t}\big) \longrightarrow \R$ is such that for each $(y,\mathrm{y},z,u) \in \R \times \R \times \R^m \times \mathfrak{H}$, the map\footnote{The symbol $\bigsqcup$ denotes the disjoint union, and therefore each $f_t(\omega,\cdot,\cdot,\cdot,\cdot)$ is a map from $\R \times \R \times \R^m \times \fH_{\omega,t}$ into $\R$.}
		\begin{equation*}
		\Omega \times [0,\infty) \ni (\omega,t) \longmapsto f_t(\omega,y,\mathrm{y},z,u_t(\omega; \cdot) ) \in \R,
	\end{equation*}
	is optional and $f$ is $(r,\theta^X,\theta^\mu)$--Lipschitz continuous on $\llbracket 0, T \rrbracket \coloneqq \{(\omega,t) \in \Omega \times [0,\infty) : t \leq T(\omega) \}$ in the sense that
			\begin{align*}
		\big|f_t(\omega,y,\mathrm{y},z,u_t(\omega; \cdot)) - f_t(\omega,y^\prime,\mathrm{y}^\prime,z^\prime,u_t^\prime(\omega; \cdot))\big|^2 
		&\leq r_t(\omega) |y-y^\prime|^2 + \mathrm{r}_t(\omega) |\mathrm{y}-\mathrm{y}^\prime|^2 \\
		&\quad + \theta^X_t(\omega) \|c_t^{1/2}(\omega)(z-z^\prime)\|^2 + \theta^\mu_t(\omega) \Big(\vertiii{u_t(\omega;\cdot) - u^\prime_t(\omega;\cdot)}_t(\omega) \Big)^2,
	\end{align*}
			for $\P \otimes \d C$--a.e. $(\omega,t) \in \llbracket 0, T \rrbracket$, where $r = (r_t)_{t \in [0,\infty)}$, $\mathrm{r} = (\mathrm{r}_t)_{t \in [0,\infty)}$, $\theta^X = (\theta^X_t)_{t \in [0,\infty)}$ and $\theta^\mu =(\theta^\mu_t)_{t \in [0,\infty)}$ are $[0,\infty)$-valued, predictable processes, and $c^{1/2}$ is the unique square-root matrix-valued process of $c$;\footnote{See \citeauthor*{horn2013matrix} \cite[page 439]{horn2013matrix}.}
		\item \label{data::integ_f0} the optional process $f_\cdot(0,0,0,\mathbf{0})$\footnote{Here $\mathbf{0}$ denotes the zero element of the space $\H^2(\mu)$.} satisfies
		\begin{equation*}
			\E \bigg[\bigg(\int_0^T |f_s(0,0,0,\mathbf{0})| \d C_s \bigg)^2 \bigg] < \infty;
		\end{equation*}
	\item \label{data::process_A} the non-negative, predictable process $\alpha = (\alpha_t)_{t \in [0,\infty)}$ defined through $\alpha^2_t = \max\{\sqrt{r_t},\sqrt{\mathrm{r}_t},\theta^X_t,\theta^\mu_t\}$ satisfies $\alpha_t(\omega) > 0$ for $\P \otimes \d C$--a.e. $(\omega,t) \in \llbracket 0,T\rrbracket$, and the predictable process $A = (A_t)_{t \in [0,\infty)}$ defined by $A_t \coloneqq \int_0^{t \land T} \alpha^2_s \d C_s$ is real-valued and satisfies $\Delta A \leq \Phi$, up to $\P$-evanescence, for some $\Phi \in [0,\infty)$.
\end{enumerate}

\begin{remark}\label{rem::after_data}
$(i)$ If we start with a process $\tilde\xi$ that is only defined on $\{(\omega,t) \in \Omega \times [0,\infty] : t \leq T(\omega)\}$, then we ask here that the process $\xi = (\xi_t)_{t \in [0,\infty]}$ defined by $\xi_t(\omega) \coloneqq \tilde\xi_t(\omega) \mathbf{1}_{[0,T(\omega)]}(t) + \tilde\xi_{T(\omega)}(\omega) \mathbf{1}_{(T(\omega),\infty]}(t)$ is optional. The process $\tilde\xi$ is still the lower barrier and $\tilde\xi_T$ is the terminal condition of the reflected {\rm BSDE}.

\medskip
$(ii)$ Note that \ref{data::martingale} allows us to decompose each $M \in \cH^2$ uniquely into
\begin{equation*}
	M_t = M_0 + \int_0^t Z_s \d X_s + \int_0^t \int_E U_s(x) \mu(\d s, \d x) + N_t, \; t \in [0,\infty), \; \text{$\P$\rm--a.s.},
\end{equation*}
for $(Z,U,N) \in \H^2(X) \times \H^2(\mu) \times \cH^{2,\perp}(X,\mu)$ by {\rm\Cref{prop::orthogonal_decomposition}}.
\end{remark}

The process $\cE(\beta A)$ denotes the stochastic exponential of $\beta A$, that is, $\cE(\beta A) = \big(\cE(\beta A)_t\big)_{t \in [0,\infty)}$ is the unique right-continuous, adapted process satisfying
\begin{equation*}
	\cE(\beta A)_t = 1 + \int_0^t \cE(\beta A)_{s-}\beta \d A_s, \; t \in [0,\infty), \; \text{$\P$--a.s.}
\end{equation*}
As $A$ is $\P$--a.s. non-decreasing, it follows from \cite[Theorem I.4.61]{jacod2003limit} that
\begin{equation*}
	\displaystyle \cE(\beta A)_t = \mathrm{e}^{\beta A_t} \prod_{s \in (0,t]}(1+\gamma\Delta A_s)\mathrm{e}^{-\gamma\Delta A_s}, \; t \in [0,\infty), \; \text{$\P$--a.s.}
\end{equation*}
Therefore $\cE(\beta A)$ is $\P$--a.s. non-decreasing as well and satisfies $1\leq\cE(\beta A)\leq\mathrm{e}^{\beta A}$ up to $\P$--indistinguishability.
We now introduce the (weighted) classical spaces in which we will construct the solution to the reflected BSDE. Although these spaces depend on $(\alpha,C,T)$, we will suppress the dependence on $(\alpha,C)$ to ease the notation. For $\beta \in [0,\infty)$

\begin{itemize}[leftmargin=0.9cm]
	\item $\L^p_\beta(\cF)$, for $p \in [1,\infty)$ and a sub--$\sigma$-algebra $\cF \subseteq \cG$, denotes the space of real-valued, $\cF$-measurable random variables $\zeta$ satisfying 
	\[
	\|\zeta\|^{2}_{\L^{\smalltext{2}}_{\smalltext{\beta}}} \coloneqq \E\big[\big|\cE(\beta A)^{1/2}_T\zeta\big|^2\big] < \infty;
	\]
	\item $\cH^2_{T,\beta}$ denotes the Banach space of real-valued martingales $M = (M_t)_{t \in [0,\infty)}$ in $\cH^2$ satisfying $M = M_{\cdot \land T}$ and 
	\[
	\|M\|^2_{\cH^{\smalltext{2}}_{\smalltext{T}\smalltext{,}\smalltext{\beta}}} \coloneqq \E[M^2_0] + \E\bigg[\int_{(0,\infty)} \cE(\beta A)_s \d \langle M^T \rangle_s\bigg] = \E[M^2_0] + \E\bigg[\int_0^T \cE(\beta A)_s \d \langle M \rangle_s\bigg] < \infty;
	\]
	\item $\cS^2_{T,\beta}$ denotes the Banach space\footnote{That $\cS^2_{T,\beta}$ is a Banach space follows with the arguments described in \cite[IV.21, pp. 82--83]{dellacherie1982probabilities}.} of real-valued, optional processes $Y = (Y_t)_{t \in [0,\infty]}$ satisfying $Y = Y_{\cdot \land T}$ and
		\begin{equation*}
			\|Y\|^2_{\cS^{\smalltext{2}}_{\smalltext{T}\smalltext{,}\smalltext{\beta}}} \coloneqq \E \bigg[ \sup_{s \in [0,T]} \big| \cE(\beta A)^{1/2}_s Y_s \big|^2\bigg] < \infty;
		\end{equation*}
	\item $\H^2_{T,\beta}$ denotes the Banach space of real-valued, optional processes $\phi = (\phi_t)_{t \in [0,\infty]}$ satisfying $\phi = \phi_{\cdot \land T}$ and
		\begin{equation*}
			\|\phi\|^2_{\H^{\smalltext{2}}_{\smalltext{T}\smalltext{,}\smalltext{\beta}}} \coloneqq \E \bigg[ \int_{(0,\infty)} \cE(\beta A)_s |\phi_s|^2 \d C^T_s\bigg] = \E \bigg[ \int_0^T \cE(\beta A)_s |\phi_s|^2 \d C_s\bigg] < \infty;
		\end{equation*}
	\item $\H^2_{T,\beta}(X)$ denotes the Banach space of $\R^m$-valued, predictable processes $Z$ in $\H^2(X)$ satisfying $Z = Z \mathbf{1}_{\llbracket 0, T \rrbracket}$ and
		\begin{equation*}
			\|Z\|^2_{\H^\smalltext{2}_{\smalltext{T}\smalltext{,}\smalltext{\beta}}(X)} \coloneqq \big\|\cE(\beta A)^{1/2}Z\mathbf{1}_{\llbracket 0, T\rrbracket}\big\|^2_{\H^\smalltext{2}(X)} = \E\bigg[\int_0^T \cE(\beta A)_s \sum_{i=1}^m\sum_{j=1}^m Z^i_s c^{i,j}_s Z^j_s \d C_s\bigg] < \infty;
		\end{equation*}
	\item $\H^2_{T,\beta}(\mu)$ denotes the Banach space of real-valued, $\widetilde\cP$-predictable processes $U$ in $\H^2(\mu)$ satisfying $U = U \mathbf{1}_{\llbracket 0, T \rrbracket}$ and
		\begin{equation*}
			\|U\|^2_{\H^\smalltext{2}_{\smalltext{T}\smalltext{,}\smalltext{\beta}}(\mu)} \coloneqq \big\|\cE(\beta A)^{1/2}U\mathbf{1}_{\llbracket 0, T \rrbracket}\big\|^2_{\H^\smalltext{2}(\mu)} = \E\bigg[\int_0^T \cE(\beta A)_s \d \langle U \star \tilde\mu \rangle_s\bigg] < \infty;
		\end{equation*}
	\item $\cH^{2,\perp}_{0,T,\beta}(X,\mu)$ denotes the closed subspace of real-valued martingales $N = (N_t)_{t \in [0,\infty)}$ in $\cH^2_{T,\beta}$ with $N \in \cH^2_0 \cap \cH^{2,\perp}(X,\mu);$
	\item $\cI^2_{T,\beta}$ denotes the space of $[0,\infty)$-valued, optional processes $K = (K_t)_{t \in [0,\infty]}$ whose paths are $\P$--a.s. right-continuous and non-decreasing, satisfying $K = K_{\cdot \land T}$, $\E[K^2_T] < \infty$ and
	\begin{equation*}
		\|K\|^2_{\cI^\smalltext{2}_{\smalltext{T}\smalltext{,}\smalltext{\beta}}} \coloneqq \E\bigg[\bigg(\int_{[0,\infty)}\cE(\beta A)^{1/2}_s \d K^T_s\bigg)^2\bigg] = \E\bigg[\bigg(\int_0^T\cE(\beta A)^{1/2}_s \d K_s\bigg)^2\bigg] < \infty,
	\end{equation*}
	with convention $K_{0-} \coloneqq 0$.
	\end{itemize}
Finally, for $\beta = 0$, we simply write $\L^p \coloneqq \L^p_0(\cG)$, $\L^p(\cF) \coloneqq \L^p_0(\cF)$, $\cH^2_T \coloneqq \cH^2_{T,0}$, $\cS^2_T \coloneqq \cS^2_{T,0}$, $\H^2_T \coloneqq \H^2_{T,0}$, $\H^2_T(X) \coloneqq \H^2_{T,0}(X)$, $\H^2_T(\mu) \coloneqq \H^2_{T,0}(\mu)$, $\cH^{2,\perp}_{0,T}(X,\mu) \coloneqq \cH^{2,\perp}_{0,T,0}(X,\mu)$ and $\cI^2_{T} \coloneqq \cI^2_{T,0}$.

\subsection{Formulation of the reflected BSDE}\label{sec::formulation}

In this work, we consider reflected BSDEs driven by c\`adl\`ag martingales and integer-valued random measures on a possibly unbounded time horizon. It turns out that for the analysis that follows, it is convenient to construct the solution directly on $[0,\infty]$, although the driving martingales and integer-valued random measures are only defined on $[0,\infty)$. As we have seen before, we can and will assign a value to the (stochastic) integrals at infinity by taking the limit $t \uparrow\uparrow \infty$, whenever this makes sense. Inspired by the work of \citeauthor*{grigorova2020optimal} \cite{grigorova2020optimal}, we will not suppose any regularity on the paths of the obstacle process $\xi$. We thus have to consider its left-limit process $\overline\xi = (\overline\xi_t)_{t \in [0,\infty]}$ defined by
\begin{equation*}
	\overline\xi_t \coloneqq
		\xi_0\mathbf{1}_{\{t = 0\}}+\limsup_{s \uparrow\uparrow t} \xi_s\mathbf{1}_{\{t\in (0,\infty]\}}, 
\end{equation*}
which is $(\cG^U_t)_{t \in [0,\infty]}$-predictable by \Cref{prop::pred_left_limit}. Note that this process is $\P$--indistinguishable from a $(\cG_t)_{t \in [0,\infty]}$-predictable process by \Cref{lem::compl_filt} or \cite[Proposition 1.1]{jacod1979calcul}. The solution to the reflected BSDE with generator $f$ and obstacle process $\xi$ is a collection of processes $(Y,Z,U,N,K^r,K^\ell)$ satisfying the following conditions
\begin{enumerate}[{\bf (R1)}, leftmargin=0.9cm]
	\item \label{cond::integ_mart}$(Z,U,N) \in \H^2_T(X) \times \H^2_T(\mu) \times \cH^{2,\perp}_{0,T}(X,\mu)$, and $Y = (Y_t)_{t \in [0,\infty]}$ is optional with $\P$--a.s. l\`adl\`ag paths;
	\item \label{cond::integ_y_and_k} $(K^r,K^\ell) \in \cI^2_T \times  \cI^2_T$ with $K^r_0 = 0$, $\P$--a.s.;
	\item \label{cond::refl_eq} $(Y,Z,U,N,K^r,K^\ell)$ satisfies\footnote{We use a predictable version $Y_-$ of the left-limit process of $Y$ here (see for example Equation 2.3.3 and Lemma 6.1.3 in \cite{weizsaecker1990stochastic}).}
		\begin{equation*}
			\E\bigg[\int_0^T \big|f_s\big(Y_s,Y_{s-},Z_s,U_s(\cdot)\big)\big|\d C_s\bigg] < \infty,
		\end{equation*}
	and $\P$--a.s., for each $t \in [0,\infty]$,
		\begin{align*}
			Y_t &= \xi_T + \int_t^T f_s\big(Y_s,Y_{s-},Z_s,U_s(\cdot)\big) \d C_s - \int_t^T Z_s \d X_s - \int_t^T\int_E U_s(x)\tilde\mu(\d s, \d x) - \int_t^T\d N_s+ K^r_T - K^r_t + K^{\ell}_{T-} - K^{\ell}_{t-};
		\end{align*}
	\item \label{cond::y_equals_xi}$Y_T = \xi_T$, and $Y = Y_{\cdot \land T}$;
	\item \label{cond::y_greater_xi}$Y_{\cdot \land T} \geq \xi_{\cdot \land T}$;
	\item \label{cond::increasing_proc} $K^r$ is predictable, $K^\ell$ satisfies $K^\ell_T = K^\ell_{T-}$, $\P$--a.s., and is purely discontinuous, that is, $K^\ell_t = K^\ell_0 + \sum_{s \in (0,t] } \Delta K^\ell_s, \; t \in [0,\infty], \; \text{$\P$--a.s.},$
		 and the following Skorokhod condition holds, with $K^\ell_{0-} \coloneqq 0$,
		\[
		 \big(Y_{T-} - \overline{\xi}_T\big) \Delta K^r_T + \int_{(0,T)} \big(Y_{s-} - \overline{\xi}_s\big) \d K^r_s + \int_{[0,T)} (Y_s - \xi_s) \d K^\ell_s = 0, \;\text{$\P$--a.s.};
		\]
	
	\item \label{cond::repre_Y} $\displaystyle Y_S = {\esssup_{\tau \in \cT_{\smalltext{S}\smalltext{,}\smalltext{T}}}}^{\cG_\smalltext{S}} \E\bigg[ \xi_\tau + \int_S^\tau f_s\big(Y_s,Y_{s-},Z_s,U_s(\cdot)\big) \d C_s \bigg| \cG_S\bigg]$, $\P$--a.s., $S \in \cT_{0,T}$.
\end{enumerate}

Let us comment on the form of the generator.

\begin{remark}\label{rem::discussion_y}
	To the best of our knowledge, and except in {\rm\cite{papapantoleon2018existence}}, the literature only considers the case where the generator depends on $Y_{s-}$ and not on $Y_s$. When the integrator $C$ does not jump and is thus continuous, the dependence on $Y_s$ or $Y_{s-}$ does not matter as $\{s :Y_s \neq Y_{s-}\}$ will be at most countable and thus of $\d C_s$-measure zero. When the process $C$ can jump, the dependence matters, and we include both cases for the following reasons:
	
	\begin{itemize}
	\item[$(i)$] a dependence on $Y_{s-}$ in the generator has been considered in numerical schemes, see among others {\rm\citeauthor*{briand2001donsker} \cite{briand2001donsker}, \citeauthor*{briand2002robustness} \cite{briand2002robustness}, \citeauthor*{briand2021donsker} \cite{briand2021donsker}, \citeauthor*{cheridito2013bs} \cite{cheridito2013bs}, \citeauthor*{madan2015convergence} \cite{madan2015convergence}}, {\rm\citeauthor*{possamai2015weak} \cite{possamai2015weak}}, and {\rm\citeauthor*{papapantoleon2021stability} \cite{papapantoleon2019stability,papapantoleon2021stability}}$;$
	
	\item[$(ii)$] a linear {\rm BSDE} only seems to allow for an explicit representation of its $Y$-component as a conditional expectation if the linearity of the generator depends not on $Y_{s-}$ but on $Y_s$. To see this, one can adapt the techniques of {\rm\cite[Lemma 2.2]{carbone2008backward}} to our setup. Similarly, in the reflected {\rm BSDE} case, a $Y_s$-dependence in the generator already appears in the following seemingly simple example. Consider the optimal stopping problem $\sup_{\tau \in \cT_{0,T}} \E[\xi_\tau/D_\tau]$, where the discounting process is given by $D = \cE(\int_0^\cdot r_s \d C_s)$ for some predictable process $r = (r_s)_{s \in [0,\infty)}$ that is $C$-integrable. The value process $V_t = \esssup_{\tau \in \cT_{t,T}} \E[\xi_\tau D_t/D_\tau]$ of this optimal stopping problem, after applying some standard transformations and then Mertens's decomposition, is the $Y$-component of a reflected {\rm BSDE} with obstacle $\xi$ and generator of the form 
		\begin{equation*}
			f_s(Y_s) = \frac{r_s Y_s}{1+r_s\Delta C_s}.
		\end{equation*}
		As our main motivation is to develop $($reflected$)$ {\rm BSDEs} to analyse certain discrete- and continuous-time problems in a unified manner, we stress that the dependence on $Y_s$ in the generator is thus crucial.
	\end{itemize}
\end{remark}

We now comment on the formulation of the reflected BSDE. 
\begin{remark}\label{rem::after_formulation}
	$(i)$ That $\xi$ plays the role of the lower barrier and terminal condition is without loss of generality since for any lower barrier $L = (L_t)_{t \in [0,\infty)}$ and terminal condition $\zeta$, we can define the obstacle $\xi$ as $\xi_t \coloneqq L_t \mathbf{1}_{[0,T)}(t) + \zeta \mathbf{1}_{[T,\infty]}(t),$ $t\geq 0$. With our formulation, it is clear that on $[0,T)$, $\xi$ is the lower barrier $L$ and $\xi_T$ is the terminal condition $\zeta$.
	
	\medskip
	$(ii)$ With the conventions we agreed upon in {\rm\Cref{sec::integrals}}, the integral processes appearing in \ref{cond::refl_eq} and \ref{cond::repre_Y} never include the points $0$ or $\infty$ in the domain of integration.
	
	\medskip
	$(iii)$ Note that the forward dynamics of $Y$ are, $\P${\rm--a.s.},
	\begin{align*}
			Y_t = Y_0 - \int_0^{t \land T} f_s\big(Y_s,Y_{s-},Z_s,U_s(\cdot)\big) \d C_s  + \int_0^{t \land T} Z_s \d X_s + \int_0^{t \wedge T} \int_E U_s(x)\tilde\mu(\d s, \d x) + N_{t \land T} - K^r_{t \land T} - K^{\ell}_{(t \land T)-}, \; t \in [0,\infty].
		\end{align*}
	In particular, $Y$ is a $\P${\rm--a.s.} l\`adl\`ag optional semimartingale.
	
	\medskip
	$(iv)$ Conditions \ref{cond::y_equals_xi}, \ref{cond::y_greater_xi} and \ref{cond::repre_Y} are equivalent to
	\begin{equation*}
		Y_S = {\esssup_{\tau \in \cT_{S,\infty}}}^{\cG_\smalltext{S}} \E \bigg[  \xi_{\tau \land T} + \int_S^{\tau \land T} f_s\big(Y_{s-},Y_s, Z_s, U_s(\cdot)\big) \d C_s \bigg| \cG_S \bigg], \; \text{$\P$\rm--a.s.}, \; S \in \cT_{0,\infty}.
	\end{equation*}
	
	\medskip
	$(v)$ We will see in {\rm\Cref{lem::cond_repre}} that if $(\alpha Y,\alpha Y_-,Z,U,N) \in \H^2_{T,\hat\beta} \times \H^2_{T,\hat\beta} \times \H^2_{T,\hat\beta}(X) \times \H^2_{T,\hat\beta}(\mu)$, then \ref{cond::integ_mart} up to \ref{cond::increasing_proc} imply \ref{cond::repre_Y}.
	
	\medskip
	$(vi)$
	If $\xi$ is {\rm$\P$--a.s.} left--upper semicontinuous along stopping times, then $K^r$ is continuous. This is similar to {\rm\cite[Remark 2.4]{grigorova2020optimal}}.

	\medskip
	$(vii)$ If $\xi$ is $\P${\rm--a.s.} right-continuous, then so is $Y$. Indeed, let $Y^{(+)}$ be the right-continuous, optional process that {\rm$\P$--a.s.} agrees with the process of right-hand side limits of $Y$ $($see {\rm\cite[Appendix I, Remark 5(b)]{dellacherie1982probabilities}}$)$. For $\tau \in \cT_{0,T}$, we have $Y_{\tau+} - Y_\tau = -( K^\ell_{\tau \land T} - K^\ell_{{\tau \land T}-}) \leq 0$, {\rm$\P$--a.s.}, since $K^\ell$ is {$\P$--a.s.} non-decreasing, and therefore $Y_{\tau+} \leq Y_\tau$ up to a {\rm$\P$--null set}. This then also implies $Y_\tau \geq Y_{\tau+} \geq \xi_{\tau+} = \xi_\tau$ up to a {\rm$\P$--null set}. Following {\rm\cite[Remark 3.3]{grigorova2017reflected}}, we then find with \ref{cond::y_greater_xi} and \ref{cond::increasing_proc} that 
		\begin{align*}
			1 = \P[Y_\tau \geq \xi_\tau] = \P[Y_\tau > \xi_\tau] + \P[Y_\tau = \xi_\tau] &= \P[\Delta K^\ell_\tau = 0, Y_\tau > \xi_\tau] + \P[Y_\tau = Y_{\tau+},Y_\tau = \xi_\tau] \\
			&= \P[Y_{\tau+} = Y_\tau, Y_\tau > \xi_\tau] + \P[Y_{\tau+} = Y_\tau, Y_\tau = \xi_\tau] = \P[Y_\tau=Y_{\tau+}].
		\end{align*}
		Hence $Y_\tau = Y_{\tau+} = Y^{(+)}_\tau$, $\P${\rm--a.s.}, which implies that $Y = Y_{\cdot\land T}$ is $\P$--indistinguishable from $Y^{(+)}_{\cdot\land T}$ by the optional cross-section theorem in {\rm\cite[Theorem IV.84, p. 137]{dellacherie1978probabilities}} $($or see {\rm\Cref{prop::optional_ineq}}$)$. Incidentally, since $K^\ell$ is purely discontinuous and the source of right-hand side jumps of $Y$, this implies $K^\ell = 0$ up to a {\rm$\P$--null set}.
		
	\medskip
	$(viii)$ Instead of considering two predictable processes $(K^r,K^\ell)$ in the above formulation of the reflected {\rm BSDE}, we could simply consider a single $\P$--{\rm a.s.} non-decreasing, predictable process $K$  satisfying $K_0 = 0$ and $K = K_{\cdot \land T}$, $\P$--{\rm a.s.}, defined by $K = K^r + K^\ell_-$ up to $\P$--indistinguishability. The processes $(K,K^r,K^\ell)$ are then related to each other as follows
		\begin{equation*}
			K^r_t = K_t- K^\ell_{t-}, \; \text{\rm where} \; K^\ell_t = \sum_{s \in [0,t]} (K_{s+} - K_s), \; t \in [0,\infty], \; \text{$\P$\rm--a.s.},\;  \text{with} \; K^\ell_{0-} = 0.
		\end{equation*}
\end{remark}

\section{Main results}\label{sec::main_results}

This section contains the main results of our work. We present them first in the reflected BSDE and then in the (non-reflected) BSDE setting. Although the formulation we chose for reflected BSDEs includes BSDEs as special cases, it turns out that the \emph{a priori} estimates in \Cref{sec::a_priori} can be improved. We therefore report the results separately.

\subsection{Existence and uniqueness for reflected BSDEs}\label{sec::main_results_rbsde}

Before stating our main well-posedness result for reflected BSDEs, we introduce some notation to ease the presentation. For $(\Psi,\beta) \in [0,\infty) \times (0,\infty)$, let
\begin{align*}
	\ff^\Psi(\beta) &\coloneqq \inf_{\gamma\in (0,\beta)} \bigg\{\frac{(1+\beta\Psi)}{\gamma(\beta-\gamma)}\bigg\} = \frac{4(1+\beta\Psi)}{\beta^2},\\
	\fg^\Psi(\beta) &\coloneqq \inf_{\gamma \in (0,\beta)} \bigg\{ \frac{(1+\gamma\Psi)}{\gamma(\beta-\gamma)}\bigg\} = \frac{4}{\beta^2}\1_{\{\Psi = 0\}} + \frac{\Psi^2  \sqrt{1+\beta\Psi}}{\big(1+\beta\Psi -\sqrt{1+\beta\Psi}\big)\big(\sqrt{1+\beta\Psi}-1\big)}\1_{\{\Psi > 0\}}.
\end{align*}
Here the equalities follow from \Cref{lem::analysis_ff_fg}. We define
\begin{align*}
	M_1^\Psi(\beta) &\coloneqq \ff^\Psi(\beta) + \frac{4}{\beta} + \max\bigg\{1,\frac{(1+\beta\Psi)}{\beta}\bigg\}\bigg(\frac{5}{\beta} +\frac{4}{\beta}(1+\beta\Psi)^{1/2} + \beta \fg^\Psi(\beta)\bigg),\\ M_2^\Psi (\beta)&\coloneqq \ff^\Psi(\beta) + \bigg( \frac{5}{\beta} + \frac{4}{\beta}(1+\beta\Psi)^{1/2} + \beta \fg^\Psi(\beta) \bigg), \\
	M_3^\Psi (\beta) &\coloneqq \frac{4}{\beta} + \max\bigg\{1,\frac{(1+\beta\Psi)}{\beta}\bigg\}\bigg(\frac{5}{\beta} +\frac{4}{\beta}(1+\beta\Psi)^{1/2} + \beta \fg^\Psi(\beta)\bigg),\; \beta \in (0,\infty).
\end{align*}

The constants $M^\Phi_1(\beta)$, $M^\Phi_2(\beta)$ and $M^\Phi_3(\beta)$ will appear when we construct the contraction mappings on the weighted solution spaces of the reflected BSDE. Being able to keep them strictly less than one will allow us to use a fixed-point argument to deduce well-posedness. We now turn to the integrability conditions we need to impose on the obstacle $\xi$ and the generator $f$ to make our method of proof work, in particular, to ensure that the contraction mappings will be well-defined. Let $\brs{\xi} = (\brs{\xi}_t)_{t \in [0,\infty]}$ be the process defined by
\begin{equation}\label{eq::left_limit_xi}
	\brs{\xi}_0 \coloneqq 0, \; \text{and} \; \brs{\xi}_t \coloneqq \lim_{t^{\smalltext{\prime}} \uparrow\uparrow t}\bigg\{\sup_{s \in [t^\smalltext{\prime},\infty]} |\xi^+_s \mathbf{1}_{\{s < T\}}|\bigg\}, \; t \in (0,\infty],
\end{equation}
which is $\cG^U_\infty \otimes \cB([0,\infty])$-measurable by \cite[Proposition 2.21]{karoui2013capacities}. Although this process depends on the stopping time $T$, we suppress this to ease the notation. The following definition contains the main integrability condition that we impose.
\begin{definition}\label{def::standard_data}
The collection $(X,\mu,\G,T,\xi,f,C)$ is \emph{standard data for $\hat\beta \in [0,\infty)$}, if the pair $(\xi,f)$ satisfies
\begin{equation*}
		\|\xi_T\|_{\L^\smalltext{2}_\smalltext{\hat\beta}} + \|\alpha \brs{\xi}\|_{\H^\smalltext{2}_{\smalltext{T}\smalltext{,}\smalltext{\hat\beta}}} + \bigg\|\frac{f(0,0,0,\mathbf{0})}{\alpha}\bigg\|_{\H^\smalltext{2}_{\smalltext{T}\smalltext{,}\smalltext{\hat\beta}}}  < \infty.
	\end{equation*}
\end{definition}

\begin{remark}
	Our integrability assumption on $\xi$ is slightly different than the one imposed in {\rm\cite{grigorova2017reflected,grigorova2020optimal}}, which reads
		\begin{equation*}
		\E\bigg[{\esssup_{\tau \in \cT_{\smalltext{0}\smalltext{,}\smalltext{T}}}}^{\cG_\smalltext{T}} |\xi_\tau|^2\bigg] < \infty,
	\end{equation*}
	in a bounded horizon, Brownian--Poisson framework. Our assumption is not stronger than the one above, if we set ourselves into their framework. On the contrary, we actually only need to consider the positive part of $\xi$ on $[0,T)$, which is more general than the integrability condition in {\rm\cite{grigorova2017reflected,grigorova2020optimal}}. We will have a more thorough comparison with the literature in {\rm\Cref{sec::comparison_literature}}.
\end{remark}

Let us mention here a sufficient condition to have $\|\alpha \brs{\xi}\|_{\H^\smalltext{2}_{\smalltext{T}\smalltext{,}\smalltext{\hat\beta}}}$ finite. We defer the proof to \Cref{app::proofs_preliminaries}.

\begin{lemma}\label{lem::integrability_xi}
	Let $(\hat\beta,\beta^\star) \in [0,\infty)^2$ with $\hat\beta < \beta^\star$. Then
	\begin{equation*}
		\|\alpha \brs{\xi}\|_{\H^\smalltext{2}_{\smalltext{T}\smalltext{,}\smalltext{\hat\beta}}}^2 
		\leq \frac{(1+\beta^\star \Phi)(1+\hat\beta \Phi)}{(\beta^\star-\hat\beta)}\|\xi^+_\cdot \mathbf{1}_{\{\cdot < T\}}\|^2_{\cS^\smalltext{2}_{\smalltext{T}\smalltext{,}\smalltext{\beta^\star}}}.
	\end{equation*}
	In particular, if $\|\xi^+_\cdot \mathbf{1}_{\{\cdot < T\}}\|_{\cS^\smalltext{2}_{\smalltext{T}\smalltext{,}\smalltext{\beta^\star}}}$ is finite, then so is $\|\alpha \brs{\xi}\|_{\H^\smalltext{2}_{\smalltext{T}\smalltext{,}\smalltext{\hat\beta}}}$.
\end{lemma}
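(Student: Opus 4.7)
The plan is to reduce the inequality to a pointwise bound on $|\brs{\xi}_s|^2$ and a purely pathwise estimate of $\int_0^T \cE(\hat\beta A)_s/\cE(\beta^\star A)_{s-}\,\d A_s$, and then take expectations. Unpacking the definitions of the weighted norms and using $\d A_s = \alpha_s^2\,\d C_s$ on $\llbracket 0,T\rrbracket$,
\[
\|\alpha\brs{\xi}\|^2_{\H^{\smalltext{2}}_{\smalltext{T}\smalltext{,}\smalltext{\hat\beta}}} = \E\bigg[\int_0^T \cE(\hat\beta A)_s\,|\brs{\xi}_s|^2\,\d A_s\bigg], \quad \|\xi^+_\cdot\mathbf{1}_{\{\cdot<T\}}\|^2_{\cS^{\smalltext{2}}_{\smalltext{T}\smalltext{,}\smalltext{\beta^\star}}} = \E[\Xi^2],
\]
where $\Xi \coloneqq \sup_{s \in [0,\infty]} \cE(\beta^\star A)^{1/2}_s \xi^+_s\mathbf{1}_{\{s<T\}}$ is $\cG$-measurable by \Cref{rem::measurability_of_sup}.

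First I would establish the pointwise bound $|\brs{\xi}_s|^2 \leq \Xi^2/\cE(\beta^\star A)_{s-}$ for $s \in (0,\infty]$. Since $\cE(\beta^\star A)$ is non-decreasing, the definition of $\Xi$ gives, for every $u \geq s'$, $\xi^+_u\mathbf{1}_{\{u<T\}} \leq \Xi\,\cE(\beta^\star A)^{-1/2}_u \leq \Xi\,\cE(\beta^\star A)^{-1/2}_{s'}$; taking the supremum over $u \in [s',\infty]$ and then letting $s' \uparrow\uparrow s$ yields the claim via \eqref{eq::left_limit_xi} and $\cE(\beta^\star A)_{s-} = \inf_{s'<s}\cE(\beta^\star A)_{s'}$.

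Next I would prove the deterministic pathwise estimate
\[
\int_0^T \frac{\cE(\hat\beta A)_s}{\cE(\beta^\star A)_{s-}}\,\d A_s \leq \frac{(1+\hat\beta\Phi)(1+\beta^\star\Phi)}{\beta^\star - \hat\beta}.
\]
From $\cE(\beta A)_s = (1+\beta\Delta A_s)\cE(\beta A)_{s-}$ for $\beta \in \{\hat\beta,\beta^\star\}$ and $\Delta A \leq \Phi$ from \ref{data::process_A},
\[
\frac{\cE(\hat\beta A)_s}{\cE(\beta^\star A)_{s-}} = (1+\hat\beta\Delta A_s)(1+\beta^\star\Delta A_s)\,\frac{\cE(\hat\beta A)_{s-}}{\cE(\beta^\star A)_s} \leq (1+\hat\beta\Phi)(1+\beta^\star\Phi)\,\frac{\cE(\hat\beta A)_{s-}}{\cE(\beta^\star A)_s}.
\]
For $W_s \coloneqq \cE(\hat\beta A)_s/\cE(\beta^\star A)_s$, Stieltjes integration by parts together with $\d\cE(\beta A)_s = \beta\cE(\beta A)_{s-}\,\d A_s$ yields $\d W_s = -(\beta^\star-\hat\beta)\,\big(\cE(\hat\beta A)_{s-}/\cE(\beta^\star A)_s\big)\,\d A_s$; integrating from $0$ to $T$ and using $W_0 = 1$ together with $W_T \geq 0$ gives $\int_0^T \cE(\hat\beta A)_{s-}/\cE(\beta^\star A)_s\,\d A_s = (1-W_T)/(\beta^\star-\hat\beta) \leq 1/(\beta^\star-\hat\beta)$.

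Combining these two bounds inside the expectation, the pathwise constant from the second step factors out and one is left with $\|\alpha\brs{\xi}\|^2_{\H^{\smalltext{2}}_{\smalltext{T}\smalltext{,}\smalltext{\hat\beta}}} \leq \big((1+\hat\beta\Phi)(1+\beta^\star\Phi)/(\beta^\star-\hat\beta)\big)\E[\Xi^2]$, which is precisely the claim; the ``in particular'' statement follows immediately. The only delicate point is the jump computation for $\d W_s$, which I would verify via the product rule $\Delta(U/V)_s = V_{s-}^{-1}\Delta U_s + U_{s-}\Delta V^{-1}_s + \Delta U_s\Delta V^{-1}_s$ with $U = \cE(\hat\beta A)$ and $V = \cE(\beta^\star A)$, checking that the algebra collapses to $-(\beta^\star-\hat\beta)W_{s-}\Delta A_s/(1+\beta^\star\Delta A_s)$; everything else is straightforward monotonicity and Fubini.
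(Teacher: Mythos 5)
Your proof is correct and follows essentially the same route as the paper: the same pointwise bound $|\brs{\xi}_s|^2\leq \Xi^2\,\cE(\beta^\star A)^{-1}_{s-}$, followed by the same pathwise estimate of $\int_0^T\cE(\hat\beta A)_{s}\,\cE(\beta^\star A)^{-1}_{s-}\,\d A_s$ obtained by extracting the jump factors $(1+\hat\beta\Delta A_s)(1+\beta^\star\Delta A_s)\leq(1+\hat\beta\Phi)(1+\beta^\star\Phi)$ and integrating the ratio of stochastic exponentials (the paper phrases this via \Cref{lem::stoch_exp_rules}.$(ii)$, keeping one of the two jump factors inside the integral, but the bookkeeping is equivalent and yields the same constant). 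Only a cosmetic slip: since $\cE(\beta^\star A)$ is non-decreasing, $\cE(\beta^\star A)_{s-}=\sup_{s'<s}\cE(\beta^\star A)_{s'}$ (it is the reciprocal that is an infimum), which is what your monotone-limit argument actually uses.
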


We now turn to our main well-posedness result for reflected BSDEs. It covers the case where the generator depends, additionally to $Z$ and $U(\cdot)$, on both $Y_s$ and $Y_{s-}$, just on $Y_s$ or just on $Y_{s-}$. The proof is deferred to \Cref{sec::existence_rbsde} as it is based on the optimal stopping and Snell envelope theory we revisit in \Cref{sec::optimal_stopping} and the \emph{a priori} estimates we establish in \Cref{sec::a_priori}.

\begin{theorem}\label{thm::main}
	Suppose that $(X,\G,T,\xi,f,C)$ is standard data for some $\hat\beta \in (0,\infty)$.	
	\begin{itemize}
	\item[$(i)$] If $M_1^\Phi(\hat\beta) < 1$, there exists a solution $(Y,Z,U,N,K^r,K^\ell)$ to the reflected {\rm BSDE} satisfying \ref{cond::integ_mart} up to \ref{cond::increasing_proc} such that $(Y,\alpha Y,\alpha Y_-,Z,U,N)$ is in $\cS^2_T \times \H^2_{T,\hat\beta} \times \H^2_{T,\hat\beta} \times \H^2_{T,\hat\beta}(X) \times \H^2_{T,\hat\beta}(\mu) \times \cH^{2,\perp}_{0,T,\hat\beta}(X,\mu)$. 
	%dependence on Y_s,Y_{s-},Z_s,U_s
	
	\item[$(ii)$] If $M_2^\Phi(\hat\beta) < 1$ and $f$ does not depend on the component $Y_{s-}$, then there exists a solution $(Y,Z,U,N,K^r,K^\ell)$ to the reflected {\rm BSDE} satisfying \ref{cond::integ_mart} up to \ref{cond::increasing_proc} such that $(\alpha Y,Z,U,N)$ is in $\H^2_{T,\hat\beta} \times \H^2_{T,\hat\beta}(X) \times \H^2_{T,\hat\beta}(\mu) \times \cH^{2,\perp}_{0,T,\hat\beta}(X,\mu)$. Moreover, $Y$ is in $\cS^2_T$.
	%dependence on Y_s,Z_s,U_s
	
	\item[$(iii)$] If $M_3^\Phi(\hat\beta) < 1$ and $f$ does not depend on the component $Y_s$, then there exists a solution $(Y,Z,U,N,K^r,K^\ell)$ to the reflected {\rm BSDE} satisfying \ref{cond::integ_mart} up to \ref{cond::increasing_proc} such that $(Y,\alpha Y_-,Z,U,N)$ is in $\cS^2_T \times \H^2_{T,\hat\beta} \times \H^2_{T,\hat\beta}(X) \times \H^2_{T,\hat\beta}(\mu) \times \cH^{2,\perp}_{0,T,\hat\beta}(X,\mu)$. 
	\end{itemize}
	In all three cases, the triple $(Y,K^r,K^\ell)$ is unique up to $\P$--indistinguishability and $(Z,U,N)$ is unique in $\H^2_{T,\hat\beta}(X) \times \H^2_{T,\hat\beta}(\mu) \times \cH^{2,\perp}_{0,T,\hat\beta}(X,\mu)$. Furthermore, \ref{cond::repre_Y} holds. 
	If additionally, $\xi^+\mathbf{1}_{[0,T)} \in \cS^2_{T,\beta}$ for some $\beta \in (0,\hat\beta)$, then $(K^r,K^\ell) \in \cI^2_{T,\beta} \times \cI^2_{T,\beta}$.
\end{theorem}

\begin{remark}\label{rem::reflection_stopping_norm}
	The fixed-point argument used in the proof of {\rm\Cref{thm::main}} relies on the usage of the $\cS^2_T$--norm in cases $(i)$ and $(iii)$. However, one can use the following alternative norm\footnote{For more details on this norm, see \cite[IV.21, pp. 82--83]{dellacherie1982probabilities}.}
	\begin{equation}\label{eq::def_norm_tau}
		\|Y\|^2_{\cT^\smalltext{2}_\smalltext{T}} \coloneqq \sup_{\tau \in \cT_{\smalltext{0}\smalltext{,}\smalltext{T}}}\E\big[|Y_\tau|^2\big]
	\end{equation}
	in the contraction argument.
	The statements of {\rm\Cref{thm::main}} and {\rm\Cref{cor::main_result_reflected_bsde}} remain unchanged, except that we would be able to replace the constants $M_1^\Phi(\hat\beta)$ and $M_3^\Phi (\hat\beta)$ by
	\begin{align*}
	M_1^\Phi(\hat\beta) = \ff^\Phi(\hat\beta) + \frac{1}{\hat\beta} + \max\bigg\{1,\frac{(1+\hat\beta\Phi)}{\hat\beta}\bigg\}\bigg(\frac{5}{\hat\beta} +\frac{4}{\hat\beta}(1+\hat\beta\Phi)^{1/2} + \hat\beta \fg^\Phi(\hat\beta)\bigg)
	\end{align*}
	and
	\begin{align*}
	M_3^\Phi (\hat\beta) = \frac{1}{\hat\beta} + \max\bigg\{1,\frac{(1+\hat\beta\Phi)}{\hat\beta}\bigg\}\bigg(\frac{5}{\hat\beta} +\frac{4}{\hat\beta}(1+\hat\beta\Phi)^{1/2} + \hat\beta \fg^\Phi(\hat\beta)\bigg),
\end{align*}
	respectively. For the {\rm BSDE} case, see {\rm\Cref{rem::bsde_stopping_norm}}. We refer to {\rm\Cref{rem::apriori_stopping_norm}} and {\rm\ref{rem::proof_main_stopping_norm}} for more details.
\end{remark}

\Cref{thm::main} and the analysis of the contraction constants in \Cref{lem::contraction_constants_reflected_bsde} yield the following immediate result.

\begin{corollary}\label{cor::main_result_reflected_bsde}
	Suppose that $\Phi < 1$. For each $i \in \{1,2,3\}$, there exists $\beta^\star_i \in (0,\infty)$ such that $M^\Phi_i(\hat\beta) <1$ for every $\hat\beta \in (\beta^\star_i,\infty)$. Moreover
	\begin{itemize}
		\item[$(i)$] if $(X,\G,T,\xi,f,C)$ is standard data for $\hat\beta \in (\beta^\star_1,\infty)$, then {\rm\Cref{thm::main}.$(i)$} holds$;$
		\item[$(ii)$] if $(X,\G,T,\xi,f,C)$ is standard data for $\hat\beta \in (\beta^\star_2,\infty)$, then {\rm\Cref{thm::main}.$(ii)$} holds$;$
		\item[$(iii)$] if $(X,\G,T,\xi,f,C)$ is standard data for $\hat\beta \in (\beta^\star_3,\infty)$, then {\rm\Cref{thm::main}.$(iii)$} holds.
	\end{itemize}
\end{corollary}

\subsection{Existence and uniqueness for BSDEs}\label{sec::main_results_bsde}

We now discuss the existence and uniqueness of the non-reflected BSDE with generator $f$ and terminal condition $\xi_T$. More precisely, we look for a unique quadruple $(Y,Z,U,N)$ within a class of processes satisfying
\begin{enumerate}[{\bf (B1)}, leftmargin=0.9cm]
	\item\label{BSDE1} $(Z,U,N) \in \H^2_T(X) \times \H^2_T(\mu) \times \cH^{2,\perp}_{0,T}(X,\mu)$;
	\item\label{BSDE2} $Y = (Y_t)_{t \in [0,\infty]}$ is optional with $\P$--a.s. l\`adl\`ag paths,\footnote{Note that $Y$ is then \emph{a fortiori} $\mathbb{P}$--a.s. c\`adl\`ag.}
		\begin{equation*}
			\E\bigg[\int_0^T \big|f_s\big(Y_s,Y_{s-},Z_s,U_s(\cdot)\big)\big|\d C_s\bigg] < \infty,
		\end{equation*}
		\begin{equation*}
			Y_t = \xi_T + \int_t^T f_s\big(Y_s,Y_{s-},Z_s,U_s(\cdot)\big) \d C_s - \int_t^T Z_s \d X_s - \int_t^T \int_E U_s(x)\tilde\mu(\d s, \d x) - \int_t^T \d N_s, \; t \in [0,\infty], \; \text{$\P$--a.s.}
		\end{equation*}
\end{enumerate}
To deduce existence and uniqueness within a class of processes of the BSDE above, one could just redefine the obstacle $\xi$ to be $-\infty$ on $[0,T)$, note that then $K^r = 0$ and $K^\ell_{-} = 0$, the representation \ref{cond::repre_Y} turns into
\begin{equation*}
	Y_S = \E\bigg[\zeta + \int_S^T f_s\big(Y_s,Y_{s-},Z_s,U_s(\cdot)\big)\d C_s\bigg| \cG_S\bigg], \; \text{$\P$--a.s.}, \; S \in \cT_{0,\infty},
\end{equation*}
and then refer to \Cref{thm::main} for the conditions that provide existence and uniqueness in case there exists $\hat\beta \in (0,\infty)$ with
\begin{equation}\label{eq::integrability_bsde}
	\|\xi_T\|_{\L^\smalltext{2}_\smalltext{\hat\beta}} + \bigg\|\frac{f(0,0,0,\mathbf{0})}{\alpha}\bigg\|_{\H^\smalltext{2}_{\smalltext{T}\smalltext{,}\smalltext{\hat\beta}}} < \infty.
\end{equation}

However, it is worthwhile to redo the \emph{a priori} estimates in \Cref{sec::a_priori} in this case as the contraction constants improve significantly. Let us also emphasise here that the techniques we employ to establish well-posedness for BSDEs do not depend on It\^o's formula, and extending them to BSDEs with a multi-dimensional generator and terminal condition is straightforward. The constants we want to control in the contraction argument to prove well-posedness of \ref{BSDE1}--\ref{BSDE2} become
\begin{align*}\label{eq::tilde_pi_psi_1} %depends on y,y_,z,u
	\widetilde M_1^\Psi(\beta) \coloneqq \ff^\Psi(\beta) + \frac{4}{\beta} + \max\bigg\{1,\frac{(1+\beta\Psi)}\beta\bigg\}\bigg(\frac{1}{\beta} + \beta \fg^\Psi(\beta)\bigg),\;	\widetilde M_2^\Psi(\beta) \coloneqq \ff^\Psi(\beta) + \bigg(\frac{1}{\beta} + \beta \fg^\Psi(\beta)\bigg),
\end{align*}
\begin{equation*}
	\widetilde M_3^\Psi(\beta) \coloneqq \frac{4}{\beta} + \max\bigg\{1,\frac{(1+\beta\Psi)}\beta\bigg\}\bigg(\frac{1}{\beta} + \beta \fg^\Psi(\beta)\bigg).
\end{equation*}

The following is our main well-posedness result for BSDEs. We defer its proof to the end of \Cref{sec::existence_rbsde}.

\begin{theorem}\label{thm::main_bsde}
	Suppose that $(X,\G,T,\xi,f,C)$ is standard data for some $\hat\beta \in (0,\infty)$ and $\xi = -\infty$ on $[0,T)$.
	
	\begin{itemize}
	\item[$(i)$] If $\widetilde M_1^\Phi(\hat\beta) < 1$, then there exists a solution $(Y,Z,U,N)$ to the {\rm BSDE} \ref{BSDE1}--\ref{BSDE2} such that $(Y,\alpha Y,\alpha Y_-,Z,U,N)$ is in $\cS^2_T \times \H^2_{T,\hat\beta} \times \H^2_{T,\hat\beta} \times \H^2_{T,\hat\beta}(X) \times \H^2_{T,\hat\beta}(\mu) \times \cH^{2,\perp}_{0,T,\hat\beta}(X,\mu)$. %dependence on Y_{s-} and Y_s

	\item[$(ii)$] If $\widetilde M_2^\Phi(\hat\beta) < 1$ and the generator $f$ does not depend on $Y_{s-}$, then there exists a solution $(Y,Z,U,N)$ to the {\rm BSDE} \ref{BSDE1}--\ref{BSDE2} such that $(\alpha Y,Z,U,N)$ is in $\H^2_{T,\hat\beta} \times \H^2_{T,\hat\beta}(X) \times \H^2_{T,\hat\beta}(\mu) \times \cH^{2,\perp}_{0,T,\hat\beta}(X,\mu)$. %dependence on Y_s

	\item[$(iii)$] If $\widetilde M_3^\Phi(\hat\beta) < 1$ and the generator $f$ does not depend on $Y_s$, then there exists a solution $(Y,Z,U,N)$ to the {\rm BSDE} \ref{BSDE1}--\ref{BSDE2} such that $(Y,\alpha Y_-,Z,U,N)$ is in $\cS^2_T \times \H^2_{T,\hat\beta} \times \H^2_{T,\hat\beta}(X) \times \H^2_{T,\hat\beta}(\mu) \times \cH^{2,\perp}_{0,T,\hat\beta}(X,\mu)$.
	\end{itemize}
	
	In all three cases, $Y$ is in $\cS^2_{T,\hat\beta}$ and unique up to $\P$--indistinguishability, and $(Z,U,N)$ is unique in $\H^2_{T,\hat\beta}(X) \times \H^2_{T,\hat\beta}(\mu) \times \cH^{2,\perp}_{0,T,\hat\beta}(X,\mu)$.
\end{theorem}

\begin{remark}\label{rem::bsde_stopping_norm}
	Similar to the discussion in \Cref{rem::reflection_stopping_norm}, we can also replace here in the contraction argument the norm $\|\cdot\|_{\cS^\smalltext{2}_\smalltext{T}}$ by the norm $\|\cdot\|_{\cT^\smalltext{2}_\smalltext{T}}$ introduced in \eqref{eq::def_norm_tau}. Incidentally, we would be able to replace the constants $\widetilde M_1^\Phi(\hat\beta)$ and $\widetilde M_3^\Phi(\hat\beta)$ in the statement of {\rm\Cref{thm::main_bsde}} and {\rm\Cref{cor::main_result_bsde}} by
	\begin{align*}
	\widetilde M_1^\Phi(\hat\beta) = \ff^\Phi(\hat\beta) + \frac{1}{\hat\beta} + \max\bigg\{1,\frac{1+\hat\beta\Phi}{\hat\beta}\bigg\}\bigg(\frac{1}{\hat\beta} + \hat\beta \fg^\Phi(\hat\beta)\bigg)
\end{align*}
and
\begin{equation*}
	\widetilde M_3^\Phi(\hat\beta) = \frac{1}{\hat\beta} + \max\bigg\{1,\frac{1+\hat\beta\Phi}{\hat\beta}\bigg\}\bigg(\frac{1}{\hat\beta} + \hat\beta \fg^\Phi(\hat\beta)\bigg),
\end{equation*}
respectively. We refer to {\rm\Cref{rem::apriori_stopping_norm}} and {\rm\ref{rem::proof_main_stopping_norm}} for more details.
\end{remark}

Combining \Cref{thm::main_bsde} with the analysis of the contraction constants in \Cref{lem::contraction_constants_bsde}, we find the following.

\begin{corollary}\label{cor::main_result_bsde}
Suppose that $\Phi < 1$ and that $\xi = -\infty$ on $[0,T)$. For each $i \in \{1,2,3\}$, there exists $\beta^\star_i \in (0,\infty)$ such that $\widetilde M^\Phi_i(\hat\beta) <1$ for every $\hat\beta \in (\beta^\star_i,\infty)$. Moreover
	\begin{itemize}
		\item[$(i)$] if $(X,\G,T,\xi,f,C)$ is standard data for $\hat\beta \in (\beta^\star_1,\infty)$, then {\rm\Cref{thm::main_bsde}.$(i)$} holds$;$
		\item[$(ii)$] if $(X,\G,T,\xi,f,C)$ is standard data for $\hat\beta \in (\beta^\star_2,\infty)$, then {\rm\Cref{thm::main_bsde}.$(ii)$} holds$;$
		\item[$(iii)$] if $(X,\G,T,\xi,f,C)$ is standard data for $\hat\beta \in (\beta^\star_3,\infty)$, then {\rm\Cref{thm::main_bsde}.$(iii)$} holds.
	\end{itemize}
\end{corollary}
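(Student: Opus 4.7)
The plan is to reduce the corollary to Theorem \ref{thm::main_bsde} by verifying that each contraction constant $\widetilde M_i^\Phi(\beta)$ drops below $1$ once $\beta$ is taken large enough. This is precisely what Lemma \ref{lem::contraction_constants_bsde} is designed to do, so the proof will amount to citing that lemma and then invoking the three cases of Theorem \ref{thm::main_bsde} in turn.

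First, I would apply Lemma \ref{lem::contraction_constants_bsde} under the hypothesis $\Phi < 1$ to obtain, for each $i \in \{1,2,3\}$, a threshold $\beta_i^\star \in (0,\infty)$ such that $\widetilde M_i^\Phi(\beta) < 1$ for every $\beta > \beta_i^\star$. The mechanism is transparent from the explicit formulas given before the theorem statement: $\ff^\Phi(\beta) = 4/\beta^2 + 4\Phi/\beta \to 0$ as $\beta \to \infty$, and a short Taylor expansion of $\sqrt{1+\beta\Phi}$ in the closed form of Lemma \ref{lem::analysis_ff_fg} yields $\fg^\Phi(\beta) \to \Phi$ when $\Phi > 0$ (and trivially $\fg^\Phi(\beta) = 5/\beta \to 0$ when $\Phi = 0$). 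Since $\Phi < 1$, we additionally have $\max\{1,(1+\beta\Phi)/\beta\} = \max\{1, 1/\beta + \Phi\} \to 1$ as $\beta \to \infty$. Assembling these three asymptotics gives
\begin{equation*}
\widetilde M_i^\Phi(\beta) \longrightarrow \Phi < 1, \quad i \in \{1,2,3\}, \quad \text{as } \beta \to \infty,
\end{equation*}
which produces the desired thresholds $\beta_i^\star$.

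With the thresholds in hand, the three items of the corollary are then immediate. Indeed, if $(X,\G,T,\xi,f,C)$ is standard data for some $\hat\beta > \beta_i^\star$, then by construction $\widetilde M_i^\Phi(\hat\beta) < 1$, so the hypothesis required by Theorem \ref{thm::main_bsde}(i), (ii), or (iii) is satisfied, and the corresponding existence and uniqueness conclusion is inherited verbatim. The extra condition $\xi = -\infty$ on $[0,T)$ needed to appeal to Theorem \ref{thm::main_bsde} is supplied directly in the statement of the corollary.

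I do not expect any real obstacle, as the corollary is essentially a repackaging of Theorem \ref{thm::main_bsde} in terms of the size of $\Phi$ rather than of the contraction constants themselves. The only mildly delicate point is the verification of the exact limit $\fg^\Phi(\beta) \to \Phi$, which follows by carefully tracking leading-order terms in the explicit expression from Lemma \ref{lem::analysis_ff_fg}; this is encapsulated once and for all in Lemma \ref{lem::contraction_constants_bsde} and does not need to be redone here.
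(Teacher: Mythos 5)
Your proposal is correct and follows exactly the route the paper intends: the corollary is obtained by combining \Cref{lem::contraction_constants_bsde} (which gives $\widetilde M^\Phi_i(\beta)\to\Phi\max\{1,\Phi\}=\Phi$ for $i\in\{1,3\}$ and $\widetilde M^\Phi_2(\beta)\to\Phi$, all strictly below $1$ under the hypothesis $\Phi<1$) with the three cases of \Cref{thm::main_bsde}. Your explicit verification of the asymptotics of $\ff^\Phi$, $\fg^\Phi$ and of $\max\{1,(1+\beta\Phi)/\beta\}$ matches the content of that lemma, so nothing is missing.
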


\begin{remark}\label{rem::main_bsde}
	$(i)$ Here, the condition we need to impose on $\Phi$ to get well-posedness for sufficiently integrable data is weaker than the claimed condition $\Phi < 1/(18\mathrm{e})$ in {\rm\cite[Corollary 3.6]{papapantoleon2018existence}} in case the generator depends on $\big(Y_s,Z_s,U_s(\cdot)\big)$. There is actually a slight issue that occurs when deriving the \emph{a priori} estimates in {\rm\cite{papapantoleon2018existence}}. By correcting the issue appearing in the derivation of the \emph{a priori} estimates, it turns out that the constant $\Phi$ needs to be even lower than $1/(18\mathrm{e})$ for the contraction argument to go through. We will come back to this in {\rm\Cref{rem::discussion_constants}}.
	
	\medskip
	$(ii)$ The second well-posedness result in {\rm\cite[Theorem 3.23]{papapantoleon2018existence}} relies on the generator depending on $\big(Y_{s-},Z_s,U_s(\cdot)\big)$. The proof is also based on a fixed-point argument, but the \emph{a priori} estimates are derived by an application of It\^o's formula and not by the more direct approach as in the first part of {\rm \cite{papapantoleon2018existence}}. Although the integrability condition {\bf(H4)} imposed on $f$ in {\rm\cite{papapantoleon2018existence}} under which well-posedness is established is not comparable to our integrability condition \eqref{eq::integrability_bsde}, their result relies on the more restrictive conditions {\bf(H5)} and {\bf(H6)} imposed on the integrator $C$ and the driving martingale $X$ in {\rm\cite{papapantoleon2018existence}}. 
	
	\medskip
	$(iii)$ It turns out that within the framework we are working in, the condition $\Phi < 1$ to establish well-posedness of  \emph{BSDEs} with jumps is not only sufficient, but also necessary in the following sense: if we ask ourselves whether a condition of the form $\Phi < a$ for some $a \in (1,\infty)$ would still allow us to have a general well-posedness result, then the is answer is no as can be seen by the counterexamples to existence and uniqueness established in {\rm\citeauthor*{confortola2014backward} \cite[Section 4.3]{confortola2014backward}}. See also the discussion in {\rm\citeauthor*{papapantoleon2018existence} \cite[Section 3.3.1]{papapantoleon2018existence}}. 
	
	\medskip
	$(iv)$ An extension to $d$-dimensional {\rm BSDEs}, for $d \in \N$, is straightforward as we will never use It\^o's formula to derive the \emph{a priori} estimates in the {\rm BSDE} case. In this case, the generator $f = (f^1,\ldots,f^d)$ is $\R^d$-valued and the system of {\rm BSDEs} will then take the form
		\begin{align*}
			Y^i_t = \xi^i_T + \int_t^T f^i_s\big(Y_s, Y_{s-}, Z_s, U_s(\cdot)\big) \d C_s - \int_t^T Z^i_s \d X_s - \int_t^T\int_E U^i_s(x)(\mu-\mu^p)(\d s,\d x) - \int_t^T N^i_s,\; i\in\{1,\dots,d\}, 
		\end{align*}
		where $Y = (Y^1,\ldots,Y^d)^\top$, $Z = (Z^1,\ldots,Z^d)^\top$, $U = (U^1,\ldots,U^d)^\top$ and $N = (N^1,\ldots,N^d)^\top$. To adapt our method of proof, we need to replace in \ref{data::generator}, in the definition of the weighted norms of {\rm\Cref{sec::weighted_spaces}}, and in the proof of {\rm\Cref{thm::main_bsde}} the absolute value $|\cdot|$ by the Euclidean norm $\|\cdot\|_{\R^\smalltext{d}}$. All computations that we will carry out in {\rm\Cref{sec::a_priori}} in the {\rm BSDE} case and in the proof of {\rm\Cref{thm::main_bsde}} will still go through.
\end{remark}

\subsection{Comparison with the literature and some consequences}\label{sec::comparison_literature}

In this part we compare our well-posedness results with other results in the literature. We are mostly interested in a comparison of the integrability conditions imposed on the data, and, in case the integrator $C$ jumps, whether some a condition similar to our condition $\alpha\Delta C \leq \Phi \in [0,1)$ is needed to ensure well-posedness. However, we restrict ourselves to works which are closest to ours, although there are far more well-posedness results out there. In particular, all well-posedness results we mention in this part, except one, consider $\L^2$-data and Lipschitz-continuous generators.

\subsubsection{When the obstacle is predictable}

The case of a predictable obstacle process $\xi$ together with a notion of `predictable reflected BSDE' was studied by \citeauthor*{bouhadou2018non} \cite{bouhadou2018non}. While we do allow, of course, for an obstacle $\xi$ that is merely predictable, the solution to the reflected BSDE in \cite{bouhadou2018non} consists of predictable processes. Specifically, the processes $Y$ in \cite{bouhadou2018non} is predictable. Their study closely relies on the theory of predictable strong supermartingales by \citeauthor*{meyer1976cours} \cite[page 388]{meyer1976cours} and the corresponding predictable Snell envelopes by \citeauthor*{karoui1978arret} \cite{karoui1978arret}. Although we do not cover the well-posedness result of \cite{bouhadou2018non} by simply taking predictable projections of our solution processes, it would be intriguing to explore whether this can be achieved by our techniques in this work in combination with the results in \cite{karoui1978arret} and \cite{meyer1976cours}.

\subsubsection{When Wiener meets Poisson}

We start with comparing our results in the reflected BSDE case to \citeauthor*{grigorova2017reflected} \cite{grigorova2017reflected} and \citeauthor*{grigorova2020optimal} \cite{grigorova2020optimal}, in which they show well-posedness of bounded horizon reflected BSDEs in a Brownian--Poisson framework whose obstacle, as in our case, is merely an optional process. Let us translate their setup into ours to see that we cover their well-posedness result. Let $\d C_s = \d s$, let $T$ be a deterministic and finite time horizon, let $X$ be a Brownian motion and let $\mu$ be a Poisson random measure, that is, the predictable compensator $\nu$ of $\mu$ disintegrates as $\nu(\d s, \d x) = F(\d x) \d s$ for some $\sigma$-finite measure $F : (E,\cE) \longrightarrow [0,\infty]$, see also \cite{jacod1979calcul,jacod2003limit}. Since $C$ is continuous, the dependence on $Y_s$ or $Y_{s-}$ in the generator does not matter, so we drop one of the arguments in the generator that involves $Y$. Suppose that $f$ has a universal and deterministic Lipschitz coefficient $\alpha \in (0,\infty)$. The integrability condition on the obstacle $\xi$ and the generator $f$ in \cite{grigorova2017reflected,grigorova2020optimal} that ensures well-posedness is
\begin{equation*}
	\E\bigg[{\esssup_{\tau \in \cT_{\smalltext{0}\smalltext{,}\smalltext{T}}}}^{\cG_\smalltext{T}}|\xi_\tau|^2\bigg] + \E \bigg[ \int_0^T |f_s(0,0,\mathbf{0})|^2 \d s \bigg] < \infty.
\end{equation*}
We now show that our integrability condition in \Cref{def::standard_data} is satisfied for any $\hat\beta \in (0,\infty)$, although it looks rather different at first sight. Note that we can choose $\Phi = 0$ since the integrator $C$, and thus $A$, never jumps. Moreover, $\cE(\hat\beta A) = \mathrm{e}^{\hat\beta A}$, and since $1 \leq \mathrm{e}^{\hat\beta A} \leq \mathrm{e}^{\hat\beta\alpha^2T}$, we immediately find
\begin{equation*}
	\E\bigg[\int_0^T \mathrm{e}^{\hat\beta A_s} \frac{|f_s(0,0,\mathbf{0})|^2}{\alpha^2_s}\d C_s\bigg] \leq \frac{\mathrm{e}^{\hat\beta \alpha^\smalltext{2}T}}{\alpha^2} \E \bigg[\int_0^T |f_s(0,0,\mathbf{0})|^2\mathrm{d}s\bigg] < \infty,
\end{equation*}
\begin{equation*}
	\E\bigg[ \int_0^T \mathrm{e}^{\hat\beta A_s} |\!\brs{\xi}_s|^2 \d A_s \bigg]
	\leq \alpha^2\mathrm{e}^{\hat\beta \alpha^\smalltext{2} T}\E\bigg[ \int_0^T \sup_{u \in [s,T]}|\xi_u|^2 \d s \bigg] 
	\leq \alpha^2T\mathrm{e}^{\hat\beta \alpha^\smalltext{2} T}\E\bigg[ \sup_{u \in [0,T]}|\xi_u|^2 \bigg]  
	\leq 4\alpha^2T\mathrm{e}^{\hat\beta \alpha^\smalltext{2} T}\E \bigg[ {\esssup_{\tau \in \cT_{\smalltext{0}\smalltext{,}\smalltext{T}}}}^{\cG_\smalltext{T}} |\xi_\tau|^2 \bigg] < \infty,
\end{equation*}
for any $\hat\beta \in (0,\infty)$. Here the last inequality follows by an application of \Cref{prop::equiv_integ_xi}. Furthermore, $\H^2_{T} = \H^2_{T,\hat\beta}$, $\H^2_{T}(X) = \H^2_{T,\hat\beta}(X)$, $\H^2_{T}(\mu) = \H^2_{T,\hat\beta}(\mu)$ and $\cH^2_{T}(X,\mu) = \cH^{2,\perp}_{0,T,\hat\beta}(X,\mu)$ since $\mathrm{e}^{\hat\beta A}$ is bounded. For $\hat\beta$ large enough, we have $M^\Phi_i(\hat\beta) < 1$ for $i \in \{1,2,3\}$, and thus our \Cref{thm::main} provides well-posedness of the reflected BSDE considered in \cite{grigorova2017reflected,grigorova2020optimal}. However, we want to mention that the class of processes in which uniqueness holds in \cite{grigorova2020optimal} is not completely clear, at least to us. We will discuss this further in \Cref{rem::missing_pieces}.

\subsubsection{When the random measure is a marked point process}

We would like to draw attention to the recent work of \citeauthor*{foresta2021optimal} \cite{foresta2021optimal} on reflected BSDEs driven by Brownian motion $X$ and a marked point process $\mu$, that is, $\mu$ is an integer-valued measure such that
\begin{equation*}
	\mu(\omega; \d t, \d x) = \sum_{n \in \N} \mathbf{1}_{\{T_n < \infty\}}(\omega)\boldsymbol{\delta}_{(T_\smalltext{n}(\omega),\varrho_{\smalltext{T}_\tinytext{n}\smalltext{(}\smalltext{\omega}\smalltext{)}}(\omega))}(\d t, \d x),
\end{equation*}
for a sequence of stopping times $(T_n)_{n \in \N}$ that satisfies $T_n \leq T_{n+1}$, $\P$--a.s., and $T_n < T_{n+1}$, $\P$--a.s. on $\{T_n < \infty\}$. The well-posedness result of the reflected BSDE considered in \cite[Theorem 4.1]{foresta2021optimal} can be covered by our \Cref{thm::main} in case of sufficiently integrable data. We start by translating their setup into our notation. Let $C_t = t + C^\prime_t$, where $C^\prime$ is some continuous process for which we can write the predictable compensator $\nu$ of $\mu$ as $\nu(\d s,\d x) = K^\prime_s(\omega;\d x) \d C^\prime_s$. Since $A$ is continuous, we can choose $\Phi = 0$, which then implies that $M^\Phi(\beta) \longrightarrow 0$ for $\beta \longrightarrow \infty$. Moreover, the stochastic exponential weight $\cE(\hat\beta A)$ reduces to $\mathrm{e}^{\hat\beta A}$. Suppose that the generator is of the form
\begin{equation*}
	f_s\big(\omega,y,\mathrm{y},z,u_s(\omega;\cdot)\big) =f^1_s(\omega,y,z) + f^2_s\big(\omega,y,u_s(\omega;\cdot)\big)\bigg(1- \frac{\d s}{\d C_s}\bigg),
\end{equation*}
 with deterministic and time-independent Lipschitz coefficients, so $\alpha \in (0,\infty)$.
The integrability condition in \cite{foresta2021optimal} reads
\begin{align}\label{eq::integ_foresta}
	\E\bigg[ \mathrm{e}^{\beta C^\prime_T} |\xi_T|^2\bigg] + \E\bigg[\sup_{s \in [0,T)} \mathrm{e}^{(\beta +\delta)C^\prime_s} |\xi_s|^2\bigg] + \E\bigg[\int_0^T \mathrm{e}^{\beta C^\prime_s}|f^1_s(0,0)|^2 \d s\bigg] + \E\bigg[\int_0^T \mathrm{e}^{\beta C^\prime_s}|f^2_s(0,\mathbf{0})|^2 \d C^\prime_s\bigg] <\infty.
\end{align}
It is then straightforward to check that for $\hat\beta = \beta/\alpha^2$, which then also satisfies $\hat\beta A = \hat\beta \alpha^2 C = \beta C$, we have $\|\xi_T\|_{\L^\smalltext{2}_\smalltext{\hat\beta}} + \big\|\frac{f(0,0,0,\mathbf{0})}{\alpha}\big\|_{\H^\smalltext{2}_{\smalltext{T}\smalltext{,}\smalltext{\hat\beta}}}  < \infty.$ That $\|\alpha \brs{\xi}\|_{\H^\smalltext{2}_{\smalltext{T}\smalltext{,}\smalltext{\hat\beta}}}$ is finite follows from \Cref{lem::integrability_xi}. We thus conclude that if \eqref{eq::integ_foresta} holds for $\beta$, our \Cref{thm::main} provides well-posedness of the reflected BSDE considered in \cite{foresta2021optimal}.

\subsubsection{When the horizon is finite but random}

Inspired by applications to random horizon principal--agent problems, \citeauthor*{lin2020second} \cite{lin2020second} proved well-posedness of random horizon BSDEs, 2BSDEs and reflected BSDEs. Although we cannot cover their results in general as they work with $\L^p$-data for $p > 1$ and the novel norms they use do not fit with our setup, we can compare to some extent their well-posedness result in the reflected BSDE case for $p = 2$. In our notation, the setup studied in \cite{lin2020second} is the following: there is no integer-valued random measure $\mu$, the process $X$ is a Brownian motion, the obstacle process $\xi$ is optional and c\`adl\`ag, the stopping time $T$ is finite, the integrator $C$ satisfies $\d C_s = \d s$, the Lipschitz-continuous generator $f$ has deterministic and time-independent Lipschitz coefficients, thus $\alpha \in (0,\infty)$, and moreover, $f$ is monotone in the $y$-variable, see \cite[Assumption 3.1.$(ii)$]{lin2020second}. As before, we drop one of the arguments in the generator that depends on $Y$. Note that we can choose $\Phi = 0$, so that $M^\Phi_2(\beta) \longrightarrow 0$ as $\beta \longrightarrow\infty$. This ensures that we can provide well-posedness for data that is sufficiently integrable. We now turn to the integrability condition in \cite{lin2020second} that provides well-posedness in an $\L^2$-setting. Let $\cQ_\alpha(\P)$ be the collection of probability measures $\Q^\lambda$ on $(\Omega,\cG)$ that satisfies
\begin{equation*}
	\frac{\d \Q^\lambda|_{\cG_\smalltext{t}}}{\d \P|_{\cG_\smalltext{t}}} = \cE\bigg(\int_0^\cdot \lambda_s \d X_s\bigg)_t, \; t \in [0,\infty),
\end{equation*}
for some predictable process $\lambda = (\lambda_s)_{s \in [0,\infty)}$ with $|\lambda_s| \leq \alpha$. Suppose now that there exists $\varepsilon \in (0,\infty)$ and $\hat\beta \in (0,\infty)$ sufficiently large such that 
\begin{equation*}
	\sup_{\Q \in \cQ_\smalltext{\alpha}(\P)}\E^\Q\Big[|\mathrm{e}^{\frac{\hat\beta}{2}\alpha^\smalltext{2} T}\xi_T|^{2+\varepsilon}\Big] + \sup_{\Q \in \cQ_\smalltext{\alpha}(\P)}\E^\Q\bigg[\sup_{s \in [0,\infty)}|\mathrm{e}^{\frac{\hat\beta}{2}\alpha^\smalltext{2} T}\xi^+_{s \land T}|^{2+\varepsilon}\bigg] + \sup_{\Q \in \cQ_\smalltext{\alpha}(\P)}\E^\Q\bigg[\bigg(\int_0^T\mathrm{e}^{\hat\beta\alpha^\smalltext{2} s}\frac{|f_s(0,0)|^2}{\alpha^2}\d s\bigg)^{\frac{2+\varepsilon}{2}}\bigg] < \infty.
\end{equation*}
An argument similar to the one in the previous paragraph shows that $\|\alpha\brs{\xi}\|_{\H^\smalltext{2}_{\smalltext{T}\smalltext{,}\smalltext{\hat\beta}}} < \infty$ as $\P \in \cQ_\alpha(\P)$. Suppose now that $\hat\beta$ is large enough, so that the conditions in \cite[Theorem 3.9]{lin2020second} and in our \Cref{thm::main_bsde}.$(ii)$ are satisfied. Fix $\beta < \hat\beta$ such that the difference $\hat\beta-\beta$ is small. By \Cref{thm::main_bsde}.$(ii)$, there exists a unique solution $(Y,Z,N,K^r,K^\ell)$ to \ref{cond::integ_mart}--\ref{cond::repre_Y} such that $(\alpha Y,Z,N) \in \H^2_{T,\beta} \times \H^2_{T,\beta}(X) \times \cH^{2,\perp}_{T,\beta}(X)$. Recall here that due to the obstacle $\xi$ having c\`adl\`ag paths, the process $K^\ell$ vanishes, see \Cref{rem::after_formulation}.$(vii)$, we thus write $K \coloneqq K^r$. Let us now compare our solution to the one constructed in \cite{lin2020second}. The solution $(Y^\prime,Z^\prime,N^\prime,K^\prime)$ constructed using Theorem 3.4 in \cite{lin2020second} satisfies \ref{cond::integ_mart}--\ref{cond::increasing_proc}. Moreover, we also have $(Y^\prime,Z^\prime,N^\prime) \in \cS^2_T \times \H^2_{T,\beta}(X) \times \cH^{2,\perp}_{T,\beta}(X)$, and also $Y^\prime \in \cS^2_{T,\beta^\smalltext{\dag}}$ for each $\beta^\dag \in (\beta,\hat\beta)$. The latter property implies that $\alpha Y^\prime \in \H^2_{T,\beta}$ since
\begin{align*}
	\|\alpha Y^\prime\|^2_{T,\beta} = \E\bigg[\int_0^T \mathrm{e}^{\beta A_s} |Y^\prime_s|^2 \d A_s\bigg] &= \E\bigg[\sup_{s \in [0,T]}|\mathrm{e}^{\frac{\beta^\smalltext{\dag}}{2} A_s}Y^\prime_s|^2 \int_0^T \mathrm{e}^{(\beta-\beta^\dag)A_s} \d A_s\bigg] \\
	&\leq \E\bigg[\sup_{s \in [0,T]}|\mathrm{e}^{\frac{\beta^\smalltext{\dag}}{2} A_s}Y^\prime_s|^2 \int_0^\infty \mathrm{e}^{(\beta-\beta^\dag)s} \d s\bigg] = \frac{1}{(\beta-\beta^\dag)} \|Y^\prime\|^2_{\cS^\smalltext{2}_{\smalltext{T}\smalltext{,}\smalltext{\beta^\dag}}}.
\end{align*}
With our uniqueness statement in \Cref{thm::main}.$(ii)$, we conclude that $(Y,K) = (Y^\prime,K^\prime)$, up to $\P$--indistinguishability, and that $(Z,N) = (Z^\prime,N^\prime)$ in $\H^2_{T,\beta}(X) \times \cH^{2,\perp}_{T,\beta}(X)$. Since each $\Q \in \cQ_\alpha(\P)$ is locally absolutely continuous with respect to $\P$, it is straightforward to check that our solution $(Y,Z,N,K)$ also coincides with $(Y^\prime,Z^\prime,N^\prime,K^\prime)$ with respect to the norm used in \cite{lin2020second}. We thus conclude that our solution is in their solution space.

\subsubsection{When the generator has stochastic Lipschitz coefficients}

\citeauthor*{perninge2021note} \cite{perninge2021note} also studies reflected BSDEs in a Brownian setting on an infinite horizon, and with stochastic Lipschitz coefficients, where the $Y$-component will converge to zero at infinity. The stochasticity in the Lipschitz coefficient actually only appears in $Z$-component of the generator. Here the integrability conditions imposed on the data are not comparable to ours. However, the range of applications seem to be more restrictive than in our setup for the following reason: in our notation, the process $X$ is a Brownian motion, and the stochastic Lipschitz coefficient $\theta^X$ in \cite{perninge2021note} is supposed to be an adapted and continuous process and should satisfy $\E[\cE(\int_0^\cdot \zeta_s\d X_s)_t] = 1$ for each $t \in [0,\infty)$ and optional process $\zeta$ satisfying $|\zeta|^2 \leq \theta^X$. In particular, the process $\sqrt{\theta^X}$ itself should satisfy this condition \emph{a fortiori}. Thus the well-posedness result in \cite{perninge2021note} is not for arbitrary Lipschitz generators with stochastic Lipschitz coefficients.

\subsubsection{When the non-reflected BSDE is driven by arbitrary martingales}

Let us close this section by a comparison of our BSDE results to the works of \citeauthor*{bandini2015existence} \cite{bandini2015existence}, \citeauthor*{cohen2012existence} \cite{cohen2012existence} and \citeauthor*{papapantoleon2018existence} \cite{papapantoleon2018existence}, as we feel that these works are closest to the BSDE formulation we chose here. In \cite{bandini2015existence}, the integrability condition imposed on the data is not comparable to ours. We can thus not cover the well-posedness result in \cite{bandini2015existence} in general. What is surprising nonetheless is that by translating \cite{bandini2015existence} into our notation, we see that the condition $\sqrt{r_s}\Delta C_s \leq \Phi \in [0,1/\sqrt{2})$, which only involves the Lipschitz coefficient of $f$ with respect to the $Y$-component, is sufficient for the contraction argument to go through in case of sufficiently integrable data, see \cite[Theorem 4.1]{bandini2015existence}. However, the setup in \cite{bandini2015existence} is simpler than the one we study here, as the only driving force in the BSDE is a random measure $\mu$ with finite activity, that is, $\{t \in [0,t^\prime] : \mu(\omega; \{t\} \times E) = 1\}$ is finite for each $(\omega,t^\prime) \in \Omega \times [0,\infty)$. 

\medskip
The BSDE considered in \cite{cohen2012existence} is rather different from ours. We initially fix a driving martingale $X$ and an integrator $C$ so that $\d\langle X \rangle_s$ is absolutely continuous with respect to $\d C_s$. Contrary to our case, in \cite{cohen2012existence} the integrator $C$ is fixed in the beginning, and the driving martingales of the BSDE are a sequence of orthogonal martingales that are constructed from a general martingale representation theorem relying on the assumption that the underlying probability space is separable. In \cite{cohen2012existence}, the integrator $C$ may have no relation at all to the the predictable quadratic variations of the driving martingales. We are thus not able to link our well-posedness result to theirs. However, they suppose that $C$ is deterministic and strictly increasing, which immediately excludes piecewise constant integrators. The condition ensuring well-posedness in case of sufficiently integrable data is similar to \cite{bandini2015existence}, namely, in our notation, $\sqrt{r_s}\Delta C_s \leq \Phi \in[0,1)$ is enough for the arguments to go through, see \cite[Lemma 5.5 and Theorem 6.1]{cohen2012existence}. This again only involves the Lipschitz coefficient with respect to the $Y$-component of the generator. 

\medskip
Lastly, the formulation of BSDEs we chose in this work already appeared in \cite{papapantoleon2018existence}, with the slight difference that we allow our driving martingale $X$ to have components in $\cH^2_\text{\rm loc}$ and not in $\cH^2$, thus enabling us to apply our results in case $X$ is a Brownian motion, and the random measure $\mu$ in this work is a general integer-valued random measure in the sense of \cite{jacod1979calcul,jacod2003limit}, which is not necessarily induced by the jump measure of a process in $\cH^2$ as in \cite{papapantoleon2018existence}. Moreover, we allow the generator to depend on both $Y_s$ and its left-limit $Y_{s-}$. Our integrability condition on the data is also weaker, as we use stochastic exponential weights $\cE(\beta A)$, while the well-posedness result in \cite{papapantoleon2018existence} relies on exponential weights $\mathrm{e}^{\beta A}$. Interestingly, this change of the weights allows us to build a contraction map under the condition $\alpha^2_s \Delta C_s \leq \Phi \in [0,1) $, while in \cite{papapantoleon2018existence} the more restrictive condition $\alpha^2_s \Delta C_s \leq \Phi < 1/(18\mathrm{e})$ is needed. We thus cover \cite[Theorem 3.5]{papapantoleon2018existence}. As already mentioned in \Cref{rem::main_bsde}.$(i)$, there is a small issue in the proof of the \emph{a priori} estimates in \cite{papapantoleon2018existence} to which we will come back in \Cref{rem::discussion_constants}.

\section{Optimal stopping and Mertens' decomposition}\label{sec::optimal_stopping}

In this section, we solve the reflected BSDE in case the generator does not depend on $(y,\mathrm{y},z,u)$, which we assume throughout. We imposed the following integrability condition on $f$ and $\xi$\footnote{Recall that $\xi^+ = \max\{\xi,0\}$.}
\begin{equation}\label{eq::integrability_optimal_stopping}
	\E[|\xi_T|^2] + \E\bigg[\sup_{u \in [0,T)}|\xi^+_u|^2\bigg] + \E\Bigg[\bigg(\int_0^T|f_u|\d C_u \bigg)^2\Bigg] < \infty.
\end{equation}

 From \ref{cond::repre_Y}, it is clear that the first component of the solution is related to the optimal stopping problem
\begin{equation}\label{eq::optimal_stopping}
	V(S) \coloneqq {\esssup_{\tau \in \cT_{\smalltext{S}\smalltext{,}\smalltext{\infty}}}}^{\cG_\smalltext{S}} \E \bigg[ \xi_{\tau \land T} + \int_0^{\tau \land T} f_u \d C_u \bigg| \cG_S \bigg], \; \text{$\P$--a.s.}, \; S \in \cT_{0,\infty}.
\end{equation}
Note that the conditional expectations are well-defined in $[-\infty,\infty)$. The fact that we can actually find a solution to \ref{cond::integ_mart}--\ref{cond::repre_Y} is \emph{a priori} not clear, since we cannot directly employ the classical results of optimal stopping and the Snell envelope theory, as the gains process is not necessarily non-negative (see \cite{el1981aspects,maingueneau1978temps}) or in $\cS^2_T$ (see \cite{grigorova2017reflected,grigorova2020optimal}). We thus need to go through a series of technical lemmata to modify our optimal stopping problem first. Since we do not need their proofs for the analysis that follows, we defer them to \Cref{sec::optimal_stopping_proofs_technical}.

\medskip
For the following lemma, we fix a martingale $M= (M_t)_{t \in [0,\infty]}$ satisfying
	\begin{equation*}
		M_S = \E\bigg[\xi_T + \int_0^T f_s \d C_s\bigg|\cG_S\bigg], \; \text{$\P$--a.s.}, \; S \in \cT_{0,\infty}.
	\end{equation*}
	Note that $V(S) \geq M_S$, $\P$--a.s., $S \in \cT_{0,\infty}$. Let $J = (J_t)_{t \in [0,\infty]}$ and $L = (L_t)_{t \in [0,\infty]}$ be the optional processes defined by
	\begin{equation*}
		J_t \coloneqq \xi_{t \land T} + \int_0^{t \land T} f_s \d C_s, \; \text{and} \; L_t = J_t \lor \big(M_t - \mathbf{1}_{\{t < T\}}\big),\ t\geq 0.
	\end{equation*}

We now rewrite the optimal stopping problem \eqref{eq::optimal_stopping} using the auxiliary process $L$. This idea stems from the proof of \cite[Proposition 6.3.2]{zhang2017backward}.

\begin{lemma}\label{lem::snell_reformulation}
	The process $L$ is in $\cS^2_T$, satisfies $L_\cdot = L_{\cdot \land T}$, up to $\P$--indistinguishability, and
	\begin{equation*}
		V(S) = {\esssup_{\tau \in \cT_{\smalltext{S}\smalltext{,}\smalltext{\infty}}}}^{\cG_S} \E[ L_\tau | \cG_S ], \; \text{$\P$\rm--a.s.}, \; S \in \cT_{0,\infty}.
	\end{equation*}
\end{lemma}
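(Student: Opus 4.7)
The plan is to verify the three assertions in order: first that $L$ is stopped at $T$, then that $L \in \cS^2_T$, and finally the Snell-envelope reformulation, which is the main point.

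For $L = L^T$, I would use that $M$ is a uniformly integrable martingale whose terminal value $M_\infty = \xi_T + \int_0^T f_s \d C_s$ is $\cG_T$-measurable. By the optional sampling theorem, $M_\tau = M_\infty = J_T$ for every stopping time $\tau \geq T$, and $J_t = J_T$ for $t \geq T$ by construction. On $\{t \geq T\}$ the indicator $\mathbf{1}_{\{t < T\}}$ vanishes, so $L_t = J_T \lor M_T = J_T = L_T$, because $M_T = M_\infty = J_T$. For $L \in \cS^2_T$, I would bound $|L_s| \leq J_s^+ + |M_s| + 2$ using $L_s \geq M_s - 1$ to control $L_s^-$, split $J_s^+ \leq \xi_{s \land T}^+ + \int_0^T |f_u| \d C_u$, and observe that $\sup_{s \in [0,T]} \xi_{s \land T}^+ \leq |\xi_T| + \sup_{s \in [0,T)}\xi_s^+$. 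The integrability hypothesis \eqref{eq::integrability_optimal_stopping} together with Doob's maximal inequality applied to the $\L^2$-bounded martingale $M$ then yields $\E[\sup_{s \in [0,T]} L_s^2] < \infty$.

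For the key identity, denote by $V'(S)$ the right-hand side of the claimed equality. The inequality $V'(S) \geq V(S)$ is immediate since $L_t \geq J_t = \xi_{t \land T} + \int_0^{t \land T} f_u \d C_u$ for every $t$. The converse is the crux. Given $\tau \in \cT_{S,\infty}$, introduce the set $B := \{\tau < T\} \cap \{J_\tau < M_\tau - 1\} \in \cG_\tau$ and the modified stopping time $\tilde\tau := \tau \mathbf{1}_{B^c} + T \mathbf{1}_B$, which lies in $\cT_{S,\infty}$ because $S \leq \tau \leq \tilde\tau$. On $B^c$ the definition of $L$ together with the part already proved shows that $L_\tau = J_{\tau \land T}$, while on $B$ we have $L_{\tilde\tau} = L_T = J_T = J_{\tilde\tau \land T}$, so $L_{\tilde\tau} = J_{\tilde\tau \land T}$ identically, hence $\E[L_{\tilde\tau} \mid \cG_S] \leq V(S)$.

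It then remains to compare $\E[L_\tau \mid \cG_S]$ with $\E[L_{\tilde\tau} \mid \cG_S]$. Since $L_\tau \mathbf{1}_B = (M_\tau - 1)\mathbf{1}_B$ and $\mathbf{1}_B$ is $\cG_\tau$-measurable, the optional sampling theorem combined with the tower property gives $\E[M_\tau \mathbf{1}_B \mid \cG_S] = \E[M_\infty \mathbf{1}_B \mid \cG_S] = \E[J_T \mathbf{1}_B \mid \cG_S]$, whence
\begin{equation*}
    \E[L_\tau \mid \cG_S] - \E[L_{\tilde\tau} \mid \cG_S] = \E\big[(M_\tau - 1 - J_T)\mathbf{1}_B \mid \cG_S\big] = -\E[\mathbf{1}_B \mid \cG_S] \leq 0.
\end{equation*}
Consequently $\E[L_\tau \mid \cG_S] \leq V(S)$, and taking the essential supremum over $\tau$ gives $V'(S) \leq V(S)$. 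The main obstacle is to see that one cannot simply bound $L$ by $J$, and that the right fix is to replace $\tau$ by $T$ precisely on the set where the auxiliary lift $M - \mathbf{1}_{\{\cdot < T\}}$ is active; the martingale identity for $M$ is what makes this replacement cost-free (in fact beneficial) when conditioning on $\cG_S$.
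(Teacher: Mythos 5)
Your proof is correct and follows essentially the same strategy as the paper's: on the set where the auxiliary lift $M_\cdot - \mathbf{1}_{\{\cdot < T\}}$ is active (your $B$ coincides with the paper's $\{J_\tau < L_\tau\}$), you replace $\tau$ by a later stopping time landing at the terminal value and use the martingale property of $M$ to see the replacement costs nothing; the paper sends $\tau$ to $\infty$ there while you send it to $T$, which yields the same quantity $J_T$. The preliminary verifications ($L = L^T$, $L \in \cS^2_T$) also match the paper's argument.
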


The previous lemma now allows us to deduce the following.

\begin{lemma}\label{lem::supermartingale_family}
	The family $(V(S))_{S \in \cT_{\smalltext{0}\smalltext{,}\smalltext{\infty}}}$ satisfies the following properties
	\begin{enumerate}
		\item[$(i)$] $V(S) \in \L^2(\cG_S)$ and $V(S) \geq \E[V(U)|\cG_S]$, $\P${\rm--a.s.}, for all $(S,U) \in \cT_{0,\infty} \times \cT_{0,\infty}$ with $\P[S \leq U] = 1$.
		\item[$(ii)$] $V(S) = V(U)$, $\P${\rm--a.s.} on $\{U = S\}$, for all $(S,U) \in \cT_{0,\infty} \times \cT_{0,\infty}$.
	\end{enumerate}
\end{lemma}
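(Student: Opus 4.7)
The plan is to exploit the reformulation $V(S) = \esssup^{\cG_S}_{\tau \in \cT_{S,\infty}} \E[L_\tau|\cG_S]$ provided by \Cref{lem::snell_reformulation}, which replaces the gains process $J$ by $L \in \cS^2_T$ and thereby supplies the uniform $\L^2$-dominant $L^\ast \coloneqq \sup_{t \in [0,\infty]} |L_t|$. The square-integrability of $V(S)$ in \emph{(i)} then follows at once from $|\E[L_\tau|\cG_S]| \leq \E[L^\ast|\cG_S]$ for every $\tau \in \cT_{S,\infty}$ together with the $\L^2$-contraction $\|\E[L^\ast|\cG_S]\|_{\L^\smalltext{2}} \leq \|L^\ast\|_{\L^\smalltext{2}}$, so $|V(S)| \leq \E[L^\ast|\cG_S] \in \L^2(\cG_S)$.

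For the supermartingale inequality in \emph{(i)}, fix $(S,U)$ with $\P[S \leq U] = 1$. Since $\cT_{U,\infty} \subseteq \cT_{S,\infty}$, for every $\tau \in \cT_{U,\infty}$ we have $V(S) \geq \E[L_\tau|\cG_S] = \E[\E[L_\tau|\cG_U]|\cG_S]$. The family $\{\E[L_\tau|\cG_U] : \tau \in \cT_{U,\infty}\}$ is upward directed: given $\tau_1, \tau_2 \in \cT_{U,\infty}$, the stopping time $\tau_3 \coloneqq \tau_1 \mathbf{1}_B + \tau_2 \mathbf{1}_{B^c}$ with $B \coloneqq \{\E[L_{\tau_1}|\cG_U] \geq \E[L_{\tau_2}|\cG_U]\} \in \cG_U$ again lies in $\cT_{U,\infty}$ and satisfies $\E[L_{\tau_3}|\cG_U] = \max\{\E[L_{\tau_1}|\cG_U], \E[L_{\tau_2}|\cG_U]\}$, $\P$--a.s. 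A standard argument then produces $(\tau_n)_{n \in \N} \subseteq \cT_{U,\infty}$ with $\E[L_{\tau_n}|\cG_U] \uparrow V(U)$, $\P$--a.s., and the conditional monotone convergence theorem, applied after shifting by $\E[L^\ast|\cG_U] \in \L^1$ to ensure positivity, yields $\E[V(U)|\cG_S] = \lim_n \E[\E[L_{\tau_n}|\cG_U]|\cG_S] \leq V(S)$, as claimed.

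For \emph{(ii)}, set $A \coloneqq \{S = U\}$, which belongs to both $\cG_S$ and $\cG_U$. The key observation is that for any $\tau \in \cT_{S,\infty}$, the modification $\tau^A \coloneqq \tau\mathbf{1}_A + \infty\mathbf{1}_{A^c}$ belongs to $\cT_{U,\infty}$: the inclusion $A \in \cG_S \subseteq \cG_\tau$ makes $\tau^A$ a stopping time, while $\tau \geq S = U$ on $A$ and $\tau^A = \infty$ on $A^c$ give $\tau^A \geq U$ everywhere; the reverse modification works symmetrically. Combining $\mathbf{1}_A L_{\tau^A} = \mathbf{1}_A L_\tau$ with the classical identity $\mathbf{1}_A \E[X|\cG_S] = \mathbf{1}_A \E[X|\cG_U]$ valid for integrable $X$ on $A = \{S = U\}$ shows that the families $\{\mathbf{1}_A \E[L_\tau|\cG_S] : \tau \in \cT_{S,\infty}\}$ and $\{\mathbf{1}_A \E[L_{\tau^A}|\cG_U] : \tau \in \cT_{S,\infty}\}$ coincide, so taking the essential supremum yields $V(S)\mathbf{1}_A = V(U)\mathbf{1}_A$. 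The main obstacle throughout is the measure-theoretic bookkeeping around the essential supremum and the exchange with conditional expectation, but once $L \in \cS^2_T$ has removed all integrability concerns, the proof follows the standard Snell-envelope template.
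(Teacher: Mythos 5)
Your proof is correct. The paper itself only proves the $\L^2$-integrability of $V(S)$ --- via the direct two-sided bound $|V(S)| \leq \E[\,|\xi_T| + \sup_{u \in [0,T)}\xi^+_u + \int_0^T|f_u|\,\d C_u \,|\,\cG_S]$ obtained from the original formulation in terms of $\xi$ and $f$ --- and outsources the supermartingale inequality and the consistency property $(ii)$ to Propositions 1.3 and 1.5 of Kobylanski--Quenez together with a footnote of Grigorova et al. What you do differently is twofold: you route the integrability through \Cref{lem::snell_reformulation}, dominating $V(S)$ by $\E[L^\ast|\cG_S]$ with $L^\ast = \sup_{t}|L_t| \in \L^2$ (legitimate, since that lemma precedes this one, so there is no circularity), and you write out in full the pasting/upward-directedness argument, the conditional monotone convergence step (correctly shifted by $\E[L^\ast|\cG_U]$ to restore positivity), and the localisation on $A = \{S = U\}$ via the modified stopping times $\tau^A$, all of which the paper leaves to the references. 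The only points worth flagging as implicit are standard: $\{S=U\} \in \cG_S \cap \cG_U$, the identity $\mathbf{1}_A\E[X|\cG_S] = \mathbf{1}_A\E[X|\cG_U]$ on that set, and the fact that an essential supremum commutes with restriction to a set in $\cG_S$; none of these is a gap. In short, your argument is a self-contained version of exactly the route the paper takes by citation, at the cost of length and with the minor variation of bounding through $L$ rather than through the raw data $(\xi,f)$.
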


The next result shows that we can aggregate the family $(V(S))_{S \in \cT_{0,\infty}}$ into a process.
\begin{lemma}\label{lem::snell_aggregation}
	There exists a, up to $\P$--indistinguishability, unique optional process $V = (V_t)_{t \in [0,\infty]} \in \cS^2_T$ satisfying $V_S = V(S)$, $\P$--{\rm a.s.}, for each $S \in \cT_{0,\infty}$. Moreover, $V_\cdot = V_{\cdot \land T}$, up to $\P$--indistinguishability.
\end{lemma}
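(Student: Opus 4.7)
My plan is to construct $V$ by first aggregating the family under the $\P$-completion $\G^\P$ using classical Snell envelope theory, and then descending back to $\G$ via \Cref{lem::compl_filt}. By \Cref{lem::supermartingale_family}, $(V(S))_{S \in \cT_{\smalltext{0}\smalltext{,}\smalltext{\infty}}}$ is a supermartingale family indexed by stopping times. Thanks to \Cref{lem::snell_reformulation} and $L \in \cS^2_T$, the family is $\L^2$-dominated by $\E[\sup_{s \in [0,T]}|L_s| | \cG_S]$: indeed, $V(S) \leq \E[\sup_{s \in [0,T]} L^+_s | \cG_S]$ by taking the supremum over $\tau$ inside the expectation, and $V(S) \geq \E[L_T | \cG_S] \geq -\E[|L_T| | \cG_S]$ by choosing $\tau \equiv \infty$ together with $L=L^T$.

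Second, since $\G$ is right-continuous, so is $\G^\P$, and the latter moreover satisfies the usual conditions. The classical theory of Snell envelopes (for example \cite{el1981aspects,kobylanski2012optimal} or \cite[Appendix I]{dellacherie1982probabilities}) then produces a, up to $\P$-indistinguishability, unique $\G^\P$-optional process $\widetilde V$ with $\P$-a.s. l\`adl\`ag paths such that $\widetilde V_S = V(S)$ $\P$-a.s. for each $S \in \cT_{0,\infty}$. Concretely, one constructs $\widetilde V$ as the a.s. right-limit through rationals, $\widetilde V_t \coloneqq \lim_{r \in \Q_+,\, r \downarrow\downarrow t} V(r)$, whose existence follows from a downcrossing argument for the $\L^2$-bounded supermartingale family together with the right-continuity of $\G^\P$; the identification $\widetilde V_S = V(S)$ at an arbitrary stopping time $S$ is then obtained by approximation of $S$ from the right by discrete stopping times and \Cref{lem::supermartingale_family}.$(ii)$. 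Applying \Cref{lem::compl_filt} yields a $\G$-optional process $V$ that is $\P$-indistinguishable from $\widetilde V$, which is the desired aggregation.

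Third, to verify $V \in \cS^2_T$, I would combine the domination $|V_t| \leq \E[\sup_{s \in [0,T]}|L_s| | \cG_t^\P]$ with Doob's $\L^2$-inequality applied to the c\`adl\`ag martingale $t \mapsto \E[\sup_{s \in [0,T]}|L_s| | \cG_t^\P]$ to obtain $\E[\sup_t |V_t|^2] \leq 4\,\E[\sup_{s \in [0,T]}|L_s|^2] < \infty$, which is finite because $L \in \cS^2_T$. For $V_\cdot = V_{\cdot \land T}$, on $\{S \leq T\}$ the identity $V(S) = V(S \land T)$ is immediate from \Cref{lem::supermartingale_family}.$(ii)$ applied with $U = S \land T$, while on $\{S > T\}$ one has $L_\tau = L_T$ for every $\tau \geq S > T$, hence $V(S) = \E[L_T | \cG_S] = L_T = V(T)$ since $L_T$ is $\cG_S$-measurable on $\{T < S\}$; applying the optional cross-section theorem (see \Cref{prop::optional_ineq}) then yields $V = V^T$ up to $\P$-indistinguishability. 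Uniqueness up to indistinguishability follows from the same optional cross-section theorem applied to the difference of two aggregators.

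The main obstacle I anticipate is the aggregation step itself, which relies on the classical machinery (downcrossing bounds, right-continuity of the conditional expectations along rationals, and an approximation argument to identify the limit at an arbitrary stopping time). All these arguments crucially require the usual conditions, which is why I pass to $\G^\P$; the subsequent descent back to $\G$ is then a direct application of \Cref{lem::compl_filt} and should present no difficulty.
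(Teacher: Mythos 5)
There is a genuine gap in your aggregation step. You propose to build the aggregator as the right limit through rationals, $\widetilde V_t \coloneqq \lim_{r \in \Q_+,\, r \downarrow\downarrow t} V(r)$, and to identify $\widetilde V_S = V(S)$ by approximating $S$ from the right. This construction produces the \emph{right-continuous} modification of the Snell envelope, and the identification at an arbitrary stopping time requires right-continuity in expectation of the family, $\E[V(S_n)] \longrightarrow \E[V(S)]$ for $S_n \downarrow\downarrow S$. Precisely because the obstacle here is merely optional (no right-upper-semicontinuity is assumed on $\xi$, hence none on $L$), this fails: the Snell envelope can have genuine right jumps, $V(S) > \lim_{r\downarrow\downarrow S} V(r)$. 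A deterministic example already breaks your construction: take $L_t = \mathbf{1}_{\{t=1/2\}}$, so $V(1/2)=1$ while $V(r)=0$ for all $r>1/2$, whence $\widetilde V_{1/2}=0\neq V(1/2)$. These right jumps are not a pathology to be argued away --- they are exactly what the purely discontinuous, left-continuous component $K^\ell$ in the Mertens decomposition is there to absorb, and the process $V$ the lemma asserts is only l\`adl\`ag, not c\`adl\`ag.

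The correct tool is the aggregation theorem for supermartingale \emph{systems} ($T$-systems) into strong optional supermartingales, {\rm\cite[Appendix I, Remark 23(b)]{dellacherie1982probabilities}}, which is what the paper invokes; it rests on the optional cross-section theorem rather than on right limits along rationals, and only needs the two properties established in \Cref{lem::supermartingale_family} (which your proposal does verify, together with the $\L^2$-domination). The remainder of your argument --- the passage to $\G^\P$ and back via \Cref{lem::compl_filt}, the $\cS^2_T$ bound by dominating $|V|$ with a square-integrable martingale and applying Doob's inequality, the identity $V=V^T$ via the dichotomy $\{S\leq T\}$, $\{S>T\}$, and uniqueness via \Cref{prop::optional_ineq} --- matches the paper's proof and is sound once the existence step is repaired.
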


By \Cref{lem::supermartingale_family}, the process $V = (V_t)_{t \in [0,\infty]}$ constructed in \Cref{lem::snell_aggregation} is a strong optional supermartingale in the sense of \cite[Appendix I]{dellacherie1982probabilities}, and thus $\P$--almost all its paths are l\`adl\`ag (see \cite[Appendix I, Theorem 4]{dellacherie1982probabilities}). 
The next lemma allows us to deduce the Skorokhod condition using our modified optimal stopping problem.

\begin{lemma}\label{lem::snell_indicator}
	$(i)$ For each $S \in \cT_{0,\infty}$, we have $\mathbf{1}_{\{V_\smalltext{S} = L_\smalltext{S}\}} = \mathbf{1}_{\{V_\smalltext{S} = J_\smalltext{S}\}}$, $\P${\rm--a.s.}
	
	\medskip
	$(ii)$ For each $S \in \cT^p_{0,\infty}$, we have $\mathbf{1}_{\{V_{\smalltext{S}\smalltext{-}} = \bar L_\smalltext{S}\}} = \mathbf{1}_{\{V_{\smalltext{S}\smalltext{-}} = \bar J_\smalltext{S}\}}$, $\P${\rm--a.s.}\footnote{Recall that $\cT^p_{0,\infty}$ is the collection of predictable stopping times and $\overline L$ is defined by $\overline L_0 \coloneqq L_0$ and $\overline L_t \coloneqq \limsup_{s \uparrow\uparrow \infty} L_s$ for $t \in (0,\infty]$. The process $\overline J$ is defined analogously.}
\end{lemma}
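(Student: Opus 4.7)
I would prove both parts by the same ordering argument, built on the pointwise chain $V_s \geq L_s \geq J_s$ (the first from taking $\tau = s$ in the defining essential supremum of $V_s$, the second from the identity $L_s = J_s \lor (M_s - \mathbf{1}_{\{s<T\}})$), together with $V_s \geq M_s$ (obtained by choosing $\tau = s \lor T \in \cT_{s,\infty}$, since $L_{s \lor T} = L_T = J_T$ by $L_\cdot = L_{\cdot \land T}$, and $M_s = \E[J_T \mid \cG_s]$, $\P$--a.s.). These are the only inputs I would need beyond the preceding lemmata.

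For part $(i)$, evaluating at $S \in \cT_{0,\infty}$ gives $V_S \geq L_S \geq J_S$ and $V_S \geq M_S$, $\P$--a.s., so the inclusion $\{V_S = J_S\} \subseteq \{V_S = L_S\}$ is immediate from $V_S = J_S \leq L_S \leq V_S$. For the reverse inclusion I split along $\{S < T\}$ and $\{S \geq T\}$: on $\{S \geq T\}$, $L_\cdot = L_{\cdot \land T}$ and $M_T = J_T$ (since $J_T$ is $\cG_T$-measurable) force $L_S = L_T = J_T = J_S$, so $V_S = L_S = J_S$ automatically; on $\{S < T\}$, the definition gives $L_S = J_S \lor (M_S - 1)$, so $V_S = L_S$ means either $V_S = J_S$ (done) or $V_S = M_S - 1$, but the latter contradicts $V_S \geq M_S$.

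For part $(ii)$, fix $S \in \cT^p_{0,\infty}$. Since $V$ is l\`adl\`ag (as a strong optional supermartingale) and $M$ is c\`adl\`ag, $V_{S-}$ and $M_{S-}$ are genuine left-limits; passing to $\limsup_{s \uparrow\uparrow S}$ in $V_s \geq L_s \geq J_s$ and to $\lim_{s \uparrow\uparrow S}$ in $V_s \geq M_s$ therefore yields $V_{S-} \geq \overline{L}_S \geq \overline{J}_S$ and $V_{S-} \geq M_{S-}$, $\P$--a.s., from which $\{V_{S-} = \overline J_S\} \subseteq \{V_{S-} = \overline L_S\}$ follows. For the reverse inclusion the key computation is the identity
\begin{equation*}
	\overline L_S = \overline J_S \lor \bigl(M_{S-} - \mathbf{1}_{\{S \leq T\}}\bigr), \; \text{$\P$--a.s.,}
\end{equation*}
which I would justify using the elementary fact that $\limsup_n (a_n \lor b_n) = (\limsup_n a_n) \lor (\lim_n b_n)$ whenever $b_n$ converges, combined with the a.s.\ convergences $M_s \to M_{S-}$ and $\mathbf{1}_{\{s<T\}} \to \mathbf{1}_{\{S \leq T\}}$ as $s \uparrow\uparrow S$ (the latter by inspection of the cases $S \leq T$ and $S > T$). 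Given this decomposition, on $\{V_{S-} = \overline L_S\}$ either $V_{S-} = \overline J_S$ (done) or $V_{S-} = M_{S-} - \mathbf{1}_{\{S \leq T\}}$; the second alternative is ruled out on $\{S \leq T\}$ by $V_{S-} \geq M_{S-} > M_{S-} - 1$, while on $\{S > T\}$ the identity $V_\cdot = V_{\cdot \land T}$ together with $V_T = L_T = J_T$ forces $V_{S-} = J_T = \overline J_S$, so the conclusion still holds.

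The main obstacle is the limsup-of-max identity above: because $\xi$, and hence $J$, are not assumed to admit left-limits, one cannot pull $\limsup_{s\uparrow\uparrow S}$ through the maximum for free, and the argument hinges on the ``perturbation'' part $M_s - \mathbf{1}_{\{s<T\}}$ genuinely converging. Everything else reduces to the ordering $V \geq L \geq J$, the bound $V \geq M$, and the fact that $V_\cdot = V_{\cdot \land T}$ with $V_T = J_T$.
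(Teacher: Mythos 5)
Your proof is correct and follows essentially the same route as the paper's: the case split along $\{S<T\}$ (resp.\ $\{S\le T\}$) versus its complement, the bound $V\ge M$ ruling out the alternative $V_S=M_S-1$, and the collapse $V_S=V_T=J_T=L_T$ after $T$. The only difference is that you spell out the identity $\overline L_S=\overline J_S\lor(M_{S-}-\mathbf{1}_{\{S\le T\}})$ via the limsup-of-max argument, which the paper uses implicitly.
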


We have now established the necessary technical results that allow us to apply the arguments laid out in \cite{grigorova2020optimal} to construct the solution to the reflected BSDE for the generator $f$ that does not depend on $(y,z,u)$. First, define $Y = (Y_t)_{t \in [0,\infty]}$ by $Y_t \coloneqq V_t - \int_0^{t \land T} f_s \d C_s$ $t\geq 0$. Then $Y_\cdot = Y_{\cdot \land T}$, up to $\P$-indistinguishability, and
\begin{equation*}
	Y_S = V_S - \int_0^{S \land T} f_s \d C_s = {\esssup_{\tau \in \cT_{\smalltext{S}\smalltext{,}\smalltext{\infty}}}}^{\cG_\smalltext{S}} \E\bigg[\xi_{\tau \land T} + \int_S^{\tau \land T} f_s \d C_s \bigg| \cG_S\bigg], \; \text{$\P$--a.s.}, \; S \in \cT_{0,\infty}.
\end{equation*}

We know by now that $V$ is a strong optional supermartingale in the sense of \cite[Appendix I]{dellacherie1982probabilities}. We can therefore apply Mertens' decomposition to construct the solution to the reflected BSDE.

\begin{proposition}\label{lem::snell_decomposition}
	There exists a unique triple
$(Z,U,N) \in \mathbb{H}^{2}_{T}(X) \times \mathbb{H}^{2}_{T}(
\mu ) \times \mathcal H^{2,\perp}_{0,T}(X,\mu )$ and a, up to $\mathbb{P}$--indistinguish\-ability, unique pair $(K^{r},K^{\ell}) \in \mathcal I^{2}_{T}
\times \mathcal I^{2}_{T}$ such that $K^{r}$ is predictable and starts
$\mathbb{P}$--\textup{a.s.} from zero, $K^\ell$ has $\P$--{\rm a.s.} purely discontinuous paths and satisfies $K^\ell_T = K^\ell_{T-}$, \textnormal{$\P$--a.s.}, and
		\begin{equation*}
			Y_t = \xi_T + \int_t^T f_s \d C_s - \int_t^T Z_s\d X_s - \int_t^T\int_E U_s(x)\tilde\mu(\d s, \d x) - \int_t^T \d N_s + K^r_T - K^r_t + K^\ell_{T-} - K^\ell_{t-}, \; t \in [0,\infty], \; \text{$\P${\rm--a.s.}},
		\end{equation*}
		holds with convention $K^\ell_{0-}\coloneqq 0$. Moreover,
	\[
		 \big(Y_{T-} - \overline{\xi}_T\big) \Delta K^r_T + \int_{(0,T)} \big(Y_{s-} - \overline{\xi}_s\big) \d K^r_s + \int_{[0,T)} (Y_s - \xi_s) \d K^\ell_s = 0, \;\text{$\P$\rm--a.s.}
	\]
\end{proposition}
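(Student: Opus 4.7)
The plan is to apply Mertens' decomposition to the strong optional supermartingale $V$ constructed in Lemma \ref{lem::snell_aggregation}, then orthogonally decompose the resulting martingale part along $X$ and $\mu$, and finally verify the Skorokhod condition using Lemma \ref{lem::snell_indicator}.

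First, since $V \in \cS^2_T$ is a strong optional supermartingale with $V = V^T$, Mertens' decomposition (see \cite[Appendix I, Theorem 20]{dellacherie1982probabilities}) yields a unique decomposition
\begin{equation*}
  V_t = V_0 + M_t - K^r_t - K^\ell_{t-}, \quad t \in [0,\infty], \; \text{$\P$--a.s.,}
\end{equation*}
where $M \in \cM_{\mathrm{loc}}$ with $M_0 = 0$, $K^r$ is predictable, right-continuous, non-decreasing with $K^r_0 = 0$, and $K^\ell$ is adapted, purely discontinuous and non-decreasing with $K^\ell_{0-} = 0$. Because $V$ is stopped at $T$, one checks that $M$, $K^r$ and $K^\ell$ can be taken to satisfy $M = M^T$, $K^r = (K^r)^T$ and $K^\ell = (K^\ell)^T$. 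A standard Burkholder–Davis–Gundy-type argument, combined with $V \in \cS^2_T$, \ref{data::integ_f0}, and \eqref{eq::integrability_optimal_stopping}, then gives $M \in \cH^2_T$ and $(K^r, K^\ell) \in \cI^2_T \times \cI^2_T$.

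Next, I invoke Proposition \ref{prop::orthogonal_decomposition} to decompose $M$ uniquely as
\begin{equation*}
  M_t = \int_0^t Z_s \, \mathrm{d}X_s + \int_0^t \!\!\int_E U_s(x) \, \tilde\mu(\mathrm{d}s, \mathrm{d}x) + N_t,
\end{equation*}
with $(Z,U,N) \in \H^2_T(X) \times \H^2_T(\mu) \times \cH^{2,\perp}_{0,T}(X,\mu)$; stopping at $T$ and using $M = M^T$ ensures $Z = Z\mathbf{1}_{\llbracket 0, T\rrbracket}$ and $U = U\mathbf{1}_{\llbracket 0, T\rrbracket}$, while $N = N^T$. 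Substituting the Mertens decomposition of $V$ into $Y = V - \int_0^{\cdot \land T} f_s\, \mathrm{d}C_s$, and using the terminal condition $Y_T = V_T = \xi_T$ from the definitions of $V$ and $L$, yields the backward equation displayed in the statement.

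The remaining and most delicate step is the Skorokhod condition. The supports of $K^r$ and $K^\ell$ in Mertens' decomposition are controlled by when $V$ touches itself from the left and from the right, respectively. Concretely, $K^r$ increases only on $\{V_{-} = \overline{L}\}$ and the jumps of $K^\ell$ occur only where $V = L$; this follows from the minimality of $K^r + K^\ell_-$ in the decomposition together with $V_S = \mathrm{esssup}_{\tau \in \cT_{S,\infty}}^{\cG_S}\E[L_\tau|\cG_S]$, exactly as in the proof of \cite[Proposition 4.2]{grigorova2020optimal}. Applying Lemma \ref{lem::snell_indicator}.(i) at the predictable stopping times exhausting the jumps of $K^\ell$, and Lemma \ref{lem::snell_indicator}.(ii) at the predictable stopping times carrying $\mathrm{d}K^r$, replaces $L$ and $\overline L$ by $J$ and $\overline J$ on these supports. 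Subtracting the absolutely continuous part $\int_0^{\cdot \land T} f_s\, \mathrm{d}C_s$ from $V$ to recover $Y$ then converts $J, \overline J$ into $\xi, \overline \xi$, giving
\begin{equation*}
  (Y_{T-} - \overline\xi_T)\Delta K^r_T + \int_{(0,T)}(Y_{s-} - \overline\xi_s)\,\mathrm{d}K^r_s + \int_{[0,T)}(Y_s - \xi_s)\,\mathrm{d}K^\ell_s = 0, \quad \text{$\P$--a.s.}
\end{equation*}

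Uniqueness is inherited in two stages: the uniqueness of $(M, K^r, K^\ell)$ in Mertens' decomposition, and the uniqueness of $(Z, U, N)$ in Proposition \ref{prop::orthogonal_decomposition}. The main obstacle I anticipate is a clean identification of the supports of $K^r$ and $K^\ell$ with the contact sets $\{V_- = \overline L\}$ and $\{V = L\}$; while this is classical in the right-continuous case, the l\`adl\`ag setting requires the careful bookkeeping carried out in \cite{grigorova2020optimal}, which is precisely where Lemma \ref{lem::snell_indicator} is essential to translate the contact with $L$ into contact with the genuine obstacle $\xi$.
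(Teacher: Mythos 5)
Your proposal is correct and follows essentially the same route as the paper's proof: Mertens' decomposition of the strong optional supermartingale $V$ (with the support properties of $K^r$ and $K^\ell$ taken from \cite{grigorova2020optimal}), the orthogonal decomposition of the martingale part via \Cref{prop::orthogonal_decomposition}, and \Cref{lem::snell_indicator} to pass from contact with $L$ to contact with $\xi$. The only points where the paper is more explicit are the square-integrability of $(M,K^r,K^\ell)$, which it obtains directly from \cite[Lemma 3.2]{grigorova2020optimal} rather than a BDG argument, and the uniqueness step, where it carefully separates $K^r$ from $K^\ell_-$ (in particular deducing, rather than assuming, that $K^\ell_0$ is pinned down) using that a martingale indistinguishable from a predictable finite-variation process is constant.
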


\begin{proof}
	We use $V_\cdot = V_{\cdot \land T}$, up to $\P$--indistinguishability, and \cite[Lemma 3.2]{grigorova2020optimal}\footnote{Their results still applies to our infinite, but right-closed, horizon $[0,\infty]$.} (which is based on Mertens' unique decomposition of a strong supermartingale) to find a martingale $M = (M_t)_{t \in [0,\infty]}$ with $M = M_{\cdot \land T}$, $M_T \in \L^2(\cG_T)$ and $M_0 = 0$, $\P$--a.s., and two processes $(K^r,K^\ell) \in \cI^2_T \times \cI^2_T$ such that $K^r$ is predictable and satisfies $K^r_0 = 0$, $\P$--a.s., $K^\ell$ has $\P$--a.s. purely discontinuous paths and satisfies $K^\ell_T = K^\ell_{T-}$, $\P$--a.s., with convention $K^\ell_{0-} \coloneqq 0$,
	\begin{equation*}
		Y_t + \int_0^{t \land T} f_s \d C_s = V_t = V_0  + M_t - K^r_t - K^\ell_{t-}, \; t \in [0,\infty], \; \text{$\P$--a.s.},
	\end{equation*}
	and $\Delta K^\ell_\tau = \mathbf{1}_{\{V_\smalltext{\tau} = L_\smalltext{\tau}\}} \Delta K^\ell_\tau$, $\P$--a.s., for each $\tau \in \cT_{0,\infty}$, and $\Delta K^r_\tau = \mathbf{1}_{\{V_{\smalltext{\tau}\smalltext{-}} = \overline L_\smalltext{\tau}\}} \Delta K^r_\tau$, $\P$--a.s., for each $\tau \in \cT^p_{0,\infty}$. Moreover, \cite[Lemma 3.3]{grigorova2020optimal} implies
	\begin{equation*}
		\int_{(0,T)} \mathbf{1}_{\{V_{s-} > \overline L_s\}} \d K^{r,c}_s = 0, \; \text{$\P$--a.s.}, \; \text{where} \; K^{r,c} = K^r - \sum_{s \in (0,\cdot]} \Delta K^r_s.
	\end{equation*}
	Therefore, by \Cref{lem::snell_indicator},
	\begin{equation*}
		(Y_{T-} - \overline\xi_T)\Delta K^r_T + \int_{(0,T)} (Y_{s-} - \overline\xi_s) \d K^r_s + \int_{[0,T)} (Y_s - \xi_s) \d K^\ell_s = 0, \; \text{$\P$--a.s.}
	\end{equation*}
	Since $M_\infty$ is $\cG_\infty$-measurable, and because $\cG_\infty = \cG_{\infty-} = \sigma(\cup_{t \in [0,\infty)}\cG_t)$, the martingale $M$ is $\P$--a.s. left-continuous at infinity. We can therefore decompose $(M_t)_{t \in [0,\infty)}$ as
	\begin{equation}\label{eq::snell_decomposition}
		M_t = \int_0^t Z_s \d X_s + \int_0^t\int_E U_s(x)\tilde\mu(\d s, \d x) + N_t, \; t \in [0,\infty), \; \text{$\P$--a.s.},
	\end{equation}
	for unique $(Z,U,N) \in \H^2_T(X) \times \H^2_T(\mu) \times \cH^{2,\perp}_{0,T}(X,\mu)$ with $N_0 = 0$, $\P$--a.s., such that the value $M_\infty$ is the $\P$--a.s. limit at infinity of the processes on the right-hand-side of \eqref{eq::snell_decomposition}, that is,
	\begin{equation*}
		M_\infty = \int_0^{\infty} Z_s \d X_s + \int_0^{\infty}\int_E U_s(x)\tilde\mu(\d s, \d x) + N_\infty, \; \text{$\P$--a.s.}
	\end{equation*}
	It remains to write the dynamics of $Y$ in backward form.
	
	\medskip
	We now present the argument that implies uniqueness separately as it might seem odd that we do not impose $K^\ell_0 = 0$, $\P$--almost surely. Suppose that $(M^\prime,K^{r,\prime},K^{\ell,\prime})$ is a triplet satisfying the same properties as $(M,K^r,K^\ell)$, and such that
	\begin{equation*}
		Y_t + \int_0^{t \land T} f_s \d C_s = V_t = V_0 +M^\prime_t - K^{r,\prime}_t - K^{\ell,\prime}_{t-}, \; t \in [0,\infty], \; \text{$\P$--a.s.}
	\end{equation*}
	It follows that $M_t - K^r_t - K^\ell_{t-} = M^\prime_t - K^{r,\prime}_t - K^{\ell,\prime}_{t-}, \; t \in [0,\infty], \; \text{$\P$--a.s.},$ hence $M-M^\prime$ is a martingale that is $\P$--indistinguishable from a predictable process with $\P$--a.s. locally finite variation paths. Thus  $M_t = M^\prime_t$, $t \in [0,\infty]$, $\P$--a.s., by \cite[Corollary I.3.16]{jacod2003limit}, and therefore
	\begin{equation*}
		K^r_t + K^\ell_{t-} = K^{r,\prime}_t + K^{\ell,\prime}_{t-}, \; t \in [0,\infty], \; \text{$\P$--a.s.}
	\end{equation*}
	Since $K^\ell$ and $K^{\ell,\prime}$ are $\P$--a.s. purely discontinuous, we can write
	\begin{equation*}
		K^r_t - K^{r,\prime}_t = K^\ell_{t-} - K^{\ell,\prime}_{t-} = K^\ell_0 - K^{\ell,\prime}_0 + \sum_{s \in (0,t)} (\Delta K^\ell_s - \Delta K^{\ell,\prime}_s), \; t \in (0,\infty], \; \text{$\P$--a.s.}
	\end{equation*}
	As $K^r - K^{r,\prime}$ is $\P$--a.s. right-continuous, it follows by taking limits from the right that
	\begin{equation*}
		K^r_t - K^{r,\prime}_t = K^\ell_0 - K^{\ell,\prime}_0 + \sum_{s \in (0,t]} (\Delta K^\ell_s - \Delta K^{\ell,\prime}_s), \; t \in [0,\infty], \; \text{$\P$--a.s.}
	\end{equation*}
	Hence, $0 = K^r_0 - K^{r,\prime}_0 = K^\ell_0 - K^{\ell,\prime}_0 \; \text{and} \; \Delta K^\ell_s = \Delta K^{\ell,\prime}_s, \; s \in (0,\infty], \; \text{$\P$--a.s.},$ which implies $K^\ell = K^{\ell,\prime}$, up to $\P$--indistinguishability, and thus also $K^r = K^{r,\prime}$, up to $\P$--indistinguishability. This completes the proof.
\end{proof}

\section{\emph{A priori} estimates}\label{sec::a_priori}

We now devote ourselves to \emph{a priori} estimates, which will allow us to construct a contraction mapping on the weighted normed spaces introduced in \Cref{sec::weighted_spaces}. Classically, these sort of estimates are derived by an application of It\^o's formula to the square of the (weighted) difference of two solutions to the (reflected) BSDE (see \cite{bouchard2018unified,grigorova2017reflected,grigorova2020optimal}). In our generality, we were not able to deduce appropriate estimates solely using this tool, and thus we will approach the estimates differently. We first derive bounds on conditional expectations of the martingale processes using both It\^o's formula and a pathwise version of Doob's martingale inequality (\Cref{lem::cond_doob}). Then we employ the ideas of \cite[page 27]{el1997backward2} to get the desired weighted norm estimates. For $i \in \{1,2\}$, let $f^i = (f^i_u)_{u \in [0,\infty)}$ be an optional processes satisfying
\begin{equation*}
	\E\bigg[\bigg(\int_0^T |f^i_u| \d C_u\bigg)^2\bigg] < \infty,
\end{equation*}
and suppose that we are given an optional process $y^i = (y^i_t)_{t \in [0,\infty]}$ satisfying
	\begin{subnumcases}{}
		 y^i_t = \xi_T + \int_t^T f^i_s \d C_s - \int_t^T\d \eta^i_s + k^{r,i}_T - k^{r,i}_t  + k^{\ell,i}_{T-} - k^{\ell,i}_{t-}, \; t \in [0,\infty], \; \text{$\P$--a.s.},\label{eq::differential_y} \\
		 y^i = y^i_{\cdot \land T} \geq \xi_{\cdot \land T}, \; \text{$\P$--a.s.}, \\
		 y^i_{S} = {\esssup_{\tau \in \cT_{\smalltext{S}\smalltext{,}\smalltext{\infty}}}}^{\cG_\smalltext{S}} \E \bigg[ \xi_{\tau \land T} +\int_S^{\tau \land T} f^i_u\d C_u \bigg| \cG_S \bigg],\; \text{$\P$--a.s.}, \; S \in \cT_{0,\infty}, \label{eq::rep_y} \\
		 \big(y^i_{T-} - \overline{\xi}_T\big)\Delta k^{r,i}_T + \int_{(0,T)} \big(y^i_{s-} - \overline{\xi}_s\big) \d k^{r,i}_s + \int_{[0,T)} (y^i_s - \xi_s) \d k^{\ell,i}_s = 0, \; \text{$\P$--a.s.}, \label{eq::a_priori_skor}
	\end{subnumcases}
	for some $\eta^i \in \cH^2_T$ and $(k^{r,i}, k^{\ell,i}) \in \cI^{2}_T \times \cI^{2}_T$ with $k^{r,i}$ predictable and starting $\P$--a.s. from zero, and $k^{\ell,i}_T = k^{\ell,i}_{T-}$ up to a $\P$--null set. Here we use the convention $k^{\ell,i}_{0-} \coloneqq 0$. Let 
	\begin{equation*}
		\delta y \coloneqq y^1 - y^2, \; \delta \eta \coloneqq \eta^1 - \eta^2, \; \delta k^r \coloneqq k^{r,1} - k^{r,2}, \; \delta k^\ell \coloneqq k^{\ell,1} - k^{\ell,2},\; \text{and} \; \delta f \coloneqq f^1 - f^2.
	\end{equation*}

To ease the notation, we denote by $L$ the infimum of the function $\ell$ defined on $\{(\varepsilon,\kappa) \in (0,\infty)^2 : 0 < 1-4\kappa \leq 1\}$ by
\begin{equation*}
	\ell(\varepsilon,\kappa) \coloneqq \frac{\max\{1 + \varepsilon+4\kappa + 12/\varepsilon,12/\varepsilon + 2/\kappa\}}{1-4\kappa}.
\end{equation*}

Recall from \ref{data::expectation_sup} that the obstacle $\xi$ satisfies $\E[|\xi_T|^2] + \E\big[ \sup_{s \in [0,T)} |\xi^+_s|^2  \big] < \infty.$ The following is the main result of this section.
\begin{proposition}\label{prop::apriori}
	Let $\beta \in (0,\infty)$. Then\footnote{Recall the definition of $\ff^\Phi$, $\fg^\Phi$, $M^\Phi_1$, $M^\Phi_2$ and $M^\Phi_3$ at the beginning of \Cref{sec::main_results_rbsde}.}
	\begin{equation*}
			\|\delta y\|^2_{\cS^\smalltext{2}_\smalltext{T}} \leq \frac{4}{\beta} \bigg\| \frac{\delta f}{\alpha} \bigg\|^2_{\H^{\smalltext{2}}_{\smalltext{T}\smalltext{,}\smalltext{\beta}}},
			\;
			\|\alpha \delta y\|^2_{\H^{\smalltext{2}}_{\smalltext{T}\smalltext{,}\smalltext{\beta}}} 
			\leq \ff^\Phi(\beta) \bigg\| \frac{\delta f}{\alpha} \bigg\|^2_{\H^{\smalltext{2}}_{\smalltext{T}\smalltext{,}\smalltext{\beta}}},
			\;
			\|\alpha \delta y_-\|^2_{\H^\smalltext{2}_{\smalltext{T}\smalltext{,}\smalltext{\beta}}} 
			\leq (1+\beta\Phi) \fg^\Phi(\beta)\bigg\|\frac{\delta f}{\alpha}\bigg\|^2_{\mathbb{H}^\smalltext{2}_{\smalltext{T}\smalltext{,}\smalltext{\beta}}},
	\end{equation*}
	\begin{align*}
		\E\big[(\delta y_0)^2\big] + \frac{\beta}{(1+\beta\Phi)}\|\alpha\delta y_{-}\|^2_{\H^\smalltext{2}_{\smalltext{T}\smalltext{,}\smalltext{\beta}}} + \|\delta\eta\|^2_{\cH^\smalltext{2}_{\smalltext{T}\smalltext{,}\smalltext{\beta}}} + \E\bigg[\int_0^T \cE(\beta A)_s \d[\delta k^r]_s\bigg] &+ \E\bigg[\int_0^T \cE(\beta A)_s \d[\delta k^\ell]_s\bigg]  \\
		&\quad \leq \bigg( \frac{5}{\beta} + \frac{4}{\beta}(1+\beta\Phi)^{1/2} + \beta \fg^\Phi(\beta) \bigg)\bigg\|\frac{\delta f}{\alpha}\bigg\|^2_{\H^\smalltext{2}_{\smalltext{T}\smalltext{,}\smalltext{\beta}}},
	\end{align*}

	and
	\begin{equation*}
		\|\delta y\|^2_{\cS^\smalltext{2}_\smalltext{T}} + \|\alpha \delta y\|^2_{\H^{\smalltext{2}}_{\smalltext{T}\smalltext{,}\smalltext{\beta}}} + \|\alpha \delta y_-\|^2_{\H^\smalltext{2}_{\smalltext{T}\smalltext{,}\smalltext{\beta}}} + \|\delta\eta\|^2_{\cH^{\smalltext{2}}_{\smalltext{T}\smalltext{,}\smalltext{\beta}}} 
		\leq M^\Phi_1(\beta) \bigg\| \frac{\delta f}{\alpha} \bigg\|^2_{\H^{\smalltext{2}}_{\smalltext{T}\smalltext{,}\smalltext{\beta}}},
		\;
		\|\alpha \delta y\|^2_{\H^{\smalltext{2}}_{\smalltext{T}\smalltext{,}\smalltext{\beta}}} + \|\delta\eta\|^2_{\cH^{\smalltext{2}}_{\smalltext{T}\smalltext{,}\smalltext{\beta}}} 
		\leq M^\Phi_2(\beta) \bigg\| \frac{\delta f}{\alpha} \bigg\|^2_{\H^{\smalltext{2}}_{\smalltext{T}\smalltext{,}\smalltext{\beta}}},
	\end{equation*}
	\begin{equation*}
		\|\delta y\|^2_{\cS^\smalltext{2}_\smalltext{T}} + \|\alpha \delta y_-\|^2_{\H^\smalltext{2}_{\smalltext{T}\smalltext{,}\smalltext{\beta}}} + \|\delta\eta\|^2_{\cH^{\smalltext{2}}_{\smalltext{T}\smalltext{,}\smalltext{\beta}}} 
		\leq M^\Phi_3(\beta) \bigg\| \frac{\delta f}{\alpha} \bigg\|^2_{\H^{\smalltext{2}}_{\smalltext{T}\smalltext{,}\smalltext{\beta}}}.
	\end{equation*}
	Moreover, for each $i \in \{1,2\}$,
	\begin{equation*}
		\|y^i\|^2_{\cS^\smalltext{2}_\smalltext{T}} \leq 12 \, \Bigg(\|\xi_T\|^2_{\L^\smalltext{2}} + \Big\|\sup_{u \in [0,T)} \xi^+_u\Big\|^2_{\L^\smalltext{2}} +  \frac{1}{\beta}\bigg\|\frac{f^i}{\alpha}\bigg\|^2_{\H^\smalltext{2}_{\smalltext{T}\smalltext{,}\smalltext{\beta}}}\Bigg),
	\end{equation*}
	\begin{equation*}
		\|\alpha y^i\|_{\mathbb{H}^{\smalltext{2}}_{\smalltext{T}\smalltext{,}\smalltext{\beta}}}^2 
		\leq 3 \Bigg( \frac{(1+\beta\Phi)}{\beta} \|\xi_T\|^2_{\L^\smalltext{2}_\smalltext{\beta}} + \|\alpha\brs{\xi}\|^2_{\H^\smalltext{2}_{\smalltext{T}\smalltext{,}\smalltext{\beta}}} +  \ff^\Phi(\beta) \bigg\| \frac{f^i}{\alpha} \bigg\|^2_{\H^{\smalltext{2}}_{\smalltext{T}\smalltext{,}\smalltext{\beta}}} \Bigg),
	\end{equation*}
	\begin{equation*}
		\|\alpha y^i_-\|^2_{\H^\smalltext{2}_{\smalltext{T}\smalltext{,}\smalltext{\beta}}} 
		\leq 3 \Bigg( \frac{(1+\beta\Phi)}{\beta} \|\xi_T\|^2_{\L^\smalltext{2}_\smalltext{\beta}} + \|\alpha\brs{\xi}\|^2_{\H^\smalltext{2}_{\smalltext{T}\smalltext{,}\smalltext{\beta}}} +  (1+\beta\Phi) \fg^\Phi(\beta) \bigg\| \frac{f^i}{\alpha} \bigg\|^2_{\H^{\smalltext{2}}_{\smalltext{T}\smalltext{,}\smalltext{\beta}}} \Bigg),
	\end{equation*}
	and
	\begin{align*}
	&\E\big[|y^i_0|^2\big] + \frac{\beta}{(1+\beta\Phi)}\|\alpha y^i_{-}\|^2_{\H^\smalltext{2}_{\smalltext{T}\smalltext{,}\smalltext{\beta}}} + \|\eta^i\|^2_{\cH^\smalltext{2}_{\smalltext{T}\smalltext{,}\smalltext{\beta}}} + \E\bigg[\int_0^T \cE(\beta A)_s \d[k^{r,i}]_s\bigg] + \E\bigg[\int_0^T \cE(\beta A)_s \d[k^{\ell,i}]_s\bigg] \\
	&\quad\leq L\Bigg( \|\xi_T\|^2_{\L^\smalltext{2}} + \|\xi^+\1_{\llbracket 0,T\rrparenthesis}\|^2_{\cS^\smalltext{2}_\smalltext{T}} + \frac{1}{\beta} \bigg\|\frac{f^i}{\alpha}\bigg\|^2_{\H^\smalltext{2}_{\smalltext{T}\smalltext{,}\smalltext{\beta}}} + \beta\bigg(\|\xi_T\|^2_{\L^\smalltext{2}_\smalltext{\beta}} + \| \alpha\brs{\xi}\|^2_{\H^\smalltext{2}_{\smalltext{T}\smalltext{,}\smalltext{\beta}}} + \fg^\Phi(\beta) \bigg\|\frac{f^i}{\alpha}\bigg\|^2_{\H^\smalltext{2}_{\smalltext{T}\smalltext{,}\smalltext{\beta}}}\bigg)\Bigg).
\end{align*}
\end{proposition}
	
\medskip
The proof of the preceding proposition will be based on the two following lemmata whose proofs we defer to \Cref{subsec::proofs_tech_lemmas}. Here we use the convention $\zeta_{0-} \coloneqq 0$ for a process $\zeta =(\zeta_t)_{t \in [0,\infty]}$, and we recall from \Cref{subsec::integrals} that we never include the point $\infty$ in the domain of integration.

\begin{lemma}\label{lem::bound_delta_y_s2}
	The following inequalities hold
	\begin{equation}\label{eq::cond_bound_delta_y_s}
		|\delta y_S| \leq \E\bigg[\int_S^T |\delta f_u| \d C_u \bigg| \cG_S\bigg],  \; |y^i_S| \leq \E\bigg[|\xi_T| + \sup_{u \in [S,\infty]}|\xi^+_u \mathbf{1}_{\{u<T\}}| + \int_S^T |f^i_u| \d C_u \bigg| \cG_S\bigg], \; \text{$\P$\rm--a.s.}, \; S \in \cT_{0,\infty}, \; i \in \{1,2\},
	\end{equation}
	\begin{equation}\label{eq::cond_bound_delta_y_s_minus}
		|\delta y_{S-}| \leq \E\bigg[\int_{S-}^T |\delta f_u| \d C_u \bigg| \cG_{S-}\bigg],  \; |y^i_{S-}| \leq \E\bigg[|\xi_T| + \brs{\xi}_S + \int_{S-}^T |f^i_u| \d C_u \bigg| \cG_{S-}\bigg], \; \text{$\P$\rm--a.s.}, \; S \in \cT^p_{0,\infty}, \; i \in \{1,2\}.\footnote{Recall from \Cref{sec::stochastic_basis} that $\cT^p_{0,\infty}$ denotes the collection of predictable stopping times.}
	\end{equation}
	Moreover
	\begin{equation}\label{eq::exp_bound_delta_y_s}
		\|\delta y\|^2_{\cS^\smalltext{2}_\smalltext{T}} \leq 4 \E\bigg[\bigg( \int_0^T|\delta f_u| \d C_u\bigg)^2\bigg] < \infty, \; \|y^i\|^2_{\cS^\smalltext{2}_\smalltext{T}} \leq 12  \E\bigg[|\xi_T|^2 + \sup_{u \in [0,T)}|\xi^+_u|^2 + \bigg(\int_0^T |f^i_u| \d C_u\bigg)^2\bigg] < \infty, \; i \in \{1,2\}.
	\end{equation}
\end{lemma}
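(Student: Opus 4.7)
The heart of the argument is the representation \eqref{eq::rep_y} combined with the inequality $y^2_S \geq \E[\xi_{\tau \land T} + \int_S^{\tau \land T} f^2_u \d C_u | \cG_S]$ valid for every $\tau \in \cT_{S,\infty}$. Subtracting this from the analogous expression with $f^1$ inside the conditional expectation telescopes the obstacle terms and yields
\begin{equation*}
	\E\bigg[\xi_{\tau \land T} + \int_S^{\tau \land T} f^1_u \d C_u \bigg| \cG_S\bigg] - y^2_S \leq \E\bigg[\int_S^T |\delta f_u| \d C_u \bigg| \cG_S\bigg],
\end{equation*}
so that taking the essential supremum over $\tau \in \cT_{S,\infty}$ gives $y^1_S - y^2_S \leq \E[\int_S^T |\delta f_u| \d C_u | \cG_S]$; symmetry provides the first half of \eqref{eq::cond_bound_delta_y_s}. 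For the bound on $|y^i_S|$, specialising to $\tau = T$ gives $y^i_S \geq \E[\xi_T + \int_S^T f^i_u \d C_u | \cG_S]$, while the crude estimate $\xi_{\tau \land T} \leq |\xi_T| + \sup_{u \in [S, \infty]} |\xi^+_u \mathbf{1}_{\{u < T\}}|$, valid uniformly in $\tau \in \cT_{S,\infty}$, provides the matching upper bound.

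To transfer \eqref{eq::cond_bound_delta_y_s} to its left-limit version \eqref{eq::cond_bound_delta_y_s_minus} at a predictable $S \in \cT^p_{0,\infty}$, I would choose an announcing sequence $(S_n)_{n \in \N} \subseteq \cT_{0,\infty}$ with $S_n \uparrow\uparrow S$ and $S_n < S$ on $\{S > 0\}$. The l\`adl\`ag property of $y^i$ forces $\delta y_{S_n} \to \delta y_{S-}$ $\P$-a.s.; on the right-hand side, dominated convergence yields $\int_{S_n}^T |\delta f_u| \d C_u \to \int_{S-}^T |\delta f_u| \d C_u$ both $\P$-a.s. and in $\L^1$, and $\cG_{S-} = \sigma\big(\cup_n \cG_{S_n}\big)$ by the usual construction of $\cG_{S-}$. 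A standard result on $\L^1$ convergence of conditional expectations with varying filtrations and integrands then gives $\E[\int_{S_n}^T |\delta f_u| \d C_u | \cG_{S_n}] \to \E[\int_{S-}^T |\delta f_u| \d C_u | \cG_{S-}]$, establishing the first inequality in \eqref{eq::cond_bound_delta_y_s_minus}. The bound on $|y^i_{S-}|$ is analogous after noting that $\sup_{u \in [S_n, \infty]} |\xi^+_u \mathbf{1}_{\{u < T\}}| \uparrow \brs{\xi}_S$ by the very definition \eqref{eq::left_limit_xi}.

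For \eqref{eq::exp_bound_delta_y_s}, I introduce the square-integrable c\`adl\`ag martingales
\begin{equation*}
	M_t \coloneqq \E\bigg[\int_0^T |\delta f_u| \d C_u \bigg| \cG_t\bigg], \quad N^i_t \coloneqq \E\bigg[|\xi_T| + \sup_{u \in [0,T)} |\xi^+_u| + \int_0^T |f^i_u| \d C_u \bigg| \cG_t\bigg],
\end{equation*}
whose $\L^2$-integrability is guaranteed by \ref{data::expectation_sup} together with the standing integrability of $f^1$ and $f^2$. The bounds \eqref{eq::cond_bound_delta_y_s} read $|\delta y_S| \leq M_S$ and $|y^i_S| \leq N^i_S$, $\P$-a.s., for every $S \in \cT_{0,\infty}$; since all the processes involved are optional, the optional cross-section theorem upgrades these inequalities to pathwise bounds up to $\P$-evanescence. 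Doob's $\L^2$ inequality applied to $M$ and $N^i$, combined with $(a + b + c)^2 \leq 3(a^2 + b^2 + c^2)$, then yields the two supremum bounds with the advertised constants $4$ and $12$. The only genuinely delicate step is the left-limit passage in the predictable case, where one must simultaneously push conditional expectations through a varying sequence of filtrations and integrands; everything else is a direct consequence of the optimal-stopping representation and Doob's inequality.
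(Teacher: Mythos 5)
Your argument for \eqref{eq::cond_bound_delta_y_s} (telescoping the representation \eqref{eq::rep_y} and taking the essential supremum, plus the crude two-sided estimate for $|y^i_S|$) and for \eqref{eq::exp_bound_delta_y_s} (dominating by the martingales $M$ and $N^i$, upgrading the stopping-time inequalities to pathwise bounds via the optional cross-section theorem, then applying Doob's $\L^2$ inequality with $(a+b+c)^2\leq 3(a^2+b^2+c^2)$) is exactly the paper's proof. Where you genuinely diverge is the passage to the left-limit inequalities \eqref{eq::cond_bound_delta_y_s_minus}: the paper first proves, for \emph{deterministic} times, that $\E[V_t|\cG_t]\leq\E[V'_t|\cG_t]$ for all $t$ implies $\E[V_{t-}|\cG_{t-}]\leq\E[V'_{t-}|\cG_{t-}]$ (its Lemma \ref{lem::increas_cond_mean}, itself a $\pi$--$\lambda$ argument), and then transfers this to predictable stopping times by observing that the predictable projection of a left-continuous process is left-continuous, so the inequality between predictable projections holds up to indistinguishability and can be sampled at $S$. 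You instead announce $S$ by a sequence $(S_n)$ and pass to the limit in the conditional expectations directly. Your route is more direct and avoids predictable projections, but it leans on two facts the paper's route sidesteps: the existence of an announcing sequence of $\G$-stopping times for $S\in\cT^p_{0,\infty}$ when $\G$ is not $\P$-complete (this needs the paper's Lemma \ref{lem::compl_filt} to descend from $\G^\P$ to $\G$), and the identity $\cG_{S-}=\sigma(\cup_n\cG_{S_\smalltext{n}})$ together with the $\L^1$-convergence of $\E[X_n|\cG_{S_\smalltext{n}}]$ to $\E[X|\cG_{S-}]$ (which does hold, by splitting off $\E[X_n-X|\cG_{S_\smalltext{n}}]$ and using martingale convergence, and then extracting an a.s.\ convergent subsequence to preserve the inequality). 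Two small slips to fix: the suprema $\sup_{u\in[S_\smalltext{n},\infty]}|\xi^+_u\mathbf{1}_{\{u<T\}}|$ \emph{decrease} to $\brs{\xi}_S$ (the domain shrinks), not increase; and on $\{S=0\}$ the announcing sequence is eventually $0$, so $y^i_{S_\smalltext{n}}\to y^i_0\neq y^i_{0-}=0$ in general --- there you must simply invoke the convention $\zeta_{0-}\coloneqq 0$, under which \eqref{eq::cond_bound_delta_y_s_minus} is trivial, rather than the limiting argument. Neither point is a genuine gap.
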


\begin{lemma}\label{lem::cond_estimates}
	The following inequalities hold:
		\begin{align}\label{eq::cond_deltaeta0}
			|\delta y_S|^2 + \E\bigg[\int_S^T \d\langle\delta\eta\rangle_u \bigg| \cG_S\bigg] &+ \E\bigg[\int_S^T \d[\delta k^r]_u \bigg| \cG_S\bigg] + \E\bigg[\int_{S-}^T \d[\delta k^\ell]_u \bigg| \cG_S\bigg] \nonumber\\
	&\quad \leq 2 \E\bigg[\int_S^T \delta y_u \delta f_u \d C_u \bigg| \cG_S\bigg] + \E\bigg[\int_S^T (\delta f_u)^2\d[C]_u \bigg| \cG_S\bigg],\; \text{$\P$\rm--a.s.}, \; S \in \cT_{0,\infty},
		\end{align}
		\begin{align}\label{eq::cond_deltaeta_pred0}
			|\delta y_{S-}|^2 + \E\bigg[\int_{S-}^T \d\langle\delta\eta\rangle_u \bigg| \cG_{S-}\bigg] &+ \E\bigg[\int_{S-}^T \d[\delta k^r]_u \bigg| \cG_{S-}\bigg] + \E\bigg[\int_{S-}^T \d[\delta k^\ell]_u \bigg| \cG_{S-}\bigg] \nonumber\\
	&\quad \leq 2 \E\bigg[\int_{S-}^T \delta y_u \delta f_u \d C_u \bigg| \cG_{S-}\bigg] + \E\bigg[\int_{S-}^T (\delta f_u)^2\d[C]_u \bigg| \cG_{S-}\bigg],\; \text{$\P$\rm--a.s.}, \; S \in \cT^p_{0,\infty}.
		\end{align}
		
		Moreover, for each $i \in \{1,2\}$,
		\begin{align}\label{eq::cond_eta1}
			 |y^i_S|^2 + \E\bigg[ \int_S^T \d \langle \eta^i \rangle_u \bigg| \cG_S \bigg] &+ \E\bigg[\int_S^T \d[k^{r,i}]_u \bigg| \cG_S\bigg] + \E\bigg[\int_{S-}^T \d[k^{\ell,i}]_u \bigg| \cG_S\bigg] \nonumber\\
			 &\quad\leq L \E\bigg[ |\xi_T|^2 + \sup_{u \in [S,\infty]} |\xi^+_u \mathbf{1}_{\{u < T\}}|^2 + \bigg(\int_S^T |f^i_u| \d C_u\bigg)^2 \bigg| \cG_S \bigg],\; \text{$\P$\rm--a.s.}, \; S \in \cT_{0,\infty},
		\end{align}
		\begin{align}\label{eq::cond_eta1_pred}
			 |y^i_{S-}|^2 + \E\bigg[ \int_{S-}^T \d \langle \eta^i \rangle_u \bigg| \cG_{S-} \bigg] &+ \E\bigg[\int_{S-}^T \d[k^{r,i}]_u \bigg| \cG_{S-}\bigg] + \E\bigg[\int_{S-}^T \d[k^{\ell,i}]_u \bigg| \cG_{S-}\bigg] \nonumber\\
			 &\quad\leq L \E\bigg[ |\xi_T|^2 + |\brs{\xi}_S|^2 + \bigg(\int_{S-}^T |f^i_u| \d C_u\bigg)^2 \bigg| \cG_{S-} \bigg],\; \text{$\P$\rm--a.s.}, \; S \in \cT^p_{0,\infty}.
		\end{align}
\end{lemma}

\medskip
We turn to the proof of the \emph{a priori} estimates.

\begin{proof}[Proof of Proposition \ref{prop::apriori}]\label{proof::apriori}
	To ease the presentation, let us abuse notation sligthly in this proof and denote by $\E[W_\cdot|\cG_\cdot]$ and $\E[W_\cdot|\cG_{\cdot-}]$ the optional and predictable projections, respectively, of a non-negative, product-measurable process $W = (W_t)_{t \in [0,\infty]}$. We refer to \cite[Section VI.2 and Appendix I]{dellacherie1982probabilities} for their existence and properties.
	
	\medskip
	We suppose, without loss of generality, that $\| \frac{f^1}{\alpha}\|_{\H^{\smalltext{2}}_{\smalltext{T}\smalltext{,}\smalltext{\beta}}}$, $\| \frac{f^2}{\alpha}\|_{\H^{\smalltext{2}}_{\smalltext{T}\smalltext{,}\smalltext{\beta}}}$ and $\| \frac{\delta f}{\alpha}\|_{\H^{\smalltext{2}}_{\smalltext{T}\smalltext{,}\smalltext{\beta}}}$ are all finite; otherwise the stated inequalities trivially hold. We start with some introductory calculations. First, let $(\gamma,\beta) \in (0,\infty)^2$ with $\gamma < \beta$, and recall that the stochastic exponential $\cE(\gamma A)$ of the non-decreasing, predictable process $\gamma A$ satisfies
\begin{equation}\label{eq::sde_stoch_exp}
	\cE(\gamma A)_t = 1 + \int_0^t \cE(\gamma A)_{s-}  \d (\gamma A)_s = 1 + \int_0^t \cE(\gamma A)_{s-} \gamma \d A_s, \; t \in [0,\infty), \; \text{$\P$--a.s.}
\end{equation}
In particular, $\cE(\gamma A)$ is predictable and (see \cite[page 134]{jacod2003limit})
\begin{equation}\label{eq::prod_formula_stoch_exp}
	\displaystyle \cE(\gamma A)_t = \mathrm{e}^{\gamma A_\smalltext{t}} \prod_{s \in (0,t]}(1+\gamma\Delta A_s)\mathrm{e}^{-\gamma\Delta A_\smalltext{s}}, \; t \in [0,\infty), \; \text{$\P$--a.s.}
\end{equation}
Since $A$ is $\P$--a.s. non-decreasing, so is $\cE(\gamma A)$ by \eqref{eq::sde_stoch_exp} and \eqref{eq::prod_formula_stoch_exp}. Note also that
\begin{equation}\label{eq::jump_stoch_exp}
	\cE(\gamma A)_t = \cE(\gamma A)_{t-} + \Delta\cE(\gamma A)_t = \cE(\gamma A)_{t-} + \cE(\gamma A)_{t-}\gamma\Delta A_t = \cE(\gamma A)_{t-}(1+\gamma\Delta A_t), \; t \in (0,\infty), \; \text{$\P$--a.s.}
\end{equation}
By \cite[Lemma 4.4]{cohen2012existence} or \Cref{lem::stoch_exp_rules}.$(i)$, we have $\cE(\gamma A)^{-1} = \cE(-\overline{\gamma A})$, $\P$--a.s., where $\overline{\gamma A}$ is the predictable process satisfying
\begin{equation*}
	\overline{\gamma A} = \gamma A - \sum_{s \in (0,\cdot]} \frac{(\gamma\Delta A_s)^2}{1+\gamma\Delta A_s}, \; \text{$\P$--a.s.},
\end{equation*}
\begin{equation}\label{eq::overline_A}
	(\overline{\gamma A})^c = (\gamma A)^c = \gamma A^c, \; \Delta\overline{\gamma A} = \gamma\Delta A - \frac{(\gamma\Delta A)^2}{1+\gamma\Delta A} = \frac{\gamma\Delta A}{1+\gamma\Delta A}, \; \text{$\P$--a.s.}
\end{equation}
In particular, $\overline{\gamma A}$ is $\P$--a.s. non-decreasing.
Let $F = \big(F(t)\big)_{t \in [0,\infty]}$ be the process defined by
\begin{equation}\label{eq::def_F}
	F(t) \coloneqq \int_t^T |\delta f_s| \d C_s \coloneqq \int_0^T |\delta f_s|\d C_s - \int_0^{t \land T}|\delta f_s|\d C_s = \int_{(0,\infty)} |\delta f_s|\mathbf{1}_{(t,T]}(s) \d C_s, \; \text{$\P$--a.s.}
\end{equation}
For $t \in (0,\infty]$, we have $F(t-) = \int_{(0,\infty)} |\delta f_s| \mathbf{1}_{[t,T]}(s) \d C_s$, $\P$--a.s., and
\begin{align}\label{eq::bound_on_F_squared_minus}
	|F(t-)|^2 
	\leq \int_{(0,\infty)} \frac{1}{\cE(\gamma A)_s} \mathbf{1}_{[t,T]}(s) \d A_s \int_{(0,\infty)} \cE(\gamma A)_s \frac{|\delta f_s|^2}{\alpha^2_s}  \mathbf{1}_{[t,T]}(s) \d C_s, \; \text{$\P$--a.s.},
\end{align}
by the Cauchy--Schwarz inequality. The first integral on the right-hand side above can be bounded as follows
\begin{align}\label{eq::integ_stoch_exp_inverse}
	\int_{(0,\infty)} \frac{1}{\cE(\gamma A)_s} \mathbf{1}_{[t,T]}(s) \d A_s 
	&= \lim_{t^\prime \uparrow\uparrow \infty}\int_{(0,\infty)} \frac{1}{\cE(\gamma A)_s} \mathbf{1}_{[t,t^\prime \land T]}(s) \d A_s \nonumber\\
	&= \lim_{t^\prime \uparrow\uparrow \infty}\int_{(0,\infty)} \frac{1}{\cE(\gamma A)_{s-}}\frac{1}{(1+\gamma\Delta A_s)} \mathbf{1}_{[t,t^\prime \land T]}(s) \d A_s \nonumber\\
	&= \lim_{t^\prime \uparrow\uparrow \infty}\int_{(0,\infty)} \cE(-\overline{\gamma A})_{s-}\frac{1}{(1+\gamma\Delta A_s)} \mathbf{1}_{[t,t^\prime \land T]}(s) \d A_s \nonumber\\
	&= \lim_{t^\prime \uparrow\uparrow \infty}\int_{(0,\infty)} \cE(-\overline{\gamma A})_{s-}\frac{1}{(1+\gamma\Delta A_s)} \mathbf{1}_{[t,t^\prime \land T]}(s) \d A^c_s + \sum_{s \in [t,\infty)} \cE(-\overline{\gamma A})_{s-}\mathbf{1}_{\{s \leq t^\prime \land T\}}\frac{\Delta A_s}{(1+\gamma\Delta A_s)}  \nonumber\\
	%&= \frac{1}{\gamma}\int_{(0,\infty)} \cE(-\overline{\gamma A})_{s-} \mathbf{1}_{[t,t^\prime \land T]}(s) \d (\gamma A)^c_s + \frac{1}{\gamma}\sum_{t\leq s\leq T} \cE(-\overline{\gamma A})_{s-}\frac{\gamma\Delta A_s}{(1+\gamma\Delta A_s)} \nonumber\\
	&= \lim_{t^\prime \uparrow\uparrow \infty}\frac{1}{\gamma}\int_{(0,\infty)} \cE(-\overline{\gamma A})_{s-} \mathbf{1}_{[t,t^\prime \land T]}(s) \d (\overline{\gamma A})^c_s + \frac{1}{\gamma}\sum_{s \in [t,\infty)} \cE(-\overline{\gamma A})_{s-}\mathbf{1}_{\{s \leq t^\prime \land T\}}\Delta\overline{\gamma A}_s \nonumber\\
	&= \lim_{t^\prime \uparrow\uparrow \infty}\frac{1}{\gamma} \int_{(0,\infty)} \cE(-\overline{\gamma A})_{s-}\mathbf{1}_{[t,t^\prime \land T]}(s) \d \overline{\gamma A}_s \nonumber\\
	%&= -\frac{1}{\gamma} \int_{t-}^T \cE(-\overline{\gamma A})_{s-} \d (-\overline{\gamma A})_s \nonumber\\
	&= -\frac{1}{\gamma} \lim_{t^\prime \uparrow\uparrow \infty}\bigg(\cE(-\overline{\gamma A})_{t^\prime \land T} - \cE(-\overline{\gamma A})_{{t \land T}-}\bigg) 
	%= \frac{1}{\gamma}\bigg(\frac{1}{\cE(\gamma A)_{t-}} - \frac{1}{\cE(\gamma A)_T}\bigg) 
	\leq \frac{1}{\gamma} \frac{1}{\cE(\gamma A)_{t \land T-}}.
\end{align}
Here, the fourth line follows from \eqref{eq::overline_A} and 
\begin{equation}\label{eq::dA_cont_jumps}
	\frac{1}{(1+\gamma\Delta A)} \d A^c = \d A^c
\end{equation}
since $A^c$ is continuous, and thus $\d A^c$ does not charge any points on $(0,\infty)$.
Thus
\begin{equation*}
	|F(t-)|^2 \leq \frac{1}{\gamma} \frac{1}{\cE(\gamma A)_{t\land T-}} \int_{(0,\infty)} \cE(\gamma A)_s \frac{|\delta f_s|^2}{\alpha^2_s}\mathbf{1}_{[t,T]}(s) \d C_s,\; t \in (0,\infty), \; \text{$\P$--a.s.},
\end{equation*}
and therefore
\begin{align}\label{eq::integral_F_minus}
	\int_0^T \cE(\beta A)_{t-} |F(t-)|^2 \d A_t &= \int_{(0,\infty)} \mathbf{1}_{(0,T]}(t) \cE(\beta A)_{t-} |F(t-)|^2 \d A_t \nonumber\\
	&\leq \frac{1}{\gamma}\int_{(0,\infty)} \mathbf{1}_{(0,T]}(t) \cE(\beta A)_{t-}  \frac{1}{\cE(\gamma A)_{t-}} \int_{(0,\infty)} \cE(\gamma A)_s \frac{|\delta f_s|^2}{\alpha^2_s} \mathbf{1}_{[t,T]}(s) \d C_s \d A_t \nonumber\\
	&= \frac{1}{\gamma}\int_{(0,\infty)}\int_{(0,\infty)}\mathbf{1}_{\{0<t\leq s \leq T\}} \cE(\beta A)_{t-}  \frac{1}{\cE(\gamma A)_{t-}} \cE(\gamma A)_s \frac{|\delta f_s|^2}{\alpha^2_s} \d C_s \d A_t \nonumber\\
	&= \frac{1}{\gamma}\int_{(0,\infty)}\int_{(0,\infty)}\mathbf{1}_{\{0<t\leq s \leq T\}} \cE(\beta A)_{t-}  \frac{1}{\cE(\gamma A)_{t-}} \cE(\gamma A)_s \frac{|\delta f_s|^2}{\alpha^2_s} \d A_t \d C_s \nonumber\\
	&= \frac{1}{\gamma}\int_{(0,\infty)} \cE(\gamma A)_s \frac{|\delta f_s|^2}{\alpha^2_s} \int_{(0,\infty)}\mathbf{1}_{\{0<t\leq s \leq T\}} \cE(\beta A)_{t-}  \frac{1}{\cE(\gamma A)_{t-}} \d A_t \d C_s \nonumber\\
	&= \frac{1}{\gamma}\int_0^T \cE(\gamma A)_s \frac{|\delta f_s|^2}{\alpha^2_s} \int_0^s \cE(\beta A)_{t-}  \frac{1}{\cE(\gamma A)_{t-}} \d A_t \d C_s, \; \text{$\P$--a.s.}
\end{align}
Here, the fourth line follows from Tonelli's theorem. \Cref{lem::stoch_exp_rules}.$(ii)$ yields
\begin{equation*}
	\int_0^s \cE(\beta A)_{t-}  \frac{1}{\cE(\gamma A)_{t-}} \d A_t  = \int_0^s \cE(\widehat A^{\beta,\gamma})_{t-} \d A_t,\; s \in [0,\infty), \; \text{$\P$--a.s.},
\end{equation*}
where $\widehat A^{\beta,\gamma} = (\widehat A^{\beta,\gamma}_t)_{t \in [0,\infty)}$ is the predictable process satisfying
\begin{equation}\label{eq::widehat_A_beta_gamma}
	\widehat A^{\beta,\gamma} = (\beta-\gamma)A^c + \sum_{s \in (0,\cdot]} (\beta-\gamma)\frac{\Delta A_s}{1+\gamma\Delta A_s}, \; \text{$\P$--a.s.}
\end{equation}
Since $\beta-\gamma > 0$, the process $\widehat A^{\beta,\gamma}$ is $\P$--a.s. non-decreasing. Similar to the derivation of \eqref{eq::integ_stoch_exp_inverse}, we find
\begin{align}\label{eq::widehat_A_integral}
	\int_0^s \cE\big(\widehat A^{\beta,\gamma}\big)_{t-} \d A_t &= \int_{(0,s]} \cE\big(\widehat A^{\beta,\gamma}\big)_{t-} \d A^c_t + \sum_{t \in (0,s]} \cE\big(\widehat A^{\beta,\gamma}\big)_{t-} \Delta A_t \nonumber\\
	%&= \frac{1}{(\delta-\gamma)}\int_0^s \cE(\widehat A^{\delta,\gamma})_{t-} (1+\gamma\Delta A_t) \d \big((\delta-\gamma)A\big)^c_t + \frac{1}{(\delta-\gamma)}\sum_{0 < t \leq s} \cE(\widehat A^{\delta,\gamma})_{t-} (1+\gamma\Delta A_t) (\delta-\gamma)\frac{\Delta A_t}{1+\gamma\Delta A_t} \nonumber\\
	&= \frac{1}{(\beta-\gamma)}\int_{(0,s]} \cE\big(\widehat A^{\beta,\gamma}\big)_{t-} (1+\gamma\Delta A_t) \d \big(\widehat A^{\beta,\gamma}\big)^c_t + \frac{1}{(\beta-\gamma)}\sum_{t \in (0,s]} \cE\big(\widehat A^{\beta,\gamma}\big)_{t-} (1+\gamma\Delta A_t) \Delta \widehat A^{\beta,\gamma}_t \nonumber\\
	&= \frac{1}{(\beta-\gamma)}\int_{(0,s]} \cE\big(\widehat A^{\beta,\gamma}\big)_{t-} (1+\gamma\Delta A_t) \d \widehat A^{\beta,\gamma}_t \nonumber\\
	&\leq \frac{(1+\gamma\Phi)}{(\beta - \gamma)} \int_{(0,s]} \cE\big(\widehat A^{\beta,\gamma}\big)_{t-} \d \widehat A^{\beta,\gamma}_t \nonumber\\
	&= \frac{(1+\gamma\Phi)}{(\beta - \gamma)} \big(\cE\big(\widehat A^{\beta,\gamma}\big)_s - 1\big) \leq \frac{(1+\gamma\Phi)}{(\beta - \gamma)} \cE\big(\widehat A^{\beta,\gamma}\big)_s = \frac{(1+\gamma\Phi)}{(\beta - \gamma)} \cE(\beta A)_s \frac{1}{\cE(\gamma A)_s},\; s \in (0,\infty), \; \text{$\P$--a.s.}
\end{align}
Here, we used $(1+\gamma\Delta A)\d A^c = \d A^c$ and \eqref{eq::widehat_A_beta_gamma} in the second line, and the definition of $\widehat A^{\beta,\gamma}$ in the last equality. By substituting \eqref{eq::widehat_A_integral} into \eqref{eq::integral_F_minus}, we obtain
\begin{equation*}
	\int_0^T \cE(\beta A)_{t-} |F(t-)|^2 \d A_t 
	\leq \frac{(1+\gamma\Phi)}{\gamma(\beta - \gamma)} \int_0^T \cE(\beta A)_s \frac{|\delta f_s|^2}{\alpha^2_s} \d C_s, \; \text{$\P$--a.s.}
\end{equation*}
Consequently, this implies
\begin{align*}
	\int_0^T \cE(\beta A)_t |F(t-)|^2 \d A_t &= \int_0^T \cE(\beta A)_{t-}(1+\beta \Delta A_t) |F(t-)|^2 \d A_t  \\
	&\leq (1+\beta\Phi) \int_0^T \cE(\beta A)_{t-} |F(t-)|^2 \d A_t \leq \frac{(1+\beta\Phi)(1+\gamma\Phi)}{\gamma(\beta-\gamma)} \int_0^T \cE(\beta A)_s \frac{|\delta f_s|^2}{\alpha^2_s} \d C_s, \; \text{$\P$--a.s.}
\end{align*}
Next, we note that we also have
\begin{align}\label{eq::bound_on_F_squared}
	|F(t)|^2 
	\leq \int_{(0,\infty)} \frac{1}{\cE(\gamma A)_s} \mathbf{1}_{(t,T]}(s) \d A_s \int_{(0,\infty)} \cE(\gamma A)_s \frac{|\delta f_s|^2}{\alpha^2_s}  \mathbf{1}_{(t,T]}(s) \d C_s, \; t \in [0,\infty), \; \text{$\P$--a.s.},
\end{align}
and similarly to \eqref{eq::integ_stoch_exp_inverse}, we find
\begin{equation}\label{eq::integ_stoch_exp_inverse2}
	\int_{(0,\infty)} \frac{1}{\cE(\gamma A)_s} \mathbf{1}_{(t,T]}(s) \d A_s \leq \frac{1}{\gamma} \frac{1}{\cE(\gamma A)_{t \land T}}, \; t \in [0,\infty), \; \text{$\P$--a.s.}
\end{equation}
This also follows by taking right-hand limits along $t$ in \eqref{eq::integ_stoch_exp_inverse}. We insert \eqref{eq::integ_stoch_exp_inverse2} into \eqref{eq::bound_on_F_squared} and find
\begin{equation}\label{eq::F_t_squared}
	|F(t)|^2 \leq \frac{1}{\gamma} \frac{1}{\cE(\gamma A)_{t \land T}} \int_{(0,\infty)} \cE(\gamma A)_s \frac{|\delta f_s|^2}{\alpha^2_s}  \mathbf{1}_{(t,T]}(s) \d C_s, \; t \in [0,\infty), \; \text{$\P$--a.s.}
\end{equation}
This yields
\begin{align*}
	\int_0^T \cE(\beta A)_t |F(t)|^2 \d A_t 
	&= \int_{(0,\infty)} \cE(\beta A)_t |F(t)|^2 \mathbf{1}_{(0,T]}(t) \d A_t \\ 
	&\leq \frac{1}{\gamma}\int_{(0,\infty)} \cE(\beta A)_{t}  \frac{1}{\cE(\gamma A)_{t}} \mathbf{1}_{(0,T]}(t) \int_{(0,\infty)} \cE(\gamma A)_s \frac{|\delta f_s|^2}{\alpha^2_s} \mathbf{1}_{(t,T]}(s) \d C_s \d A_t \nonumber\\
	&= \frac{1}{\gamma}\int_{(0,\infty)}\int_{(0,\infty)}\mathbf{1}_{\{0<t < s \leq T\}} \cE(\beta A)_{t}  \frac{1}{\cE(\gamma A)_{t}} \cE(\gamma A)_s \frac{|\delta f_s|^2}{\alpha^2_s} \d C_s \d A_t \nonumber\\
	&= \frac{1}{\gamma}\int_{(0,\infty)}\int_{(0,\infty)}\mathbf{1}_{\{0<t < s \leq T\}} \cE(\beta A)_{t}  \frac{1}{\cE(\gamma A)_{t}} \cE(\gamma A)_s \frac{|\delta f_s|^2}{\alpha^2_s} \d A_t \d C_s \nonumber\\
	&= \frac{1}{\gamma}\int_{(0,\infty)} \cE(\gamma A)_s \frac{|\delta f_s|^2}{\alpha^2_s} \int_{(0,\infty)}\mathbf{1}_{\{0<t< s \leq T\}} \cE(\beta A)_{t}  \frac{1}{\cE(\gamma A)_{t}} \d A_t \d C_s \nonumber\\
	&= \frac{1}{\gamma}\int_0^T \cE(\gamma A)_s \frac{|\delta f_s|^2}{\alpha^2_s} \int_0^{s-} \cE(\beta A)_{t}  \frac{1}{\cE(\gamma A)_{t}} \d A_t \d C_s \\
	&=  \frac{1}{\gamma}\int_0^T \cE(\gamma A)_s \frac{|\delta f_s|^2}{\alpha^2_s} \int_0^{s-} \frac{\cE(\beta A)_{t-}}{\cE(\gamma A)_{t-}}\frac{(1+\beta\Delta A_t)}{(1+\gamma\Delta A_t)} \d A_t \d C_s \\
	&\leq  \frac{1}{\gamma}\frac{(1+\beta\Phi)}{(1+\gamma\Phi)} \int_0^T \cE(\gamma A)_s \frac{|\delta f_s|^2}{\alpha^2_s} \int_0^{s-} \frac{\cE(\beta A)_{t-}}{\cE(\gamma A)_{t-}}\d A_t \d C_s \\
	&=  \frac{1}{\gamma}\frac{(1+\beta\Phi)}{(1+\gamma\Phi)}\int_0^T \cE(\gamma A)_s \frac{|\delta f_s|^2}{\alpha^2_s} \int_0^{s-} \cE\big(\widehat A^{\beta,\gamma}\big)_{t-} \d A_t \d C_s \\
	&\leq \frac{(1+\beta\Phi)}{\gamma(\beta - \gamma)}\int_0^T \cE(\gamma A)_s \frac{|\delta f_s|^2}{\alpha^2_s}  \cE(\beta A)_{s-} \frac{1}{\cE(\gamma A)_{s-}} \d C_s \\
	&= \frac{(1+\beta\Phi)}{\gamma(\beta - \gamma)}\int_0^T \cE(\beta A)_{s-}  \frac{|\delta f_s|^2}{\alpha^2_s} \frac{\cE(\gamma A)_{s-}(1+\gamma\Delta A_s)}{\cE(\gamma A)_{s-}}  \d C_s \\
	&\leq \frac{(1+\beta\Phi)}{\gamma(\beta - \gamma)}\int_0^T \cE(\beta A)_{s-}(1+\beta\Delta A_s)  \frac{|\delta f_s|^2}{\alpha^2_s} \d C_s = \frac{(1+\beta\Phi)}{\gamma(\beta - \gamma)}\int_0^T \cE(\beta A)_s \frac{|\delta f_s|^2}{\alpha^2_s}   \d C_s, \; \text{$\P$--a.s.}
\end{align*}
Here, we used Tonelli's theorem in the fourth line, \eqref{eq::jump_stoch_exp} in the seventh line, the fact that $(1+\beta \Delta A)/(1+\gamma \Delta A) \leq (1+\beta\Phi)/(1+\gamma\Phi)$ since $x \longmapsto (1+\beta x)/(1+\gamma x)$ is increasing on $[0,\Phi]$ in the eighth line, \Cref{eq::widehat_A_integral} in the tenth line, and \Cref{eq::jump_stoch_exp} again in the second-to-last and last line. Consequently, this implies
\begin{equation*}
	\int_0^T \cE(\beta A)_{t-} |F(t)|^2 \d A_t 
	\leq \int_0^T \cE(\beta A)_{t-} |F(t-)|^2 \d A_t 
	\leq \frac{(1+\gamma\Phi)}{\gamma(\beta - \gamma)}\int_0^T \cE(\beta A)_s  \frac{|\delta f_s|^2}{\alpha^2_s}  \d C_s, \; \text{$\P$--a.s.},
\end{equation*}
since $F(t) \leq F(t-)$ for $t \in (0,\infty]$. To summarise, we found
\begin{align}\label{eq::stoch_exp_F}
	\int_0^T \cE(\beta A)_t |F(t)|^2 \d A_t 
	&\leq \frac{(1+\beta\Phi)}{\gamma(\beta - \gamma)}\int_0^T \cE(\beta A)_s  \frac{|\delta f_s|^2}{\alpha^2_s}  \d C_s, \nonumber\; \int_0^T \cE(\beta A)_{t-} |F(t)|^2 \d A_t 
	\leq \frac{(1+\gamma\Phi)}{\gamma(\beta - \gamma)}\int_0^T \cE(\beta A)_s  \frac{|\delta f_s|^2}{\alpha^2_s}  \d C_s, \nonumber\\
\nonumber	&\int_0^T \cE(\beta A)_t |F(t-)|^2 \d A_t 
	\leq \frac{(1+\beta\Phi)(1+\gamma\Phi)}{\gamma(\beta-\gamma)} \int_0^T \cE(\beta A)_s \frac{|\delta f_s|^2}{\alpha^2_s} \d C_s, \\ 
	&\qquad\int_0^T \cE(\beta A)_{t-} |F(t-)|^2 \d A_t 
	\leq \frac{(1+\gamma\Phi)}{\gamma(\beta - \gamma)} \int_0^T \cE(\beta A)_s \frac{|\delta f_s|^2}{\alpha^2_s} \d C_s, \; \text{$\P$--a.s.}
\end{align}

\medskip
We turn to the stated bounds. The $\cS^2_T$-bounds
\begin{equation}\label{eq::S2T_bounds_proof}
	\|\delta y\|^2_{\cS^\smalltext{2}_\smalltext{T}} \leq \frac{4}{\beta} \bigg\| \frac{\delta f}{\alpha} \bigg\|^2_{\H^{\smalltext{2}}_{\smalltext{T}\smalltext{,}\smalltext{\beta}}} \; \text{and} \; \|y^i\|^2_{\cS^\smalltext{2}_\smalltext{T}} \leq 12 \, \Bigg(\|\xi_T\|^2_{\L^\smalltext{2}} + \Big\|\sup_{u \in [0,T)} \xi^+_u\Big\|^2_{\L^\smalltext{2}} +  \frac{1}{\beta}\bigg\|\frac{f^i}{\alpha}\bigg\|^2_{\H^\smalltext{2}_{\smalltext{T}\smalltext{,}\smalltext{\beta}}}\Bigg)
\end{equation}
follow from \Cref{lem::bound_delta_y_s2} together with
\begin{align}
	\E\bigg[\bigg(\int_0^T |\delta f_s| \d C_s \bigg)^2\bigg] &\leq \E\bigg[\bigg(\int_0^T \frac{1}{\cE(\beta A)_s}\d A_s\bigg)\bigg(\int_0^T \cE(\beta A)_s\frac{|\delta f_s|^2}{\alpha^2_s} \d C_s \bigg)\bigg] \leq \frac{1}{\beta} \bigg\|\frac{\delta f}{\alpha}\bigg\|^2_{\H^\smalltext{2}_{\smalltext{T}\smalltext{,}\smalltext{\beta}}},\label{eq::bound_integral_delta_f}\\
	\E\bigg[\bigg(\int_0^T |f^i_s| \d C_s \bigg)^2\bigg] &\leq \E\bigg[\bigg(\int_0^T \frac{1}{\cE(\beta A)_s}\d A_s\bigg)\bigg(\int_0^T \cE(\beta A)_s\frac{|f^i_s|^2}{\alpha^2_s} \d C_s \bigg)\bigg] \leq \frac{1}{\beta} \bigg\|\frac{f^i}{\alpha}\bigg\|^2_{\H^\smalltext{2}_{\smalltext{T}\smalltext{,}\smalltext{\beta}}}. \label{eq::bound_integral_f}
\end{align}
Here we first use the Cauchy--Schwarz inequality and then \eqref{eq::integ_stoch_exp_inverse2}.
The $\H^2$-bound for $\delta y$ follows from
\begin{align*}
	\|\alpha \delta y\|^2_{\H^\smalltext{2}_{\smalltext{T}\smalltext{,}\smalltext{\beta}}} = \E \bigg[ \int_0^T \cE(\beta A)_t |\delta y_t|^2 \alpha^2_t \d C_t \bigg] 
	= \E \bigg[ \int_0^T \cE(\beta A)_t |\delta y_{t}|^2 \d A_t \bigg] 
&\leq \E \bigg[ \int_0^T \E \big[ \cE(\beta A)_t |F(t)|^2 \big| \cG_{t} \big] \d A_t \bigg] \\
	&= \E \bigg[ \int_0^T \cE(\beta A)_t |F(t)|^2 \d A_t \bigg]
	\leq \frac{(1+\beta\Phi)}{\gamma(\beta - \gamma)} \bigg\| \frac{\delta f}{\alpha} \bigg\|^2_{\H^{\smalltext{2}}_{\smalltext{T}\smalltext{,}\smalltext{\beta}}},
\end{align*}
where the first inequality follows from optional projection together with \Cref{lem::bound_delta_y_s2}, the step from the first to the second line follows from \cite[Theorem VI.57]{dellacherie1982probabilities}, and the last inequality follows from \eqref{eq::stoch_exp_F}. Hence,
\begin{equation}\label{eq::inequality_delta_y_H_2}
	\|\alpha \delta y\|^2_{\H^\smalltext{2}_{\smalltext{T}\smalltext{,}\smalltext{\beta}}} 
	\leq \inf_{\gamma \in (0,\beta)}\bigg\{\frac{(1+\beta\Phi)}{\gamma(\beta-\gamma)}\bigg\}\bigg\| \frac{\delta f}{\alpha} \bigg\|^2_{\H^{\smalltext{2}}_{\smalltext{T}\smalltext{,}\smalltext{\beta}}} = \frac{4(1+\beta\Phi)}{\beta^2}\bigg\| \frac{\delta f}{\alpha} \bigg\|^2_{\H^{\smalltext{2}}_{\smalltext{T}\smalltext{,}\smalltext{\beta}}} = \ff^\Phi(\beta) \bigg\| \frac{\delta f}{\alpha} \bigg\|^2_{\H^{\smalltext{2}}_{\smalltext{T}\smalltext{,}\smalltext{\beta}}}.
\end{equation}

We similarly find
\begin{align}
	\|\alpha \delta y_-\|^2_{\H^\smalltext{2}_{\smalltext{T}\smalltext{,}\smalltext{\beta}}} 
	%= \E \bigg[ \int_0^T \cE(\beta A)_t |\delta y_{t-}|^2 \alpha^2_t \d C_t \bigg] 
	&= \E \bigg[ \int_0^T \cE(\beta A)_t |\delta y_{t-}|^2 \d A_t \bigg] 
\leq (1+\beta\Phi)\E \bigg[ \int_0^T \cE(\beta A)_{t-}|\delta y_{t-}|^2 \d A_t \bigg] \label{eq::exp_minus_y_minus}\\
	&= (1+\beta\Phi)\E \bigg[ \int_0^T \cE(\beta A)_{t-} |F(t-)|^2 \d A_t \bigg]
	\leq (1+\beta\Phi)\inf_{\gamma \in (0,\beta)}\bigg\{\frac{(1+\gamma\Phi)}{\gamma(\beta - \gamma)}\bigg\} \bigg\|\frac{\delta f}{\alpha}\bigg\|^2_{\mathbb{H}^\smalltext{2}_{\smalltext{T}\smalltext{,}\smalltext{\beta}}}
	= (1+\beta\Phi) \fg^\Phi(\beta) \bigg\|\frac{\delta f}{\alpha}\bigg\|^2_{\mathbb{H}^\smalltext{2}_{\smalltext{T}\smalltext{,}\smalltext{\beta}}}, \nonumber
\end{align}
where we now use the predictable projection together with \Cref{lem::bound_delta_y_s2} and  \cite[Theorem VI.57]{dellacherie1982probabilities}. Assuming for the moment that $f^2 = 0$, we find analogously
\begin{align*}
	\|\alpha y^1\|^2_{\H^\smalltext{2}_{\smalltext{T}\smalltext{,}\smalltext{\beta}}} = \E \bigg[ \int_0^T \cE(\beta A)_t |y^1_t|^2 \alpha^2_t \d C_t \bigg] 
	&= \E \bigg[ \int_0^T \cE(\beta A)_t |y^1_{t}|^2 \d A_t \bigg] \\
	&\leq 3\E\bigg[ \int_0^T \E \bigg[\cE(\beta A)_t|\xi_T|^2 + \cE(\beta A)_t\sup_{u \in [t,\infty]}|\xi^+_u \mathbf{1}_{\{u < T\}}|^2 + \cE(\beta A)_t |F(t)|^2 \bigg| \cG_{t} \bigg] \d A_t \bigg] \\
	&= 3\Bigg(\E \bigg[ \int_0^T \cE(\beta A)_t |\xi_T|^2 \d A_t \bigg] + \E \bigg[ \int_0^T \cE(\beta A)_t \sup_{u \in [t,\infty]}|\xi^+_u \mathbf{1}_{\{u < T\}}|^2 \d A_t \bigg] \\
	&\quad+ \E \bigg[ \int_0^T \cE(\beta A)_t |F(t)|^2 \d A_t \bigg] \Bigg) \\
	&= \frac{3}{\beta}\E \bigg[ |\xi_T|^2 \int_0^T \cE(\beta A)_{t-}(1+\beta\Delta A_t) \d (\beta A)_t \bigg] \\
	&\quad+3\Bigg( \E \bigg[ \int_0^T \cE(\beta A)_t \sup_{u \in [t,\infty]}|\xi^+_u \mathbf{1}_{\{u < T\}}|^2 \d A_t \bigg] + \E \bigg[ \int_0^T \cE(\beta A)_t |F(t)|^2 \d A_t \bigg] \Bigg) \\
	&\leq 3\frac{(1+\beta\Phi)}{\beta}\E \bigg[ |\xi_T|^2 \int_0^T \cE(\beta A)_{t-} \d (\beta A)_t \bigg]\\
	&\quad+ 3\Bigg( \E \bigg[ \int_0^T \cE(\beta A)_t \sup_{u \in [t,\infty]}|\xi^+_u \mathbf{1}_{\{u < T\}}|^2 \d A_t \bigg] +\E \bigg[ \int_0^T \cE(\beta A)_t |F(t)|^2 \d A_t \bigg] \Bigg) \\
	&\leq 3\Bigg(\frac{(1+\beta\Phi)}{\beta}\E \Big[ \cE(\beta A)_T|\xi_T|^2 \Big] + \E \bigg[ \int_0^T \cE(\beta A)_t \sup_{u \in [t,\infty]}|\xi^+_u \mathbf{1}_{\{u < T\}}|^2 \d A_t \bigg] \\
	&\quad+ \E \bigg[ \int_0^T \cE(\beta A)_t |F(t)|^2 \d A_t \bigg] \Bigg) \\
	&\leq 3 \Bigg( \frac{(1+\beta\Phi)}{\beta} \|\xi_T\|^2_{\L^\smalltext{2}_\smalltext{\beta}} + \|\alpha\brs{\xi}\|^2_{\H^\smalltext{2}_{\smalltext{T}\smalltext{,}\smalltext{\beta}}} +  \inf_{\gamma \in (0,\beta)}\bigg\{\frac{(1+\beta\Phi)}{\gamma(\beta - \gamma)}\bigg\} \bigg\| \frac{f^1}{\alpha} \bigg\|^2_{\H^{\smalltext{2}}_{\smalltext{T}\smalltext{,}\smalltext{\beta}}} \Bigg) \\
	&= 3 \Bigg( \frac{(1+\beta\Phi)}{\beta} \|\xi_T\|^2_{\L^\smalltext{2}_\smalltext{\beta}} + \|\alpha\brs{\xi}\|^2_{\H^\smalltext{2}_{\smalltext{T}\smalltext{,}\smalltext{\beta}}} +  \ff^\Phi(\beta) \bigg\| \frac{f^1}{\alpha} \bigg\|^2_{\H^{\smalltext{2}}_{\smalltext{T}\smalltext{,}\smalltext{\beta}}} \Bigg),
\end{align*}
where the second line follows from the optional projection together with \Cref{lem::bound_delta_y_s2} and $(a^2+b^2+c^3) \leq 3(a^2+b^2+c^2)$, the third line follows from \cite[Theorem VI.57]{dellacherie1982probabilities}, and the last line follows from \eqref{eq::stoch_exp_F}.
An analogous argument leads to the $\H^2$-bound on $y^2$. By using predictable projection, we find
\begin{align*}
	\|\alpha y^1_-\|^2_{\H^\smalltext{2}_{\smalltext{T}\smalltext{,}\smalltext{\beta}}} = \E \bigg[ \int_0^T \cE(\beta A)_t |y^1_{t-}|^2 \alpha^2_t \d C_t \bigg] 
	&= \E \bigg[ \int_0^T \cE(\beta A)_t |y^1_{t-}|^2 \d A_t \bigg] \\
	&\leq 3\E\bigg[ \int_0^T \E \bigg[\cE(\beta A)_t\bigg(|\xi_T|^2 +\lim_{t^\prime \uparrow\uparrow t}\sup_{u \in [t^\prime,\infty]}|\xi^+_u \mathbf{1}_{\{u < T\}}|^2 +  |F(t-)|^2\bigg) \bigg| \cG_{t-} \bigg] \d A_t \bigg] \\
	&= 3\Bigg(\E \bigg[ \int_0^T \cE(\beta A)_t |\xi_T|^2 \d A_t \bigg] + \E \bigg[ \int_0^T \cE(\beta A)_t \lim_{t^\prime \uparrow\uparrow t}\sup_{u \in [t^\prime,\infty]}|\xi^+_u \mathbf{1}_{\{u < T\}}|^2 \d A_t \bigg] \\
	&\quad+ \E \bigg[ \int_0^T \cE(\beta A)_t |F(t-)|^2 \d A_t \bigg] \Bigg) \\
	&\leq 3 \Bigg( \frac{(1+\beta\Phi)}{\beta} \|\xi_T\|^2_{\L^\smalltext{2}_\smalltext{\beta}} + \|\alpha\brs{\xi}\|^2_{\H^\smalltext{2}_{\smalltext{T}\smalltext{,}\smalltext{\beta}}} +  (1+\beta\Phi) \inf_{\gamma \in (0,\beta)}\bigg\{\frac{(1+\gamma\Phi)}{\gamma(\beta-\gamma)}\bigg\} \bigg\| \frac{f^1}{\alpha} \bigg\|^2_{\H^{\smalltext{2}}_{\smalltext{T}\smalltext{,}\smalltext{\beta}}} \Bigg) \\
	&= 3 \Bigg( \frac{(1+\beta\Phi)}{\beta} \|\xi_T\|^2_{\L^\smalltext{2}_\smalltext{\beta}} + \|\alpha\brs{\xi}\|^2_{\H^\smalltext{2}_{\smalltext{T}\smalltext{,}\smalltext{\beta}}} +  (1+\beta\Phi) \fg^\Phi(\beta) \bigg\| \frac{f^1}{\alpha} \bigg\|^2_{\H^{\smalltext{2}}_{\smalltext{T}\smalltext{,}\smalltext{\beta}}} \Bigg).
\end{align*}
An analogous argument leads to the $\H^2$-bound on $y^2_-$. 

\medskip
We turn to the bounds on the martingale $\delta\eta$ and first note that for a $\P$--a.s. c\`adl\`ag process $V = (V_t)_{t \in [0,\infty)}$ with $\P$--a.s. non-decreasing paths starting from zero, we have
\begin{align}\label{eq::weighted_norm_V}
	\int_0^T \cE(\beta A)_s \d V_s 
	&= \int_0^T \bigg(1+\int_0^s \cE(\beta A)_{t-}\d(\beta A)_t\bigg) \d V_s \nonumber\\
	&= V_T + \int_0^T \int_0^s \cE(\beta A)_{t-}\beta\d A_t \d V_s \nonumber\\
	&= V_T + \beta \int_{(0,\infty)} \int_{(0,\infty)} \mathbf{1}_{\{0<t\leq s \leq T\}} \cE(\beta A)_{t-} \d A_t \d V_s \nonumber\\
	&= V_T + \beta \int_{(0,\infty)} \int_{(0,\infty)} \mathbf{1}_{\{0<t\leq s \leq T\}} \cE(\beta A)_{t-} \d V_s\d A_t \nonumber\\
	&= V_T + \beta \int_{(0,\infty)} \mathbf{1}_{\{0<t\leq T\}} \cE(\beta A)_{t-} \int_{(0,\infty)} \mathbf{1}_{\{t\leq s \leq T\}} \d V_s\d A_t \nonumber\\
	&= V_T + \beta \int_0^T \cE(\beta A)_{t-} \int_{t-}^T \d V_s\d A_t \nonumber\\
	&= V_T + \beta \int_0^T \cE(\beta A)_{t-} \int_{t-}^T \d V_s\d A_t, \; \text{$\P$--a.s.}
\end{align}
Here the fourth line follows from Tonelli's theorem. Thus, by letting $V \coloneqq \langle\delta\eta\rangle + [\delta k^r] + [\delta k^\ell]$, we get
\begin{align}\label{eq::decompeta}
	&\|\delta\eta\|^2_{\cH^\smalltext{2}_{\smalltext{T}\smalltext{,}\smalltext{\beta}}} + \E\bigg[\int_0^T \cE(\beta A)_s \d [\delta k^r]_s\bigg] + \E\bigg[\int_0^T \cE(\beta A)_s \d [\delta k^\ell]_s\bigg] \nonumber\\
	&\quad= \E\bigg[\int_0^T \cE(\beta A)_s \d \langle\delta\eta \rangle_s\bigg] + \E\bigg[\int_0^T \cE(\beta A)_s \d [\delta k^r]_s\bigg] + \E\bigg[\int_0^T \cE(\beta A)_s \d [\delta k^\ell]_s\bigg] \nonumber\\
	&\quad= \E\big[ \langle \delta\eta \rangle_T + [\delta k^r]_T + [\delta k^\ell]_T \big] + \beta \E \bigg[ \int_0^T \cE(\beta A)_{t-} \int_{t-}^T \d (\langle \delta\eta \rangle + [\delta k^r] + [\delta k^\ell])_s \d A_t \bigg].
\end{align}
We now apply \cite[Theorem VI.57]{dellacherie1982probabilities}, the predictable projection together with \Cref{lem::cond_estimates}, and find
\begin{align*}
	&\E \bigg[ \int_0^T \cE(\beta A)_{t-} \int_{t-}^T \d (\langle \delta\eta \rangle + [\delta k^r] + [\delta k^\ell])_s \d A_t \bigg] \\
	&\leq 2 \E\bigg[ \int_0^T \cE(\beta A)_{t-} \int_{t-}^T |\delta y_s \delta f_s| \d C_s\d A_t \bigg] + \E\bigg[ \int_0^T \cE(\beta A)_{t-} \int_{t-}^T (\delta f_s)^2 \d [C]_s \d A_t \bigg]  - \E\bigg[ \int_0^T \cE(\beta A)_{t-}|\delta y_{t-}|^2 \d A_t \bigg].
\end{align*}
We insert this inequality into \eqref{eq::decompeta} and find, after a rearrangement of the terms, that
\begin{align}\label{eq::inequality_norms_1}
	&\beta \E\bigg[ \int_0^T \cE(\beta A)_{t-}|\delta y_{t-}|^2 \d A_t \bigg] + \|\delta\eta\|^2_{\cH^\smalltext{2}_{\smalltext{T}\smalltext{,}\smalltext{\beta}}} + \E\bigg[\int_0^T \cE(\beta A)_s \d [\delta k^r]_s\bigg] + \E\bigg[\int_0^T \cE(\beta A)_s \d [\delta k^\ell]_s\bigg] \nonumber\\
	&\leq \E\big[ \langle \delta\eta \rangle_T + [\delta k^r]_T + [\delta k^\ell]_T \big] + 2\beta \E\bigg[ \int_0^T \cE(\beta A)_{t-} \int_{t-}^T |\delta y_s \delta f_s| \d C_s\d A_t \bigg] + \beta \E\bigg[ \int_0^T \cE(\beta A)_{t-} \int_{t-}^T (\delta f_s)^2 \d [C]_s \d A_t \bigg].
\end{align}
We now bound the second expectation on the last line. By the Cauchy--Schwartz-inequality and since $A = \int_0^\cdot \alpha^2_s \d C_s$, we find
\begin{align*}
	\int_{t-}^T |\delta y_s \delta f_s|\d C_s 
	&= \int_{t-}^T \big(\cE(\gamma A)^{-1/2}_s|\delta y_s|\alpha_s\big) \bigg(\cE(\gamma A)^{1/2}_s\frac{|\delta f_s|}{\alpha_s}\bigg)\d C_s \\
	&\leq \bigg(\int_{t-}^T \cE(\gamma A)^{-1}_s |\delta y_s|^2 \d A_s\bigg)^{1/2}\bigg(\int_{t-}^T \cE(\gamma A)_s\frac{|\delta f_s|^2}{\alpha^2_s}\d C_s\bigg)^{1/2}.
\end{align*}
Therefore, for $\gamma \in (0,\beta)$, we find
\begin{align}\label{eq6}
	&\E\bigg[ \int_0^T \cE(\beta A)_{t-}\int_{t-}^T |\delta y_s \delta f_s|\d C_s \d A_t \bigg] \nonumber \\
	&\leq \E\bigg[ \int_0^T \cE(\beta A)^{1/2}_{t-}\cE(\gamma A)^{1/2}_{t-}\bigg(\int_{t-}^T \cE(\gamma A)^{-1}_s |\delta y_s|^2 \d A_s\bigg)^{1/2} \cE(\beta A)^{1/2}_{t-}\frac{1}{\cE(\gamma A)^{1/2}_{t-}}\bigg(\int_{t-}^T \cE(\gamma A)_s\frac{|\delta f_s|^2}{\alpha^2_s}\d C_s\bigg)^{1/2} \d A_t \bigg] \nonumber \\
	& \leq \E\bigg[ \int_0^T \cE(\beta A)_{t-}\cE(\gamma A)_{t-}\int_{t-}^T \cE(\gamma A)^{-1}_s |\delta y_s|^2 \d A_s \d A_t \bigg]^{1/2} \E\bigg[ \gamma \frac{1}{\gamma} \int_0^T\cE(\beta A)_{t-}\frac{1}{\cE(\gamma A)_{t-}}\int_{t-}^T \cE(\gamma A)_s\frac{|\delta f_s|^2}{\alpha^2_s}\d C_s \d A_t \bigg]^{1/2} \nonumber\\
	& \leq \E\bigg[ \int_0^T \cE(\beta A)_{t-}\cE(\gamma A)_{t-}\int_{t-}^T \cE(\gamma A)^{-1}_s |\delta y_s|^2 \d A_s \d A_t \bigg]^{1/2} \E\bigg[\gamma \frac{(1+\gamma\Phi)}{\gamma(\beta-\gamma)} \int_0^T \cE(\beta A)_s \frac{|\delta f_s|^2}{\alpha^2_s}\d C_s\bigg]^{1/2} \nonumber\\
	& = \E\bigg[ \int_0^T \cE(\beta A)_{t-}\cE(\gamma A)_{t-}\int_{t-}^T \cE(\gamma A)^{-1}_s |\delta y_s|^2 \d A_s \d A_t \bigg]^{1/2} \bigg(\frac{(1+\gamma\Phi)}{(\beta-\gamma)}\bigg)^{1/2} \bigg\|\frac{\delta f}{\alpha} \bigg\|_{\H^\smalltext{2}_{\smalltext{T}\smalltext{,}\smalltext{\beta}}}.
\end{align}
Here the third line follows from the Cauchy--Schwarz inequality, and the fourth line follows from the equalities starting on the second line of \eqref{eq::integral_F_minus} together with \eqref{eq::widehat_A_integral}. Next, note that
\begin{align}\label{eq7}
	&\int_0^T \cE(\beta A)_{t-}\cE(\gamma A)_{t-}\int_{t-}^T \cE(\gamma A)^{-1}_s |\delta y_s|^2 \d A_s \d A_t \nonumber\\
	&\quad = \int_{(0,\infty)}\int_{(0,\infty)} \mathbf{1}_{\{t \leq s \leq T\}} \cE(\beta A)_{t-}\cE(\gamma A)_{t-} \cE(\gamma A)^{-1}_s |\delta y_s|^2 \d A_s \d A_t \nonumber\\
	&\quad = \int_{(0,\infty)}\int_{(0,\infty)} \mathbf{1}_{\{t \leq s \leq T\}} \cE(\beta A)_{t-}\cE(\gamma A)_{t-} \cE(\gamma A)^{-1}_s |\delta y_s|^2 \d A_t \d A_s \nonumber\\
	&\quad = \int_0^T \cE(\gamma A)^{-1}_s |\delta y_s|^2 \int_0^s \cE(\beta A)_{t-}\cE(\gamma A)_{t-} \d A_t \d A_s \nonumber\\
	&\quad = \int_0^T \cE(\gamma A)^{-1}_s |\delta y_s|^2 \int_0^s \cE\big(\underbrace{(\beta+\gamma) A + \beta\gamma[A]}_{\displaystyle\overline A \coloneqq}\big)_{t-} \d A_t \d A_s \nonumber\\
	&\quad = \int_0^T \cE(\gamma A)^{-1}_s |\delta y_s|^2 \bigg( \int_0^s \cE(\overline A)_{t-} \d A^c_t + \sum_{0 < t \leq s} \cE(\overline A)_{t-} \Delta A_t \bigg) \d A_s \nonumber\\
	&\quad = \int_0^T \cE(\gamma A)^{-1}_s |\delta y_s|^2 \bigg( \int_0^s \cE(\overline A)_{t-} \frac{1}{(\beta + \gamma)} \d\overline A^c_t + \sum_{0 < t \leq s} \cE(\overline A)_{t-} \frac{1}{(\beta + \gamma + \beta\gamma\Delta A_t)} \Delta \overline A_t \bigg) \d A_s \nonumber\\
	&\quad = \int_0^T \cE(\gamma A)^{-1}_s |\delta y_s|^2 \bigg( \int_0^s \cE(\overline A)_{t-} \frac{1}{(\beta + \gamma + \beta\gamma\Delta A_t)} \d\overline A^c_t + \sum_{0 < t \leq s} \cE(\overline A)_{t-} \frac{1}{(\beta + \gamma + \beta\gamma\Delta A_t)} \Delta \overline A_t \bigg) \d A_s \nonumber\\
	&\quad = \int_0^T \cE(\gamma A)^{-1}_s |\delta y_s|^2  \int_0^s \cE(\overline A)_{t-} \frac{1}{(\beta + \gamma + \beta\gamma\Delta A_t)} \d\overline A_t \d A_s \nonumber\\
	&\quad \leq \int_0^T \cE(\gamma A)^{-1}_s |\delta y_s|^2 \frac{1}{(\beta + \gamma)} \int_0^s \cE(\overline A)_{t-} \d\overline A_t \d A_s \nonumber\\
	&\quad \leq \frac{1}{\beta+\gamma} \int_0^T \cE(\gamma A)^{-1}_s |\delta y_s|^2 \cE(\overline A)_{s} \d A_s \nonumber\\
	&\quad = \frac{1}{\beta+\gamma} \int_0^T \cE(\gamma A)^{-1}_s |\delta y_s|^2 \cE(\beta A)_s \cE(\gamma A)_s \d A_s \nonumber\\
	&\quad = \frac{1}{\beta+\gamma} \int_0^T \cE(\beta A)_s |\delta y_s|^2  \d A_s.
\end{align}
Here the seventh line follows from $\overline A^c = (\beta + \gamma) A^c$ and $\Delta \overline A = (\beta + \gamma)\Delta A + \beta\gamma(\Delta A)^2 = (\beta + \gamma + \beta\gamma\Delta A)\Delta A$, the eigth line follows from the fact that $\d\overline A^c$ puts no mass on the points $s \in (0,\infty)$ where $\Delta A_s \neq 0$, and the tenth line follows from $\beta + \gamma \leq \beta + \gamma + \beta\gamma \Delta A$. This then yields
\begin{align}\label{eq8}
	&\E\bigg[ \int_0^T \cE(\beta A)_{t-}\cE(\gamma A)_{t-}\int_{t-}^T \cE(\gamma A)^{-1}_s |\delta y_s|^2 \d A_s \d A_t \bigg]^{1/2} \leq \bigg(\frac{1}{\beta+\gamma}\bigg)^{1/2}\E\bigg[\int_0^T\cE(\beta A)_s |\delta y_s|^2 \d A_s\bigg]^{1/2} \nonumber\\
	&\quad = \bigg(\frac{1}{\beta + \gamma}\bigg)^{1/2} \|\alpha\delta y\|_{\H^\smalltext{2}_{\smalltext{T}\smalltext{,}\smalltext{\beta}}} 
	\leq \bigg(\frac{1}{\beta + \gamma}\bigg)^{1/2} \big(\ff^\Phi(\beta)\big)^{1/2}\bigg\|\frac{\delta f}{\alpha}\bigg\|_{\H^\smalltext{2}_{\smalltext{T}\smalltext{,}\smalltext{\beta}}}
	= \bigg(\frac{1}{\beta + \gamma}\bigg)^{1/2} \bigg(\frac{4(1+\beta\Phi)}{\beta^2}\bigg)^{1/2}\bigg\|\frac{\delta f}{\alpha}\bigg\|_{\H^\smalltext{2}_{\smalltext{T}\smalltext{,}\smalltext{\beta}}}.
\end{align}
Here the inequality on the second line follows from \eqref{eq::inequality_delta_y_H_2}. We now combine \Cref{eq6} and \Cref{eq8} and find that
\begin{equation*}
	\E\bigg[ \int_0^T \cE(\beta A)_{t-}\int_{t-}^T |\delta y_s \delta f_s| \d C_s \d A_t \bigg] 
	\leq \bigg(\frac{1}{\beta + \gamma}\bigg)^{1/2} \bigg(\frac{4(1+\beta\Phi)}{\beta^2}\bigg)^{1/2} \bigg(\frac{(1+\gamma\Phi)}{(\beta-\gamma)}\bigg)^{1/2} \bigg\|\frac{\delta f}{\alpha} \bigg\|^2_{\H^\smalltext{2}_{\smalltext{T}\smalltext{,}\smalltext{\beta}}}.
\end{equation*}
Since $\gamma \in (0,\beta)$ was arbitrary, we find
\begin{align}\label{eq9}
	\E\bigg[ \int_0^T \cE(\beta A)_{t-}\int_{t-}^T |\delta y_s \delta f_s|\d C_s \d A_t \bigg] 
	&\leq \Bigg( \inf_{\gamma \in (0,\beta)}\bigg\{\frac{(1+\gamma\Phi)}{(\beta^2-\gamma^2)}\bigg\} \frac{4(1+\beta\Phi)}{\beta^2}\Bigg)^{1/2}\bigg\|\frac{\delta f}{\alpha} \bigg\|^2_{\H^\smalltext{2}_{\smalltext{T}\smalltext{,}\smalltext{\beta}}} \nonumber\\
	&= \bigg(\frac{1}{\beta^2} \frac{4(1+\beta\Phi)}{\beta^2}\bigg)^{1/2}\bigg\|\frac{\delta f}{\alpha} \bigg\|^2_{\H^\smalltext{2}_{\smalltext{T}\smalltext{,}\smalltext{\beta}}} = \frac{2}{\beta^2}(1+\beta\Phi)^{1/2}\bigg\|\frac{\delta f}{\alpha} \bigg\|^2_{\H^\smalltext{2}_{\smalltext{T}\smalltext{,}\smalltext{\beta}}}.
\end{align}

\medskip
We now bound the fourth summand on the second line in \eqref{eq::inequality_norms_1}. Since
\begin{align*}
	\int_{t-}^T(\delta f_s)^2\d[C]_s 
	&= \sum_{t \leq s \leq T}\mathbf{1}_{\{s < \infty\}}(\delta f_s)^2 (\Delta C_s)^2 \nonumber\\
	&= \sum_{t \leq s \leq T}\mathbf{1}_{\{s < \infty\}}(|\delta f_s|\Delta C_s)^2 
	\leq \bigg(\sum_{t \leq s \leq T}\mathbf{1}_{\{s < \infty\}}|\delta f_s|\Delta C_s\bigg)^2 
	\leq \bigg(\int_{t-}^T|\delta f_s|\d C_s\bigg)^2 = |F(t-)|^2,
\end{align*}
it follows from \eqref{eq::stoch_exp_F} that
\begin{equation}\label{eq10}
	\E \bigg[ \int_0^T \cE(\beta A)_{t-}\int_{t-}^T(\delta f_s)^2\d[C]_s \d A_t \bigg] 
	\leq \E \bigg[ \int_0^T \cE(\beta A)_{t-}|F(t-)|^2 \d A_t \bigg]
	\leq \inf_{\gamma \in (0,\beta)}\bigg\{\frac{(1+\gamma\Phi)}{\gamma(\beta-\gamma)}\bigg\} \bigg\|\frac{\delta f}{\alpha}\bigg\|^2_{\H^\smalltext{2}_{\smalltext{T}\smalltext{,}\smalltext{\beta}}}
\end{equation}
by arbitrariness of $\gamma \in (0,\beta)$.

\medskip
We now turn to the first expectation on the second line in \eqref{eq::inequality_norms_1}. We find
\begin{align}\label{eq12}
	\E \big[ \langle \delta\eta \rangle_T + [\delta k^r]_T + [\delta k^\ell]_T \big] 
	&\leq -\E\big[(\delta y_0)^2\big] + 2 \E\bigg[\int_0^T\delta y_{s}\delta f_s \d C_s\bigg]  + \E\bigg[\int_0^T(\delta f_s)^2\d[C]_s\bigg] \nonumber\\
	&\leq -\E\big[(\delta y_0)^2\big] + 2 \E\bigg[\sup_{s \in [0,T]}|\delta y_s|\int_0^T|\delta f_s| \d C_s\bigg]  + \E\bigg[\bigg(\int_0^T|\delta f_s|\d C_s\bigg)^2\bigg] \nonumber\\
	&\leq -\E\big[(\delta y_0)^2\big] + 2 \E\Big[\sup_{s \in [0,T]}|\delta y_s|^2\Big]^{1/2}\E\bigg[\bigg(\int_0^T |\delta f_s|\d C_s\bigg)^2\bigg]^{1/2} + \E\bigg[\bigg(\int_0^T|\delta f_s|\d C_s\bigg)^2\bigg] \nonumber\\
	&\leq -\E\big[(\delta y_0)^2\big] + 2 \frac{2}{\beta^{1/2}} \bigg\|\frac{\delta f}{\alpha}\bigg\|_{\H^\smalltext{2}_{\smalltext{T}\smalltext{,}\smalltext{\beta}}}  \frac{1}{\beta^{1/2}}\bigg\|\frac{\delta f}{\alpha}\bigg\|_{\H^\smalltext{2}_{\smalltext{T}\smalltext{,}\smalltext{\beta}}} + \frac{1}{\beta} \bigg\|\frac{\delta f}{\alpha} \bigg\|^2_{\H^\smalltext{2}_{\smalltext{T}\smalltext{,}\smalltext{\beta}}} \nonumber\\
	&= -\E\big[(\delta y_0)^2\big] + \frac{5}{\beta} \bigg\|\frac{\delta f}{\alpha}\bigg\|^2_{\H^\smalltext{2}_{\smalltext{T}\smalltext{,}\smalltext{\beta}}}
\end{align}
Here, the second-to-last line follows from \eqref{eq::S2T_bounds_proof}. We substitute \eqref{eq12}, \eqref{eq10} and \eqref{eq9} into \eqref{eq::inequality_norms_1} and find
\begin{align*}
	&\E\big[(\delta y_0)^2\big] + \frac{\beta}{(1+\beta\Phi)}\|\alpha\delta y_{-}\|^2_{\H^\smalltext{2}_{\smalltext{T}\smalltext{,}\smalltext{\beta}}} + \|\delta\eta\|^2_{\cH^\smalltext{2}_{\smalltext{T}\smalltext{,}\smalltext{\beta}}} + \E\bigg[\int_0^T \cE(\beta A)_s \d[\delta k^r]_s\bigg] + \E\bigg[\int_0^T \cE(\beta A)_s \d[\delta k^\ell]_s\bigg] \\
	&\quad\leq \E\big[(\delta y_0)^2\big] + \beta\E\bigg[\int_0^T\cE(\beta A)_{s-}|\delta y_{s-}|^2 \d A_s\bigg] + \|\delta\eta\|^2_{\cH^\smalltext{2}_{\smalltext{T}\smalltext{,}\smalltext{\beta}}} + \E\bigg[\int_0^T \cE(\beta A)_s \d[\delta k^r]_s\bigg] + \E\bigg[\int_0^T \cE(\beta A)_s \d[\delta k^\ell]_s\bigg] \\
	&\quad\leq \bigg( \frac{5}{\beta} + (2\beta)\frac{2}{\beta^2}(1+\beta\Phi)^{1/2} + \beta \inf_{\gamma \in (0,\beta)}\bigg\{\frac{(1+\gamma\Phi)}{\gamma(\beta-\gamma)}\bigg\}  \bigg)\bigg\|\frac{\delta f}{\alpha}\bigg\|^2_{\H^\smalltext{2}_{\smalltext{T}\smalltext{,}\smalltext{\beta}}} \\
	&\quad= \bigg( \frac{5}{\beta} + \frac{4}{\beta}(1+\beta\Phi)^{1/2} + \beta \fg^\Phi(\beta) \bigg)\bigg\|\frac{\delta f}{\alpha}\bigg\|^2_{\H^\smalltext{2}_{\smalltext{T}\smalltext{,}\smalltext{\beta}}}.
	%= \mathfrak{L}(\beta) \bigg\|\frac{\delta f}{\alpha}\bigg\|^2_{\H^\smalltext{2}_{\smalltext{T}\smalltext{,}\smalltext{\beta}}}.
\end{align*}
In particular, this implies
\begin{align*}
	\min\bigg\{1,\frac{\beta}{(1+\beta\Phi)}\bigg\}\Big(\|\alpha\delta y_{-}\|^2_{\H^\smalltext{2}_{\smalltext{T}\smalltext{,}\smalltext{\beta}}} + \|\delta\eta\|^2_{\cH^\smalltext{2}_{\smalltext{T}\smalltext{,}\smalltext{\beta}}}\Big)
	&\leq \frac{\beta}{(1+\beta\Phi)}\|\alpha\delta y_{-}\|^2_{\H^\smalltext{2}_{\smalltext{T}\smalltext{,}\smalltext{\beta}}} + \|\delta\eta\|^2_{\cH^\smalltext{2}_{\smalltext{T}\smalltext{,}\smalltext{\beta}}} \\
	&\leq \bigg(\frac{5}{\beta} +\frac{4}{\beta}(1+\beta\Phi)^{1/2} + \beta \fg^\Phi(\beta)\bigg)\bigg\|\frac{\delta f}{\alpha}\bigg\|^2_{\H^\smalltext{2}_{\smalltext{T}\smalltext{,}\smalltext{\beta}}},
\end{align*}
and therefore
\begin{align*}
	&\|\delta y\|^2_{\cS^\smalltext{2}_\smalltext{T}} + \|\alpha\delta y_{-}\|^2_{\H^\smalltext{2}_{\smalltext{T}\smalltext{,}\smalltext{\beta}}} + \|\delta\eta\|^2_{\cH^\smalltext{2}_{\smalltext{T}\smalltext{,}\smalltext{\beta}}} \\ 
	&\quad\leq \Bigg(\frac{4}{\beta} + \max\bigg\{1,\frac{(1+\beta\Phi)}{\beta}\bigg\}\bigg(\frac{5}{\beta} +\frac{4}{\beta}(1+\beta\Phi)^{1/2} + \beta \fg^\Phi(\beta)\bigg)\Bigg)\bigg\|\frac{\delta f}{\alpha}\bigg\|^2_{\H^\smalltext{2}_{\smalltext{T}\smalltext{,}\smalltext{\beta}}} 
	= M^\Phi_3(\beta)\bigg\|\frac{\delta f}{\alpha}\bigg\|^2_{\H^\smalltext{2}_{\smalltext{T}\smalltext{,}\smalltext{\beta}}}.
\end{align*}
The inequalities involving $M^\Phi_1(\beta)$ or $M^\Phi_2(\beta)$ follow immediately.

\medskip
Finally, we turn to the inequality involving $\eta^i$. Similar to before, we choose $V = \langle\delta\eta^i\rangle + [k^{r,i}] + [k^{r,\ell}]$ in \eqref{eq::weighted_norm_V} and find
\begin{align}\label{eq::inequality_eta_i_proof}
	&\|\eta^i\|^2_{\cH^\smalltext{2}_{\smalltext{T}\smalltext{,}\smalltext{\beta}}} + \E\bigg[\int_0^T \cE(\beta A)_s \d [k^{r,i}]_s\bigg] + \E\bigg[\int_0^T \cE(\beta A)_s \d [k^{\ell,i}]_s\bigg] \nonumber\\
	&\quad= \E\big[ \langle \eta^i \rangle_T + [k^{r,i}]_T + [k^{\ell,i}]_T \big] + \beta \E \bigg[ \int_0^T \cE(\beta A)_{t-} \int_{t-}^T \d (\langle\eta^i \rangle + [k^{r,i}] + [k^{\ell,i}])_s \d A_t \bigg].
\end{align}
To bound the first expectation on the last line, we apply \eqref{eq::cond_eta1} and find
\begin{align*}
	\E\big[ \langle \eta^i \rangle_T + [k^{r,i}]_T + [k^{\ell,i}]_T \big] 
	&\leq -\E\big[|y^i_0|^2\big] + L \E\bigg[ |\xi_T|^2 + \sup_{u \in [0,\infty]} |\xi^+_u \mathbf{1}_{\{u < T\}}|^2 + \bigg(\int_0^T |f^i_u| \d C_u\bigg)^2 \bigg] \\
	&\leq -\E\big[|y^i_0|^2\big] + L \|\xi_T\|^2_{\L^\smalltext{2}} + L\|\xi^+\1_{\llbracket 0,T\rrparenthesis}\|^2_{\cS^\smalltext{2}_\smalltext{T}} + \frac{L}{\beta} \bigg\|\frac{f^i}{\alpha}\bigg\|^2_{\H^\smalltext{2}_{\smalltext{T}\smalltext{,}\smalltext{\beta}}},
\end{align*}
where the last inequality follows from \eqref{eq::bound_integral_f}. To bound the second expectation on the last line of \eqref{eq::inequality_eta_i_proof}, we apply \eqref{eq::cond_eta1_pred} and then \eqref{eq::stoch_exp_F} and find
\begin{align*}
	&\E\bigg[ \int_0^T \cE(\beta A)_{t-} \int_{t-}^T \d (\langle\eta^i \rangle + [k^{r,i}] + [k^{\ell,i}])_s \d A_t \bigg] \\
	&\quad \leq - \E\bigg[\int_0^T \cE(\beta A)_{t-} |y^i_{t-}|^2\d A_t\bigg] + L\E\bigg[ \int_0^T \cE(\beta A)_{t-} \bigg(|\xi_T|^2 + |\brs{\xi}_t|^2 +  \bigg(\int_{t-}^T |f^i_s|\d C_s\bigg)^2\bigg) \d A_t \bigg] \\
	&\quad\leq - \E\bigg[\int_0^T \cE(\beta A)_{t-} |y^i_{t-}|^2\d A_t\bigg] +  L\|\xi_T\|^2_{\L^\smalltext{2}_\smalltext{\beta}} + L\| \alpha\brs{\xi}\|^2_{\H^\smalltext{2}_{\smalltext{T}\smalltext{,}\smalltext{\beta}}} + L\inf_{\gamma \in (0,\infty)} \bigg\{\frac{(1+\gamma\Phi)}{\gamma(\beta-\gamma)}\bigg\} \bigg\|\frac{f^i}{\alpha}\bigg\|^2_{\H^\smalltext{2}_{\smalltext{T}\smalltext{,}\smalltext{\beta}}}.
\end{align*}
We now substitute this back into \eqref{eq::inequality_eta_i_proof} and find after a rearrangement of the terms that
\begin{align*}
	&\E\big[|y^i_0|^2\big] + \frac{\beta}{(1+\beta\Phi)}\|\alpha y^i_{-}\|^2_{\H^\smalltext{2}_{\smalltext{T}\smalltext{,}\smalltext{\beta}}} + \|\eta^i\|^2_{\cH^\smalltext{2}_{\smalltext{T}\smalltext{,}\smalltext{\beta}}} + \E\bigg[\int_0^T \cE(\beta A)_s \d[k^{r,i}]_s\bigg] + \E\bigg[\int_0^T \cE(\beta A)_s \d[k^{\ell,i}]_s\bigg] \\
	&\quad\leq \E\big[|y^i_0|^2\big] + \beta\E\bigg[\int_0^T\cE(\beta A)_{s-}|y^i_{s-}|^2 \d A_s\bigg] + \|\eta^i\|^2_{\cH^\smalltext{2}_{\smalltext{T}\smalltext{,}\smalltext{\beta}}} + \E\bigg[\int_0^T \cE(\beta A)_s \d[k^{r,i}]_s\bigg] + \E\bigg[\int_0^T \cE(\beta A)_s \d[k^{\ell,i}]_s\bigg] \\
	&\quad\leq L\Bigg( \|\xi_T\|^2_{\L^\smalltext{2}} + \|\xi^+\1_{\llbracket 0,T\rrparenthesis}\|^2_{\cS^\smalltext{2}_\smalltext{T}} + \frac{1}{\beta} \bigg\|\frac{f^i}{\alpha}\bigg\|^2_{\H^\smalltext{2}_{\smalltext{T}\smalltext{,}\smalltext{\beta}}} + \beta\bigg(\|\xi_T\|^2_{\L^\smalltext{2}_\smalltext{\beta}} + \| \alpha\brs{\xi}\|^2_{\H^\smalltext{2}_{\smalltext{T}\smalltext{,}\smalltext{\beta}}} + \inf_{\gamma \in (0,\infty)} \bigg\{\frac{(1+\gamma\Phi)}{\gamma(\beta-\gamma)}\bigg\} \bigg\|\frac{f^i}{\alpha}\bigg\|^2_{\H^\smalltext{2}_{\smalltext{T}\smalltext{,}\smalltext{\beta}}}\bigg)\Bigg) \\
	&= L\Bigg( \|\xi_T\|^2_{\L^\smalltext{2}} + \|\xi^+\1_{\llbracket 0,T\rrparenthesis}\|^2_{\cS^\smalltext{2}_\smalltext{T}} + \frac{1}{\beta} \bigg\|\frac{f^i}{\alpha}\bigg\|^2_{\H^\smalltext{2}_{\smalltext{T}\smalltext{,}\smalltext{\beta}}} + \beta\bigg(\|\xi_T\|^2_{\L^\smalltext{2}_\smalltext{\beta}} + \| \alpha\brs{\xi}\|^2_{\H^\smalltext{2}_{\smalltext{T}\smalltext{,}\smalltext{\beta}}} + \fg^\Phi(\beta) \bigg\|\frac{f^i}{\alpha}\bigg\|^2_{\H^\smalltext{2}_{\smalltext{T}\smalltext{,}\smalltext{\beta}}}\bigg)\Bigg).
\end{align*}
This completes the proof.
\end{proof}

Although the \emph{a priori} estimates in \Cref{prop::apriori} also hold for non-reflected BSDEs, we obtain sharper bounds by redoing them in this special case. Furthermore, as previously indicated in \Cref{rem::main_bsde}.$(iv)$, the techniques we employ below do not rely on It\^o's formula, and an extension to BSDEs with multi-dimensional generator and terminal condition is straightforward. As before, we use the convention $\zeta_{0-} \coloneqq 0$ for a process $\zeta =(\zeta_t)_{t \in [0,\infty]}$.

\begin{proposition}\label{prop::apriori_bsde}
	Suppose that $\xi = -\infty$ on $[0,T)$. Then
	\begin{align*}
		|\delta y_S|^2 +\E\bigg[\int_S^T \d \langle\delta\eta\rangle_u \bigg| \cG_S\bigg] &= \E\bigg[\bigg(\int_S^T |\delta f_u| \d C_u\bigg)^2\bigg| \cG_S\bigg], \; \text{$\P$\rm--a.s.}, \; S \in \cT_{0,\infty},\\
		|\delta y_{S-}|^2 +\E\bigg[\int_{S-}^T \d \langle\delta\eta\rangle_u \bigg| \cG_{S-}\bigg] &= \E\bigg[\bigg(\int_{S-}^T |\delta f_u| \d C_u\bigg)^2\bigg| \cG_{S-}\bigg], \; \text{$\P$\rm--a.s.}, \; S \in \cT^p_{0,\infty}.
	\end{align*}
	Moreover, for $\beta \in (0,\infty)$,
	\begin{equation*}
		\frac{\beta}{(1+\beta\Phi)}\|\alpha\delta y_{-}\|^2_{\H^\smalltext{2}_{\smalltext{T}\smalltext{,}\smalltext{\beta}}} + \|\delta\eta\|^2_{\cH^\smalltext{2}_{\smalltext{T}\smalltext{,}\smalltext{\beta}}} 
		\leq \bigg(\frac{1}{\beta} + \beta \fg^\Phi(\beta)\bigg) \bigg\|\frac{\delta f}{\alpha}\bigg\|^2_{\H^\smalltext{2}_{\smalltext{T}\smalltext{,}\smalltext{\beta}}},
	\end{equation*}
	and thus\footnote{Recall the definition of $\widetilde M^\Phi_1$, $\widetilde M^\Phi_2$ and $\widetilde M^\Phi_3$ from \Cref{sec::main_results_bsde}.}
	\begin{equation*}
		\|\delta y\|^2_{\cS^\smalltext{2}_\smalltext{T}} + \|\alpha\delta y\|^2_{\H^\smalltext{2}_{\smalltext{T}\smalltext{,}\smalltext{\beta}}} + \|\alpha\delta y_{-}\|^2_{\H^\smalltext{2}_{\smalltext{T}\smalltext{,}\smalltext{\beta}}} + \|\delta\eta\|^2_{\cH^\smalltext{2}_{\smalltext{T}\smalltext{,}\smalltext{\beta}}} 
		\leq \widetilde M^\Phi_1(\beta) \bigg\|\frac{\delta f}{\alpha}\bigg\|^2_{\H^\smalltext{2}_{\smalltext{T}\smalltext{,}\smalltext{\beta}}},
		\;
		\|\alpha\delta y\|^2_{\H^\smalltext{2}_{\smalltext{T}\smalltext{,}\smalltext{\beta}}} + \|\delta\eta\|^2_{\cH^\smalltext{2}_{\smalltext{T}\smalltext{,}\smalltext{\beta}}} 
		\leq \widetilde M^\Phi_2(\beta) \bigg\|\frac{\delta f}{\alpha}\bigg\|^2_{\H^\smalltext{2}_{\smalltext{T}\smalltext{,}\smalltext{\beta}}},
	\end{equation*}
	\begin{equation*}
		\|\delta y\|^2_{\cS^\smalltext{2}_\smalltext{T}} + \|\alpha\delta y_{-}\|^2_{\H^\smalltext{2}_{\smalltext{T}\smalltext{,}\smalltext{\beta}}} + \|\delta\eta\|^2_{\cH^\smalltext{2}_{\smalltext{T}\smalltext{,}\smalltext{\beta}}} 
		\leq \widetilde M^\Phi_3(\beta) \bigg\|\frac{\delta f}{\alpha}\bigg\|^2_{\H^\smalltext{2}_{\smalltext{T}\smalltext{,}\smalltext{\beta}}}.
	\end{equation*}
\end{proposition}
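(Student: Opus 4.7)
The first step will be to exploit the hypothesis $\xi = -\infty$ on $[0,T)$: since $y^i_{s-} - \overline\xi_s = +\infty$ on $(0,T]$ and $y^i_s - \xi_s = +\infty$ on $[0,T)$, the Skorokhod condition \eqref{eq::a_priori_skor} forces the nonnegative measures $\d k^{r,i}$ and $\d k^{\ell,i}$ to vanish on $[0,T]$ and $[0,T)$ respectively, while the martingale property of $\eta^i$ at time $0$ rules out the initial mass $k^{\ell,i}_0$. The dynamics \eqref{eq::differential_y} thus collapse to the pure BSDE $y^i_t = \xi_T + \int_t^T f^i_s \d C_s - (\eta^i_T - \eta^i_t)$, and subtracting yields the clean linear identity $\delta y_t + (\delta\eta_T - \delta\eta_t) = \int_t^T \delta f_s \d C_s$, $\P$--a.s.

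For the conditional identities, I will fix $S \in \cT_{0,\infty}$ and take $\E[\,\cdot\,|\cG_S]$ in the above display; since $\E[\delta\eta_T - \delta\eta_S |\cG_S] = 0$ this gives $\delta y_S = \E[\int_S^T \delta f_u \d C_u | \cG_S]$. Rearranging to $\delta\eta_T - \delta\eta_S = \int_S^T \delta f_u \d C_u - \delta y_S$, squaring, and taking $\E[\,\cdot\,|\cG_S]$ then produces
\[
|\delta y_S|^2 + \E\bigg[\int_S^T \d\langle\delta\eta\rangle_u \bigg| \cG_S\bigg] = \E\bigg[\bigg(\int_S^T \delta f_u \d C_u\bigg)^2 \bigg| \cG_S\bigg],
\]
since the cross term equals $-2|\delta y_S|^2$ and $\E[(\delta\eta_T - \delta\eta_S)^2|\cG_S] = \E[\int_S^T \d\langle\delta\eta\rangle_u|\cG_S]$ by the martingale isometry applied to $\delta\eta \in \cH^2_T$. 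The bound in the statement then follows from $\int \delta f \d C \leq \int |\delta f| \d C$, and the predictable-stopping-time case will follow from the identical calculation with $\cG_{S-}$ in place of $\cG_S$ together with predictable projection.

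For the weighted norm estimates, I will recycle the stochastic-exponential machinery from the proof of Proposition \ref{prop::apriori}: the Cauchy--Schwarz estimates on $|F(t)|^2$ and $|F(t-)|^2$ combined with the $\cE(\gamma A)^{-1}$-integration-by-parts that yields \eqref{eq::stoch_exp_F}. Predictable or optional projection together with \cite[Theorem VI.57]{dellacherie1982probabilities} applied to the pointwise bound of the previous step produces the $\H^2_{T,\beta}$-estimates on $\alpha\delta y$ and $\alpha\delta y_-$. For $\|\delta\eta\|^2_{\cH^2_{T,\beta}}$ I will use the decomposition \eqref{eq::decompeta}; the key improvement over the reflected case is that the identity in Step 2 is \emph{sharp}, so the Doob-type factor $\etaconst$ appearing in \eqref{eq::cond_deltaeta0} disappears, yielding $\|\delta\eta\|^2_{\cH^2_{T,\beta}} + \frac{\beta}{1+\beta\Phi}\|\alpha\delta y_-\|^2_{\H^2_{T,\beta}} \leq \fg^\Phi(\beta)\|\delta f/\alpha\|^2_{\H^2_{T,\beta}}$. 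The $\cS^2_T$-bound on $\delta y$ will follow from Doob's maximal inequality applied to the martingale $M_t = \E[\int_0^T \delta f_s \d C_s | \cG_t]$ exactly as in \Cref{lem::bound_delta_y_s2}. Assembling these individual bounds through the same $\min\{1,\beta/(1+\beta\Phi)\}$-rearrangement used at the end of the proof of Proposition \ref{prop::apriori} will produce the combined $\widetilde M_i^\Phi(\beta)$-estimates.

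The main obstacle is essentially bookkeeping: the argument is structurally identical to the reflected case, so one must carefully track which occurrences of $\etaconst$ originated from Doob's inequality in \eqref{eq::cond_deltaeta0} (these vanish here thanks to the sharp identity of Step 2) versus those from the stochastic-exponential manipulations (these persist and generate the $\ff^\Phi(\beta)$ and $\fg^\Phi(\beta)$ factors in $\widetilde M_i^\Phi(\beta)$).
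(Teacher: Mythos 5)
Your proposal is correct and follows essentially the same route as the paper: the sharp conditional variance identity obtained by writing $\delta\eta_T-\delta\eta_S=\int_S^T\delta f_u\,\d C_u-\delta y_S$ and squaring (which is exactly the paper's \eqref{eq::cond_delta_eta_bsde}--\eqref{eq::cond_delta_eta_bsde_minus}), followed by the stochastic-exponential estimates \eqref{eq::stoch_exp_F}, the decomposition \eqref{eq::decompeta}, the disappearance of the Doob factor $\etaconst$, the $\min\{1,\beta/(1+\beta\Phi)\}$ rearrangement, and the recycled $\cS^2_T$- and $\ff^\Phi$-bounds from \Cref{prop::apriori}. Your preliminary observation that the Skorokhod condition kills $k^{r,i}$ and $k^{\ell,i}$ when $\xi=-\infty$ on $[0,T)$ is the same reduction the paper makes implicitly in \Cref{sec::main_results_bsde}.
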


\begin{proof}
	As in the proof of \Cref{prop::apriori}, we slightly abuse notation and denote by $\E[W_\cdot| \cG_\cdot]$ and $\E[W_\cdot|\cG_{\cdot-}]$ the optional and predictable projection, respectively, of a non-negative, measurable process $W = (W_t)_{t \in [0,\infty]}$.
	
	\medskip
	Note first that$ \int_t^T \d(\delta\eta)_s = -\delta y_t + \int_t^T \delta f_s \d C_s = - \E\big[\int_t^T \delta f_s \d C_s \big| \cG_t\big] + \int_t^T \delta f_s \d C_s,$ which implies
	\begin{align}\label{eq::cond_delta_eta_bsde}
		\E\bigg[\int_t^T \d\langle\delta\eta\rangle_s\bigg|\cG_t\bigg] = \E\bigg[\bigg(\int_t^T \d(\delta\eta)_s\bigg)^2\bigg|\cG_t\bigg] &= \E\bigg[\bigg(\int_t^T \delta f_s \d C_s\bigg)^2 \bigg| \cG_t\bigg] - \bigg(\E\bigg[\int_t^T \delta f_s \d C_s \bigg| \cG_t\bigg]\bigg)^2 \nonumber\\
		&= \E\bigg[\bigg(\int_t^T \delta f_s \d C_s\bigg)^2 \bigg| \cG_t\bigg] - (\delta y_t)^2, \; t \in [0,\infty], \; \text{$\P$--a.s.}
	\end{align}
	A similar argument, but now using the predictable projection, implies
	\begin{align}\label{eq::cond_delta_eta_bsde_minus}
		(\delta y_{t-})^2 + \E\bigg[\int_{t-}^T \d\langle\delta\eta\rangle_s\bigg|\cG_{t-}\bigg] = \E\bigg[\bigg(\int_{t-}^T |\delta f_s| \d C_s\bigg)^2 \bigg| \cG_{t-}\bigg], \; t \in [0,\infty], \; \text{$\P$--a.s.}
	\end{align}
	As in the proof of \Cref{prop::apriori}, we have
	\begin{align*}
		\E\bigg[\int_0^T \cE(\beta A)_s \d \langle\delta\eta\rangle_s\bigg] 
		&= \E[\langle\delta\eta\rangle_T] + \beta \E\bigg[\int_0^T \cE(\beta A)_{s-}\int_{t-}^T \d\langle\delta\eta\rangle_s\d A_t\bigg] \\
		&= \E[\langle\delta\eta\rangle_T] + \beta \E\bigg[\int_0^T \cE(\beta A)_{s-}\bigg(\int_{t-}^T |\delta f_s|\d C_s\bigg)^2\d A_t\bigg] - \beta \E\bigg[\int_0^T \cE(\beta A)_{s-}|\delta y_{t-}|^2\d A_t\bigg].
	\end{align*}
	We now rearrange the terms and find for $\gamma \in (0,\beta)$ arbitrary that
	\begin{align*}
		\beta \E\bigg[\int_0^T \cE(\beta A)_{s-}|\delta y_{s-}|^2\d A_s\bigg] + \E\bigg[\int_0^T \cE(\beta A)_s \d \langle\delta\eta\rangle_s\bigg] 
		&= \E[\langle\delta\eta\rangle_T] + \beta \E\bigg[\int_0^T \cE(\beta A)_{s-}\bigg(\int_{t-}^T |\delta f_s|\d C_s\bigg)^2\d A_t\bigg] \\
		&\leq \bigg(\frac{1}{\beta} + \beta\frac{(1+\gamma\Phi)}{\gamma(\beta-\gamma)}\bigg) \bigg\|\frac{\delta f}{\alpha}\bigg\|^2_{\H^\smalltext{2}_{\smalltext{T}\smalltext{,}\smalltext{\beta}}}.
	\end{align*}
	Here the inequality follows from \eqref{eq::F_t_squared}, \eqref{eq::stoch_exp_F}, \eqref{eq::cond_delta_eta_bsde} and \eqref{eq::cond_delta_eta_bsde_minus}. Hence
	\begin{equation*}
		\beta \E\bigg[\int_0^T \cE(\beta A)_{s-}|\delta y_{s-}|^2\d A_s\bigg] + \E\bigg[\int_0^T \cE(\beta A)_s \d \langle\delta\eta\rangle_s\bigg] \leq \bigg(\frac{1}{\beta} + \beta \fg^\Phi(\beta)\bigg) \bigg\|\frac{\delta f}{\alpha}\bigg\|^2_{\H^\smalltext{2}_{\smalltext{T}\smalltext{,}\smalltext{\beta}}}.
	\end{equation*}
	 Since
	\begin{align*}
		\frac{\beta}{(1+\beta\Phi)}\|\alpha\delta y_{-}\|^2_{\H^\smalltext{2}_{\smalltext{T}\smalltext{,}\smalltext{\beta}}} = \frac{\beta}{(1+\beta\Phi)} \E\bigg[\int_0^T \cE(\beta A)_{s}|\delta y_{t-}|^2\d A_t\bigg] 
		&= \frac{\beta}{(1+\beta\Phi)} \E\bigg[\int_0^T \cE(\beta A)_{s-}(1+\beta\Delta A_s)|\delta y_{t-}|^2\d A_t\bigg] \\
		&\leq \beta \E\bigg[\int_0^T \cE(\beta A)_{s-}|\delta y_{t-}|^2\d A_t\bigg],
	\end{align*}
	we thus have
	\begin{equation*}
		\min\bigg\{1,\frac{\beta}{(1+\beta\Phi)}\bigg\} \bigg(\|\alpha\delta y_{-}\|^2_{\H^\smalltext{2}_{\smalltext{T}\smalltext{,}\smalltext{\beta}}} + \|\delta\eta\|^2_{\cH^\smalltext{2}_{\smalltext{T}\smalltext{,}\smalltext{\beta}}}\bigg) 
		\leq \frac{\beta}{(1+\beta\Phi)}\|\alpha\delta y_{-}\|^2_{\H^\smalltext{2}_{\smalltext{T}\smalltext{,}\smalltext{\beta}}} + \|\delta\eta\|^2_{\cH^\smalltext{2}_{\smalltext{T}\smalltext{,}\smalltext{\beta}}} 
		\leq \bigg(\frac{1}{\beta} + \beta \fg^\Phi(\beta)\bigg) \bigg\|\frac{\delta f}{\alpha}\bigg\|^2_{\H^\smalltext{2}_{\smalltext{T}\smalltext{,}\smalltext{\beta}}},
	\end{equation*}
	which implies
	\begin{equation*}
		\|\alpha\delta y_{-}\|^2_{\H^\smalltext{2}_{\smalltext{T}\smalltext{,}\smalltext{\beta}}} + \|\delta\eta\|^2_{\cH^\smalltext{2}_{\smalltext{T}\smalltext{,}\smalltext{\beta}}}  
		\leq \max\bigg\{1,\frac{(1+\beta\Phi)}{\beta}\bigg\}\bigg(\frac{1}{\beta} + \beta \fg^\Phi(\beta)\bigg) \bigg\|\frac{\delta f}{\alpha}\bigg\|^2_{\H^\smalltext{2}_{\smalltext{T}\smalltext{,}\smalltext{\beta}}}
	\end{equation*}
	Together with $\|\delta y\|^2_{\cS^\smalltext{2}_\smalltext{T}} \leq \frac{4}{\beta} \big\|\frac{\delta f}{\alpha}\big\|^2_{\H^\smalltext{2}_{\smalltext{T}\smalltext{,}\smalltext{\beta}}}$ from \Cref{prop::apriori}, 
	we find
	\begin{equation*}
		\|\delta y\|^2_{\cS^\smalltext{2}_\smalltext{T}} + \|\alpha\delta y_{-}\|^2_{\H^\smalltext{2}_{\smalltext{T}\smalltext{,}\smalltext{\beta}}} + \|\delta\eta\|^2_{\cH^\smalltext{2}_{\smalltext{T}\smalltext{,}\smalltext{\beta}}} \leq \bigg(\frac{4}{\beta} + \max\bigg\{1,\frac{(1+\beta\Phi)}{\beta}\bigg\}\bigg(\frac{1}{\beta} + \beta \fg^\Phi(\beta)\bigg)\bigg) \bigg\|\frac{\delta f}{\alpha}\bigg\|^2_{\H^\smalltext{2}_{\smalltext{T}\smalltext{,}\smalltext{\beta}}} = \widetilde M^\Phi_3(\beta)\bigg\|\frac{\delta f}{\alpha}\bigg\|^2_{\H^\smalltext{2}_{\smalltext{T}\smalltext{,}\smalltext{\beta}}}.
	\end{equation*}
	Finally, since by \Cref{prop::apriori} $
		\|\alpha \delta y\|^2_{\H^{\smalltext{2}}_{\smalltext{T}\smalltext{,}\smalltext{\beta}}} 
		\leq \ff^\Phi(\beta) \big\| \frac{\delta f}{\alpha} \big\|^2_{\H^{\smalltext{2}}_{\smalltext{T}\smalltext{,}\smalltext{\beta}}},
$
	we find the remaining two bounds
	\begin{equation*}
		\|\delta y\|^2_{\cS^\smalltext{2}_\smalltext{T}} + \|\alpha \delta y\|^2_{\H^{\smalltext{2}}_{\smalltext{T}\smalltext{,}\smalltext{\beta}}} + \|\alpha\delta y_{-}\|^2_{\H^\smalltext{2}_{\smalltext{T}\smalltext{,}\smalltext{\beta}}} + \|\delta\eta\|^2_{\cH^\smalltext{2}_{\smalltext{T}\smalltext{,}\smalltext{\beta}}} 
		\leq \widetilde M^\Phi_1(\beta) \bigg\| \frac{\delta f}{\alpha} \bigg\|^2_{\H^{\smalltext{2}}_{\smalltext{T}\smalltext{,}\smalltext{\beta}}},\;
		\|\alpha \delta y\|^2_{\H^{\smalltext{2}}_{\smalltext{T}\smalltext{,}\smalltext{\beta}}} + \|\delta\eta\|^2_{\cH^\smalltext{2}_{\smalltext{T}\smalltext{,}\smalltext{\beta}}} 
		\leq \widetilde M^\Phi_2(\beta) \bigg\| \frac{\delta f}{\alpha} \bigg\|^2_{\H^{\smalltext{2}}_{\smalltext{T}\smalltext{,}\smalltext{\beta}}}.
	\end{equation*}
	This completes the proof.
\end{proof}

\begin{remark}\label{rem::apriori_stopping_norm}
	We note here that with {\rm\eqref{eq::cond_bound_delta_y_s}}, {\rm\eqref{eq::bound_integral_delta_f}} and {\rm\eqref{eq::bound_integral_f}}, we find
	\begin{equation*}
		\|\delta y\|^2_{\cT^\smalltext{2}_\smalltext{T}} \leq \frac{1}{\beta} \bigg\|\frac{\delta f}{\alpha}\bigg\|^2_{\H^\smalltext{2}_{\smalltext{T}\smalltext{,}\smalltext{\beta}}} \; \text{and} \;
		\|y^i\|^2_{\cT^\smalltext{2}_\smalltext{T}} 
		\leq 3 \, \Bigg(\|\xi_T\|^2_{\L^\smalltext{2}} + \Big\|\sup_{u \in [0,T)} \xi^+_u\Big\|^2_{\L^\smalltext{2}} +  \frac{1}{\beta}\bigg\|\frac{f^i}{\alpha}\bigg\|^2_{\H^\smalltext{2}_{\smalltext{T}\smalltext{,}\smalltext{\beta}}}\Bigg), \; i \in \{1,2\}.
	\end{equation*}
	Hence, we could substitute the $\cS^2_T$--norm $\|\cdot\|_{\cS^\smalltext{2}_\smalltext{T}}$ with the $\cT^2_T$--norm\footnote{We refer to \Cref{rem::reflection_stopping_norm} for the definition of $\|\cdot\|_{\cT^\smalltext{2}_\smalltext{T}}$.} $\|\cdot\|_{\cT^\smalltext{2}_\smalltext{T}}$ in both {\rm\Cref{prop::apriori}} and {\rm\ref{prop::apriori_bsde}}. This would involve adjusting the constants $M_1^\Phi(\beta)$, $M_3^\Phi(\beta)$, $\widetilde{M}_1^\Phi(\beta)$ and $\widetilde{M}_3^\Phi(\beta)$ in a similar manner as discussed in {\rm\Cref{rem::reflection_stopping_norm}} and {\rm\ref{rem::bsde_stopping_norm}}.
\end{remark}

\begin{remark}\label{rem::discussion_constants}
	Since our existence and uniqueness result is in spirit similar to the one presented in {\rm\cite{papapantoleon2018existence}}, we want to comment here on the constant appearing in their contraction argument. Note that the weights used in {\rm\cite{papapantoleon2018existence}} are exponential functions and not stochastic exponentials as in our setting. {\rm Inequality $(3.24)$} in {\rm\cite{papapantoleon2018existence}} reads
	\begin{equation*}
		\|\delta\eta\|^2_{\cH^\smalltext{2}_{\smalltext{T}\smalltext{,}\smalltext{\beta}}} \leq \beta \E \bigg[ \int_0^T \mathrm{e}^{\beta A_t} \int_{t}^T \d \langle \delta\eta \rangle_s \d A_t \bigg] + \E [ \langle \delta\eta \rangle_T ].
	\end{equation*}
	This differs from our {\rm Inequality \eqref{eq::decompeta}} since $\{t\}$ is not included in the domain of integration of the innermost integral. However, the above inequality is derived by applying Tonelli's theorem to
	\begin{equation*}
		\int_0^T \mathrm{e}^{\beta A_s} \d \langle \delta\eta \rangle_s 
		\leq \beta \int_0^T\int_0^s \mathrm{e}^{\beta A_t} \d A_t \d \langle \delta\eta \rangle_s + \langle \delta\eta \rangle_T,
	\end{equation*} 
	and as we saw in the proofs of this section, changing the order of integration necessitates including $\{t\}$ in the domain of integration of the innermost integral. Consequently, a weighted bound on $F(t-)$ rather than on $F(t)$ $($see \eqref{eq::def_F}$)$ is required. Hence, an additional term of the form $\mathrm{e}^{(\gamma\lor\beta)\Phi}$ should appear in the contraction constant $M^\Phi(\hat\beta)$ in {\rm\cite{papapantoleon2018existence}}. It now seems that no closed-form expression for this contraction constant can be derived. Therefore, one naturally has to resort to employing numerical schemes.
\end{remark}

For the sake completeness, we close this section with the following weighted bound on the increasing processes $(k^r,k^\ell)$. We have refrained from including this estimate in \Cref{prop::apriori_bsde} as we will not need it in the contraction argument. To state the bound, we define the function $\mathfrak{j} : [0,\infty)^3 \longrightarrow [0,\infty)$ by the formula
\begin{equation*}
	\mathfrak{j}(\gamma,\beta,\Psi) \coloneqq \max\Bigg\{\frac{\gamma}{\beta-\gamma}, \frac{(\sqrt{1+\gamma \Psi} - 1)\sqrt{1+\beta \Psi}}{\sqrt{1+\beta \Psi} - \sqrt{1+\gamma \Psi}}\Bigg\} = \max\Bigg\{\frac{\gamma}{\beta-\gamma}, \frac{(\sqrt{1+\gamma \Psi} - 1)\sqrt{1+\beta \Psi}\big(\sqrt{1+\beta \Psi}+\sqrt{1+\gamma \Psi}\big)}{(\beta-\gamma)\Psi}\Bigg\}.
\end{equation*}
Here we use the convention $0 \coloneqq 0/0$. 

\begin{proposition}\label{prop::weighted_k_estimate}
	Let $(\gamma,\beta) \in (0,\infty)^2$ with $\gamma < \beta$. For each $i \in \{1,2\}$,
	\begin{align}\label{eq::weighted_k_inequality}
		\|k^{r,i}\|^2_{\cI^\smalltext{2}_{\smalltext{T}\smalltext{,}\smalltext{\gamma}}} + \|k^{\ell,i}\|^2_{\cI^\smalltext{2}_{\smalltext{T}\smalltext{,}\smalltext{\gamma}}} 
		&\leq 3 \Bigg( \|\eta^i\|^2_{\cH^\smalltext{2}_{\smalltext{T}\smalltext{,}\smalltext{\gamma}}} + 108 \max\{1, \mathfrak{j}(\gamma,\beta,\Phi)\}^2\Big(\|\xi_T\|^2_{\L^\smalltext{2}_\smalltext{\gamma}} + \|\xi^+_\cdot\mathbf{1}_{\{\cdot < T\}}\|^2_{\cS^\smalltext{2}_{\smalltext{T}\smalltext{,}\smalltext{\gamma}}} \Big) \nonumber\\
		&\quad + \big(108 \max\{1, \mathfrak{j}(\gamma,\beta,\Phi)\}^2 + 1\big)\frac{(1+\gamma\Phi)}{\beta-\gamma}\bigg\|\frac{f^i}{\alpha}\bigg\|^2_{\H^\smalltext{2}_{\smalltext{T}\smalltext{,}\smalltext{\beta}}} \Bigg).
	\end{align}
\end{proposition}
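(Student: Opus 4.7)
The plan is to derive a forward-in-time representation of the non-decreasing process $K^i := k^{r,i} + k^{\ell,i}_{-}$ from the backward equation \eqref{eq::differential_y}. Evaluating \eqref{eq::differential_y} at a generic $t \in [0,T]$ and at $t=0$ (using $y^i_T = \xi_T$, $k^{r,i}_0 = 0$, and the convention $k^{\ell,i}_{0-}=0$), one obtains the forward identity
\begin{equation*}
K^i_t = y^i_0 - y^i_t - \int_0^t f^i_s \d C_s + \eta^i_t - \eta^i_0, \qquad t \in [0,T].
\end{equation*}
Since $k^{\ell,i}$ is purely discontinuous, its Lebesgue--Stieltjes measure on $[0,T]$ coincides with that of its left-limit $k^{\ell,i}_-$, hence $\int_0^T H_s \d k^{\ell,i}_{s-} = \int_0^T H_s \d k^{\ell,i}_s$ for every non-negative integrand $H$. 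Combined with $a^2+b^2 \leq (a+b)^2$ for $a,b \geq 0$, this gives
\begin{equation*}
\|k^{r,i}\|^2_{\cI^2_{T,\gamma}} + \|k^{\ell,i}\|^2_{\cI^2_{T,\gamma}} \leq \E\bigg[\bigg(\int_0^T \cE(\gamma A)^{1/2}_s \d K^i_s\bigg)^2\bigg].
\end{equation*}

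To bound the right-hand side, I would substitute $\d K^i_s = -\d y^i_s - f^i_s \d C_s + \d\eta^i_s$ from the forward identity and estimate each contribution separately. The terms involving $y^i$ are handled by \Cref{lem::bound_delta_y_s2}, whose conditional-expectation bounds on $|y^i_S|$ and $|y^i_{S-}|$ (in terms of $|\xi_T|$, $\sup_u \xi^+_u \mathbf{1}_{\{u<T\}}$, and $\int_S^T |f^i_u| \d C_u$) can be multiplied by the predictable weight $\cE(\gamma A)^{1/2}$ inside the conditional expectation, exactly as in the proof of \Cref{prop::apriori}, producing the $\|\xi_T\|^2_{\L^2_\gamma}$ and $\|\xi^+_\cdot \mathbf{1}_{\{\cdot<T\}}\|^2_{\cS^2_{T,\gamma}}$ contributions on the right-hand side of \eqref{eq::weighted_k_inequality}. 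The drift piece is handled by the Cauchy--Schwarz-type bound
\begin{equation*}
\bigg(\int_t^T |f^i_s| \d C_s\bigg)^2 \leq \int_t^T \frac{\cE(\gamma A)_s}{\cE(\beta A)_s} \d A_s \cdot \int_t^T \frac{\cE(\beta A)_s}{\cE(\gamma A)_s}\frac{|f^i_s|^2}{\alpha^2_s} \d C_s,
\end{equation*}
combined with an explicit computation of the first factor, modelled on the derivation of $\int_0^s \cE(\widehat A^{\beta,\gamma})_{t-} \d A_t$ inside the proof of \Cref{prop::apriori}, which produces the factor $(1+\gamma\Phi)/(\beta-\gamma)$. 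Finally, the martingale contribution $\int_0^T \cE(\gamma A)^{1/2}_s \d \eta^i_s$ is controlled by its $\cH^2_{T,\gamma}$-norm via It\^o's isometry.

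The main obstacle is the precise tracking of multiplicative constants so that the sharp factor $\mathfrak{j}(\gamma,\beta,\Phi)$ emerges rather than a cruder bound like $\sqrt{(1+\gamma\Phi)(1+\beta\Phi)}/(\beta-\gamma)$. The function $\mathfrak{j}$ aggregates two structurally different pieces: the elementary term $\gamma/(\beta-\gamma)$, which comes from the comparison of the continuous parts $\exp(\gamma A^c_s)$ and $\exp(\beta A^c_s)$, and the more delicate term $(\sqrt{1+\gamma\Phi}-1)\sqrt{1+\beta\Phi}/(\sqrt{1+\beta\Phi}-\sqrt{1+\gamma\Phi})$, which comes from comparing the jump factors $\sqrt{1+\gamma\Delta A_s}$ and $\sqrt{1+\beta\Delta A_s}$ at the discontinuities of $A$. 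I would therefore split $A = A^c + A^d$ via the product formula $\cE(\gamma A)_s = \exp(\gamma A^c_s)\prod_{u \leq s}(1+\gamma\Delta A_u)$, treat the two pieces separately, and take the worse of the two contributions in the maximum defining $\mathfrak{j}$. A further subtlety is that the forward identity controls $K^i_T = k^{r,i}_T + k^{\ell,i}_{T-}$ but not $k^{\ell,i}_T$ itself, so the possible terminal jump $\Delta k^{\ell,i}_T$ must be absorbed into the estimate either through the structure of the Skorokhod condition \eqref{eq::a_priori_skor} or through a refinement of the pathwise bound on $\int_0^T \cE(\gamma A)^{1/2}_s \d k^{\ell,i}_s$.
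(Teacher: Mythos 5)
Your proposal follows essentially the same route as the paper's proof: the paper applies the Gal'chouk--It\^o--Lenglart product formula to $\cE(\gamma A)^{1/2}y^i$ (rather than substituting the forward dynamics of $K^i$ into the weighted integral, which after integration by parts amounts to the same computation), isolates $\int_0^T\cE(\gamma A)^{1/2}_s\d k^{r,i}_s+\int_{[0,T)}\cE(\gamma A)^{1/2}_s\d k^{\ell,i}_s$, and bounds the three remaining contributions exactly as you describe --- the cross term $\int y^i_{s-}\d\cE(\gamma A)^{1/2}_s$ by $\mathfrak{j}(\gamma,\beta,\Phi)$ times a weighted supremum of $y^i$ via the continuous/jump splitting of the stochastic exponentials, the drift by Cauchy--Schwarz yielding the factor $(1+\gamma\Phi)/(\beta-\gamma)$ against $\|f^i/\alpha\|^2_{\H^2_{T,\beta}}$, and the martingale term by the isometry --- before controlling $\|y^i\|^2_{\cS^2_{T,\gamma}}$ through \Cref{lem::bound_delta_y_s2} and Doob's inequality. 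The approach is correct and matches the paper's own argument.
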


\begin{remark}
	To formulate a bound on $\|k^{r,i}\|_{\cI^\smalltext{2}_{\smalltext{T}\smalltext{,}\smalltext{\gamma}}}$ and $\|k^{\ell,i}\|_{\cI^\smalltext{2}_{\smalltext{T}\smalltext{,}\smalltext{\gamma}}}$ solely by terms involving $f^i$ and $\xi^i$, one can combine the previous result with the bound on $\|\eta^i\|_{\cH^\smalltext{2}_{\smalltext{T}\smalltext{,}\smalltext{\gamma}}}$ from {\rm\Cref{prop::apriori}}.
\end{remark}

\begin{proof}
	We fix $i \in \{1,2\}$ and thus drop the superscript $i$ for ease of notation. Moreover, we suppose without loss of generality that the right side of \eqref{eq::weighted_k_inequality} is finite. In a first step, we fix $t \in [0,\infty)$ and apply It\^o's formula for optional semi-martingales to the function $f(x,z) = xz$ for $(x,z) \in \R^2$, see \cite[Theorem A.3]{grigorova2017reflected} or \cite[page 538]{lenglart1980tribus}, and find
	\begin{align}\label{eq::ito_stoch_exp_y}
		\cE(\gamma A)^{1/2}_t y_t 
		&= \cE(\gamma A)^{1/2}_0 y_0 + \int_{(0,t]}y_{s-}\d\cE(\gamma A)^{1/2}_s - \int_{(0,t]} \cE(\gamma A)^{1/2}_{s-} f_s \d C_s + \int_{(0,t]} \cE(\gamma A)_{s-}\d \eta_s - \int_{(0,t]} \cE(\gamma A)^{1/2}_{s-}\d k^r_s \nonumber\\
		&\quad +\sum_{s \in (0,t]} \Big( \cE(\gamma A)^{1/2}_s y_s - \cE(\gamma A)^{1/2}_{s-} y_{s-} - y_{s-}\Delta(\cE(\gamma A)^{1/2})_s - \cE(\gamma A)^{1/2}_{s-}\Delta y_s \Big) \nonumber -\int_{[0,t)}\cE(\gamma A)^{1/2}_s \d k^\ell_s \nonumber\\
		&\quad +\sum_{s \in [0,t)} \Big(\cE(\gamma A)^{1/2}_{s+} y_{s+} - \cE(\gamma A)^{1/2}_s y_s - y_s \big(\cE(\gamma A)^{1/2}_{s+} - \cE(\gamma A)^{1/2}_{s}\big) - \cE(\gamma A)^{1/2}_s(y_{s+} - y_s) \Big) \nonumber\\
		&= \cE(\gamma A)^{1/2}_0 y_0 + \int_{(0,t]}y_{s-}\d\cE(\gamma A)^{1/2}_s - \int_{(0,t]} \cE(\gamma A)^{1/2}_{s-} f_s \d C_s + \int_{(0,t]} \cE(\gamma A)_{s-}\d \eta_s - \int_{(0,t]} \cE(\gamma A)^{1/2}_{s-}\d k^r_s \nonumber\\
		&\quad +\sum_{s \in (0,t]} \Big(y_s(\cE(\gamma A)^{1/2}_s - \cE(\gamma A)^{1/2}_{s-}) - y_{s-}(\cE(\gamma A)^{1/2}_s - \cE(\gamma A)^{1/2}_{s-})\Big) \nonumber - \int_{[0,t)} \cE(\gamma A)^{1/2}_s \d k^\ell_s \nonumber\\
		&= \cE(\gamma A)^{1/2}_0 y_0 + \int_{(0,t]}y_{s-}\d\cE(\gamma A)^{1/2}_s - \int_{(0,t]} \cE(\gamma A)^{1/2}_{s-} f_s \d C_s + \int_{(0,t]} \cE(\gamma A)_{s-}\d \eta_s - \int_{(0,t]} \cE(\gamma A)^{1/2}_{s-}\d k^r_s \nonumber\\
		&\quad +\sum_{s \in (0,t]} \Big((y_s - y_{s-})(\cE(\gamma A)^{1/2}_s - \cE(\gamma A)^{1/2}_{s-})\Big) \nonumber - \int_{[0,t)} \cE(\gamma A)^{1/2}_s \d k^\ell_s \nonumber\\
		&= \cE(\gamma A)^{1/2}_0 y_0 + \int_{(0,t]}y_{s-}\d\cE(\gamma A)^{1/2}_s - \int_{(0,t]} \cE(\gamma A)^{1/2}_{s-} f_s \d C_s + \int_{(0,t]} \cE(\gamma A)_{s-}\d \eta_s - \int_{(0,t]} \cE(\gamma A)^{1/2}_{s-}\d k^r_s \nonumber\\
		&\quad +\sum_{s \in (0,t]} \Big(-f_s\Delta \cE(\gamma A)^{1/2}_s \Delta C_s + \Delta \cE(\gamma A)^{1/2}_s \Delta \eta_s - \Delta \cE(\gamma A)^{1/2}_s \Delta k^r_s\Big) \nonumber - \int_{[0,t)} \cE(\gamma A)^{1/2}_s \d k^\ell_s \nonumber\\
		&= \cE(\gamma A)^{1/2}_0 y_0 + \int_{(0,t]}y_{s-}\d\cE(\gamma A)^{1/2}_s - \int_{(0,t]} \cE(\gamma A)^{1/2}_s f_s \d C_s + \int_{(0,t]} \cE(\gamma A)^{1/2}_s\d \eta_s - \int_{(0,t]} \cE(\gamma A)^{1/2}_s\d k^r_s \nonumber\\
		&\quad - \int_{[0,t)} \cE(\gamma A)^{1/2}_s \d k^\ell_s, \; t \in [0,\infty), \; \text{$\P$--a.s.} 
	\end{align}
	Here we used the fact that $\cE(\gamma A)^{1/2}$ is predictable and non-decreasing, thus locally bounded. We now analyze the terms in \eqref{eq::ito_stoch_exp_y} one by one. First, \Cref{lem::stoch_exp_rules}.$(iii)$ implies that $\cE(\gamma A)^{1/2} = \cE(D^\gamma), \; \text{and} \; \cE(\beta A)^{1/2} = \cE(D^\beta),$ where $D^\gamma = (D^\gamma)_{t \in [0,\infty)}$ and $D^\beta = (D^\beta)_{t \in [0,\infty)}$ are the predictable processes satisfying
	\begin{equation*}
		D^\gamma_t = \frac{\gamma}{2} A^c_t + \sum_{s \in (0,t]} \big(\sqrt{1+\gamma\Delta A_s}-1\big), \; \text{and} \; D^\beta_t = \frac{\beta}{2} A^c_t + \sum_{s \in (0,t]} \big(\sqrt{1+\beta\Delta A_s}-1\big), \; t \in [0,\infty), \; \text{$\P$--a.s.}
	\end{equation*}
	Recall that $A^c$ denotes the continuous part of the process $A$. We bound the second term on the right of \eqref{eq::ito_stoch_exp_y} as follows
	\begin{align}\label{eq::sup_stoch_exp_inverse}
		\int_{(0,t]}y_{s-}\d\cE(\gamma A)^{1/2}_s 
		&= \int_{(0,t]}\cE(\beta A)^{1/2}_{s-}y_{s-}\frac{1}{\cE(\beta A)^{1/2}_{s-}}\d\cE(\gamma A)^{1/2}_s \nonumber\\
		&\leq \sup_{s \in (0,t)}\big\{\cE(\beta A)^{1/2}_{s}|y_{s}|\big\}\int_{(0,t]}\frac{1}{\cE(\beta A)^{1/2}_{s-}}\d\cE(\gamma A)^{1/2}_s \nonumber\\
		&\leq \sup_{s \in (0,t)}\big\{\cE(\beta A)^{1/2}_{s}|y_{s}|\big\}\int_{(0,t]}\frac{1}{\cE(D^\beta)_{s-}}\d\cE(D^\gamma)_s \nonumber\\
		&= \sup_{s \in (0,t)}\big\{\cE(\beta A)^{1/2}_{s}|y_{s}|\big\}\int_{(0,t]}\frac{1}{\cE(D^\beta)_{s-}}\cE(D^\gamma)_{s-}\d D^\gamma_s, \; \text{$\P$--a.s.}
	\end{align}
	By \Cref{lem::stoch_exp_rules}.$(ii)$, we can write $\cE(D^\beta)^{-1}\cE(D^\gamma) = \cE(\widehat D^{\gamma,\beta})$, where $\widehat D^{\gamma,\beta} = (\widehat D^{\gamma,\beta}_t)_{t \in [0,\infty)}$ is the predictable process satisfying
	\begin{equation*}
		\widehat D^{\gamma,\beta}_t = D^{\gamma,c}_t - D^{\beta,c}_t + \sum_{s \in (0,t]} \frac{\Delta D^\gamma_s - \Delta D^\beta_s}{1+\Delta D^\beta_s} = -\frac{(\beta-\gamma)}{2}A^c_t - \sum_{s \in (0,t]} \frac{\sqrt{1+\beta\Delta A_s} - \sqrt{1+\gamma\Delta A_s}}{\sqrt{1+\beta \Delta A_s}}, \; t \in [0,\infty), \; \text{$\P$--a.s.}
	\end{equation*}
	Note that $\Delta \widehat D^{\gamma,\beta} \geq -1$ although $\widehat D^{\gamma,\beta}$ is non-increasing. Then
	\begin{align}\label{eq::stoch_exp_ineq_l}
		\int_{(0,t]}\cE(\widehat D^{\gamma,\beta})_{s-}\d D^\gamma_s &= \int_{(0,t]} \cE(\widehat D^{\gamma,\beta})_{s-} \d D^{\gamma,c}_s + \sum_{s \in (0,t]} \cE(\widehat D^{\gamma,\beta})_{s-} \Delta D^\gamma_s \nonumber\\
		&= \frac{\gamma}{2}\int_{(0,t]} \cE(\widehat D^{\gamma,\beta})_{s-} \d A^c_s + \sum_{s \in (0,t]} \cE(\widehat D^{\gamma,\beta})_{s-} \big(\sqrt{1+\gamma\Delta A_s} - 1\big) \nonumber\\
		&= \frac{\gamma}{\beta-\gamma}\int_{(0,t]} \cE(\widehat D^{\gamma,\beta})_{s-} \frac{(\beta-\gamma)}{2} \d A^c_s + \sum_{s \in (0,t]} \cE(\widehat D^{\gamma,\beta})_{s-} \frac{(\sqrt{1+\gamma\Delta A_s} - 1)\sqrt{1+\beta\Delta A_s}}{\sqrt{1+\beta\Delta A_s} - \sqrt{1+\gamma \Delta A_s}} \Delta (-\widehat D^{\gamma,\beta})_s \nonumber\\
		&= \frac{\gamma}{\beta-\gamma}\int_{(0,t]} \cE(\widehat D^{\gamma,\beta})_{s-} \d (-\widehat D^{\gamma,c})_s + \sum_{s \in (0,t]} \cE(\widehat D^{\gamma,\beta})_{s-} \frac{(\sqrt{1+\gamma\Delta A_s} - 1)\sqrt{1+\beta\Delta A_s}}{\sqrt{1+\beta\Delta A_s} - \sqrt{1+\gamma \Delta A_s}} \Delta(- \widehat D^{\gamma,\beta})_s \nonumber\\
		&\leq \mathfrak{l}(\gamma,\beta) \int_{(0,t]} \cE(\widehat D^{\gamma,\beta})_{s-} \d (-\widehat D^\gamma)_s = \mathfrak{l}(\gamma,\beta) \bigg(1-\frac{\cE(D^\gamma)_t}{\cE(D^\beta)_t}\bigg) \leq \mathfrak{l}(\gamma,\beta), \; \text{$\P$--a.s.},
	\end{align}
	where the last inequality follows from $1 \leq \cE(D^\gamma) \leq \cE(D^\beta)$, and where
	\begin{equation*}
		\mathfrak{l}(\gamma,\beta) \coloneqq \max\Bigg\{\frac{\gamma}{\beta-\gamma}, \sup_{s \in [0,\infty)} \frac{(\sqrt{1+\gamma\Delta A_s} - 1)\sqrt{1+\beta\Delta A_s}}{\sqrt{1+\beta\Delta A_s} - \sqrt{1+\gamma \Delta A_s}}\Bigg\}. 
	\end{equation*}
	Let $[0,\infty) \ni x \longmapsto \mathfrak{k}(\gamma,\beta,x) \in [0,\infty)$ be defined by
	\begin{equation*}
		\mathfrak{k}(\gamma,\beta,x) \coloneqq  \frac{(\sqrt{1+\gamma x} - 1)\sqrt{1+\beta x}}{\sqrt{1+\beta x} - \sqrt{1+\gamma x}} = \frac{(\sqrt{1+\gamma x} - 1)\sqrt{1+\beta x}(\sqrt{1+\beta x} + \sqrt{1+\gamma x})}{(\beta-\gamma)x}.
	\end{equation*}
	Then
	\begin{align*}
		\frac{\partial}{\partial x} \mathfrak{k}(\gamma,\beta,x) 
		&= \frac{\beta  \gamma  x^2 (\sqrt{1+\beta x}+\sqrt{1+\gamma x})}{2 x^2 (\beta -\gamma ) \sqrt{1+\beta x} \sqrt{1+\gamma x}} \\
		&\quad +\frac{x (\beta +\gamma -\beta\sqrt{1+\gamma x}-\gamma  \sqrt{1+\beta x})+2 (\sqrt{1+\beta x}-1) (\sqrt{1+\gamma x}-1)}{2 x^2 (\beta -\gamma ) \sqrt{1+\beta x} \sqrt{1+\gamma x}},
	\end{align*}
	and by using $\sqrt{1+x} \leq 1+x/2$, for $x \geq -1$, we deduce $\frac{\partial}{\partial x} \mathfrak{k}(\gamma,\beta,x) \geq 0$. This implies that $x \longmapsto \mathfrak{k}(\gamma,\beta,x)$ is non-decreasing, and therefore
	\begin{equation}\label{eq::bound_l_by_j}
		\mathfrak{l}(\gamma,\beta) 
		\leq \max\bigg\{ \frac{\gamma}{\beta-\gamma}, \frac{(\sqrt{1+\gamma \Phi} - 1)\sqrt{1+\beta \Phi}}{\sqrt{1+\beta \Phi} - \sqrt{1+\gamma \Phi}} \bigg\} = \mathfrak{j}(\gamma,\beta,\Phi), \; \text{$\P$--a.s.}
	\end{equation}
	We deduce from \eqref{eq::bound_l_by_j}, \eqref{eq::stoch_exp_ineq_l} and \eqref{eq::sup_stoch_exp_inverse} that 
	\begin{equation}\label{eq::ineq_int_y_minus_stoch_exp}
		\int_{(0,t]}y_{s-}\d\cE(\gamma A)^{1/2}_s \leq \mathfrak{j}(\gamma,\beta,\Phi) \sup_{s \in (0,t)}\big\{\cE(\beta A)^{1/2}_{s}|y_{s}|\big\}, \; \text{$\P$--a.s.}
	\end{equation}
	
	\medskip
	We turn to the third term on the right of \eqref{eq::ito_stoch_exp_y}. Cauchy--Schwarz's inequality and \Cref{lem::stoch_exp_rules}.$(ii)$ yields
	\begin{align}\label{eq::cs_exp_root_f}
		\bigg(\int_{(0,t]} \cE(\gamma A)^{1/2}_s |f_s| \d C_s\bigg)^2 
		%&\leq \bigg(\int_{(0,t]} \cE(\gamma A)_s \frac{1}{\cE(\beta A)_s} \d A_s\bigg)\bigg(\int_{(0,t]} \cE(\beta A)_s \frac{|f_s|^2}{\alpha^2_s} \d C_s\bigg) \\
		&\leq \bigg(\int_{(0,t]} \frac{1}{\cE(\beta A)_s}\cE(\gamma A)_s \d A_s\bigg)\bigg(\int_{(0,t]} \cE(\beta A)_s \frac{|f_s|^2}{\alpha^2_s} \d C_s\bigg) \nonumber\\
		&= \bigg(\int_{(0,t]} \cE(\widehat A^{\gamma,\beta})_s\d A_s\bigg)\bigg(\int_{(0,t]} \cE(\beta A)_s \frac{|f_s|^2}{\alpha^2_s} \d C_s\bigg), \; \text{$\P$--a.s.},
	\end{align}
	where $\widehat A^{\gamma,\beta}$ is the predictable process satisfying
	\begin{equation*}
		\widehat A^{\gamma,\beta}_t 
		=  -(\beta-\gamma)A^c_t - \sum_{s \in (0,t]}(\beta-\gamma)\frac{\Delta A_s}{1+\beta\Delta A_s}, \; t \in [0,\infty), \; \text{$\P$--a.s.}
	\end{equation*}
	We now explicitly bound the integral
	\begin{align}\label{eq::bound_inverse_root_exp}
		\int_{(0,t]} \cE(\widehat A^{\gamma,\beta})_s\d A_s 
		&= \int_{(0,t]} \cE(\widehat A^{\gamma,\beta})_s\d A^c_s + \sum_{s \in (0,t]}\cE(\widehat A^{\gamma,\beta})_s\Delta A_s \nonumber\\
		&= \frac{1}{(\gamma-\beta)}\int_{(0,t]} \cE(\widehat A^{\gamma,\beta})_s(1+\beta\Delta A_s) (\gamma-\beta)\d A^c_s + \frac{1}{(\gamma-\beta)}\sum_{s \in (0,t]}\cE(\widehat A^{\gamma,\beta})_s (1+\beta\Delta A_s) (\gamma-\beta)\frac{\Delta A_s}{1+\beta\Delta A_s} \nonumber\\
		&= \frac{1}{(\gamma-\beta)}\int_{(0,t]} \cE(\widehat A^{\gamma,\beta})_s(1+\beta\Delta A_s) \d\big(\widehat A^{\gamma,\beta}\big)^c_s + \frac{1}{(\gamma-\beta)}\sum_{s \in (0,t]}\cE(\widehat A^{\gamma,\beta})_s (1+\beta\Delta A_s) \Delta \widehat A^{\gamma,\beta}_s \nonumber\\
		&= \frac{1}{(\gamma-\beta)}\int_{(0,t]} \cE(\widehat A^{\gamma,\beta})_s(1+\beta\Delta A_s) \d \widehat A^{\gamma,\beta}_s \nonumber\\
		&= \frac{1}{(\gamma-\beta)}\int_{(0,t]} \cE(\widehat A^{\gamma,\beta})_{s-}(1+\Delta\widehat A^{\gamma,\beta}_s)(1+\beta\Delta A_s) \d \widehat A^{\gamma,\beta}_s \nonumber\\
		&= \frac{1}{(\gamma-\beta)}\int_{(0,t]} \cE(\widehat A^{\gamma,\beta})_{s-}(1+\gamma\Delta A_s) \d \widehat A^{\gamma,\beta}_s \nonumber\\
		&\leq \frac{(1+\gamma\Phi)}{(\gamma-\beta)}\int_{(0,t]} \cE(\widehat A^{\gamma,\beta})_{s-} \d \widehat A^{\gamma,\beta}_s \nonumber\\
		&= \frac{(1+\gamma\Phi)}{(\gamma-\beta)}\big(\cE(\widehat A^{\gamma,\beta})_t - 1\big) = \frac{(1+\gamma\Phi)}{(\beta-\gamma)}\bigg(1-\cE(\gamma A)_t\frac{1}{\cE(\beta A)_t}\bigg) \leq \frac{(1+\gamma\Phi)}{(\beta-\gamma)}, \; \text{$\P$--a.s.}
	\end{align}
	Here the second to last line follows from $\cE(\widehat A^{\gamma,\beta}) = \cE(\gamma A)/\cE(\beta A) > 0$ and from the fact that $\widehat A^{\gamma,\beta} / (\gamma-\beta)$ is non-decreasing since $\gamma-\beta < 0$ and since $\widehat A^{\gamma,\beta}$ is non-increasing, and the last line follows from $0 < \cE(\gamma A) \leq \cE(\beta A)$. Combining \eqref{eq::bound_inverse_root_exp} and \eqref{eq::cs_exp_root_f} yields 
	\begin{equation}\label{eq::ineq_exp_root_f}
		\bigg(\int_{(0,t]} \cE(\gamma A)^{1/2}_s |f_s| \d C_s\bigg)^2 
		\leq \frac{(1+\gamma\Phi)}{(\beta-\gamma)} \bigg(\int_{(0,t]} \cE(\beta A)_s \frac{|f_s|^2}{\alpha^2_s} \d C_s\bigg), \; \text{$\P$--a.s.}
	\end{equation}
	Combining \eqref{eq::ineq_int_y_minus_stoch_exp} and \eqref{eq::ito_stoch_exp_y}, then rearranging the terms and applying $(a+b+c)^2 \leq 3(a^2+b^2+c^2)$ yields
	\begin{align}\label{eq::estimate_k}
		&\bigg(\int_{(0,t]}\cE(\gamma A)^{1/2}_s \d k^r_s + \int_{[0,t)} \cE(\gamma A)^{1/2}_s \d k^\ell_s\bigg)^2 \nonumber\\
		&\leq 3\Bigg( 9\max\{1, \mathfrak{j}(\gamma,\beta,\Phi)\}^2 \sup_{s \in [0,t]}\{\cE(\gamma A)|y_t|^2\} + \bigg(\int_0^t \cE(\gamma A)^{1/2}_s |f_s| \d C_s\bigg)^2 + \bigg(\int_0^t \cE(\gamma A)^{1/2}_s \d\eta_s\bigg)^2\Bigg), \; \text{$\P$--a.s.}
	\end{align}
	We now plug \Cref{eq::ineq_exp_root_f} into \Cref{eq::estimate_k}, let $t \uparrow\uparrow \infty$ in the resulting inequality and then take the expectation and find\footnote{We refer to \Cref{sec::formulation} for the conventions we agreed upon when writing an integral of the form $\int_0^T$.}
	\begin{align*}
		\|k^r\|^2_{\cI^\smalltext{2}_{\smalltext{T}\smalltext{,}\smalltext{\gamma}}} + \|k^\ell\|^2_{\cI^\smalltext{2}_{\smalltext{T}\smalltext{,}\smalltext{\gamma}}} 
		&\leq \E\bigg[\bigg(\int_0^T\cE(\gamma A)^{1/2}_s \d k^r_s + \int_0^T \cE(\gamma A)^{1/2}_s \d k^\ell_s\bigg)^2\bigg] \\
		&\leq 3\bigg(9\max\{1,\mathfrak{j}(\gamma,\beta,\Phi)\}^2\|y\|^2_{\cS^\smalltext{2}_{\smalltext{T}\smalltext{,}\smalltext{\gamma}}} + \|\eta\|^2_{\cH^\smalltext{2}_{\smalltext{T}\smalltext{,}\smalltext{\gamma}}} + \frac{(1+\gamma\Phi)}{(\beta-\gamma)}\bigg\|\frac{f}{\alpha}\bigg\|^2_{\H^\smalltext{2}_{\smalltext{T}\smalltext{,}\smalltext{\beta}}}  \bigg).
	\end{align*}
	
	It remains to bound $\|y\|^2_{\cS^\smalltext{2}_{\smalltext{T}\smalltext{,}\smalltext{\gamma}}}$ by terms involving $f$ and $\xi$. \Cref{lem::bound_delta_y_s2} implies
	\begin{align*}
		\cE(\gamma A)^{1/2}_S |y_S| 
		&\leq \E\bigg[ |\cE(\gamma A)^{1/2}_T\xi_T| + \sup_{u \in [0,\infty]}|\cE(\gamma A)^{1/2}_u \xi^+_u\mathbf{1}_{\{u < T\}}| + \int_0^T\cE(\gamma A)^{1/2}_u |f_u|\d C_u \bigg| \cG_S\bigg] \\
		&\leq \sqrt{3}\E\Bigg[ \sqrt{|\cE(\gamma A)^{1/2}_T\xi_T|^2 + \sup_{u \in [0,\infty]}|\cE(\gamma A)^{1/2}_u \xi^+_u\mathbf{1}_{\{u < T\}}|^2 + \bigg(\int_0^T\cE(\gamma A)^{1/2}_u |f_u|\d C_u\bigg)^2} \Bigg| \cG_S\Bigg], \; \text{$\P$--a.s.}, \; S \in \cT_{0,T},
	\end{align*}
	and Doob's $\L^2$-inequality for martingales leads to
	\begin{equation*}
		\|y\|^2_{\cS^\smalltext{2}_{\smalltext{T}\smalltext{,}\smalltext{\gamma}}} \leq 12 \bigg( \|\xi_T\|^2_{\L^\smalltext{2}_\smalltext{\gamma}} + \|\xi^+_\cdot\mathbf{1}_{\{\cdot < T\}}\|^2_{\cS^\smalltext{2}_{\smalltext{T}\smalltext{,}\smalltext{\gamma}}} + \frac{(1+\gamma\Phi)}{(\beta-\gamma)}\bigg\|\frac{f}{\alpha}\bigg\|^2_{\H^\smalltext{2}_{\smalltext{T}\smalltext{,}\smalltext{\beta}}} \bigg).
	\end{equation*}
	This yields
	\begin{align*}
		\|k^r\|^2_{\cI^\smalltext{2}_{\smalltext{T}\smalltext{,}\smalltext{\gamma}}} + \|k^\ell\|^2_{\cI^\smalltext{2}_{\smalltext{T}\smalltext{,}\smalltext{\gamma}}} 
		&\leq 3 \Bigg( \|\eta\|^2_{\cH^\smalltext{2}_{\smalltext{T}\smalltext{,}\smalltext{\gamma}}} + 108 \max\{1, \mathfrak{j}(\gamma,\beta,\Phi)\}^2\Big(\|\xi_T\|^2_{\L^\smalltext{2}_\smalltext{\gamma}} + \|\xi^+_\cdot\mathbf{1}_{\{\cdot < T\}}\|^2_{\cS^\smalltext{2}_{\smalltext{T}\smalltext{,}\smalltext{\gamma}}} \Big) \\
		&\quad + \big(1+ 108 \max\{1, \mathfrak{j}(\gamma,\beta,\Phi)\}^2\big)\frac{(1+\gamma\Phi)}{\beta-\gamma}\bigg\|\frac{f}{\alpha}\bigg\|^2_{\H^\smalltext{2}_{\smalltext{T}\smalltext{,}\smalltext{\beta}}} \Bigg),
	\end{align*}
	which completes the proof.
\end{proof}

\subsection{Proof of Lemma \ref{lem::bound_delta_y_s2} and Lemma \ref{lem::cond_estimates}}\label{subsec::proofs_tech_lemmas}

\begin{proof}[Proof of \Cref{lem::bound_delta_y_s2}]
	Let us first discuss how to deduce \eqref{eq::cond_bound_delta_y_s_minus} from \eqref{eq::cond_bound_delta_y_s} and \eqref{eq::exp_bound_delta_y_s}. Suppose that $V = (V_t)_{t \in [0,\infty]}$ and $V^\prime = (V^\prime_t)_{t \in [0,\infty]}$ are two product-measurable processes whose $\P$--almost all paths admit limits from the left on $(0,\infty]$, such that $\E[\sup_{u \in [0,\infty]}|V_u|] + \E[\sup_{u \in [0,\infty]}|V^\prime_u|] < \infty$ and $\E[V_t |\cG_t] \leq \E[V^\prime_t |\cG_t], \; \text{$\P$--a.s.}, \; t \in [0,\infty]$. An application of \Cref{lem::increas_cond_mean} yields $\E[V_{t-}|\cG_{t-}] \leq \E[V^\prime_{t-}|\cG_{t-}], \; \text{$\P$--a.s.}, \; t \in [0,\infty]$. Here we use the conventions $V_{0-} \coloneqq 0$, $V^\prime_{0-} \coloneqq 0$ and $\cG_{0-} \coloneqq \cG_0$. Let us denote by $W = (W_t)_{t \in [0,\infty]}$ and $W^\prime = (W^\prime_t)_{t \in [0,\infty]}$ the processes $W_t \coloneqq V_{t-}$ and $W^\prime_t \coloneqq V^\prime_{t-}$, respectively. We write $\prescript{p}{}{W}$ and $\prescript{p}{}{W^\prime}$ for the predictable projections of $W$ and $W^\prime$. Since the predictable projection of a process with $\P$--a.s. left-continuous paths also has $\P$--a.s. left-continuous paths (see \cite[Theorem VI.47 and Remark VI.50.(f)]{dellacherie1982probabilities}), we find from
	\begin{equation*}
		\prescript{p}{}{W_t} = \E[V_{t-}|\cG_{t-}] \leq \E[V^\prime_{t-}|\cG_{t-}] = \prescript{p}{}{W^\prime_t}, \; \text{$\P$--a.s.}, \; t \in [0,\infty],
	\end{equation*}
	that $\prescript{p}{}{W_t} \leq \prescript{p}{}{W^\prime_t}$, $t \in [0,\infty]$, $\P$--almost surely. Sampling $\prescript{p}{}{W}$ and $\prescript{p}{}{W^\prime}$ at a predictable stopping time $S \in \cT_{0,\infty}$ thus yields
	\begin{equation*}
		\E[V_{S-}|\cG_{S-}] = \E[W_S|\cG_{S-}] = \prescript{p}{}{W_S} \leq \prescript{p}{}{W^\prime_S} = \E[W^\prime_S|\cG_{S-}] = \E[V^\prime_{S-}|\cG_{S-}], \; \text{$\P$--a.s.}
	\end{equation*}

	\medskip
	We now turn to the proof of \eqref{eq::cond_bound_delta_y_s} and \eqref{eq::exp_bound_delta_y_s}. Note first that for each $S \in \cT_{0,\infty}$, we have $\P$--a.s.
	\begin{align*}
		y^1_S &= {\esssup_{\tau \in \cT_{\smalltext{S}\smalltext{,}\smalltext{\infty}}}}^{\cG_\smalltext{S}} \E\bigg[ \xi_{\tau \land T} + \int_S^{\tau \land T} f^2_s \d C_s + \int_S^{\tau \land T} \delta f_u \d C_u \bigg| \cG_S\bigg] \\
		&\leq {\esssup_{\tau \in \cT_{\smalltext{S}\smalltext{,}\smalltext{\infty}}}}^{\cG_\smalltext{S}} \E\bigg[ \xi_{\tau \land T} + \int_S^{\tau \land T} f^2_s \d C_s \bigg| \cG_S \bigg] + {\esssup_{\tau \in \cT_{\smalltext{S}\smalltext{,}\smalltext{\infty}}}}^{\cG_\smalltext{S}} \E\bigg[ \int_S^{\tau \land T} |\delta f_u| \d C_u \bigg| \cG_S\bigg] \leq y^2_S + {\esssup_{\tau \in \cT_{\smalltext{S}\smalltext{,}\smalltext{\infty}}}}^{\cG_\smalltext{S}} \E\bigg[ \int_S^{\tau \land T} |\delta f_u| \d C_u \bigg| \cG_S\bigg], 
	\end{align*}
	which, since $y^i_S \in \L^2$ by \Cref{lem::supermartingale_family} and \Cref{lem::snell_aggregation}, leads by symmetry to
	\begin{equation*}
		|\delta y_S| = |y^1_S - y^2_S| \leq {\esssup_{\tau \in \cT_{\smalltext{S}\smalltext{,}\smalltext{\infty}}}}^{\cG_\smalltext{S}} \E\bigg[ \int_S^{\tau \land T} |\delta f_u| \d C_u \bigg| \cG_S\bigg] \leq \E\bigg[ \int_S^T |\delta f_u| \d C_u \bigg| \cG_S\bigg], \; \text{$\P$--a.s.}
	\end{equation*}
	We now have $|\delta y_S| \leq M_S$, $\P$--a.s., $S \in \cT_{0,\infty}$, where $M = (M_t)_{t \in [0,\infty]}$ is the martingale satisfying $M_S = \E\big[ \int_0^T |\delta f_u| \d C_u \big| \cG_S\big],$ $\P$--a.s.,  $S \in \cT_{0,\infty}.$ By \Cref{prop::optional_ineq}, we find
	\begin{equation*}
		\E\bigg[\sup_{s \in [0,\infty]} |\delta y|^2\bigg] \leq \E\bigg[\sup_{s \in [0,\infty]} |M_s|^2\bigg] \leq 4  \E[|M_\infty|^2] = 4  \E\bigg[ \bigg(\int_0^T |\delta f_u| \d C_u\bigg)^2 \bigg].
	\end{equation*}
	
	\medskip
	We turn to the inequalities containing only $y^i$ for $i \in \{1,2\}$, and drop the superscripts from now on due to the symmetry of the problem. As in the proof of \Cref{lem::supermartingale_family}, we find
	\begin{equation*}
		-\E\big[ |\xi_T| \big| \cG_S\big] - \E\bigg[\int_S^T|f_u|\d C_u\bigg|\cG_S\bigg] \leq y_S \leq \E\bigg[\sup_{u \in [S,\infty]}|\xi^+_{u \land T}| + \int_S^T |f_u|\d C_u \bigg| \cG_S\bigg], \; \text{$\P$--a.s.}, \; S \in \cT_{0,\infty},
	\end{equation*}
	and thus
	\begin{equation*}
		|y_S| \leq \E\bigg[|\xi_T| + \sup_{u \in [S,\infty]}|\xi^+_u \mathbf{1}_{\{u < T\}}| + \int_S^T |f_u|\d C_u \Bigg| \cG_S \bigg], \; \text{$\P$--a.s.}, \; S \in \cT_{0,\infty}.
	\end{equation*}
	By abusing notation, let $M = (M_t)_{t \in [0,\infty]}$ now be the martingale satisfying
	\begin{equation*}
		M_S = \E\bigg[|\xi_T| + \sup_{u \in [0,T)}|\xi^+_{u}| + \int_0^T |f_u|\d C_u \bigg| \cG_S \bigg], \; \text{$\P$--a.s.}, \; S \in \cT_{0,\infty}.
	\end{equation*}
	We derive similarly to before that
	\begin{align*}
		\E\bigg[\sup_{s \in [0,\infty]} |y_s|^2\bigg] \leq \E\bigg[\sup_{s \in [0,\infty]} |M_s|^2\bigg] \leq 4  \E[|M_\infty|^2] &= 4  \E\bigg[ \bigg(|\xi_T| + \sup_{u \in [0,T)}|\xi^+_u| + \int_0^T | f_u| \d C_u\bigg)^2 \bigg] \\
		&\leq 12  \E\bigg[ |\xi_T|^2 + \sup_{u \in [0,T)}|\xi^+_u|^2 + \bigg(\int_0^T | f_u| \d C_u\bigg)^2 \bigg],
	\end{align*}
	where in the last inequality we used $(a+b+c)^2 \leq 3(a^2+b^2+c^2)$. This completes the proof.
\end{proof}

\begin{proof}[Proof of \Cref{lem::cond_estimates}]
	As in the proof of \Cref{lem::bound_delta_y_s2}, it suffices to prove \eqref{eq::cond_deltaeta0} and \eqref{eq::cond_eta1}. We start with \eqref{eq::cond_deltaeta0}. Although we fix $(t,t^\prime) \in [0,\infty)$ with $t \leq t^\prime$ to ease the notation, the equalities and inequalities that follow should be read as holding, up to a $\P$--null set, for each pair $(t,t^\prime) \in [0,\infty)$ with $t \leq t^\prime$ unless stated otherwise. Note that the processes under consideration are all constant after time $T$ apart from $C$ and $f$. From \eqref{eq::differential_y}, we see that $\delta y$ satisfies
	\begin{equation*}
		\delta y_t = \delta y_0 - \int_0^t \delta f_s \mathbf{1}_{[0,T]}(s) \d C_s + \delta\eta_t - \delta k^r_t - \delta k^\ell_{t-}, \; t \in [0,\infty], \; \text{$\P$--a.s.}
	\end{equation*}
	To ease the notation and without loss of generality, we suppose that $\delta f_s = \delta f_s \mathbf{1}_{[0,T]}(s)$ and $\xi_s = \xi_{s \land T}$. By an application of the Gal'chouk--It\^o--Lenglart formula (see \cite[Theorem A.3 and Corollary A.2]{grigorova2017reflected} or \cite[Theorem 8.2]{gal1981optional}) on $(t,t^\prime]$, we find the (optional) semimartingale decomposition of $|\delta y|^2$ to be 
	\begin{align}\label{eq::apriori1}
		|\delta y_t|^2 &= |\delta y_{t^\prime}|^2 + 2\int_{(t,t^\prime]} \delta y_{s-} \delta f_s  \d C_s + 2 \int_{(t,t^\prime]} \delta y_{s-} \d (\delta k^r)_s - 2 \int_{(t,t^\prime]} \delta y_{s-} \d (\delta \eta)_s - \int_{(t,t^\prime]} \d [\delta \eta^c]_s \nonumber \\
			& \quad - \sum_{s \in (t,t^\prime]} (\delta y_s - \delta y_{s-})^2 + 2 \int_{[t,t^\prime)} \delta y_s \d (\delta k^\ell)_s - \sum_{s \in [t,t^\prime)} (\delta y_{s+} - \delta y_s)^2.
	\end{align}
	We decided here to write the integral bounds more clearly, as it is crucial whether one takes left-open or right-open intervals. Let us analyse the terms in the above decomposition one by one. First, note that the last term $\sum_{s \in [t,t^\prime)} (\delta y_{s+} - \delta y_s)^2$ is non-negative, and by adding zero to the last term, we find
\begin{align}
	-\sum_{s \in (t,t^\prime]} (\delta y_s - \delta y_{s-})^2 
	&= -\sum_{s \in (t,t^\prime]} (\delta y_s - \delta y_{s-})^2 - 2\sum_{s \in (t,t^\prime]} (\delta y_s - \delta y_{s-})(\delta f_s\Delta C_s) - \sum_{s \in (t,t^\prime]} (\delta f_s\Delta C_s)^2 \nonumber \\
	& \quad + 2\sum_{s \in (t,t^\prime]} (\delta y_s - \delta y_{s-})(\delta f_s\Delta C_s) + \sum_{s \in (t,t^\prime]} (\delta f_s\Delta C_s)^2 \nonumber\\
	&= -\sum_{s \in (t,t^\prime]} \big( \delta y_s - \delta y_{s-} + \delta f_s \Delta C_s \big)^2 + 2\sum_{s \in (t,t^\prime]} (\delta y_s - \delta y_{s-})(\delta f_s\Delta C_s) + \sum_{s \in (t,t^\prime]} (\delta f_s\Delta C_s)^2 \nonumber\\
	&= -\sum_{s \in (t,t^\prime]} \big( \Delta\delta\eta_s - \Delta\delta k^r_s\big)^2 + 2\sum_{s \in (t,t^\prime]} (\delta y_s - \delta y_{s-})(\delta f_s\Delta C_s) + \sum_{s \in (t,t^\prime]} (\delta f_s\Delta C_s)^2 \nonumber\\
	&= -\sum_{s \in (t,t^\prime]} \big( \Delta\delta\eta_s - \Delta\delta k^r_s\big)^2 + 2 \int_{(t,t^\prime]}(\delta y_s - \delta y_{s-})\delta f_s \d C_s + \sum_{s \in (t,t^\prime]} (\delta f_s\Delta C_s)^2 \label{eq::apriori2}.
\end{align}
By substituting this back into \eqref{eq::apriori1}, rearranging the terms, and using $\delta y_{s+} - \delta y_s = -(\delta k^\ell_s - \delta k^\ell_{s-})$ and
\begin{equation*}
	\int_{(t,t^\prime]}\d[\delta\eta]_s - 2\int_{(t,t^\prime]}\d[\delta\eta,\delta k^r]_s + \int_{(t,t^\prime]}\d[\delta k^r]_s = \int_{(t,t^\prime]}\d[\delta\eta - \delta k^r]_s = \int_{(t,t^\prime]}\d[\delta\eta^c]_s + \sum_{s \in (t,t^\prime]} \big( \Delta\delta\eta_s - \Delta\delta k^r_s\big)^2,
\end{equation*}
we find
\begin{align}\label{eq::equality_without_skorokhod}
	&|\delta y_t|^2 + \int_{(t,t^\prime]}\d[\delta\eta]_s + \int_{(t,t^\prime]}\d[\delta k^r]_s + \int_{[t,t^\prime)} \d[\delta k^\ell]_s - 2\int_{(t,t^\prime]}\d[\delta\eta,\delta k^r]_s \nonumber\\
	& = |\delta y_{t^\prime}|^2 + 2\int_{(t,t^\prime]} \delta y_{s} \delta f_s  \d C_s - 2 \int_{(t,t^\prime]} \delta y_{s-} \d (\delta \eta)_s + \sum_{s \in (t,t^\prime]} (\delta f_s\Delta C_s)^2 + 2 \int_{(t,t^\prime]} \delta y_{s-} \d (\delta k^r)_s + 2 \int_{[t,t^\prime)} \delta y_s \d (\delta k^\ell)_s.
\end{align}
The Skorokhod condition \eqref{eq::a_priori_skor} implies
\begin{equation}\label{eq::skorokhod_for_delta_y}
	\int_{(t,t^\prime]}\delta y_{s-} \d \delta k^r_s \leq 0, \; \text{and} \; \int_{[t,t^\prime)}\delta y_{s} \d \delta k^\ell_s \leq 0,
\end{equation}
which then yields

\begin{align}\label{eq::inequality_t_t_prime}
	|\delta y_t|^2 + \int_{(t,t^\prime]}\d[\delta\eta]_s & + \int_{(t,t^\prime]}\d[\delta k^r]_s + \int_{[t,t^\prime)} \d[\delta k^\ell]_s - 2\int_{(t,t^\prime]}\d[\delta\eta,\delta k^r]_s \nonumber\\
	&\quad \leq |\delta y_{t^\prime}|^2 + 2\int_{(t,t^\prime]} \delta y_{s} \delta f_s  \d C_s - 2 \int_{(t,t^\prime]} \delta y_{s-} \d (\delta \eta)_s + \sum_{s \in (t,t^\prime]} (\delta f_s\Delta C_s)^2.
\end{align}

Note that $\int_{(0,\cdot]}\delta y_{s-}\d(\delta\eta)_s$ and $\int_{(0,\cdot]}\d[\delta\eta,\delta k^r]$ are uniformly integrable martingales since $\delta y \in \cS^2_T$ by \Cref{lem::bound_delta_y_s2}, $\delta\eta \in \cH^2_T$ by assumption, $|\delta k^r| \leq k^{r,1}_T + k^{r,2}_T \in \L^2$, and $[\delta\eta,\delta k^r] = \int_{(0,\cdot]} \Delta(\delta k^r)_s\d(\delta\eta)_s$ by \cite[Proposition I.4.49]{jacod2003limit}. Indeed, since $ (k^{r,1}, k^{r,2}) \in \cI^2_T \times \cI^2_T$ and $\delta \eta \in \cH^2_T$, and thus
\begin{align*}
	\sqrt{\bigg\langle \int_{(0,\cdot]} \Delta (\delta k^r)_s \d \delta \eta_s \bigg\rangle_{\infty-}} 
	&= \sqrt{\int_{(0,\infty)} \big(\Delta (\delta k^r)_s\big)^2 \d \langle\delta \eta \rangle_s} \\
	&\leq \sqrt{2 \Big(\big(k^{r,1}_T\big)^2 + \big(k^{r,2}_T\big)^2\Big) \int_{(0,\infty)} \d \langle \delta \eta \rangle_s} 
	\leq \frac{1}{\sqrt{2}} \bigg(\big(k^{r,1}_T\big)^2 + \big(k^{r,2}_T\big)^2 + \int_{(0,\infty)} \d \langle \delta \eta \rangle_s \bigg),
\end{align*}
with an integrable right-hand side, the Burkholder--Davis--Gundy inequality implies that $\int_{(0,\cdot]} \Delta(\delta k^r)_s\d(\delta\eta)_s$ is bounded by an integrable random variable, and thus it is a uniformly integrable martingale. A similar argument, together with \Cref{lem::bound_delta_y_s2}, implies that $\int_{(0,\cdot]}\delta y_{s-}\d(\delta\eta)_s$ is a uniformly integrable martingale.
Since by \Cref{lem::bound_delta_y_s2}
\begin{equation*}
	|\delta y_{t^\prime}| \leq \E\bigg[\int_{t^\prime}^T |\delta f_s|\d C_s\bigg| \cG_{t^\prime}\bigg] = \E\bigg[\int_0^T |\delta f_s|\d C_s\bigg| \cG_{t^\prime}\bigg] - \int_0^{t^\prime \land T} |\delta f_s|\d C_s,
\end{equation*}
and since the right-hand side converges $\P$--a.s. to zero as $t^\prime$ tends to infinity, we deduce from \eqref{eq::inequality_t_t_prime} that
\begin{align}\label{eq::inequality_pathwise_t}
	|\delta y_t|^2 + \int_{(t,\infty)}\d[\delta\eta]_s & + \int_{(t,\infty)}\d[\delta k^r]_s + \int_{[t,\infty)} \d[\delta k^\ell]_s - 2\int_{(t,\infty)}\d[\delta\eta,\delta k^r]_s \nonumber\\
	&\quad \leq  2\int_{(t,\infty)} \delta y_{s} \delta f_s  \d C_s - 2 \int_{(t,\infty)} \delta y_{s-} \d (\delta \eta)_s + \sum_{s \in (t,\infty)} (\delta f_s\Delta C_s)^2, \; t \in [0,\infty], \; \text{$\P$--a.s.}
\end{align}
Since
\begin{equation*}
	\sum_{s \in (t,\infty)} (\delta f_s\Delta C_s)^2 = \int_{(t,\infty)} |\delta f_s|^2\d[C]_s, \; t \in [0,\infty], \; \text{$\P$--a.s.,}
\end{equation*}
we find for any stopping time $S \in \cT_{0,\infty}$ and by taking conditional expectation in \eqref{eq::inequality_pathwise_t} that
\begin{align*}
	|\delta y_S|^2 + \E\bigg[\int_S^T \d[\delta\eta]_s \bigg| \cG_S\bigg] + \E\bigg[\int_S^T \d[\delta k^r]_s \bigg| \cG_S\bigg] 
	&+ \E\bigg[\int_{S-}^T  \d[\delta k^\ell]_s \bigg| \cG_S\bigg] \\
	&\quad \leq 2 \E\bigg[\int_S^T \delta y_s \delta f_s \d C_s \bigg| \cG_S\bigg] + \E\bigg[\int_S^T (\delta f_s)^2\d[C]_s \bigg| \cG_S\bigg], \; \text{$\P$--a.s.}
\end{align*}
Analogously, in case $S \in \cT^p_{0,\infty}$, we find by taking left-hand limits in \eqref{eq::inequality_pathwise_t} that

\begin{align*}
	|\delta y_{S-}|^2 + \E\bigg[\int_{S-}^T \d[\delta\eta]_s \bigg| \cG_{S-}\bigg] + \E\bigg[\int_{S-}^T \d[\delta k^r]_s \bigg| \cG_{S-}\bigg] 
	&+ \E\bigg[\int_{S-}^T  \d[\delta k^\ell]_s \bigg| \cG_{S-}\bigg] \\
	&\quad \leq 2 \E\bigg[\int_{S-}^T \delta y_s \delta f_s \d C_s \bigg| \cG_{S-}\bigg] + \E\bigg[\int_{S-}^T (\delta f_s)^2\d[C]_s \bigg| \cG_{S-}\bigg], \; \text{$\P$--a.s.}
\end{align*}
This yields \eqref{eq::cond_deltaeta0} and \eqref{eq::cond_deltaeta_pred0} since
\begin{equation*}
	\E \bigg[ \int_S^T \d \langle\delta \eta\rangle_u \bigg| \cG_S \bigg] = \E \bigg[ \int_S^T \d [\delta \eta]_u \bigg| \cG_S \bigg] 
	\; \text{and} \; 
	\E \bigg[ \int_{S-}^T \d \langle\delta \eta\rangle_u \bigg| \cG_{S-} \bigg] = \E \bigg[ \int_{S-}^T \d [\delta \eta]_u \bigg| \cG_{S-} \bigg], \; \text{$\P$--a.s.}
\end{equation*}

\medskip
Before turning to the remaining inequalities, it is worth noting the following. With \Cref{lem::bound_delta_y_s2} and \Cref{lem::cond_doob}, we find that
\begin{equation*}
%\label{eq::conditional_sup_y1}
	\E\bigg[\sup_{u \in [t^\prime,\infty]}|y_u|^2 \bigg| \cG_{t^\prime}\bigg] \leq 12 \E\bigg[|\xi_T|^2 + \sup_{s \in [t^\prime,\infty]}|\xi^+_s\mathbf{1}_{\{s<T\}}|^2 + \bigg(\int_{(t^\prime,\infty)} |f_s| \d C_s\bigg)^2 \bigg| \cG_{t^\prime} \bigg], \; \text{$\P$--a.s.}, \; t^\prime \in (0,\infty).
\end{equation*}
By taking the conditional expectation with respect to $\cG_t$ for $t \in [0,t^\prime)$, and then letting $t^\prime$ tend to $t$, we find
\begin{equation*}\label{eq::conditional_sup_y1}
	\E\bigg[\sup_{u \in (t,\infty]}|y_u|^2 \bigg| \cG_t\bigg] \leq 12 \E\bigg[|\xi_T|^2 + \sup_{s \in (t,\infty]}|\xi^+_s\mathbf{1}_{\{s<T\}}|^2 + \bigg(\int_{(t,\infty)} |f_s| \d C_s\bigg)^2 \bigg| \cG_t \bigg], \; \text{$\P$--a.s.}, \; t \in [0,\infty].
\end{equation*}
As before, the processes within the conditional expectations are $\P$--a.s. right-continuous, and therefore, by the $\P$--a.s. right-continuity of their respective optional projections, we even have
\begin{equation}\label{eq::ineq_sup_y_cond}
	\E\bigg[\sup_{u \in (S,\infty]}|y_u|^2 \bigg| \cG_S\bigg] \leq 12 \E\bigg[|\xi_T|^2 + \sup_{u \in (S,\infty]}|\xi^+_u\mathbf{1}_{\{u<T\}}|^2 + \bigg(\int_{(S,\infty)} |f_u| \d C_u\bigg)^2 \bigg| \cG_S \bigg], \; \text{$\P$--a.s.}, \; S \in \cT_{0,\infty}.
\end{equation}

\medskip
We now turn to \eqref{eq::cond_eta1}. It is enough to show the bound for $i = 1$, and we thus also drop the superscript in what follows. An analogous argument to the one which lead to \eqref{eq::equality_without_skorokhod} yields by letting $t^\prime$ tend to infinity that
\begin{align}\label{eq::apriori4}
	&|y_t|^2 + \int_{(t,\infty)}\d[\eta]_s + \int_{(t,\infty)}\d[k^r]_s + \int_{[t,\infty)} \d[k^\ell]_s - 2\int_{(t,\infty)}\d[\eta, k^r]_s \nonumber\\
	&\quad = |y_{\infty-}|^2 + 2\int_{(t,\infty)} y_s f_s  \d C_s - 2 \int_{(t,\infty)} y_{s-} \d\eta_s + \sum_{s \in (t,\infty)} (f_s\Delta C_s)^2 + 2 \int_{(t,\infty)} y_{s-} \d k^r_s + 2 \int_{[t,\infty)} y_s \d k^\ell_s \nonumber\\
	&\quad \leq |\xi_T|^2 + 2\int_{(t,\infty)} y_s f_s  \d C_s - 2 \int_{(t,\infty)} y_{s-} \d\eta_s + \sum_{s \in (t,\infty)} (f_s\Delta C_s)^2 + 2 \int_{(t,\infty)} y_{s-} \d k^r_s + 2 \int_{[t,\infty)} y_s \d k^\ell_s, \; t \in [0,\infty], \; \text{$\P$--a.s.}
\end{align}
Here the inequality follows from \eqref{eq::cond_bound_delta_y_s_minus}. Now the Skorokhod condition implies that
\begin{align*}
	\int_{(t,\infty)} y_{s-} \d  k^r_s &= \int_{(t,\infty)} y_{s-} \mathbf{1}_{\{y_{\smalltext{s}\smalltext{-}} = \overline\xi_\smalltext{s}\}} \1_{\{s \leq T\}} \d  k^r_s = \int_{(t,\infty)} \overline\xi_s \mathbf{1}_{\{y_{\smalltext{s}\smalltext{-}} = \overline\xi_\smalltext{s}\}} \1_{\{s \leq T\}} \d  k^r_s = \int_{(t,\infty)} \overline\xi_s \1_{\{s \leq T\}} \d k^r_s
\end{align*}
and
\begin{align*}
	\int_{[t,\infty)} y_s \d k^\ell_s &= \int_{[t,\infty)} y_s \mathbf{1}_{\{y_\smalltext{s} = \xi_\smalltext{s}\}}\1_{\{s < T\}}\d k^\ell_s = \int_{[t,\infty)} \xi_s \mathbf{1}_{\{y_\smalltext{s} = \xi_\smalltext{s}\}} \1_{\{s < T\}} \d k^\ell_s = \int_{[t,\infty)} \xi_s \1_{\{s < T\}} \d k^\ell_s.
\end{align*}
Thus
\begin{align}\label{eq::cond_kr}
	2\int_{(t,\infty)} y_{s-} \d  k^r_s = 2\int_{(t,\infty)} \overline{\xi}_s \mathbf{1}_{\{s \leq T\}} \d k^r_s &\leq 2 \sup_{s \in (t,\infty)} \big\{\xi^+_s \mathbf{1}_{\{s < T\}}\big\} \int_{(t,\infty)} \d k^r_s \nonumber\\
	&\leq \frac{1}{\kappa} \sup_{s \in (t,\infty)} |\xi^+_s\mathbf{1}_{\{s < T\}}|^2  + \kappa \bigg(\int_{(t,\infty)} \d k^r_s \bigg)^2,
\end{align}
\begin{align}\label{eq::cond_kell}
	2\int_{[t,\infty)} y_s\d k^\ell_s = 2\int_{[t,\infty)} \xi_s \mathbf{1}_{\{s < T\}} \d k^\ell_s &\leq 2 \sup_{s \in [t,\infty)} \big\{\xi^+_s \mathbf{1}_{\{s < T\}} \big\}\int_{[t,\infty)} \d k^\ell_s \nonumber\\
	&\leq \frac{1}{\kappa} \sup_{s \in [t,\infty)} |\xi^+_s \mathbf{1}_{\{s < T\}}|^2  + \kappa \bigg(\int_{[t,\infty)} \d k^\ell_s \bigg)^2, 
\end{align}
for every $\kappa \in (0,\infty)$. Similarly, we find
\begin{equation}\label{eq::int_y_f_cs}
	2\int_{(t,\infty)} y_s f_s  \d C_s \leq \frac{1}{\varepsilon}\sup_{s \in (t,\infty)} |y_s|^2 + \varepsilon\bigg(\int_{(t,\infty)} |f_s| \d C_s\bigg)^2,
\end{equation}
for every $\varepsilon \in (0,\infty)$. Since
\begin{equation*}
	\int_{(t,\infty)} \d k^r_s + \int_{[t,\infty)} \d k^\ell_s = y_t - y_{\infty-} - \int_{(t,\infty)} f_s \d C_s + \int_{(t,\infty)} \d \eta_s, \; t \in [0,\infty), \; \text{$\P$--a.s.},
\end{equation*}
we also have
\begin{equation}\label{eq::cond_k1}
	\bigg(\int_{(t,\infty)} \d k^r_s + \int_{[t,\infty)} \d k^\ell_s \bigg)^2 \leq 4 \bigg( |y_t|^2 + |y_{\infty-}|^2 + \bigg(\int_{(t,\infty)} f_s \d C_s\bigg)^2 + \bigg(\int_{(t,\infty)} \d \eta_s\bigg)^2 \bigg), \; t \in [0,\infty), \; \text{$\P$--a.s.}
\end{equation}

Combining \eqref{eq::apriori4} with \Cref{eq::cond_kr,eq::cond_kell,eq::int_y_f_cs,eq::cond_k1} yields

\begin{align}\label{eq::inequality_pathwise_without_delta}
	&|y_t|^2 + \int_{(t,\infty)}\d[\eta]_s + \int_{(t,\infty)}\d[k^r]_s + \int_{[t,\infty)} \d[k^\ell]_s - 2\int_{(t,\infty)}\d[\eta, k^r]_s \nonumber\\ 
	&\quad \leq |\xi_T|^2 + \frac{1}{\varepsilon}\sup_{s \in (t,\infty)} |y_s|^2 + \varepsilon\bigg(\int_{(t,\infty)} |f_s| \d C_s\bigg)^2 - 2 \int_{(t,\infty)} y_{s-} \d\eta_s + \sum_{s \in (t,\infty)} (f_s\Delta C_s)^2 \nonumber\\
	&\quad\quad + \frac{1}{\kappa} \sup_{s \in (t,\infty)} |\xi^+_s\mathbf{1}_{\{s < T\}}|^2  + \kappa \bigg(\int_{(t,\infty)} \d k^r_s \bigg)^2 + \frac{1}{\kappa} \sup_{s \in [t,\infty)} |\xi^+_s \mathbf{1}_{\{s < T\}}|^2  + \kappa \bigg(\int_{[t,\infty)} \d k^\ell_s \bigg)^2 \nonumber\\
	&\quad \leq |\xi_T|^2 + \frac{1}{\varepsilon}\sup_{s \in (t,\infty)} |y_s|^2 +(\varepsilon+4\kappa)\bigg(\int_{(t,\infty)} |f_s| \d C_s\bigg)^2 - 2 \int_{(t,\infty)} y_{s-} \d\eta_s + \sum_{s \in (t,\infty)} (f_s\Delta C_s)^2 \nonumber\\
	&\quad\quad + \frac{2}{\kappa} \sup_{s \in [t,\infty)} |\xi^+_s\mathbf{1}_{\{s < T\}}|^2 + 4\kappa \bigg( |y_t|^2 + |y_{\infty-}|^2 + \bigg(\int_{(t,\infty)} \d \eta_s\bigg)^2 \bigg) \nonumber\\
	&\quad \leq |\xi_T|^2 + \frac{1}{\varepsilon}\sup_{s \in (t,\infty)} |y_s|^2 +(\varepsilon+4\kappa)\bigg(\int_{(t,\infty)} |f_s| \d C_s\bigg)^2 - 2 \int_{(t,\infty)} y_{s-} \d\eta_s + \sum_{s \in (t,\infty)} (f_s\Delta C_s)^2 \nonumber\\
	&\quad\quad + \frac{2}{\kappa} \sup_{s \in [t,\infty)} |\xi^+_s\mathbf{1}_{\{s < T\}}|^2 + 4\kappa \bigg( |y_t|^2 + |\xi_T|^2 + \bigg(\int_{(t,\infty)} \d \eta_s\bigg)^2 \bigg) \nonumber\\
	&\quad \leq (1+4\kappa)|\xi_T|^2 + \frac{1}{\varepsilon}\sup_{s \in (t,\infty)} |y_s|^2 +(\varepsilon+4\kappa)\bigg(\int_{(t,\infty)} |f_s| \d C_s\bigg)^2 - 2 \int_{(t,\infty)} y_{s-} \d\eta_s + \sum_{s \in (t,\infty)} (f_s\Delta C_s)^2 \nonumber\\
	&\quad\quad + \frac{2}{\kappa} \sup_{s \in [t,\infty)} |\xi^+_s\mathbf{1}_{\{s < T\}}|^2 + 4\kappa \bigg( |y_t|^2 + \bigg(\int_{(t,\infty)} \d \eta_s\bigg)^2 \bigg), \; t \in [0,\infty], \; \text{$\P$--a.s.}
\end{align}
Let $\kappa \in (0,\infty)$ be such that $0 < 1-4\kappa \leq 1$, and let $S \in \cT_{0,\infty}$. By taking conditional expectation, rearranging the terms and using $\E[(\int_{(S,\infty)}\d\eta_u)^2|\cG_S] = \E[\int_{(S,\infty)}\d[\eta]_u|\cG_S]$ and $\sum_{s \in (S,\infty)} (f_s\Delta C_s)^2 \leq (\int_S^T |f_s| \d C_s)^2$, we find
\begin{align*}
	& (1-4\kappa)\Bigg(|y_S|^2 + \E\bigg[\int_{(S,\infty)}\d[\eta]_u\bigg|\cG_S\bigg]\Bigg) + \E\bigg[\int_{(S,\infty)}\d[k^r]_u\bigg|\cG_S\bigg] + \E\bigg[\int_{[S,\infty)} \d[k^\ell]_u\bigg|\cG_S\bigg] \\
	& \leq (1+4\kappa)\E\big[|\xi_T|^2\big|\cG_S\big] + \frac{1}{\varepsilon}\E\bigg[\sup_{u \in (S,\infty)} |y_u|^2\bigg|\cG_S\bigg] +(1 + \varepsilon+4\kappa)\E\bigg[\bigg(\int_{(S,\infty)} |f_u| \d C_u\bigg)^2\bigg|\cG_S\bigg]  + \frac{2}{\kappa} \E\bigg[\sup_{u \in [S,\infty)} |\xi^+_u\mathbf{1}_{\{u < T\}}|^2\bigg|\cG_S\bigg] \\
	& \leq (1+4\kappa + 12/\varepsilon)\E\big[|\xi_T|^2\big|\cG_S\big] + (1 + \varepsilon+4\kappa + 12/\varepsilon)\E\bigg[\bigg(\int_{(S,\infty)} |f_u| \d C_u\bigg)^2\bigg|\cG_S\bigg]  + \bigg(\frac{12}{\varepsilon} +\frac{2}{\kappa}\bigg) \E\bigg[\sup_{u \in [S,\infty)} |\xi^+_u\mathbf{1}_{\{u < T\}}|^2\bigg|\cG_S\bigg].
\end{align*}
Here the second inequality follows from \eqref{eq::ineq_sup_y_cond}. Since $0 < (1-4\kappa) \leq 1$, we can thus divide both sides by $(1-4\kappa)$ and find
\begin{align*}
	& |y_S|^2 + \E\bigg[\int_{(S,\infty)}\d[\eta]_u\bigg|\cG_S\bigg] + \E\bigg[\int_{(S,\infty)}\d[k^r]_u\bigg|\cG_S\bigg] + \E\bigg[\int_{[S,\infty)} \d[k^\ell]_u\bigg|\cG_S\bigg] \\
	& \leq \frac{\max\{1 + \varepsilon+4\kappa + 12/\varepsilon,12/\varepsilon + 2/\kappa\}}{1-4\kappa} \Bigg( \E\big[|\xi_T|^2\big|\cG_S\big] + \E\bigg[\bigg(\int_{(S,\infty)} |f_u| \d C_u\bigg)^2\bigg|\cG_S\bigg] + \E\bigg[\sup_{u \in [S,\infty)} |\xi^+_u\mathbf{1}_{\{u < T\}}|^2\bigg|\cG_S\bigg]\Bigg).
\end{align*}

\medskip
Finally, as explained at the beginning of the proof of \Cref{lem::bound_delta_y_s2}, the inequality \eqref{eq::cond_eta1_pred} follows from \eqref{eq::cond_eta1}, which completes the proof.
\end{proof}

\section{Proofs of the main results}\label{sec::existence_rbsde}

We are now in a position to prove \Cref{thm::main} and \Cref{thm::main_bsde}. The proofs are based on the optimal stopping theory we revisited in \Cref{sec::optimal_stopping} and on the \emph{a priori} estimates we established in \Cref{sec::a_priori}. The existence and uniqueness of the BSDE and the reflected BSDE are based on defining a contraction map on the weighted spaces of \Cref{sec::weighted_spaces}. Therefore, we first need to show that such a contraction map is well-defined, meaning that it maps its domain into itself.

\begin{proposition}\label{lem::exist_simple_gen}
Suppose that $f$ does not depend on $(y,\mathrm{y},z,u)$. There exists a unique triple
$(Z,U,N) \in \mathbb{H}^{2}_{T}(X) \times \mathbb{H}^{2}_{T}(
\mu ) \times \mathcal H^{2,\perp}_{0,T}(X,\mu )$ and a, up to $\mathbb{P}$--indistinguishability, unique triple $(Y,K^{r},K^{\ell})$ such that the collection $(Y,Z,U,N,K^{r},K^{\ell})$ satisfies~\ref{cond::integ_mart} up to~\ref{cond::repre_Y}. Moreover,
$Y \in \mathcal S^{2}_{T}$. If, in addition,
$(X,\mathbb{G},T,\xi ,f,C)$ is standard data for some
$\hat\beta \in (0,\infty )$, then
$(\alpha Y,\alpha Y_{-},Z,U,N) \in \mathbb{H}^{2}_{T,\hat\beta}
\times \mathbb{H}^{2}_{T,\hat\beta} \times \mathbb{H}^{2}_{T,
\hat\beta}(X) \times \mathbb{H}^{2}_{T,\hat\beta}(\mu ) \times
\mathcal H^{2,\perp}_{T,\hat\beta}(X,\mu )$.
\end{proposition}

\begin{proof}
	Let $(Y,Z,U,N,K^\ell,K^r)$ be the collection of processes constructed in \Cref{lem::snell_decomposition}, which clearly is the unique collection of processes satisfying \ref{cond::integ_mart} up to \ref{cond::repre_Y}. That $Y \in \cS^2_T$ follows from \Cref{lem::bound_delta_y_s2}.

	\medskip
	That $(\alpha Y, \alpha Y_-,Z,U,N) \in \H^2_{T,\hat\beta} \times \H^2_{T,\hat\beta} \times \H^2_{T,\hat\beta}(X) \times \H^2_{T,\hat\beta}(\mu) \times \cH^{2,\perp}_{T,\hat\beta}(X,\mu)$ in case $(X,\G,T,\xi,f,C)$ is standard data under some $\hat\beta \in (0,\infty)$ follows from Proposition \ref{prop::apriori}. This completes the proof.
\end{proof}

Before proving the next result, recall from \Cref{rem::after_formulation} that the first component of the reflected BSDE is an optional semimartingale indexed by $[0,\infty]$ since $Y = Y_0 + M + A$, $\P$--a.s., where
\begin{equation*}
	M_t \coloneqq \int_0^t Z_s \d X_s + \int_0^t \int_E U_s(x)\tilde\mu(\d s, \d x) + \int_0^t \d N_t, \; \text{and} \; A_t \coloneqq -\int_0^t f_s\big(Y_s,Y_{s-},Z_s,U_s(\cdot)\big) \d C_s - K^r_t - K^\ell_{t-}, \; t \in [0,\infty].
\end{equation*}
The integrals above do not include the point $\infty$ in their domain of integration, yet their values at infinity are determined by letting $t \uparrow\uparrow \infty$.  

\begin{lemma}\label{lem::cond_repre}
	Let $\hat\beta \in (0,\infty)$. Suppose that $(Y,Z,U,N,K^r,K^\ell)$ satisfy \ref{cond::integ_mart} up to \ref{cond::increasing_proc} and one of the following conditions holds:
	
	\medskip
	$(i)$ $(\alpha Y,\alpha Y_-,Z,U,N) \in \H^2_{T,\hat\beta} \times \H^2_{T,\hat\beta} \times \H^2_{T,\hat\beta}(X) \times \H^2_{T,\hat\beta}(\mu);$
	
	\medskip
	$(ii)$ the generator $f$ does not depend on $Y_{s-}$ and $(\alpha Y,Z,U,N) \in \H^2_{T,\hat\beta} \times \H^2_{T,\hat\beta}(X) \times \H^2_{T,\hat\beta}(\mu);$
	
	\medskip
	$(iii)$ the generator $f$ does not depend on $Y_s$ and $(\alpha Y_-,Z,U,N) \in \H^2_{T,\hat\beta} \times \H^2_{T,\hat\beta}(X) \times \H^2_{T,\hat\beta}(\mu)$.
	
	\medskip
	Then $Y \in \cS^2_T$ and \ref{cond::repre_Y} holds.
\end{lemma}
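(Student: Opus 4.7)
The plan is to prove the two implications separately after freezing the generator. Set $g_s \coloneqq f_s(Y_s, Y_{s-}, Z_s, U_s)$; under any of the hypotheses (i)--(iii), the stochastic Lipschitz property \ref{data::generator} combined with \ref{data::integ_f0} yields $\|g/\alpha\|_{\H^2_{T,\hat\beta}} < \infty$, since in cases (ii) and (iii) the Lipschitz term involving the missing $Y$-argument simply drops. A Cauchy--Schwarz argument analogous to the one carried out in the proof of \Cref{prop::apriori} then gives
\[
\E\bigg[\bigg(\int_0^T |g_s|\,\d C_s\bigg)^{\!2}\bigg] \leq \frac{1}{\hat\beta}\bigg\|\frac{g}{\alpha}\bigg\|^2_{\H^\smalltext{2}_{\smalltext{T}\smalltext{,}\smalltext{\hat\beta}}} < \infty,
\]
so the data $(X,\G,T,\xi,g,C)$ satisfies the integrability condition of \Cref{sec::optimal_stopping}, and in particular is standard data for $\hat\beta$.

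For the implication $\ref{cond::repre_Y} \Longrightarrow Y \in \cS^2_T$, I would apply \Cref{lem::bound_delta_y_s2} with $y^1 = Y$ and $f^1 = g$; the $\cS^2_T$-bound in that lemma reads
\[
\|Y\|_{\cS^\smalltext{2}_\smalltext{T}}^2 \leq 12\, \E\bigg[|\xi_T|^2 + \sup_{u \in [0,T)}|\xi^+_u|^2 + \bigg(\int_0^T |g_u|\,\d C_u\bigg)^{\!2}\bigg],
\]
which is finite by \ref{data::expectation_sup} and the previous display.

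For the converse $Y \in \cS^2_T \Longrightarrow \ref{cond::repre_Y}$, \Cref{lem::exist_simple_gen} applied to the frozen generator $g$ provides a unique tuple $(\widetilde Y, \widetilde Z, \widetilde U, \widetilde N, \widetilde K^r, \widetilde K^\ell)$ satisfying \ref{cond::integ_mart}--\ref{cond::repre_Y} with generator $g$, and with $\widetilde Y \in \cS^2_T$. The lemma will follow once $(Y,Z,U,N,K^r,K^\ell) = (\widetilde Y, \widetilde Z, \widetilde U, \widetilde N, \widetilde K^r, \widetilde K^\ell)$ is established, since the representation then transfers to $Y$. To this end, pass to the auxiliary optional strong supermartingales $V_t \coloneqq Y_t + \int_0^{t\wedge T}g_s\,\d C_s$ and $\widetilde V_t \coloneqq \widetilde Y_t + \int_0^{t\wedge T}g_s\,\d C_s$. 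Both dominate the gain process $J_t \coloneqq \xi_{t\wedge T} + \int_0^{t\wedge T}g_s\,\d C_s$, coincide with $J_T$ at time $T$, and admit Mertens decompositions whose predictable non-decreasing parts satisfy the Skorokhod condition \ref{cond::increasing_proc}. Since $\widetilde V$ is by construction the Snell envelope of $J$, and the Mertens decomposition of this Snell envelope is uniquely characterised by that Skorokhod condition (as in \Cref{lem::snell_decomposition}), the decomposition of $V$ must agree with that of $\widetilde V$; uniqueness of Mertens then forces $V = \widetilde V$, hence $Y = \widetilde Y$, and the orthogonal splitting of \Cref{prop::orthogonal_decomposition} gives $(Z,U,N) = (\widetilde Z, \widetilde U, \widetilde N)$.

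The hard part will be this last identification of $V$ with the Snell envelope of $J$ using only the Skorokhod condition, without presupposing \ref{cond::repre_Y}. The inequality $V \geq \widetilde V$ is automatic by minimality of the Snell envelope, but the reverse inequality relies on the Mertens uniqueness argument from \Cref{lem::snell_decomposition}: combined with \Cref{lem::snell_indicator}, the Skorokhod identity for $(K^r, K^\ell)$ against the left-limit obstacle $\overline\xi$ and the obstacle $\xi$ pins down the supports of the jumps of $K^r$ and $K^\ell$ to exactly those dictated by the Snell envelope of $J$, so that the two Mertens decompositions must coincide.
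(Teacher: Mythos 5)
The first half of your argument is fine: freezing the generator, checking $\|g/\alpha\|_{\H^\smalltext{2}_{\smalltext{T}\smalltext{,}\smalltext{\hat\beta}}}<\infty$ via the Lipschitz property, and deducing \ref{cond::repre_Y}$\Rightarrow Y\in\cS^2_T$ from \Cref{lem::bound_delta_y_s2} is exactly what the paper does (one small overreach: the data $(X,\G,T,\xi,g,C)$ need not be \emph{standard} for $\hat\beta$, since nothing is assumed about $\|\xi_T\|_{\L^\smalltext{2}_{\smalltext{\hat\beta}}}$ or $\|\alpha\brs{\xi}\|_{\H^\smalltext{2}_{\smalltext{T}\smalltext{,}\smalltext{\hat\beta}}}$; but only the unweighted part of \Cref{lem::exist_simple_gen} is needed, so this is harmless).

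The converse direction, however, has a genuine gap at precisely the step you flag as ``the hard part''. Uniqueness of the Mertens decomposition is a statement about a \emph{fixed} strong supermartingale: given $V$, the triple $(M,K^r,K^\ell)$ is unique. It does not allow you to conclude that two a priori \emph{different} supermartingales $V$ and $\widetilde V$, each dominating $J$ and each equipped with increasing parts satisfying a Skorokhod condition relative to its \emph{own} $Y$-component, must coincide. Likewise, \Cref{lem::snell_indicator} is a property of the Snell envelope constructed in \Cref{sec::optimal_stopping}; invoking it for the supermartingale $V$ built from the given $Y$ presupposes that $V$ is that Snell envelope, which is what you are trying to prove. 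For a merely optional obstacle the Skorokhod condition alone does \emph{not} characterise the Snell envelope (this is why \ref{cond::repre_Y} is listed as a separate requirement, and it is exactly the point the paper raises in \Cref{rem::missing_pieces} about the literature), so the assertion that the supports of the jumps of $K^r$ and $K^\ell$ are ``pinned down'' needs an actual argument and the hypothesis $Y\in\cS^2_T$ has to enter somewhere --- in your sketch it is never used in this direction. The paper closes the gap differently: it applies the Gal'chouk--It\^o--Lenglart formula to $|Y-\widetilde Y|^2$ on $(S\land T,T]$, uses the two Skorokhod conditions (for $(Y,K^r,K^\ell)$ and for $(\widetilde Y,\widetilde K^r,\widetilde K^\ell)$) together with $Y\geq\xi$ and $\widetilde Y\geq\xi$ to show that the cross terms against $\d(K^r-\widetilde K^r)$ and $\d(K^\ell-\widetilde K^\ell)$ are non-positive, and uses $Y-\widetilde Y\in\cS^2_T$ with the Burkholder--Davis--Gundy inequality to see that the remaining stochastic integral is a uniformly integrable martingale; taking expectations gives $\E[|Y_S-\widetilde Y_S|^2]\leq 0$ for every stopping time $S$, whence $Y=\widetilde Y$ by the optional cross-section theorem. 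You should replace your Snell-envelope identification with this (or an equivalently complete) argument.
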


\begin{proof}
	We prove this result under the assumption that $(ii)$ holds. The other cases follow analogously. Note first that 
	\begin{align*}
		\E\bigg[\bigg(\int_0^T |f_s\big(Y_s,Z_s,U_s(\cdot)\big)| \d C_s\bigg)^2\bigg] 
		&\leq \E\bigg[\bigg(\int_0^T\frac{1}{\cE(\hat\beta A)_{s}}\d A_s\bigg)\bigg(\int_0^T \cE(\hat\beta A)_{s}\frac{|f_s\big(Y_s,Z_s,U_s(\cdot)\big)|^2}{\alpha^2_s} \d C_s\bigg)\bigg] \\
		&\leq  \frac{1}{\hat\beta}\E\bigg[\int_0^T \cE(\hat\beta A)_{s}\frac{|f_s\big(Y_s,Z_s,U_s(\cdot)\big)|^2}{\alpha^2_s} \d C_s\bigg] < \infty.
	\end{align*}
	Here we first used the Cauchy--Schwarz inequality, then \eqref{eq::integ_stoch_exp_inverse}, and finally that $(Y,Z,U) \in \H^2_{T,\hat\beta} \times \H^2_{T,\hat\beta}(X) \times \H^2_{T,\hat\beta}(\mu)$ together with the Lipschitz property of $f$ and \ref{data::integ_f0}. We now show that $Y \in \cS^2_T$. Let
	\begin{align*}
		J_t 
		&\coloneqq Y_{t \land T} - Y_0 + \int_0^{t \land T} f_s\big(Y_s,Z_s,U_s(\cdot)\big)\d C_s + K^r_{t \land T} + K^\ell_{(t \land T)-} \\
		&= \int_0^{t \land T} Z_s \d X_s + \int_0^{t \land T}\int_E U_s(x)\tilde\mu(\d s, \d x) + N_{t \land T}, \; t \in [0,\infty), \; \text{$\P$--a.s.}
	\end{align*}
	Then $J \in \cS^2_T$ since
	\begin{equation*}
		\E\bigg[\sup_{t \in [0,T]}|J_t|^2\bigg] = \E\bigg[\sup_{t \in [0,\infty]}|J_t|^2\bigg] = \E\bigg[\sup_{t \in [0,\infty)}|J_t|^2\bigg] 
		\leq 4 \E\big[|J_\infty|^2\big] 
		= 4\Big(\|Z\|^2_{\H^\smalltext{2}_{\smalltext{T}\smalltext{,}\smalltext{0}}(X)} + \|U\|^2_{\H^\smalltext{2}_{\smalltext{T}\smalltext{,}\smalltext{0}}(\mu)} + \|N\|^2_{\cH^\smalltext{2}_{\smalltext{T}\smalltext{,}\smalltext{0}}}\Big) < \infty,
	\end{equation*}
	and
	\begin{align*}
		\|Y - J\|_{\cS^\smalltext{2}_\smalltext{T}} 
		&\leq \|Y_0\|_{\L^\smalltext{2}} + \E\bigg[\bigg(\int_0^T |f_s\big(Y_s,Z_s,U_s(\cdot)\big)| \d C_s\bigg)^2\bigg]^{1/2} + \|K^r_T + K^\ell_T\|_{\L^2} \\
		&\leq \|\xi_T - J_T\|_{\L^2} + 2 \E\bigg[\bigg(\int_0^T |f_s\big(Y_s,Z_s,U_s(\cdot)\big)| \d C_s\bigg)^2\bigg]^{1/2} + 2 \|K^r_T + K^\ell_T\|_{\L^2} \\
		&\leq \|\xi_T\|_{\L^2} + \|J_T\|_{\L^2} + 2 \E\bigg[\bigg(\int_0^T |f_s\big(Y_s,Z_s,U_s(\cdot)\big)| \d C_s\bigg)^2\bigg]^{1/2} + 2 \|K^r_T + K^\ell_T\|_{\L^2} < \infty.
	\end{align*}
	Here we used $Y_T = \xi_T$ in the second-to-last line. This yields
	\begin{equation*}
		\|Y\|_{\cS^\smalltext{2}_\smalltext{T}} \leq \|Y - J\|_{\cS^\smalltext{2}_\smalltext{T}} + \|J\|_{\cS^\smalltext{2}_\smalltext{T}} < \infty.
	\end{equation*}

	We turn to the proof of \ref{cond::repre_Y}. Let $(\tilde Y,\tilde Z, \tilde U, \tilde N, \tilde K^r, \tilde K^\ell)$ be the solution to the reflected BSDE satisfying \ref{cond::integ_mart} up to \ref{cond::repre_Y} with $\tilde Y \in \cS^2_T$ and generator $f(Y,Z,U)$ given by \Cref{lem::exist_simple_gen}. Applying the Gal'chouk--It\^o--Lenglart formula to $|Y - \tilde Y|^2$ yields
	\begin{align}\label{eq::ito_representation}
		|Y_t - \tilde Y_t|^2 
		&= |Y_{t^\prime} - \tilde Y_{t^\prime}|^2 + 2 \int_{(t,t^\prime]} (Y_{s-}-\tilde Y_{s-}) \d (K^r - \tilde K^r)_s - 2 \int_{(t,t^\prime]} (Y_{s-}-\tilde Y_{s-}) \d (M - \tilde M)_s - \int_{(t,t^\prime]} \d [M^c - \tilde M^c]_s \nonumber \\
		& \quad - \sum_{s \in (t,t^\prime]} \big( Y_s - \tilde Y_s  - (Y_{s-} - \tilde Y_{s-})\big)^2 + 2 \int_{[t,t^\prime)} (Y_s - \tilde Y_s) \d (K^\ell - \tilde K^\ell)_s - \sum_{s \in [t,t^\prime)} \big( Y_{s+} - \tilde Y_{s+} - (Y_s - \tilde Y_s)\big)^2 \nonumber \\
		&\leq |Y_{t^\prime} - \tilde Y_{t^\prime}|^2 + 2 \int_{(t,t^\prime]} \big(Y_{s-} - \overline\xi_s - (\tilde Y_{s-}-\overline\xi_s)\big) \d (K^r - \tilde K^r)_s - 2 \int_{(t,t^\prime]} (Y_{s-}-\tilde Y_{s-}) \d (M - \tilde M)_s \nonumber \\
		& \quad  +2 \int_{[t,t^\prime)} \big(Y_s - \xi_s - (\tilde Y_s-\xi_s)\big) \d (K^\ell - \tilde K^\ell)_s \nonumber \\
		&\leq |Y_{t^\prime} - \tilde Y_{t^\prime}|^2- 2 \int_{(t,t^\prime]} (Y_{s-}-\tilde Y_{s-}) \d (M - \tilde M)_s, \; 0 \leq t \leq t^\prime < \infty, \; \text{$\P$--a.s.,}
	\end{align}
	where
	\begin{equation*}
		M_t \coloneqq \int_0^t Z_s \d X_s + \int_0^t\int_E U_s(x)\tilde\mu(\d s, \d x) + N_t, \; 
		\text{and} 
		\; \tilde M_t \coloneqq \int_0^t \tilde Z_s \d X_s + \int_0^t\int_E \tilde U_s(x)\tilde\mu(\d s, \d x) + \tilde N_t, \; t \in [0,\infty].
	\end{equation*}	
	Note that we have $M_\infty = \lim_{t \uparrow\uparrow \infty}M_t$ and $\tilde M_\infty =\lim_{t \uparrow\uparrow \infty}M_t$ up to a $\P$--null set.	Since $Y- \tilde Y \in \cS^2_T$, the local martingale
	\begin{equation*}
		\int_0^\cdot (Y_{s-}-\tilde Y_{s-}) \d (M - \tilde M)_s = \int_0^{\cdot \land T} (Y_{s-}-\tilde Y_{s-}) \d (M - \tilde M)_s,
	\end{equation*}
	is a uniformly integrable martingale by the Burkholder--Davis--Gundy inequality. Taking the conditional expectation in \eqref{eq::ito_representation} yields $|Y_S - \tilde Y_S|^2 \leq \E[|Y_{S^\prime} - \tilde Y_{S^\prime}|^2|\cG_S]$, $\P$--a.s., for two finite stopping times $S$ and $S^\prime$ with $S \leq S^\prime$. In particular, since $(Y,\tilde Y) \in \cS^2_T$, and by choosing $S^\prime = S \lor n$ and then letting $n$ tend to infinity, this yields
	\begin{equation*}
		|Y_S - \tilde Y_S|^2 \leq \E[|Y_{\infty-} - \tilde Y_{\infty-}|^2|\cG_S], \; \text{$\P$--a.s.}
	\end{equation*}
	Suppose for the moment that $\P[A] = 0$, where $A = \{T = \infty\} \cap \{|Y_{\infty-}-\tilde Y_{\infty-}| > 0\}$. Then $\E[|Y_{\infty-} - \tilde Y_{\infty-}|^2|\cG_S] = 0$, $\P$--almost surely. \Cref{prop::optional_ineq} together with $Y_T = \xi_T = \tilde Y_T$, $\P$--a.s., implies that $Y = \tilde Y$ up to $\P$--indistinguishability. Hence
	\begin{equation*}
		Y_S = \tilde Y_S = {\esssup_{\tau \in \cT_{\smalltext{S}\smalltext{,}\smalltext{\infty}}}}^{\cG_\smalltext{S}} \E\bigg[ \int_S^{\tau \land T} f_s(Y_s,Z_s,U_s(\cdot)) \d C_s + \xi_{\tau \land T} \bigg| \cG_S\bigg], \;\text{$\P$--a.s.}, \; S \in \cT_{0,\infty}.
	\end{equation*}
	
	\medskip
	It remains to prove $\P[A] = 0$, and we suppose, for the sake of reaching a contradiction, that $\P[A] > 0$. On $B \coloneqq A \cap \{\Delta Y_\infty = Y_\infty - Y_{\infty-} = 0\}$, we have $-\Delta Y_\infty = \Delta K^r_\infty = K^r_\infty - K^r_{\infty-} = 0$ and $\overline\xi_\infty \leq Y_{\infty-} = Y_\infty = \xi_\infty$, $\P$--a.s. on $B$. This implies $|\tilde Y_\infty - \tilde Y_{\infty-}| = |\xi_\infty - \tilde Y_{\infty-}| = |Y_{\infty-} - \tilde Y_{\infty-}| > 0$, $\P$--a.s. on $B$, therefore $-\Delta \tilde Y_\infty = \Delta\tilde K^r_\infty > 0$, $\P$--a.s. on $B$, and thus $\tilde Y_{\infty-} = \overline\xi_\infty$, $\P$--a.s. on $B$. However, this also yields $|Y_{\infty-} - \overline\xi_\infty| = |Y_{\infty-} - \tilde Y_{\infty-}| > 0$, $\P$--a.s. on $B$, which now implies $\tilde Y_{\infty-} = \overline\xi_\infty < Y_{\infty-} = Y_\infty = \xi_\infty = \tilde Y_\infty$, $\P$--a.s. on $B$. Therefore, we must have $0 > -\Delta\tilde K^r_\infty = \Delta\tilde Y_\infty > 0$, $\P$--a.s. on $B$, which now yields $\P[B] = 0$. Let $B^\prime \coloneqq A \cap \{\Delta Y_\infty \neq 0\}$. Then $Y_{\infty} - Y_{\infty-} = \Delta Y_\infty = -\Delta K^r_\infty < 0$ and thus $Y_{\infty-} = \overline\xi_\infty$, $\P$--a.s. on $B^\prime$. Since $|\overline\xi_\infty - \tilde Y_{\infty-}| = |Y_{\infty-} - \tilde Y_{\infty-}| > 0$, $\P$--a.s. on $B^\prime$, this implies $\Delta \tilde K^r_\infty = 0$ and therefore $\Delta\tilde Y_\infty = 0$, $\P$--a.s. on $B^\prime$. Hence $Y_{\infty-} > Y_\infty = \xi_\infty = \tilde Y_{\infty} = \tilde Y_{\infty-} > \overline \xi_\infty = Y_{\infty-}$, $\P$--a.s. on $B^\prime$, and therefore $\P[B^\prime] = 0$. This now yields $\P[A] = \P[B] + \P[B^\prime] = 0$ and completes the proof.
\end{proof}

We now turn to the proof of our main result.

\begin{proof}[Proof of Theorem \ref{thm::main}.]\label{proof::main}
	We prove the theorem under assumption $(ii)$. Let $\cL^{(2)}_{T,\hat\beta}$ be the collection of processes $(Y,Z,U,N)$ for which $Y = Y_{\cdot \land T}$ is optional, $(Z,U,N) \in \H^2_T(X) \times \H^2_T(\mu) \times \cH^{2,\perp}_{0,T}(X,\mu)$, and
	\begin{equation*}
		\|(Y,Z,U,N)\|^2_{\cL^\smalltext{(2)}_{\smalltext{T}\smalltext{,}\smalltext{\hat\beta}}} \coloneqq \|\alpha Y\|^2_{\H^\smalltext{2}_{\smalltext{T}\smalltext{,}\smalltext{\hat\beta}}} + \|Z\|^2_{\H^\smalltext{2}_{\smalltext{T}\smalltext{,}\smalltext{\hat\beta}}(X)} + \|U\|^2_{\H^\smalltext{2}_{\smalltext{T}\smalltext{,}\smalltext{\hat\beta}}(\mu)} + \|N\|^2_{\cH^\smalltext{2}_{\smalltext{T}\smalltext{,}\smalltext{\hat\beta}}} < \infty.
	\end{equation*}
	Then $\cL^{(2)}_{T,\hat\beta}$ together with the semi-norm $\|\cdot\|_{\cL^\smalltext{(2)}_{\smalltext{T}\smalltext{,}\smalltext{\hat\beta}}}$ is a Banach space after identifying processes $(Y,Z,U,N)$ and $(Y^\prime,Z^\prime,U^\prime,N^\prime)$ for which $\|(Y,Z,U,N) - (Y^\prime,Z^\prime,U^\prime,N^\prime)\|_{\cL^\smalltext{(2)}_{\smalltext{T}\smalltext{,}\smalltext{\hat\beta}}} = 0$ holds. Let $(y,z,u,n) \in \cL^{(2)}_{T,\hat\beta}$, and note that
	\begin{align*}
		&\E \bigg[ \int_0^T \cE(\hat\beta A)_s \frac{|f_s\big(y_s,z_s,u_s(\cdot)\big)|^2}{\alpha^2_s} \d C_s\bigg] \\ 
		&\quad\leq 2 \E \bigg[ \int_0^T \cE(\hat\beta A)_s \frac{|f_s\big(y_s,z_s,u_s(\cdot)\big) - f_s(0,0,{\bf 0})|^2}{\alpha^2_s} \d C_s 
		+ \int_0^T \cE(\hat\beta A)_s \frac{|f_s(0,0,{\bf 0})|^2}{\alpha^2_s} \d C_s \bigg] \\
		&\quad\leq 2\Bigg( \|\alpha y\|^2_{\H^\smalltext{2}_{\smalltext{T}\smalltext{,}\smalltext{\hat\beta}}} + \|z\|^2_{\H^\smalltext{2}_{\smalltext{T}\smalltext{,}\smalltext{\hat\beta}}(X)} + \|u\|^2_{\H^\smalltext{2}_{\smalltext{T}\smalltext{,}\smalltext{\hat\beta}}(\mu)} + \bigg\|\frac{f(0,0,\mathbf{0})}{\alpha}\bigg\|^2_{\H^\smalltext{2}_{\smalltext{T}\smalltext{,}\smalltext{\hat\beta}}}\Bigg) < \infty.
	\end{align*}
	We denote by $(Y,Z,U,N,K^r,K^\ell)$ the collection satisfying \ref{cond::integ_mart} up to \ref{cond::repre_Y} with generator $f_s\big(y_s,z_s,u_s(\cdot)\big)$ constructed in \Cref{lem::snell_decomposition} (or in \Cref{lem::exist_simple_gen}). Then $(Y,Z,U,N) \in \cL^{(2)}_{T,\hat\beta}$ by the bounds of \Cref{prop::apriori}. The map $\Upsilon_2 : \cL^{(2)}_{T,\hat\beta} \longrightarrow \cL^{(2)}_{T,\hat\beta}$ given by $\Upsilon_2(y, z, u, n) \coloneqq (Y, Z, U, N)$ is thus well-defined. 
	
	\medskip
	We prove that $\Upsilon_2$ is a contraction. For $i \in \{1,2\}$, let $(y^i,z^i,u^i,n^i) \in \cL^{(2)}_{T,\hat\beta}$ and let $(Y^i, Z^i,U^i,N^i) \coloneqq \Upsilon_2(y^i,z^i,u^i,n^i)$. Let $\delta y \coloneqq y^1 - y^2, \; \delta z = z^1 - z^2, \; \delta u = u^1 - u^2 ,\; \text{and} \; \delta n = n^1 - n^2,$
	and define $\delta Y$, $\delta Z$, $\delta U$ and $\delta N$ similarly. Denote by $\psi = (\psi_t)_{t \in [0,\infty)}$ the process $\psi_t \coloneqq f_t \big(y^1_t,z^1_t,u^1_t(\cdot)\big) - f_t\big(y^2_t,z^2_t,u^2_t(\cdot)\big).$ With \Cref{prop::apriori}, we find
	\begin{align*}
		\big\| \Upsilon_2(y^1,z^1,u^1,n^1) - \Upsilon_2(y^2,z^2,u^2,n^2) \big\|^2_{\cL^\smalltext{(2)}_{\smalltext{T}\smalltext{,}\smalltext{\hat\beta}}} 
		&= \|\alpha\delta Y\|^2_{\H^\smalltext{2}_{\smalltext{T}\smalltext{,}\smalltext{\hat\beta}}} + \|\delta Z\|^2_{\H^\smalltext{2}_{\smalltext{T}\smalltext{,}\smalltext{\hat\beta}}(X)} + \|\delta U\|^2_{\H^\smalltext{2}_{\smalltext{T}\smalltext{,}\smalltext{\hat\beta}}(\mu)} + \|\delta N\|^2_{\cH^{\smalltext{2}}_{\smalltext{T}\smalltext{,}\smalltext{\hat\beta}}} \nonumber\\
		&\leq M^\Phi_2(\hat\beta) \Big\| \frac{\psi}{\alpha} \Big\|^2_{\H^\smalltext{2}_{\smalltext{T}\smalltext{,}\smalltext{\hat\beta}}} \nonumber\\
		&\leq M^\Phi_2(\hat\beta) \Big( \|\alpha\delta y\|^2_{\H^\smalltext{2}_{\smalltext{T}\smalltext{,}\smalltext{\hat\beta}}} + \|\delta z\|^2_{\H^\smalltext{2}_{\smalltext{T}\smalltext{,}\smalltext{\hat\beta}}(X)} + \|\delta u\|^2_{\H^\smalltext{2}_{\smalltext{T}\smalltext{,}\smalltext{\hat\beta}}(\mu)} \Big)  \nonumber\\
		&\leq M^\Phi_2(\hat\beta) \big\|(y^1, z^1, u^1, n^1) - (y^2, z^2, u^2, n^2)\big\|^2_{\cL^\smalltext{(2)}_{\smalltext{T}\smalltext{,}\smalltext{\hat\beta}}}.
	\end{align*}
	Here, we used the Lipschitz-continuity of the generator $f$ in the third line. Since $M^\Phi_2(\hat\beta) < 1$, the map $\Upsilon_2$ is indeed a contraction on $\cL^{(2)}_{T,\hat\beta}$. By Banach's fixed-point theorem, there exists a unique fixed-point of $\Upsilon_2$, which we denote by $(Y,Z,U,N)$. Denoting by $(K^r,K^\ell)$ the two corresponding non-decreasing processes coming from the decomposition of the Snell envelope $Y_\cdot + \int_0^{\cdot \land T} f_s(Y_s,Z_s,U_s(\cdot))\d C_s,$ we see that $(Y,Z,U,N,K^r,K^\ell)$ satisfies \ref{cond::integ_mart} up to \ref{cond::repre_Y}. 
	
	\medskip
	Suppose that $(Y^\prime,Z^\prime,U^\prime,N^\prime,K^{\prime,r},K^{\prime,\ell})$ is a solution satisfying \ref{cond::integ_mart} up to \ref{cond::increasing_proc} such that $(\alpha Y^\prime,Z^\prime,U^\prime,N^\prime)$ is in $\H^2_{T,\hat\beta} \times \H^2_{T,\hat\beta}(X) \times \H^2_{T,\hat\beta}(\mu) \times \cH^{2,\perp}_{0,T,\hat\beta}(X,\mu)$. Then \ref{cond::repre_Y} holds by \Cref{lem::cond_repre} and thus $(Y^\prime,Z^\prime,U^\prime,N^\prime)$ is the fixed-point of $\Upsilon_2$. Hence $(Y^\prime,Z^\prime,U^\prime,N^\prime) = (Y,Z,U,N)$ in $\cL^{(2)}_{T,\hat\beta}$, and $Y = Y^\prime$ up to $\P$--indistinguishability by \Cref{prop::apriori}. That $(K^{\prime,r},K^{\prime,\ell}) = (K^r,K^\ell)$ up to indistinguishability follows from \Cref{lem::exist_simple_gen}. 
	This implies the stated uniqueness. That $Y$ is in $\cS^2_T$ follows from \Cref{lem::bound_delta_y_s2} since
	\begin{align*}
		\E\bigg[\bigg(\int_0^T |f_s\big(Y_s,Z_s,U_s(\cdot)\big)| \d C_s\bigg)^2\bigg] 
		&\leq \frac{1}{\hat\beta}\E\bigg[\int_0^T \cE(\hat\beta A)_s\frac{|f_s\big(Y_s,Z_s,U_s(\cdot)\big)|^2}{\alpha^2_s} \d C_s\bigg] < \infty
	\end{align*}
	by the Cauchy--Schwarz inequality and \eqref{eq::integ_stoch_exp_inverse}.
	This completes the proof of $(ii)$. 
	
	\medskip
	For $(i)$ and $(iii)$ we define $\cL^{(1)}_{T,\hat\beta}$ and $\cL^{(3)}_{T,\hat\beta}$ as the spaces of processes $(Y,Z,U,N)$ for which $(Y,Z,U,N) \in  \cS^2_T \times \H^2_T(X) \times \H^2_T(\mu) \times \cH^{2,\perp}_{0,T}(X,\mu)$, $\P$--a.e. path of $Y$ is l\`adl\`ag, and
	\begin{align*}
		\|(Y,Z,U,N)\|^2_{\cL^\smalltext{(1)}_{\smalltext{T}\smalltext{,}\smalltext{\hat\beta}}} &\coloneqq \|Y\|^2_{\cS^\smalltext{2}_\smalltext{T}} + \|\alpha Y\|^2_{\H^\smalltext{2}_{\smalltext{T}\smalltext{,}\smalltext{\hat\beta}}} + \|\alpha Y_-\|^2_{\H^\smalltext{2}_{\smalltext{T}\smalltext{,}\smalltext{\hat\beta}}} + \|Z\|^2_{\H^\smalltext{2}_{\smalltext{T}\smalltext{,}\smalltext{\hat\beta}}(X)} + \|U\|^2_{\H^\smalltext{2}_{\smalltext{T}\smalltext{,}\smalltext{\hat\beta}}(\mu)} + \|N\|^2_{\cH^\smalltext{2}_{\smalltext{T}\smalltext{,}\smalltext{\hat\beta}}} < \infty,\\
		\|(Y,Z,U,N)\|^2_{\cL^\smalltext{(3)}_{\smalltext{T}\smalltext{,}\smalltext{\hat\beta}}} &\coloneqq \|Y\|^2_{\cS^\smalltext{2}_\smalltext{T}} + \|\alpha Y_-\|^2_{\H^\smalltext{2}_{\smalltext{T}\smalltext{,}\smalltext{\hat\beta}}} + \|Z\|^2_{\H^\smalltext{2}_{\smalltext{T}\smalltext{,}\smalltext{\hat\beta}}(X)} + \|U\|^2_{\H^\smalltext{2}_{\smalltext{T}\smalltext{,}\smalltext{\hat\beta}}(\mu)} + \|N\|^2_{\cH^\smalltext{2}_{\smalltext{T}\smalltext{,}\smalltext{\hat\beta}}} < \infty,
	\end{align*}
	respectively. We turn both $\cL^{(1)}_{T,\hat\beta}$ and $\cL^{(3)}_{T,\hat\beta}$ into Banach spaces by identifying $(Y,Z,U,N)$ and $(Y^\prime,Z^\prime,U^\prime,N^\prime)$ for which 
	\begin{equation*}
		\|(Y,Z,U,N) - (Y^\prime,Z^\prime,U^\prime,N^\prime)\|_{\cL^\smalltext{(1)}_{\smalltext{T}\smalltext{,}\smalltext{\hat\beta}}} = 0, \; \text{and} \; \|(Y,Z,U,N) - (Y^\prime,Z^\prime,U^\prime,N^\prime)\|_{\cL^\smalltext{(3)}_{\smalltext{T}\smalltext{,}\smalltext{\hat\beta}}} = 0,
	\end{equation*}
	respectively. The approach to deduce the existence of a unique fixed-point is then analogous to our previous argument. For $(i)$ and $(iii)$ we define the maps $\Upsilon_1 : \cL^{(1)}_{T,\hat\beta} \longrightarrow \cL^{(1)}_{T,\hat\beta}$ and $\Upsilon_3 : \cL^{(3)}_{T,\hat\beta} \longrightarrow \cL^{(3)}_{T,\hat\beta}$ analogously to $\Upsilon_2$, and then note that by \Cref{prop::apriori} the maps are well-defined. By the Lipschitz property of $f$, the \emph{a priori} estimates of \Cref{prop::apriori}, we find
	\begin{align*}
		\big\| \Upsilon_1(y^1,z^1,u^1,n^1) - \Upsilon_1(y^2,z^2,u^2,n^2) \big\|^2_{\cL^\smalltext{(1)}_{\smalltext{T}\smalltext{,}\smalltext{\hat\beta}}} 
		&= \|\delta Y\|^2_{\cS^\smalltext{2}_\smalltext{T}} + \|\alpha\delta Y\|^2_{\H^\smalltext{2}_{\smalltext{T}\smalltext{,}\smalltext{\hat\beta}}} + \|\delta Z\|^2_{\H^\smalltext{2}_{\smalltext{T}\smalltext{,}\smalltext{\hat\beta}}(X)} + \|\delta U\|^2_{\H^\smalltext{2}_{\smalltext{T}\smalltext{,}\smalltext{\hat\beta}}(\mu)} + \|\delta N\|^2_{\cH^{\smalltext{2}}_{\smalltext{T}\smalltext{,}\smalltext{\hat\beta}}} \nonumber\\
		&\leq M^\Phi_1(\hat\beta) \Big\| \frac{\psi}{\alpha} \Big\|^2_{\H^\smalltext{2}_{\smalltext{T}\smalltext{,}\smalltext{\hat\beta}}} \nonumber\\
		&\leq M^\Phi_1(\hat\beta) \Big(\|\alpha\delta y\|^2_{\H^\smalltext{2}_{\smalltext{T}\smalltext{,}\smalltext{\hat\beta}}} + \|\delta z\|^2_{\H^\smalltext{2}_{\smalltext{T}\smalltext{,}\smalltext{\hat\beta}}(X)} + \|\delta u\|^2_{\H^\smalltext{2}_{\smalltext{T}\smalltext{,}\smalltext{\hat\beta}}(\mu)} \Big)  \nonumber\\
		&\leq M^\Phi_1(\hat\beta) \Big( \|\delta y\|^2_{\cS^\smalltext{2}_\smalltext{T}} + \|\alpha\delta y\|^2_{\H^\smalltext{2}_{\smalltext{T}\smalltext{,}\smalltext{\hat\beta}}} + \|\delta z\|^2_{\H^\smalltext{2}_{\smalltext{T}\smalltext{,}\smalltext{\hat\beta}}(X)} + \|\delta u\|^2_{\H^\smalltext{2}_{\smalltext{T}\smalltext{,}\smalltext{\hat\beta}}(\mu)} \Big)  \nonumber\\
		&\leq M^\Phi_1(\hat\beta) \big\|(y^1, z^1, u^1, n^1) - (y^2, z^2, u^2, n^2)\big\|^2_{\cL^\smalltext{(1)}_{\smalltext{T}\smalltext{,}\smalltext{\hat\beta}}},
	\end{align*}
	and
	\begin{align*}
		\big\| \Upsilon_3(y^1,z^1,u^1,n^1) - \Upsilon_3(y^2,z^2,u^2,n^2) \big\|^2_{\cL^\smalltext{(3)}_{\smalltext{T}\smalltext{,}\smalltext{\hat\beta}}} 
		&= \|\delta Y\|^2_{\cS^\smalltext{2}_\smalltext{T}} + \|\alpha\delta Y_-\|^2_{\H^\smalltext{2}_{\smalltext{T}\smalltext{,}\smalltext{\hat\beta}}} + \|\delta Z\|^2_{\H^\smalltext{2}_{\smalltext{T}\smalltext{,}\smalltext{\hat\beta}}(X)} + \|\delta U\|^2_{\H^\smalltext{2}_{\smalltext{T}\smalltext{,}\smalltext{\hat\beta}}(\mu)} + \|\delta N\|^2_{\cH^{\smalltext{2}}_{\smalltext{T}\smalltext{,}\smalltext{\hat\beta}}} \nonumber\\
		&\leq M^\Phi_3(\hat\beta)\Big\| \frac{\psi}{\alpha} \Big\|^2_{\H^\smalltext{2}_{\smalltext{T}\smalltext{,}\smalltext{\hat\beta}}} \nonumber\\
		&\leq M^\Phi_3(\hat\beta) \Big(\|\alpha\delta y_-\|^2_{\H^\smalltext{2}_{\smalltext{T}\smalltext{,}\smalltext{\hat\beta}}} + \|\delta z\|^2_{\H^\smalltext{2}_{\smalltext{T}\smalltext{,}\smalltext{\hat\beta}}(X)} + \|\delta u\|^2_{\H^\smalltext{2}_{\smalltext{T}\smalltext{,}\smalltext{\hat\beta}}(\mu)} \Big)  \nonumber\\
		&\leq M^\Phi_3(\hat\beta) \Big( \|\delta y\|^2_{\cS^\smalltext{2}_\smalltext{T}} + \|\alpha\delta y_-\|^2_{\H^\smalltext{2}_{\smalltext{T}\smalltext{,}\smalltext{\hat\beta}}} + \|\delta z\|^2_{\H^\smalltext{2}_{\smalltext{T}\smalltext{,}\smalltext{\hat\beta}}(X)} + \|\delta u\|^2_{\H^\smalltext{2}_{\smalltext{T}\smalltext{,}\smalltext{\hat\beta}}(\mu)} \Big)  \nonumber\\
		&\leq M^\Phi_3(\hat\beta)\big\|(y^1, z^1, u^1, n^1) - (y^2, z^2, u^2, n^2)\big\|^2_{\cL^\smalltext{(3)}_{\smalltext{T}\smalltext{,}\smalltext{\hat\beta}}}.
	\end{align*} 
	Thus $\Upsilon_1$ (resp. $\Upsilon_3$) is a contraction if $(i)$ (resp. $(iii)$) holds. For both $(i)$ and $(iii)$ the representation \ref{cond::repre_Y} is immediate by \Cref{lem::cond_repre}. The stated uniqueness can be deduced similarly to before, we thus omit the details.
	
	\medskip
	Finally, if, in addition, $\xi^{+}\mathbf{1}_{[0,T)} \in \cS^2_{T,\beta}$ for some $\beta \in (0,\hat\beta)$, then $(K^r,K^\ell) \in \cI^2_{T,\beta} \times \cI^2_{T,\beta}$ in $(i)$, $(ii)$ and $(iii)$ by \Cref{prop::weighted_k_estimate}. This completes the proof.
\end{proof}

\begin{remark}\label{rem::missing_pieces}
	As we saw in the previous proof, the $Y$-component of the unique fixed-point of $\Upsilon_2$ necessarily has to satisfy \ref{cond::repre_Y}. So the class in which uniqueness can be deduced from the fixed-point property of $\Upsilon_2$ is necessarily defined by \ref{cond::repre_Y} as well. It turns out that the proof of existence and uniqueness in {\rm\cite{grigorova2020optimal}} of their reflected {\rm BSDE} overlooks this intricate point, as an argument like our {\rm\Cref{lem::cond_repre}} is missing.
\end{remark}

\begin{proof}[Proof of \Cref{thm::main_bsde}]
	The proof of this result is analogous to the proof of \Cref{thm::main}. The main difference is that we do not use the \emph{a priori} estimates of \Cref{prop::apriori}, but instead use \Cref{prop::apriori_bsde} to deduce the existence of a unique fixed-point in the three cases $(i)$, $(ii)$ and $(iii)$. Let us show that in all three cases $(i)$, $(ii)$ and $(iii)$, the component $Y$ is in $\cS^2_{T,\hat\beta}$. Note first that from $Y_S = \E\big[\xi_T + \int_S^T f_s\big(Y_s,Y_{s-},Z_s,U_s(\cdot)\big)\d C_s \big| \cG_S\big],$ it follows that
	\begin{align*}
		\cE(\hat\beta A)^{1/2}_S|Y_S| 
		&\leq \sqrt{2}\E\Bigg[\sqrt{\cE(\hat\beta A)_S|\xi_T|^2 + \cE(\hat\beta A)_S\bigg(\int_S^T  \big|f_s\big(Y_s,Y_{s-},Z_s,U_s(\cdot)\big)\big|\d C_s\bigg)^2} \Bigg| \cG_S\Bigg] \\
		&\leq \sqrt{2}\E\Bigg[\sqrt{\cE(\hat\beta A)_S|\xi_T|^2 + \frac{1}{\hat\beta}\int_S^T \cE(\hat\beta A)_s \frac{\big|f_s\big(Y_s,Y_{s-},Z_s,U_s(\cdot)\big)\big|^2}{\alpha^2_s}\d C_s} \Bigg| \cG_S\Bigg] \\
		&\leq \sqrt{2}\E\Bigg[\sqrt{\cE(\hat\beta A)_T|\xi_T|^2 + \frac{1}{\hat\beta}\int_0^T \cE(\hat\beta A)_s \frac{\big|f_s\big(Y_s,Y_{s-},Z_s,U_s(\cdot)\big)\big|^2}{\alpha^2_s}\d C_s} \Bigg| \cG_S\Bigg].
	\end{align*}
	Here the second line follows from the same arguments we used to deduce \eqref{eq::F_t_squared}. Thus, by \Cref{lem::cond_doob}, it follows that
	\begin{equation*}
		\|Y\|^2_{\cS^\smalltext{2}_{\smalltext{T}\smalltext{,}\smalltext{\hat\beta}}} 
		= \E\bigg[\sup_{s \in [0,T]}\big|\cE(\hat\beta A)^{1/2}_sY_s\big|^2\bigg] 
		\leq 8 \|\xi_T\|^2_{\L^\smalltext{2}_\smalltext{\hat\beta}} + \frac{8}{\hat\beta} \bigg\|\frac{f\big(Y,Y_-,Z,U(\cdot)\big)}{\alpha}\bigg\|^2_{\H^\smalltext{2}_{\smalltext{T}\smalltext{,}\smalltext{\hat\beta}}}.
	\end{equation*}
	Since $\|\xi_T\|_{\L^\smalltext{2}_\smalltext{\hat\beta}} < \infty$ and 
	\begin{align*}
		\bigg\|\frac{f\big(Y,Y_-,Z,U(\cdot)\big)}{\alpha}\bigg\|_{\H^\smalltext{2}_{\smalltext{T}\smalltext{,}\smalltext{\hat\beta}}} 
		&\leq \bigg\|\frac{f\big(Y,Y_-,Z,U(\cdot)\big)-f(0,0,0,\mathbf{0})}{\alpha}\bigg\|_{\H^\smalltext{2}_{\smalltext{T}\smalltext{,}\smalltext{\hat\beta}}} + \bigg\|\frac{f(0,0,0,\mathbf{0})}{\alpha}\bigg\|_{\H^\smalltext{2}_{\smalltext{T}\smalltext{,}\smalltext{\hat\beta}}} \\
		&= \|\alpha Y\|_{\H^\smalltext{2}_{\smalltext{T}\smalltext{,}\smalltext{\hat\beta}}} + \|\alpha Y_-\|_{\H^\smalltext{2}_{\smalltext{T}\smalltext{,}\smalltext{\hat\beta}}} + \|Z\|_{\H^\smalltext{2}_{\smalltext{T}\smalltext{,}\smalltext{\hat\beta}}(X)} + \|U\|_{\H^\smalltext{2}_{\smalltext{T}\smalltext{,}\smalltext{\hat\beta}}(\mu)} + \bigg\|\frac{f(0,0,0,\mathbf{0})}{\alpha}\bigg\|_{\H^\smalltext{2}_{\smalltext{T}\smalltext{,}\smalltext{\hat\beta}}} < \infty,
	\end{align*}
	we deduce that $\|Y\|_{\cS^\smalltext{2}_{\smalltext{T}\smalltext{,}\smalltext{\hat\beta}}} < \infty$. This completes the proof.
	
\end{proof}

\begin{remark}\label{rem::proof_main_stopping_norm}
	$(i)$ In the proof of {\rm\Cref{thm::main}} and {\rm\ref{thm::main_bsde}}, we could substitute the $\cS^2_T$--norm $\|\cdot\|_{\cS^\smalltext{2}_\smalltext{T}}$ by the $\cT^2_T$--norm $\|\cdot\|_{\cT^\smalltext{2}_\smalltext{T}}$ introduced in \Cref{rem::reflection_stopping_norm}. We merely need to keep in mind the changes to the \emph{a priori} estimates in both {\rm\Cref{prop::apriori}} and {\rm\ref{prop::apriori_bsde}} described in {\rm\Cref{rem::apriori_stopping_norm}}. Therefore, modifying the contraction constants as described in {\rm\Cref{rem::reflection_stopping_norm}} and \ref{rem::bsde_stopping_norm} would still ensure the desired well-posedness of our reflected BSDE and BSDE, as stated in {\rm\Cref{thm::main}} and {\rm\ref{thm::main_bsde}}, respectively.

	\medskip
	$(ii)$ In {\rm\cite[Theorem 3.5]{papapantoleon2018existence}} and its corresponding proof, the claim is that a weighted $\cS^2$-type norm for the $Y$-component is also sufficient to construct a contraction map in the \emph{BSDE} case. This corresponds to the norm $\|\cdot\|_{\smalltext{\star}\smalltext{,}\smalltext{\hat\beta}}$ in their notation. It does not seem as though this is actually possible since by the Lipschitz property of the generator, we are forced to use an $\H^2$-type norm on $Y$ in a fixed-point argument. Although in the classical cases this is possible since weighted $\cS^2$-norms and $\H^2$-norms on $Y$ are comparable, in their and our generality, the norms are not comparable.	
\end{remark}

\section{A comparison principle for BSDEs}\label{sec::comparison}

Comparison principles for BSDEs play a crucial role in the study of stochastic control problems as they give rise to necessary conditions optimisers ought to satisfy. In this section, we prove a comparison principle for our BSDEs. Given the several counterexamples for BSDEs with jumps in \citeauthor*{barles1997backward} \cite{barles1997backward}, \citeauthor*{royer2006backward} \cite{royer2006backward} and \citeauthor*{quenez2013bsdes} \cite{quenez2013bsdes}, we are forced to impose stronger assumptions on the generator. The conditions we lay out in this section will allow us to conclude that the operator which maps $\xi_T$ to the first component $Y(\xi_T)$ of the solution to the BSDE with generator $f$ and terminal condition $\xi_T$ is monotone, that is, if $\xi_T \leq \xi^\prime_T$, $\P$--a.s., then $Y(\xi_T) \leq Y(\xi^\prime_T)$, $\P$--a.s. The method of proof we use is the classical linearisation and change of measure argument. Here, we suppose that we are given another generator $f^\prime : \bigsqcup_{(\omega,t) \in \Omega \times [0,\infty)} \big(\R \times \R \times \R^m \times \fH_{\omega,t}\big) \longrightarrow \R$ such that $\Omega \times [0,\infty) \ni (\omega,t) \longmapsto f^\prime_t\big(Y_t(\omega),Y_{t-}(\omega),Z_t(\omega),U_t(\omega;\cdot)\big) \in \R,$ is optional for each $(Y,Z,U) \in \cS^2_T \times \H^2_T(X) \times \H^2_T(\mu)$. The Lipschitz property of $f$ then allows us to write
\begin{align*}
	f_s\big(\omega,y,\mathrm{y},z,u_s(\omega;\cdot)\big) - f^\prime_s\big(\omega,y^\prime,\mathrm{y}^\prime,z^\prime,u^\prime_s(\omega;\cdot)\big) 
	&\geq \lambda^{y,y^\smalltext{\prime}}_s(\omega)(y-y^\prime) + \widehat\lambda^{\mathrm{y},\mathrm{y}^\smalltext{\prime}}_s(\omega)(\mathrm{y}-\mathrm{y}^\prime)  + \eta^{z,z^\smalltext{\prime},\top}_s(\omega)c_s(\omega)(z-z^\prime) \\
	&\quad+ f_s\big(\omega,y^\prime,\mathrm{y}^\prime,z^\prime,u_s(\omega;\cdot)\big) - f_s\big(\omega,y^\prime,\mathrm{y}^\prime,z^\prime,u^\prime_s(\omega;\cdot)\big) \\
	&\quad + f_s\big(\omega,y^\prime,\mathrm{y}^\prime,z^\prime,u^\prime_s(\omega;\cdot)\big) - f^\prime_s\big(\omega,y^\prime,\mathrm{y}^\prime,z^\prime,u^\prime_s(\omega;\cdot)\big),
\end{align*}
where
\begin{equation*}
	\lambda^{y,y^\smalltext{\prime}}_s(\omega) \coloneqq -\sqrt{r_s(\omega)}\sgn(y-y^\prime), \; \widehat\lambda^{\mathrm{y},\mathrm{y}^\smalltext{\prime}}_s(\omega) \coloneqq - \sqrt{\mathrm{r}_s(\omega)}\sgn(\mathrm{y}-\mathrm{y}^\prime),
\end{equation*}
\begin{equation*}
	\eta^{z,z^\smalltext{\prime}}_s(\omega) \coloneqq -\sqrt{\theta^X_s(\omega)} \frac{(z-z^\prime)}{\big\|c^{1/2}_s(\omega)(z-z^\prime)\big\|}\mathbf{1}_{\{c^{\smalltext{1}\smalltext{/}\smalltext{2}}_\smalltext{s}(z-z^\smalltext{\prime})\neq 0\}}(\omega).
	\footnote{Recall from \Cref{sec::weighted_spaces} that $c^{1/2}$ is the unique (predictable) square-root matrix-valued process of $c$.}
\end{equation*}

The comparison result we present will be based on the following assumption.

\begin{assumption}\label{ass::comparison}
	The following conditions hold:
	\begin{enumerate}
		\item[$(i)$] $\Phi < 1$, and the non-negative random variables $\int_0^T \sqrt{r_s} \d C_s$, $\int_0^T \sqrt{\mathrm{r}_s} \d C_s$ and $\int_0^T \theta^X_s \d C_s$ are $\P$--{\rm a.s.} bounded$;$
		\item[$(ii)$] For each $\P$--{\rm a.s.} l\`adl\`ag process $Y \in \cS^2_T$,  and $(Z,Z^\prime,U,U^\prime) \in \H^2_T(X) \times \H^2_T(X) \times \H^2_T(\mu) \times \H^2_T(\mu)$, there exists $\rho = \rho^{Y,Z,U,U^\smalltext{\prime}} \in \H^2_T(\mu)$ such that\footnote{Here, $\eta^{Z,Z^\smalltext{\prime}} = (\eta^{Z,Z^\smalltext{\prime}}_t)_{t \in [0,\infty)}$ denotes the predictable process defined by $\eta^{Z,Z^\smalltext{\prime}}_t(\omega) \coloneqq \eta^{Z_t(\omega),Z^\smalltext{\prime}_t(\omega)}_t(\omega)$.} $\eta^{Z,Z^\smalltext{\prime}}_t\Delta X_{t \land T} + \Delta (\rho \star \tilde\mu)_{t \land T} > -1$, $\P$--{\rm a.s.}, $t \in [0,\infty)$, the random variable $\langle \rho \star \tilde\mu\rangle_T$ is bounded, $\P$--{\rm a.s.}, and
			\begin{equation}\label{eq::comparison_inequality}
				f_s\big(Y_s,Y_{s-},Z_s,U_s(\cdot)\big) - f_s\big(Y_s,Y_{s-},Z_s,U^\prime_s(\cdot)\big) 
				\geq \frac{\d\langle \rho \star\tilde\mu,(U-U^\prime)\star\tilde\mu\rangle_s}{\d C_s}, \; \text{$\P \times \d C_s$--{\rm a.e.}}
			\end{equation}
	\end{enumerate}
\end{assumption}

\begin{remark}\label{rem::comparison_assumption}
$(i)$ In the standard case, where there is only a Brownian motion $X$, the stopping time $T$ is deterministic and finite, $C$ satisfies $\d C_s = \d s$, and the Lipschitz-coefficients of the generator $f$ are bounded, $\mathrm{\Cref{ass::comparison}}$ is clearly satisfied. However, if additionally there is an integer-valued random measure $\mu$ such that its compensator can be written in the form $\nu(\d s, \d x) = F(\d x)\d t$, then \eqref{eq::comparison_inequality} turns into
	\begin{equation*}
		f_s\big(Y_s,Y_{s-},Z_s,U_s(\cdot)\big) - f_s\big(Y_s,Y_{s-},Z_s,U^\prime_s(\cdot)\big) \geq \frac{\d\langle \rho \star\tilde\mu,(U-U^\prime)\star\tilde\mu\rangle_s}{\d C_s} = \int_E \rho_s(x) U_s(x) F(\d x), \; \text{$\P\times\d s$--a.e.}
	\end{equation*}
	This is now reminiscent of the classical $(A_\gamma)$--condition in {\rm\cite[Section 2.2, p. 1362]{royer2006backward}} or the assumption in {\rm\cite[Theorem 4.1]{quenez2013bsdes}}.

\medskip
$(ii)$ The conditions in {\rm\Cref{ass::comparison}} are simple enough to check in practice, but are not necessary, as some of them can be weakened by carefully redoing the proof of the comparison principle in {\rm\Cref{prop::comparison}}. We will discuss this in {\rm\Cref{rem::comparison_proof}}.

\medskip
$(iii)$ If the condition $\eta^{Z,Z^{\prime}} \Delta X_{\cdot \land T} + \Delta (\rho \star \tilde\mu)_{\cdot \land T} > -1$ in {\rm\Cref{ass::comparison}.$(ii)$} fails to hold, then the comparison principle for our {\rm BSDE} is false in general. See {\rm\cite[Example 3.1]{quenez2013bsdes}} for a counterexample in a Brownian--Poisson setting.
\end{remark}

The following comparison principle is the main result of this section.

\begin{proposition}\label{prop::comparison}
	Suppose that {\rm\Cref{ass::comparison}} holds. Let $(\xi_T,\xi^\prime_T) \in (\L^2(\cG_T))^2$, and suppose that $(Y,Z,U,N)$ and $(Y^\prime,Z^\prime,U^\prime,N^\prime)$ are solutions in $\cS^2_T \times \H^2_T(X) \times \H^2_T(\mu) \times \cH^{2,\perp}_T(X,\mu)$ to the {\rm BSDEs} with generator $f$ and terminal condition $\xi_T$ and generator $f^\prime$ and terminal condition $\xi^\prime_T$, respectively. If $\xi^\prime_T \leq \xi_T$, $\P${\rm--a.s.}, and
	\begin{equation*}
		f^\prime_s\big(Y^\prime_s,Y^\prime_{s-},Z^\prime_s,U^\prime_s(\cdot)\big) \leq f_s\big(Y^\prime_s,Y^\prime_{s-},Z^\prime_s,U^\prime_s(\cdot)\big), \; \text{$\P \otimes \d C_s$\rm--a.e.},
	\end{equation*}
	 then $Y^\prime \leq Y$ up to $\P$--indistinguishability.	
\end{proposition}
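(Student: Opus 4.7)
The plan is to use the classical linearization plus change-of-measure argument adapted to the general semimartingale setting. Set $\delta Y := Y - Y'$, $\delta Z := Z - Z'$, $\delta U := U - U'$, $\delta N := N - N'$ and $\delta\xi := \xi_T - \xi'_T \geq 0$. In the BSDE case $\delta Y$ is c\`adl\`ag, and subtracting the two BSDEs gives a BSDE for $\delta Y$ with terminal condition $\delta\xi$ and driver
$$g_s := f_s\bigl(Y_s, Y_{s-}, Z_s, U_s(\cdot)\bigr) - f'_s\bigl(Y'_s, Y'_{s-}, Z'_s, U'_s(\cdot)\bigr).$$
Telescoping $g_s$ into three successive one-coordinate differences and applying the Lipschitz linearization from the paragraph preceding \Cref{ass::comparison} to the $(y,\mathrm{y},z)$-difference, \Cref{ass::comparison}.$(ii)$ at $(Y,Z) := (Y',Z')$ to produce $\rho \in \H^2_T(\mu)$ bounding the $U$-difference, and the hypothesis $f \geq f'$ at $(Y',Y'_-,Z',U')$, yields
$$g_s\,\d C_s \geq \bigl(\lambda_s\,\delta Y_s + \widehat\lambda_s\,\delta Y_{s-} + \eta_s^\top c_s\,\delta Z_s\bigr)\d C_s + \d\langle \rho\star\tilde\mu,\,\delta U\star\tilde\mu\rangle_s + \d\Psi_s,$$
for a non-decreasing process $\Psi$, where $\lambda, \widehat\lambda, \eta$ are the processes defined in the paragraph preceding \Cref{ass::comparison}.

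Next, introduce the strictly positive local martingale $L_t := \cE\bigl(\int_0^\cdot \eta_s^\top\,\d X_s + \rho\star\tilde\mu_\cdot\bigr)_t$; positivity follows from the jump condition $\eta_s^\top\Delta X_s + \Delta(\rho\star\tilde\mu)_s > -1$ in \Cref{ass::comparison}.$(ii)$. Boundedness of $\int_0^T \theta^X_s\,\d\langle X^i\rangle_s$ in \Cref{ass::comparison}.$(i)$ and of $\langle\rho\star\tilde\mu\rangle_T$ in \Cref{ass::comparison}.$(ii)$ allows one to verify a Novikov-type criterion making $L$ a uniformly integrable $\P$-martingale on $[0,T]$. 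Set $\d\Q/\d\P := L_T$; by Girsanov's theorem for integer-valued random measures, under $\Q$ the $\P$-martingale part of $\delta Y$ acquires exactly the drift $\eta_s^\top c_s\,\delta Z_s\,\d C_s + \d\langle\rho\star\tilde\mu,\delta U\star\tilde\mu\rangle_s$. Combining with the inequality for $g_s$,
$$\d\delta Y_s \leq -\bigl(\lambda_s\,\delta Y_s + \widehat\lambda_s\,\delta Y_{s-}\bigr)\d C_s - \d\Psi_s + \d\widetilde M_s,$$
for some local $\Q$-martingale $\widetilde M$.

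Now introduce the predictable, positive, finite-variation discount $D$ with $D_0 = 1$ satisfying $\d D_s = -D_s\lambda_s\,\d C_s - D_{s-}\widehat\lambda_s\,\d C_s$, or equivalently $D_s/D_{s-} = (1 - \widehat\lambda_s\Delta C_s)/(1 + \lambda_s\Delta C_s)$ at jumps of $C$. The condition $\Phi < 1$ from \Cref{ass::comparison}.$(i)$ keeps $1 + \lambda_s\Delta C_s$ bounded away from $0$, and together with the boundedness of $\int_0^T(|\lambda_s| + |\widehat\lambda_s|)\,\d C_s$ makes $D$ $\P$-a.s.\ bounded and bounded below by a positive constant. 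By It\^o's product rule, the construction of $D$ cancels the linear drift terms in $\delta Y$, so $D\delta Y + \int_0^\cdot D_s\,\d\Psi_s$ is a local $\Q$-martingale, uniformly integrable on $[0,T]$ thanks to $\delta Y \in \cS^2_T$, $D$ bounded, and the $\cH^2$-integrability of the driving martingales. Taking $\Q$-conditional expectations at $t$ gives
$$D_t\,\delta Y_t \geq \E^\Q\bigl[D_T\,\delta\xi \,\big|\, \cG_t\bigr] \geq 0,$$
and since $D_t > 0$ this yields $\delta Y_t \geq 0$ $\P$-a.s.\ for every $t \in [0,T]$, hence $\delta Y \geq 0$ up to $\P$-indistinguishability by right-continuity of paths.

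The main obstacles will be $(a)$ upgrading $L$ from a strictly positive local martingale to a true $\P$-martingale, delicate because of the jumps of $\rho\star\tilde\mu$, which is exactly why \Cref{ass::comparison} imposes the boundedness of $\int_0^T\theta^X_s\,\d\langle X^i\rangle_s$ and of $\langle\rho\star\tilde\mu\rangle_T$; and $(b)$ coherently defining $D$ in the presence of both $\delta Y_s$ (optional) and $\delta Y_{s-}$ (predictable) multiplied against $\d C_s$, a phenomenon absent from classical Brownian--Poisson settings in which $C$ is continuous. The condition $\Phi < 1$ is precisely what makes this second construction succeed, mirroring the sharpness already observed in \Cref{thm::main_bsde}.
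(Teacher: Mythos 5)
Your overall strategy --- linearise the generator, remove the $z$- and $u$-dependence by a Girsanov change of measure driven by $\cE\big(\int_0^{\cdot\wedge T}\eta_s\d X_s + \rho\star\tilde\mu_{\cdot\wedge T}\big)$, and then discount away the $y$- and $\mathrm{y}$-terms --- is exactly the one the paper follows, and your Girsanov step is sound: boundedness of $\langle L\rangle_T$ does yield a true square-integrable martingale density (the paper's \Cref{lem::stoch_exp} proves precisely this, via multiplicativity of stochastic exponentials and the supermartingale property rather than a Novikov criterion, but the conclusion is the same), and the orthogonality of $N$ ensures it stays driftless under $\Q$.

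The gap is in the discounting step. A single process $D$ solving $\d D_s = -D_s\lambda_s\d C_s - D_{s-}\widehat\lambda_s\d C_s$ cannot cancel both linear drift terms once $C$ jumps. Write the product rule as $\d(D\,\delta Y)_s = D_{s-}\,\d\delta Y_s + \delta Y_{s-}\,\d D_s + \Delta D_s\,\Delta\delta Y_s$ and group $D_{s-}\,\d\delta Y_s + \Delta D_s\,\Delta\delta Y_s = D_s\,\d\delta Y_s$ at jump times: the drift of $D\,\delta Y$ then reads $-D_s\lambda_s\delta Y_s\,\d C_s - D_s\widehat\lambda_s\delta Y_{s-}\,\d C_s + \delta Y_{s-}\,\d D_s$ plus non-positive terms, and the last summand is proportional to $\delta Y_{s-}$ only, so no choice of $\d D$ can kill the coefficient of $\delta Y_s$. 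The alternative grouping $\delta Y_{s-}\,\d D_s + \Delta D_s\,\Delta\delta Y_s = \delta Y_s\,\d D_s$ has the symmetric defect: it can kill the $\lambda_s\delta Y_s$ term but leaves $\widehat\lambda_s\delta Y_{s-}$ alive. Since $\delta Y_s\neq\delta Y_{s-}$ precisely at jump times of $C$, which carry positive $\d C$-mass, one discount factor is one short --- this is the same $Y_s$-versus-$Y_{s-}$ phenomenon the paper flags in \Cref{rem::discussion_y}. The paper resolves it by applying the product rule \emph{twice in sequence}: first with $\cE(v)$, $v=\int_0^{\cdot\wedge T}\widehat\lambda_s(1-\widehat\lambda_s\Delta C_s)^{-1}\d C_s$, using the first grouping so that $\delta Y_{s-}\,\d\cE(v)_s$ exactly cancels the $\widehat\lambda$-term (this is why the weight is $\widehat\lambda/(1-\widehat\lambda\Delta C)$ rather than $\widehat\lambda$, and where $\Phi<1$ guarantees positivity); the process $\cE(v)\delta Y$ then has drift bounded by $-\cE(v)_s\lambda_s\delta Y_s\,\d C_s$ plus a $\Q$--local martingale, and a second multiplication by $\cE(w)$, $w=\int_0^{\cdot\wedge T}\lambda_s\d C_s$, with the second grouping finishes the cancellation. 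Replacing your single $D$ by this composition, the remainder of your argument (localisation, dominated convergence using $\delta Y\in\cS^2_T$, boundedness of the weights and $\d\Q/\d\P\in\L^2$, positivity of the stochastic exponentials, and local equivalence of $\Q$ and $\P$) goes through as you describe.
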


The proof of this result is based on the following lemma.

\begin{lemma}\label{lem::stoch_exp}
	Suppose that $M \in \cM_\text{\rm loc}$ with $M_0 = 0$ is such that $\langle M \rangle$ is bounded, where $\langle M \rangle$ denotes the compensator of the optional quadratic variation $[M]$. Then $\cE(M) \in \cH^2$.
\end{lemma}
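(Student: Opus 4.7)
The strategy is to combine Itô's formula for $\cE(M)^2$ with Doob's $\mathrm{L}^2$-inequality and a bootstrap based on the bound on $\langle M\rangle_\infty$. First, recall that $\cE(M)$ solves $\cE(M) = 1 + \int_0^\cdot \cE(M)_{s-}\mathrm{d}M_s$, so $\cE(M) \in \cM_{\mathrm{loc}}$ with $[\cE(M)] = \int_0^\cdot \cE(M)_{s-}^2\mathrm{d}[M]_s$. Applying the Gal'chouk--Itô--Lenglart formula to $x\longmapsto x^2$, then writing $[M] = \langle M\rangle + ([M]-\langle M\rangle)$ where $[M]-\langle M\rangle\in\cM_{\mathrm{loc}}$, we obtain the decomposition
\begin{equation*}
	\cE(M)_t^2 = 1 + L_t + \int_0^t\cE(M)_{s-}^2\,\mathrm{d}\langle M\rangle_s, \; L \coloneqq 2\int_0^\cdot \cE(M)_{s-}^2\mathrm{d}M_s + \int_0^\cdot\cE(M)_{s-}^2\mathrm{d}([M]-\langle M\rangle)_s \in \cM_{\mathrm{loc}}.
\end{equation*}

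Next, I would localize by $\tau_n \coloneqq \inf\{t\geq 0 : |\cE(M)_t| \geq n\}\wedge n$, so that $\cE(M)^{\tau_n}$ is bounded and hence a true martingale, and the integrands in $L^{\tau_n}$ are bounded so that $L^{\tau_n}$ is a true martingale as well. Taking expectations in the identity above at time $t\wedge\tau_n$ gives the energy identity $\E[\cE(M)_{t\wedge\tau_n}^2] = 1 + \E[\int_0^{t\wedge\tau_n}\cE(M)_{s-}^2\mathrm{d}\langle M\rangle_s]$. Combining this with Doob's $\mathrm{L}^2$-inequality applied to the bounded martingale $\cE(M)^{\tau_n}$, and using the assumed bound $\langle M\rangle_\infty \leq K$ pathwise together with $\cE(M)_{s-}^2 \leq \sup_{u\leq s}\cE(M)_u^2$, yields
\begin{equation*}
	\Psi_n(t) \coloneqq \E\Big[\sup_{s\leq t\wedge\tau_n}\cE(M)_s^2\Big] \leq 4\E[\cE(M)_{t\wedge\tau_n}^2] \leq 4 + 4K\,\Psi_n(t).
\end{equation*}
If $K < 1/4$ (say $K\leq 1/8$), this immediately gives $\Psi_n(t)\leq 8$, uniformly in $(n,t)$, and monotone convergence yields $\E[\sup_{t\geq 0}\cE(M)_t^2] < \infty$, hence $\cE(M)\in\cH^2$.

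For arbitrary bounded $\langle M\rangle_\infty$, the plan is to partition $[0,\infty)$ into finitely many stochastic intervals on which the predictable quadratic variation increases by at most $\varepsilon < 1/4$, and to iterate the small-$K$ estimate. Concretely, set $S_0\coloneqq 0$ and inductively $S_{k+1}\coloneqq\inf\{t > S_k : \langle M\rangle_t - \langle M\rangle_{S_k} \geq \varepsilon\}$; since $\langle M\rangle_\infty\leq K$, only finitely many $S_k$ are finite. Writing $M = M^{S_k} + (M^{S_{k+1}} - M^{S_k}) + \cdots$ and using that $[M^{S_k}, M^{S_{k+1}}-M^{S_k}] = 0$ so that $\cE(M)^{S_{k+1}} = \cE(M)^{S_k}\cdot\cE(M^{S_{k+1}}-M^{S_k})$, one propagates the $\cH^2$-bound inductively via a conditional version of the argument above applied to the martingale increment $M^{S_{k+1}}-M^{S_k}$, whose predictable bracket is pathwise bounded by $\varepsilon + \Delta\langle M\rangle_{S_{k+1}}$. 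The main obstacle lies precisely here: the predictable increasing process $\langle M\rangle$ may jump at the splitting times $S_{k+1}$, so $\langle M^{S_{k+1}} - M^{S_k}\rangle_\infty$ need not be $\leq\varepsilon$. This is handled by working with $S_{k+1}-$ on one hand, and treating the (at most countably many) predictable jump times of $\langle M\rangle$ separately, using that at a predictable time $T$ one has $\E[(\Delta M_T)^2\,|\,\cG_{T-}] = \Delta\langle M\rangle_T$, which allows the pre-jump $\cH^2$-bound to be promoted across the jump via a conditional Cauchy--Schwarz estimate.
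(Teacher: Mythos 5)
Your route is genuinely different from the paper's and is workable in principle, but as written it has two real gaps. First, the localization: with $\tau_n \coloneqq \inf\{t\geq 0 : |\cE(M)_t|\geq n\}\wedge n$ the stopped process $\cE(M)^{\tau_n}$ is \emph{not} bounded, since $\cE(M)_{\tau_n}=\cE(M)_{\tau_n-}(1+\Delta M_{\tau_n})$ and the jump of $M$ at $\tau_n$ is unbounded in general. This is not cosmetic: the absorption step $\Psi_n(t)\leq 4+4K\Psi_n(t)\Rightarrow\Psi_n(t)\leq 4/(1-4K)$ is vacuous unless you first know $\Psi_n(t)<\infty$. The repair is to note that $\E[[M]_\infty]=\E[\langle M\rangle_\infty]\leq K$ (localize the local martingale $[M]-\langle M\rangle$ and use monotone convergence), so that $M-M_0\in\cH^2$, $\E[\cE(M)_{\tau_n}^2]\leq 2n^2(1+K)<\infty$, and $L^{\tau_n}$ is indeed a true martingale. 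Second, and more seriously, your direct energy estimate only closes under the smallness condition $\langle M\rangle_\infty<1/4$, so the whole general case rests on the splitting-and-iteration scheme, which you only outline. You have correctly identified the hard point (the predictable jumps of $\langle M\rangle$ at the splitting times $S_{k+1}$) and the right tool ($\E[(\Delta M_T)^2|\cG_{T-}]=\Delta\langle M\rangle_T$ at a predictable time $T$), but the conditional Doob and energy estimates on each stochastic interval, the pre-stopping at $S_{k+1}-$, and the propagation of the bound across the finitely many intervals all remain to be written out; as it stands this part is a plan rather than a proof.

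The paper bypasses all of this with a multiplicative rather than additive decomposition. By Yor's formula, $\cE(M)^2=\cE(2M+[M])$, and by the multiplicative decomposition (L\'epingle) one may write $\cE(2M+[M])=\cE(\widetilde N)\,\cE(\langle M\rangle)$ for some $\widetilde N\in\cM_{\mathrm{loc}}$. Since $\cE(\langle M\rangle)>0$, the factor $\cE(\widetilde N)=\cE(M)^2/\cE(\langle M\rangle)$ is a non-negative local martingale, hence a supermartingale, while $\cE(\langle M\rangle)\leq \mathrm{e}^{\langle M\rangle}\leq \mathrm{e}^a$ when $\langle M\rangle\leq a$. Doob's $\L^2$-inequality and optional stopping then give $\E\big[\sup_{s\leq t}\cE(M)^2_{s\wedge\tau_n}\big]\leq 4\,\mathrm{e}^a\,\E[\cE(\widetilde N)_{t\wedge\tau_n}]\leq 4\,\mathrm{e}^a$ for \emph{any} bound $a$, with no smallness restriction and no splitting, and two applications of Fatou's lemma conclude. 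If you prefer to keep your additive decomposition, the moral is that the drift term $\int_0^\cdot\cE(M)_{s-}^2\,\d\langle M\rangle_s$ should be controlled multiplicatively (in Gronwall or stochastic-exponential fashion) rather than absorbed into the left-hand side, which is exactly what costs you the factor $1/(1-4K)$ and forces the iteration.
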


\begin{proof}
	Although this result follows from \cite[Proposition 8.27]{jacod1979calcul}, we would like to present another argument by following the proof of \cite[Th\'eor\`eme II.3]{lepingle1978sur}. By \cite[Proposition II.1]{lepingle1978sur2}, we can write
	\begin{equation*}
		|\cE(M)|^2 = \cE(M)\cE(M) = \cE(2M + [M]) = \cE(2M + [M] - \langle M \rangle + \langle M \rangle) = \cE(\widetilde N) \cE(\langle M \rangle),
	\end{equation*}
	for some $\widetilde N \in \cM_\text{\rm loc}$. Since $\langle M \rangle$ is non-decreasing, we have that $\cE(\langle M \rangle) \leq \mathrm{e}^{\langle M \rangle}$. In particular, the local martingale satisfies $\cE(\widetilde N) \geq 0$ and is thus a supermartingale. Since $\langle M \rangle$ is bounded, say by $a \in (0,\infty)$, we can write $|\cE(M)|^2 \leq \cE(\widetilde N) \mathrm{e}^a$. Let $(\tau_n)_{n \in \N}$ be a localising sequence such that each stopped process $\cE(M)_{\cdot \land \tau_\smalltext{n}}$ is a uniformly integrable martingale, and fix $t \in (0,\infty)$. We find
	\begin{equation*}
		\E\bigg[\sup_{s \in [0,t]} |\cE(M)_{s \land \tau_\smalltext{n}}|^2\bigg] \leq 4 \E[|\cE(M)_{t \land \tau_\smalltext{n}}|^2] \leq 4 \mathrm{e}^a \E[\cE(\widetilde N)_{t \land \tau_\smalltext{n}}] \leq 4 \mathrm{e}^a\E[\cE(\widetilde N)_0] = 4 \mathrm{e}^a.
	\end{equation*}
	Here the first inequality follows from Doob's $\L^2$-inequality, and the third inequality follows from the optional stopping theorem (see \cite[Theorem 3.2.7]{weizsaecker1990stochastic}) together with the fact that $\cE(\widetilde N)$ is a supermartingale. By applying Fatou's lemma twice, we deduce that $\E\big[\sup_{s \in [0,\infty)} |\cE(M)_s|^2\big] \leq 4 \mathrm{e}^a.$ In particular, the stochastic exponential $\cE(M)$ is a uniformly integrable martingale and is in $\cH^2$. This completes the proof.
	\end{proof}

\begin{proof}[Proof of \Cref{prop::comparison}]
	First, let us write 
	\begin{equation*}
		\delta Y \coloneqq Y-Y^\prime, \; \delta Z \coloneqq Z-Z^\prime, \; \delta U \coloneqq U - U^\prime, \; \delta N \coloneqq N - N^\prime, \; \delta \xi_T \coloneqq \xi_T - \xi^\prime_T,
	\end{equation*}
	\begin{equation*}
		\delta f \coloneqq f\big(Y,Y_-,Z,U(\cdot)) - f^\prime\big(Y^\prime,Y^\prime_-,Z^\prime,U^\prime(\cdot)\big),
	\end{equation*}
	and
	\begin{equation*}
		\lambda_s(\omega) \coloneqq \lambda^{Y_\smalltext{s}(\omega),Y^\smalltext{\prime}_\smalltext{s}(\omega)}_s(\omega), 
		\; \widehat\lambda_s(\omega) \coloneqq \widehat\lambda^{Y_{\smalltext{s}\smalltext{-}}(\omega),Y^\smalltext{\prime}_{\smalltext{s}\smalltext{-}}(\omega)}_s(\omega), 
		\; \eta_s(\omega) \coloneqq \eta^{Z_\smalltext{s}(\omega),Z^\smalltext{\prime}_\smalltext{s}(\omega)}_s(\omega),
		\; \rho_s(\omega;x) = \rho^{Y^\smalltext{\prime},Z^\smalltext{\prime},U,U^\smalltext{\prime}}_s(\omega;x),
	\end{equation*}
	for simplicity. Now consider $v \coloneqq \int_0^{\cdot \land T} \gamma_s \d C_s,$ where  $\gamma \coloneqq \frac{\widehat\lambda}{1-\widehat\lambda\Delta C}.$ Here the process $\gamma$ is predictable and the integral is well-defined by \Cref{ass::comparison}.$(i)$ since $\widehat\lambda_s \Delta C_s \leq |\widehat\lambda_s| \Delta C_s \leq \sqrt{\mathrm{r}_s}\Delta C_s \leq \Phi < 1 $ and thus
	\begin{equation*}
		|\gamma| \leq \frac{|\widehat\lambda|}{1-\Phi} \leq \frac{\sqrt{\mathrm{r}_s}}{1-\Phi}.
	\end{equation*}
	Moreover, $\Delta v = \gamma\Delta C = \widehat\lambda\Delta C/(1-\widehat\lambda\Delta C) > -1$. Thus, the stochastic exponential $\cE(v)$ is positive and because $v$ is predictable and of finite variation, the stochastic exponential $\cE(v)$ is predictable and satisfies $0 < \cE(v) \leq \mathrm{e}^{v}$. In particular, $\cE(v)$ is bounded by \Cref{ass::comparison}.$(i)$.	With the integration by parts formula, we derive
	\begin{align}\label{eq::ito_comparison}
		\d \big(\cE(v)\delta Y\big)_s &= \cE(v)_{s-}\d (\delta Y)_s + \delta Y_{s-} \d \cE(v)_s + \d [\cE(v),\delta Y]_s \nonumber\\
		&= -\cE(v)_{s-} \delta f_s \d C_s + \cE(v)_{s-} \d(\delta Z \bcdot X)_s + \cE(v)_{s-}\d (\delta U \star\tilde\mu)_s + \cE(v)_{s-}\d \delta N_s + \cE(v)_{s-}\delta Y_{s-}\gamma_s\d C_s \nonumber\\
		&\quad + \cE(v)_{s-}\gamma_s\d [C,\delta Y]_s \nonumber\\
		&= -\cE(v)_{s-} \delta f_s \d C_s + \cE(v)_{s-} \d(\delta Z \bcdot X)_s + \cE(v)_{s-}\d (\delta U \star\tilde\mu)_s + \cE(v)_{s-}\d \delta N_s + \cE(v)_{s-}\delta Y_{s-}\gamma_s\d C_s \nonumber\\
		&\quad - \cE(v)_{s-}\gamma_s\delta f_s \Delta C_s \d C_s + \cE(v)_{s-}\gamma_s \d [C,\delta Z \bcdot X]_s + \cE(v)_{s-}\gamma_s \d [C,\delta U \star\tilde\mu]_s + \cE(v)_{s-}\gamma_s \d [C,\delta N]_s \nonumber\\
		&= - \cE(v)_{s-} \big(\delta f_s(1+\gamma_s\Delta C_s) - \gamma_s\delta Y_{s-}\big)\d C_s + \cE(v)_{s-} \d(\delta Z \bcdot X)_s + \cE(v)_{s-}\d (\delta U \star\tilde\mu)_s + \cE(v)_{s-}\d \delta N_s \nonumber\\
		&\quad + \cE(v)_{s-}\gamma_s \d [C,\delta Z \bcdot X]_s + \cE(v)_{s-}\gamma_s \d [C,\delta U \star\tilde\mu]_s + \cE(v)_{s-}\gamma_s \d [C,\delta N]_s.
	\end{align}
	Note that the processes on the last line are local martingales by \cite[Proposition I.4.49.(c)]{jacod2003limit} since $C$ is predictable. We now define the probability measure $\Q$ on $(\Omega,\cG)$ through the density
	\begin{equation}\label{eq::stoch_exp_measure_change}
		\frac{\d\Q}{\d\P} \coloneqq \cE(L)_\infty \coloneqq \cE\bigg(\int_0^{\cdot \land T}\eta_s\d X_s + \rho \star \tilde\mu_{\cdot \land T}\bigg)_\infty.
	\end{equation}
	\Cref{ass::comparison} implies that $\eta \in \H^2(X^T)$, that $\cE(L)$ is non-negative since $\Delta L = \eta \Delta X + \Delta (\rho\star\tilde\mu) > -1$ and that $\langle L \rangle$ is bounded since
	\begin{align*}
		\langle L \rangle_T = \int_0^T \eta^\top_s c_s \eta_s \d C_s + \langle \rho\star\tilde\mu\rangle_T = \int_0^T \theta^X_s \d C_s + \langle \rho\star\tilde\mu\rangle_T.
	\end{align*}
	Therefore, $\cE(L) \in \cH^2$ by \Cref{lem::stoch_exp}, and $\cE(L)_\infty = \cE(L)_T \in \L^2(\cG_T;\P)$. We now rewrite the $\P$--local martingales appearing in \eqref{eq::ito_comparison} as $\Q$--semimartingales. By an application of Girsanov's theorem \cite[Proposition 7.25 and 7.26]{jacod1979calcul}, we find the $\Q$--semimartingale decompositions
	\begingroup
	\allowdisplaybreaks
	\begin{align*}
		\cE(v)_{s-} \d (\delta Z \bcdot X)_s 
		&= \cE(v)_{s-}\d (\delta Z\bcdot X - \langle \delta Z \bcdot X,L \rangle)_s + \cE(v)_{s-}\d\langle \delta Z \bcdot X,L \rangle_s \\
		&= \cE(v)_{s-}\d (\delta Z \bcdot X - \langle \delta Z \bcdot X,L \rangle)_s + \cE(v)_{s-}\d\langle \delta Z \bcdot X, \eta \bcdot X \rangle_s \\
		&= \cE(v)_{s-}\d (\delta Z \bcdot X - \langle \delta Z\bcdot X,L \rangle)_s + \cE(v)_{s-} \eta^\top_s c_s\delta Z_s \d C_s,\\
		\cE(v)_{s-}\d(\delta U \star \tilde\mu)_s &= \cE(v)_{s-}\d\big(\delta U \star \tilde\mu - \langle \delta U \star\tilde\mu,L\rangle\big)_s + \cE(v)_{s-}\d \langle \delta U \star\tilde\mu,L\rangle_s \\
		&= \cE(v)_{s-}\d\big(\delta U \star \tilde\mu - \langle \delta U \star\tilde\mu,L\big)_s + \cE(v)_{s-}\d \langle \delta U \star\tilde\mu,\rho\star\tilde\mu\rangle_s,\\
		\cE(v)_{s-}\d \delta N_s &= \cE(v)_{s-}\d(\delta N-\langle \delta N, L \rangle_s) + \cE(v)_{s-}\d\langle \delta N,L\rangle_s = \cE(v)_{s-}\d\delta N_s,
	\end{align*}
	where the last equality follows from the $\P$--orthogonality of $N$ with respect to $X$ and $\mu$ from \ref{data::martingale},
	\begin{align*}
		\cE(v)_{s-}\gamma_s \d[C,\delta Z \bcdot X]_s 
		&= \cE(v)_{s-}\gamma_s \d\big([C,\delta Z \bcdot X]-\big\langle [C,\delta Z \bcdot X],L \big\rangle\big)_s + \cE(v)_{s-}\gamma_s \d\big\langle [C,\delta Z \bcdot X],L \big\rangle_s \\
		&= \cE(v)_{s-}\gamma_s \d\big([C,\delta Z \bcdot X]-\big\langle [C,\delta Z \bcdot X],L \big\rangle\big)_s + \cE(v)_{s-}\gamma_s \Delta C_s \d\langle \delta Z \bcdot X,L \rangle_s \\
		&= \cE(v)_{s-}\gamma_s \d\big([C,\delta Z \bcdot X]-\big\langle [C,\delta Z \bcdot X],L \big\rangle\big)_s + \cE(v)_{s-} \gamma_s \Delta C_s \eta_s^\top c_s \delta Z_s \d C_s,
	\end{align*}
	here the second to last equality follows from $\d [C,\delta Z \bcdot X]_s = \Delta C \d(\delta Z \bcdot X)_s$, see \cite[Proposition I.4.49]{jacod2003limit}, and similarly,
	\begin{align*}
		\cE(v)_{s-}\gamma_s \d[C,\delta U \star\tilde\mu]_s 
		&= \cE(v)_{s-}\gamma_s\d\big([C,\delta U \star\tilde\mu]-\big\langle [C,\delta U \star\tilde\mu],L \big\rangle\big)_s + \cE(v)_{s-} \gamma_s \d\big\langle [C,\delta U \star\tilde\mu],L \big\rangle_s \\
		&= \cE(v)_{s-}\gamma_s \d\big([C,\delta U \star\tilde\mu]-\big\langle [C,\delta U \star\tilde\mu],L \big\rangle\big)_s + \cE(v)_{s-}\gamma_s \Delta C_s \d\langle \delta U \star\tilde\mu,\rho\star\tilde\mu \rangle_s,\\
		\cE(v)_{s-}\gamma_s \d [C,N]_s 
		&= \cE(v)_{s-}\gamma_s \d \big([C,\delta N] - \big\langle [C,\delta N],L\big\rangle \big)_s + \cE(v)_{s-}\d\big\langle [C,\delta N],L\big\rangle_s \\
		&= \cE(v)_{s-}\gamma_s \d \big([C,\delta N] - \big\langle [C,\delta N],L\big\rangle \big)_s + \cE(v)_{s-}\Delta C_s\d\big\langle \delta N,L\big\rangle_s \\
		&= \cE(v)_{s-}\gamma_s \d \big([C,\delta N] - \big\langle [C,\delta N],L\big\rangle \big)_s \\
		&= \cE(v)_{s-}\gamma_s \d [C,\delta N]_s.
	\end{align*}
	Inserting the previous identities into \eqref{eq::ito_comparison} yields
	\begin{align}\label{eq::ito_comparison2}
		\d \big(\cE(v)\delta Y\big)_s 
		&= - \cE(v)_{s-} \big(\delta f_s(1+\gamma_s\Delta C_s) - \gamma_s\delta Y_{s-}\big)\d C_s + \cE(v)_{s-} \d(\delta Z \bcdot X)_s \nonumber\\
		&\quad + \cE(v)_{s-}\d (\delta U \star\tilde\mu)_s + \cE(v)_{s-}\d \delta N_s +\cE(v)_{s-}\gamma_s \d [C,\delta Z \bcdot X]_s \nonumber\\
		&\quad + \cE(v)_{s-}\gamma_s \d [C,\delta U \star\tilde\mu]_s + \cE(v)_{s-}\gamma_s \d [C,\delta N]_s \nonumber\\
		&= - \cE(v)_{s-} \big(\delta f_s(1+\gamma_s\Delta C_s) - \gamma_s\delta Y_{s-}\big)\d C_s + \cE(v)_{s-}\d (\delta Z \bcdot X_s - \langle \delta Z \bcdot X,L \rangle)_s \nonumber\\
		&\quad + \cE(v)_{s-} \eta^\top_s c_s\delta Z_s \d C_s + \cE(v)_{s-}\d\big(\delta U \star \tilde\mu - \langle \delta U \star\tilde\mu,L\rangle\big)_s + \cE(v)_{s-}\d \langle \delta U \star\tilde\mu,\rho\star\tilde\mu\rangle_s \nonumber\\
		&\quad + \cE(v)_{s-}\d \delta N_s + \cE(v)_{s-}\gamma_s \d\big([C,\delta Z \bcdot X]-\big\langle [C,\delta Z \bcdot X],L \big\rangle\big)_s + \cE(v)_{s-}\eta^\top_s c_s \delta Z_s \gamma_s \Delta C_s \d C_s \nonumber\\
		&\quad + \cE(v)_{s-}\gamma_s \d\big([C,\delta U \star\tilde\mu]-\big\langle [C,\delta U \star\tilde\mu],L \big\rangle\big)_s + \cE(v)_{s-}\gamma_s \Delta C_s \d\langle \delta U \star\tilde\mu,\rho\star\tilde\mu \rangle_s \nonumber\\
		&\quad + \cE(v)_{s-}\gamma_s \d [C,\delta N]_s \nonumber\\
		&= -  \cE(v)_{s-} \big(\delta f_s(1+\gamma_s\Delta C_s) - \gamma_s\delta Y_{s-}\big)\d C_s + \cE(v)_{s-}\d (\delta Z \bcdot X_s - \langle \delta Z \bcdot X,L \rangle)_s \nonumber\\
		&\quad + \cE(v)_{s-} \eta^\top_s c_s\delta Z_s (1+\gamma_s\Delta C_s) \d C_s + \cE(v)_{s-}\d\big(\delta U \star \tilde\mu - \langle \delta U \star\tilde\mu,L\rangle\big)_s \nonumber\\
		&\quad + \cE(v)_{s-}(1+\gamma_s \Delta C_s)\d \langle \delta U \star\tilde\mu,\rho\star\tilde\mu\rangle_s  + \cE(v)_{s-}\d \delta N_s \nonumber\\
		&\quad + \cE(v)_{s-}\gamma_s \d\big([C,\delta Z \bcdot X]-\big\langle [C,\delta Z \bcdot X],L \big\rangle\big)_s + \cE(v)_{s-}\gamma_s \d\big([C,\delta U \star\tilde\mu]-\big\langle [C,\delta U \star\tilde\mu],L \big\rangle\big)_s \nonumber\\
		&\quad + \cE(v)_{s-}\gamma_s \d [C,\delta N]_s.
	\end{align}
	Note now that $\cE(v)_{-}(1+\gamma\Delta C) = \cE(v) > 0$, and thus
	\begin{align*}
		\cE(v)_{s-}\delta f_s(1+\gamma_s\Delta C_s) 
		&\geq \cE(v)_{s-}\bigg(\lambda_s\delta Y_s + \widehat\lambda_s\delta Y_{s-} + \eta^\top_s c_s \delta Z_s + \frac{\d\langle \delta U \star\tilde\mu, \rho\star\tilde\mu\rangle}{\d C_s}\bigg)(1+\gamma_s\Delta C_s) \\
		&= \cE(v)_{s-}\bigg(\lambda_s\delta Y_s + \eta^\top_s c_s \delta Z_s + \frac{\d\langle \delta U \star\tilde\mu, \rho\star\tilde\mu\rangle}{\d C_s}\bigg)(1+\gamma_s\Delta C_s) + \cE(v)_{s-}\gamma_s\delta Y_{s-}.
	\end{align*}
	Therefore
	\begin{align*}
		\d \big(\cE(v)\delta Y\big)_s 
		& \leq - \cE(v)_{s-}\lambda_s \delta Y_s (1+\gamma_s\Delta C_s) \d C_s + \d M^\Q_s  = - \cE(v)_s\lambda_s \delta Y_s \d C_s + \d M^\Q_s.
	\end{align*}
	Here we use $\cE(v) = \cE(v)_-(1+\gamma\Delta C)$ in the last line, and we denote by $M^\Q$ the sum of the $\Q$--local martingales appearing in \eqref{eq::ito_comparison2}. Consider now $w \coloneqq \int_0^{\cdot \land T} \lambda_s \d C_s$. Since $|\lambda_s \Delta C_s| = |\lambda_s|\Delta C_s \leq \sqrt{r_s} \Delta C_s \leq \Phi < 1$, we have $\cE(w) > 0$. Moreover, since $|\lambda_s| \leq \sqrt{r_s}$, \Cref{ass::comparison}.$(i)$ ensures that $w$ is bounded. As before, this also implies that $\cE(w)$ is bounded since $\cE(w)\leq\mathrm{e}^w$. Another application of the integration by parts formula yields
	\begin{align*}
		\d (\cE(w)\cE(v)\delta Y)_s 
		&= \cE(w)_{s-}\d (\cE(v)\delta Y)_s + (\cE(v)\delta Y)_{s-}\d \cE(w)_s + \d [\cE(w),\cE(v)\delta Y] \\
		&= \cE(w)_{s-}\d (\cE(v)\delta Y)_s + (\cE(v)\delta Y)_s\d \cE(w)_s \\
		&= \cE(w)_{s-}\d (\cE(v)\delta Y)_s + \cE(w)_{s-}\cE(v)_s \lambda_s \delta Y_s\d C_s \\
		&\leq -\cE(w)_{s-}\cE(v)_s\lambda_s\delta Y_s \d C_s + \cE(w)_{s-}\d M^\Q_s + \cE(w)_{s-}\cE(v)_s \lambda_s \delta Y_s\d C_s = \cE(w)_{s-}\d M^\Q_s.
	\end{align*}
	This implies that, $\P$--a.s., for each $(t,t^\prime) \in [0,\infty)$ with $t \leq t^\prime$, we have
	\begin{equation}\label{eq::comparison_ineq_martingale}
		\cE(w)_{t \land T}\cE(v)_{t \land T}\delta Y_{t \land T} \geq \cE(w)_{t^\smalltext{\prime} \land T}\cE(v)_{t^\smalltext{\prime} \land T}\delta Y_{t^\smalltext{\prime} \land T} - \int_{t \land T}^{t^\smalltext{\prime} \land T}\cE(w)_{s-}\d M^\Q_s.
	\end{equation}
	Let $(\tau_n)_{n \in \N}$ be a $\Q$--localising sequence such that for each $n \in \N$, the stopped process $\int_0^{\cdot \land \tau_\smalltext{n}} \cE(w)_{s-} \d M^\Q_s,$ is a uniformly integrable martingale under $\Q$. By taking the conditional expectation in \eqref{eq::comparison_ineq_martingale}, we find
	\begin{equation}\label{eq::comp_limit_n}
		\cE(w)_{t \land \tau_\smalltext{n} \land T}\cE(v)_{t \land \tau_\smalltext{n} \land T}\delta Y_{t \land \tau_\smalltext{n} \land T} \geq \E^\Q\bigg[ \cE(w)_{t^\smalltext{\prime} \land \tau_\smalltext{n} \land T}\cE(v)_{t^\smalltext{\prime} \land \tau_\smalltext{n} \land T}\delta Y_{t^\smalltext{\prime} \land \tau_\smalltext{n} \land T} \bigg| \cG_t\bigg].
	\end{equation}
	Since $\cE(v)$ and $\cE(w)$ are bounded, $\delta Y \in \cS^2_T$ and $\d\Q/\d\P \in \L^2(\cG,\P)$, we can apply dominated convergence with $n \longrightarrow \infty$ and find
	\begin{equation}\label{eq::comp_limit_t}
		\cE(w)_{t \land T}\cE(v)_{t \land T}\delta Y_{t \land \tau_\smalltext{n} \land T} \geq \E^\Q\bigg[ \cE(w)_{t^\smalltext{\prime} \land T}\cE(v)_{t^\smalltext{\prime} \land T}\delta Y_{t^\smalltext{\prime} \land T} \bigg| \cG_t\bigg].
	\end{equation}
	Applying dominated convergence again, now letting $t^\prime \uparrow\uparrow \infty$, yields
	\begin{equation}\label{eq::comparison_cond_exp}
		\cE(w)_{t \land T}\cE(v)_{t \land T}\delta Y_{t \land T} \geq \E^\Q\bigg[ \cE(w)^T_{\infty-}\cE(v)^T_{\infty-}\delta Y^T_{\infty-} \bigg| \cG_t\bigg].
	\end{equation}
	Since $\cG_{\infty} = \cG_{\infty-} = \bigvee_{t \in [0,\infty)} \cG_t$, we deduce from
	\begin{equation*}
		\int_0^t \delta f_s \mathbf{1}_{[0,T]} \d C_s + \delta Y^T_t = \E \bigg[\delta \xi_T + \int_0^T \delta f_s \d C_s \bigg| \cG_t\bigg] \eqqcolon M_t, \; t \in [0,\infty),
	\end{equation*}
	that 
	\begin{equation*}
		\int_0^T \delta f_s \d C_s + \delta Y^T_{\infty-} = M_{\infty-} = \delta \xi_T + \int_0^T \delta f_s \d C_s, \;\text{$\P$--a.s.},
	\end{equation*}
	and thus $\delta Y^T_{\infty-} = \delta\xi_T = \xi_T - \xi^\prime_T \geq 0$, $\P$--a.s. Then \eqref{eq::comparison_cond_exp} implies 
	\begin{equation}\label{eq::conluding_comparison}
		\cE(w)_{t \land T}\cE(v)_{t \land T}\delta Y_{t \land T} \geq 0, \; \text{$\Q$--a.s.}
	\end{equation}
	Because $\cE(w)$ and $\cE(v)$ are positive under $\P$ and thus under $\Q$, this implies that $\delta Y_{t \land T} \geq 0$, $\Q$--a.s. Since $\Q$ and $\P$ are equivalent probability measures on each $(\Omega,\cG_t)$, $t \in [0,\infty)$, we also have $\delta Y_{t \land T} \geq 0$, $\P$--a.s., and therefore $Y \geq Y^\prime$, $\P$--a.s. by ($\P$--a.s.) right-continuity of $Y$ and $Y^\prime$. This completes the proof.
	\endgroup
\end{proof}

\begin{remark}\label{rem::comparison_proof}
	As we mentioned in {\rm\Cref{rem::comparison_assumption}}, one could weaken {\rm\Cref{ass::comparison}}. For the arguments laid out in the proof of {\rm\Cref{prop::comparison}} to go through, we have to ensure the following$:$
	
	\medskip
	$(i)$ the probability measure $\Q$ defined in \eqref{eq::stoch_exp_measure_change} has to be locally equivalent to $\P$, that is, the restriction $\Q_t$ of $\Q$ to $\cG_t$ is equivalent to the restriction $\P_t$ of $\P$ to $\cG_t$ for each $t \in [0,\infty);$
	
	\medskip
	$(ii)$ we need to be able to let $n \rightarrow \infty$ and $t^\prime \uparrow\uparrow \infty$ inside the conditional expectations of \eqref{eq::comp_limit_n} and \eqref{eq::comp_limit_t}$;$
	
	\medskip
	$(iii)$ the stochastic exponentials $\cE(w)$ and $\cE(v)$ have to be positive, so that we can divide both sides of \eqref{eq::conluding_comparison} by $\cE(w)\cE(v)$.
\end{remark}

\appendix

\section{Proofs of Section \ref{sec::preliminaries} and \ref{sec::main_results}}\label{app::proofs_preliminaries}

\begin{proof}[Proof of \Cref{prop::completeness_ivm}]\label{proof::completeness_ivm}
	That $\|\cdot\|_{\H^\smalltext{2}(\mu)}$ is a norm and that the stated equalities hold is clear from our preceding discussion. It remains to show completeness. Let $(U^k)_{k \in \N}$ be a Cauchy sequence in $\H^2(\mu)$. 
	Then, in particular,
	\begin{equation*}
		\lim_{(k,\ell) \rightarrow \infty}\|U^k - U^\ell\|^2_{\H^\smalltext{2}(\mu)} = \lim_{(k,\ell) \rightarrow \infty}\E\big[\langle (U^k - U^\ell) \star \tilde\mu\rangle_\infty \big] = \lim_{(k,\ell) \rightarrow \infty}\E\big[\langle U^k \star \tilde\mu - U^\ell \star \tilde\mu\rangle_\infty \big] = 0.
	\end{equation*}
	Hence the sequence $(U^k \star \tilde\mu)_{k \in \N}$ is Cauchy in $\cK^2(\mu) \subseteq \cH^2$. Since $\cK^2(\mu)$ is closed in $\cH^2$, there exists $U \star \tilde\mu \in \cK^2(\mu)$ with
	\begin{equation*}
		\lim_{k \rightarrow \infty}\E\big[\langle U^k \star \tilde\mu - U \star \tilde\mu \rangle_\infty \big] = 0.
	\end{equation*}
	Therefore $\lim_{k \rightarrow \infty}\|U^k - U\|^2_{\H^\smalltext{2}(\mu)} = \lim_{k \rightarrow \infty}\E\big[\langle (U^k - U) \star \tilde\mu\rangle_\infty \big] = \lim_{k \rightarrow \infty}\E\big[\langle U^k \star \tilde\mu - U \star \tilde\mu\rangle_\infty \big]= 0.$
\end{proof}

\begin{proof}[Proof of \Cref{prop::orthogonal_decomposition}]\label{proof::orthogonal_decomposition}
	Let us start by showing that $\cL^2(X) \cap \cK^2(\mu)$ is the null subspace of $\cH^2$. For $M \in \cL^2(X) \cap \cK^2(\mu)$, we can write
	\begin{equation*}
		M = \int_{(0,\cdot]} Z_s \d X_s = \int_{(0,\cdot] \times E} U_s(x)\tilde\mu(\d s, \d x).
	\end{equation*}
	Therefore
	\begin{equation*}
		\langle M, M \rangle = \bigg\langle M, \int_{(0,\cdot] \times E} U_s(x)\tilde\mu(\d s, \d x) \bigg\rangle = 0,
	\end{equation*}
	by \cite[Theorem 13.3.16]{cohen2015stochastic} or \cite[Lemme 7.39]{jacod1979calcul} since $M_{\mu}\big[\Delta M \big| \widetilde\cP\big ] = M_{\mu} \big[Z^\top \Delta X \big| \widetilde\cP \big] = \sum_{i = 1}^n Z^i M_{\mu} \big[\Delta X^i \big| \widetilde\cP \big] = 0,$ which implies that $M = M_0 = 0$. We now define $\cH^{2,\perp}(X,\mu) = \big( \cL^2(X) \oplus \cK^2(\mu) \big)^\perp$, which by the previous considerations lead to a decomposition $\cH^2 = \cL^2(X) \oplus \cK^2(\mu) \oplus \cH^{2,\perp}(X,\mu).$ We now fix $M \in \cH^2$. By the previous considerations, we can decompose $M$ uniquely as
	\begin{equation*}
		M = \int_{(0,\cdot]} Z_s \d X_s + \int_{(0,\cdot] \times E} U_s(x) \tilde\mu(\d s, \d x) + N,
	\end{equation*}
	where uniqueness is meant in the spaces $\cL^2(X)$, $\cK^2(\mu)$ and $\cH^{2,\perp}(X,\mu)$, respectively. In particular, since $X^i \in \cL^2(X)$, we have that $\langle X^i, N \rangle = 0$. Moreover, by \cite[Theorem III.4.20]{jacod2003limit}, we have $M_{\mu}[\Delta N | \widetilde\cP] = 0$. 
	
	\medskip
	We now turn to the claimed uniqueness. Suppose that there exists $(Z^\prime,U^\prime,N^\prime) \in \H^2(X) \times \H^2(\mu) \times \cH^{2,\perp}(X,\mu)$ satisfying
	\begin{equation*}
		M = \int_{(0,\cdot]} Z^\prime_s \d X_s + \int_{(0,\cdot] \times E} U^\prime_s(x) \tilde\mu(\d s, \d x) + N^\prime, \; \langle N^\prime, X^i \rangle = 0, \; \text{for each $i \in \{1,\ldots,n\}$}, \; \text{and} \; M_{\mu}\big[\Delta N^\prime \big| \widetilde\cP\big] = 0.
	\end{equation*}
	We now show that $N^\prime \in \cH^{2,\perp}$. Since for any $H \in \H^2(X)$,
	\begin{equation*}
		\bigg\langle N^\prime, \int_{(0,\cdot]} H_s \d X_s \bigg\rangle = \int_{(0,\cdot]}\bigg( \sum_{i = 1}^m H^i_s c^{N^\smalltext{\prime},i}_s \bigg) \d C_s,
	\end{equation*}
	for a predictable process $c^{N,i}$ satisfying $0 = \langle N^\prime, X^i \rangle_\cdot = \int_{(0,\cdot]} c^{N^\smalltext{\prime},i}_s \d C_s,$ we see that $c^{N,i}_s = 0$, $\d C_s$--a.e., $\P$--a.s., which then implies  
	\begin{equation*}
		\bigg\langle N^\prime, \int_{(0,\cdot]} H_s \d X_s \bigg\rangle = 0.
	\end{equation*}
	Hence $N^\prime \in \big(\cL^2(X)\big)^\perp$. Next, since $M_{\mu}[\Delta N^\prime | \widetilde\cP] = 0$, we have again by \cite[Theorem 13.3.16]{cohen2015stochastic} or \cite[Lemme 7.39]{jacod1979calcul} that $\langle N^\prime, V \star \tilde\mu \rangle = 0$ for each $V \star \tilde\mu$, and therefore $N^\prime \in (\cK^2(\mu))^\perp$, so $N \in \cH^{2,\perp}$. This implies that
	\begin{equation*}
		M = \int_{(0,\cdot]} Z^\prime_s \d X_s + \int_{(0,\cdot] \times E} U^\prime_s(x) \tilde\mu(\d s, \d x) + N^\prime,
	\end{equation*}
	is a decomposition of $M$ in $\cL^2(X) \oplus \cK^2(\mu) \oplus \cH^{2,\perp}$. We therefore have $(Z,U,N)= (Z^\prime,U^\prime,N^\prime)$ in $\H^2(X) \times \H^2(\mu) \times\cH^{2,\perp}$ since 
	\begin{equation*}
		\int_{(0,\cdot]} Z_s \d X_s = \int_{(0,\cdot]} Z^\prime_s \d X_s, \; \text{and} \; \int_{(0,\cdot] \times E} U_s(x) \tilde\mu(\d s, \d x) = \int_{(0,\cdot] \times E} U^\prime_s(x) \tilde\mu(\d s, \d x),
	\end{equation*}
	implies that $\|Z-Z^\prime\|_{\H^\smalltext{2}(X)} = 0$ and $\|U - U^\prime\|_{\H^\smalltext{2}(\mu)} = 0$, which then also implies $N = N^\prime$. This completes the proof.
\end{proof}

\begin{proof}[Proof of \Cref{lem::integrability_xi}]
	Note first that
	\begin{align*}
		\|\alpha \brs{\xi}\|_{\H^\smalltext{2}_{\smalltext{T}\smalltext{,}\smalltext{\hat\beta}}}^2 
		&= \E\bigg[\int_0^T\cE(\hat\beta A)_s \lim_{s^\prime \uparrow\uparrow s} \Big\{\sup_{t \in [s^\prime,\infty]}\big|\xi^+_s\mathbf{1}_{\{s < T\}}\big|^2\Big\} \d A_s \bigg] \\
		&= \E\bigg[\int_0^T\cE(\hat\beta A)_s \lim_{s^\prime \uparrow\uparrow s} \Big\{\sup_{t \in [s^\prime,\infty]}\big|\cE(\beta^\star)^{-1/2}_s\cE(\beta^\star)^{1/2}_s\xi^+_s\mathbf{1}_{\{s < T\}}\big|^2\Big\} \d A_s \bigg] \\
		&\leq \E\bigg[\int_0^T\cE(\hat\beta A)_s \lim_{s^\prime \uparrow\uparrow s} \cE(\beta^\star)^{-1}_{s^\prime}\Big\{\sup_{t \in [s^\prime,\infty]}\big|\cE(\beta^\star)^{1/2}_s\xi^+_s\mathbf{1}_{\{s < T\}}\big|^2\Big\} \d A_s \bigg] \\
		&= \E\bigg[\int_0^T\cE(\hat\beta A)_s \cE(\beta^\star)^{-1}_{s-} \lim_{s^\prime \uparrow\uparrow s} \Big\{\sup_{t \in [s^\prime,\infty]}\big|\cE(\beta^\star)^{1/2}_s\xi^+_s\mathbf{1}_{\{s < T\}}\big|^2\Big\} \d A_s \bigg] \\
		&= \E\bigg[\int_0^T\cE(\hat\beta A)_{s-}(1+\hat\beta\Delta A_s) \cE(\beta^\star)^{-1}_{s-} \lim_{s^\prime \uparrow\uparrow s} \Big\{\sup_{t \in [s^\prime,\infty]}\big|\cE(\beta^\star)^{1/2}_s\xi^+_s\mathbf{1}_{\{s < T\}}\big|^2\Big\} \d A_s \bigg] \\
		&\leq (1+\hat\beta \Phi)\E\bigg[\sup_{s \in [0,\infty]}\big|\cE(\beta^\star A)^{1/2}_s\xi^+_s\mathbf{1}_{\{s < T\}}\big|^2 \int_0^T\cE(\hat\beta A)_{s-}\cE(\beta^\star A)^{-1}_{s-}\d A_s\bigg] \\
		&= (1+\hat\beta \Phi)\E\bigg[\sup_{s \in [0,\infty]}\big|\cE(\beta^\star A)^{1/2}_s\xi^+_s\mathbf{1}_{\{s < T\}}\big|^2 \int_0^T\cE(A^{\hat\beta,\beta^\star})_{s-}\d A_s\bigg],
	\end{align*}
	where $A^{\hat\beta,\beta^\star} = -(\beta^\star - \hat\beta)A^c - \sum_{s \in (0,\cdot]} \frac{(\beta^\star- \hat\beta)\Delta A_s}{1+\beta^\star \Delta A_s},$ by \Cref{lem::stoch_exp_rules}.$(ii)$ and $A^c$ denotes the continuous part of $A$. Next,
	\begin{align*}
		\int_0^{t \land T}\cE(A^{\hat\beta,\beta^\star})_{s-}\d A_s 
		&= \int_0^{t \land T}\cE(A^{\hat\beta,\beta^\star})_{s-}\d A^c_s + \sum_{s \in (0,t \land T]}\cE(A^{\hat\beta,\beta^\star})_{s-}\Delta A_s \\
		&= -\frac{1}{(\beta^\star-\hat\beta)}\int_0^{t \land T}\cE(A^{\hat\beta,\beta^\star})_{s-}\d (A^{\hat\beta,\beta^\star})^c_s - \sum_{s \in (0,t \land T]}\cE(A^{\hat\beta,\beta^\star})_{s-} \frac{(1+\beta^\star \Delta A_s)}{(\beta^\star-\hat\beta)} \Delta (A^{\hat\beta,\beta^\star})_s \\
		&= \int_0^{t \land T}\cE(A^{\hat\beta,\beta^\star})_{s-}\frac{(1+\beta^\star\Delta A_s)}{(\beta^\star-\hat\beta)}\d (-A^{\hat\beta,\beta^\star})^c_s + \sum_{s \in (0,t \land T]}\cE(A^{\hat\beta,\beta^\star})_{s-} \frac{(1+\beta^\star \Delta A_s)}{(\beta^\star-\hat\beta)} \Delta (-A^{\hat\beta,\beta^\star})_s.
	\end{align*}
	Since $-A^{\hat\beta,\beta^\star}$ is non-decreasing, the stochastic exponential $\cE(A^{\hat\beta,\beta^\star}) = \cE(\hat\beta A)/\cE(\beta^\star A)$ is non-negative, and $\beta^\star - \hat\beta > 0$, we deduce that
	\begin{equation*}
		\int_0^{t \land T}\cE(A^{\hat\beta,\beta^\star})_{s-}\d A_s \leq \frac{(1+\beta^\star \Phi)}{(\beta^\star-\hat\beta)} \int_0^{t \land T} \cE(A^{\hat\beta,\beta^\star})_{s-}\d (-A^{\hat\beta,\beta^\star})_s = \frac{(1+\beta^\star \Phi)}{(\beta^\star-\hat\beta)} \bigg(1- \cE(A^{\hat\beta,\beta^\star})_{t \land T}\bigg) \leq \frac{(1+\beta^\star \Phi)}{(\beta^\star-\hat\beta)},
	\end{equation*}
	where in the last inequality we used that $0 < \cE(A^{\hat\beta,\beta^\star}) \leq 1$ since $1 \leq \cE(\hat\beta A) \leq \cE(\beta^\star A)$. We thus conclude that
	\begin{equation*}
		\|\alpha \brs{\xi}\|_{\H^\smalltext{2}_{\smalltext{T}\smalltext{,}\smalltext{\hat\beta}}}^2 
		\leq \frac{(1+\beta^\star \Phi)(1+\hat\beta \Phi)}{(\beta^\star-\hat\beta)}\E\bigg[\sup_{s \in [0,\infty]}\big|\cE(\beta^\star A)^{1/2}_s\xi^+_s\mathbf{1}_{\{s < T\}}\big|^2\bigg] 
		= \frac{(1+\beta^\star \Phi)(1+\hat\beta \Phi)}{(\beta^\star-\hat\beta)}\|\xi^+_\cdot \mathbf{1}_{\{\cdot < T\}}\|^2_{\cS^\smalltext{2}_{\smalltext{T}\smalltext{,}\smalltext{\beta^\star}}}.
	\end{equation*}
	This completes the proof.
\end{proof}

\section{Proofs of technical lemmata}\label{app::proofs_sec_main_results}\label{sec::optimal_stopping_proofs_technical}

\begin{lemma}\label{lem::analysis_ff_fg}
	Let $\Psi \in [0,\infty)$, and let $(\ff^\Psi,\fg^\Psi) : (0,\infty) \rightarrow \R \times \R$ be defined by
	\begin{equation*}
		\ff^\Psi(\beta) \coloneqq \inf_{\gamma\in (0,\beta)} \bigg\{\frac{(1+\beta\Psi)}{\gamma(\beta-\gamma)}\bigg\} \; \text{and} \; \fg^\Psi(\beta) \coloneqq \inf_{\gamma \in (0,\beta)} \bigg\{\frac{(1+\gamma\Psi)}{\gamma(\beta-\gamma)}\bigg\}.
	\end{equation*}
	Then, for $\beta \in (0,\infty)$,
	\begin{equation*}
		\ff^\Psi(\beta) = \frac{4(1+\beta\Psi)}{\beta^2} \; \text{and} \; \fg^\Psi(\beta) =  \frac{4}{\beta^2}\1_{\{\Psi = 0\}} + \frac{\Psi^2  \sqrt{1+\beta\Psi}}{\big(1+\beta\Psi -\sqrt{1+\beta\Psi}\big)\big(\sqrt{1+\beta\Psi}-1\big)}\1_{\{\Psi > 0\}}.
	\end{equation*}
\end{lemma}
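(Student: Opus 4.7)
The plan is a direct one-variable optimisation in $\gamma$, treated separately for $\ff^\Psi$ (trivial) and $\fg^\Psi$ (routine calculus, with a split at $\Psi=0$).

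\medskip
For $\ff^\Psi$, since $(1+\delta\Psi)$ does not depend on $\gamma$, minimising the quotient over $\gamma \in (0,\delta)$ is equivalent to maximising $\gamma(\delta-\gamma)$. I would invoke the arithmetic--geometric mean inequality $\gamma(\delta-\gamma)\leq \big((\gamma+(\delta-\gamma))/2\big)^2=\delta^2/4$, with equality at $\gamma=\delta/2 \in (0,\delta)$. Substituting back gives $\ff^\Psi(\delta)=4(1+\delta\Psi)/\delta^2$.

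\medskip
For $\fg^\Psi$, set $h(\gamma) \coloneqq (1+\gamma\Psi)/\big(\gamma(\delta-\gamma)\big)$, so $\fg^\Psi(\delta) = 1/\delta + \delta \inf_{\gamma \in (0,\delta)} h(\gamma)$. In the case $\Psi=0$, $h$ reduces to $1/\big(\gamma(\delta-\gamma)\big)$ and the same AM--GM argument gives $\inf h = 4/\delta^2$, whence $\fg^0(\delta)=1/\delta + 4/\delta = 5/\delta$. In the case $\Psi>0$, a direct computation yields
\begin{equation*}
h'(\gamma) = \frac{\Psi\gamma^2 + 2\gamma - \delta}{\big(\gamma(\delta-\gamma)\big)^2},
\end{equation*}
so critical points are roots of $\Psi\gamma^2+2\gamma-\delta=0$. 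The unique positive root is $\gamma^\star = (\sqrt{1+\delta\Psi}-1)/\Psi$, and a brief check (using $\sqrt{1+\delta\Psi} < 1+\delta\Psi$ since $1+\delta\Psi>1$) gives $\gamma^\star \in (0,\delta)$. Since $h(\gamma) \to +\infty$ as $\gamma \downarrow 0$ or $\gamma \uparrow \delta$, the critical point $\gamma^\star$ is a global minimum.

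\medskip
It then remains to evaluate $h(\gamma^\star)$ and rearrange into the stated form. Writing $s \coloneqq \sqrt{1+\delta\Psi}$ simplifies the bookkeeping: one finds $1+\gamma^\star\Psi=s$ and $\gamma^\star(\delta-\gamma^\star)=s(s-1)^2/\Psi^2$, so that $h(\gamma^\star)=\Psi^2/(s-1)^2$. Hence
\begin{equation*}
\fg^\Psi(\delta) = \frac{1}{\delta} + \frac{\delta\Psi^2}{(s-1)^2}.
\end{equation*}
The only remaining task is to rewrite $\delta\Psi^2/(s-1)^2$ in the form stated in the lemma: multiplying numerator and denominator by $s(s-1)/\delta$ and using $\delta\Psi = s^2-1 = (s-1)(s+1)$ to turn $(s-1)^3 s/\delta$ into $\delta\Psi(s-2)+2(s-1)$ after factoring $(s-1)$ out of $(s+1)(s-2)+2 = s^2-s = s(s-1)$, yields the stated expression $\delta\Psi^2 s / \big(\delta\Psi(s-2)+2(s-1)\big)$. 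This algebraic identity is the only slightly delicate step, but it is purely mechanical.
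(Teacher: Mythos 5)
Your proposal is correct and follows essentially the same route as the paper: a one-variable optimisation in $\gamma$, locating the critical point of $(1+\gamma\Psi)/(\gamma(\delta-\gamma))$ at the positive root of $\Psi\gamma^2+2\gamma-\delta=0$ and substituting back. The only cosmetic differences are that you justify the minimum for $\ff^\Psi$ (and for $\fg^0$) via AM--GM rather than convexity, and you use ``unique critical point plus blow-up at the boundary'' in place of the paper's strict-convexity check for $\fg^\Psi$; both are valid, and your explicit verification of the identity $s(s-1)^2=\delta\Psi(s-2)+2(s-1)$ usefully fills in algebra the paper leaves implicit.
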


\begin{proof}
	Fix $\beta \in (0,\infty)$, and let $f : (0,\beta) \rightarrow \R$ be given by $f(\gamma) \coloneqq \frac{(1+\beta\Psi)}{\gamma(\beta-\gamma)},$ so that $\ff^\Psi(\beta) = \inf f$. Note that $f$ is strictly convex since 
	\begin{equation*}
		\frac{\partial^2 f}{\partial \gamma^2} = \frac{2(\beta^2 - 3\gamma\beta + 3 \gamma^2)(1+\beta\Psi)}{\gamma^3(\beta-\gamma)^3},
	\end{equation*}
	because
	\begin{equation}\label{eq::numerator_2partial_ff}
		\beta^2-3\gamma\beta + 3\gamma^2 = \beta^2 - 2 \frac{3}{2}\gamma \beta + \frac{9}{4}\gamma^2 - \frac{9}{4}\gamma^2 + 3\gamma^2 = \bigg(\beta - \frac{3}{2}\gamma\bigg)^2 - \frac{9}{4}\gamma^2 + \frac{12}{4}\gamma^2 = \bigg(\beta - \frac{3}{2}\gamma\bigg)^2 + \frac{3}{4}\gamma^2 > 0,
	\end{equation}
	and as $f$ tends to infinity at the boundaries of the interval $(0,\beta)$, there is a unique critical point $\gamma^\star$ in $(0,\beta)$ at which $f$ attains its minimum. The point $\gamma^\star$ satisfies
	\begin{equation*}
		\frac{\partial f}{\partial \gamma} (\gamma^\star) = \frac{(2\gamma^\star- \beta)(1+\beta\Psi)}{(\gamma^\star)^2(\beta-\gamma^\star)^2} = 0,
	\end{equation*}
	and therefore $\gamma^\star = \beta/2$. This implies $\ff^\Psi(\beta) = \inf f = f(\gamma^\star) = \frac{4(1+\beta\Psi)}{\beta^2}$. 
	
	\medskip
	We turn to $\fg^\Psi$. Let $g : (0,\beta) \longrightarrow \R$ be given by $g(\gamma) \coloneqq \frac{(1+\gamma\Psi)}{\gamma(\beta-\gamma)}.$ Then $\fg^\Psi(\beta) = \inf g$. We also note here that $g$ tends to infinity at the boundary of $(0,\beta)$. Similar to before, $g$ is strictly convex since
	\begin{equation*}
		\frac{\partial^2 g}{\partial \gamma^2} = \frac{2(\beta^2-3\gamma\beta + 3\gamma^2 + \gamma^3\Psi)}{\gamma^3(\beta-\gamma)^3} > 0,
	\end{equation*}
	where the strict inequality follows from \eqref{eq::numerator_2partial_ff}. As before, we now only need to find the unique critical point $\gamma^\star$ of $g$ in $(0,\beta)$. Suppose first that $\Psi \in (0,\infty)$. From
	\begin{equation*}
		\frac{\partial g}{\partial \gamma}(\gamma^\star) = \frac{2\gamma - \beta + \gamma^2\Psi}{\gamma^2(\beta-\gamma)^2} = 0,
	\end{equation*}
	we find that $\gamma^\star_{1,2} = \frac{-1 \pm \sqrt{1+\beta\Psi}}{\Psi},$ and thus the critical point we are looking for is $\gamma^\star = (-1 +\sqrt{1+\beta\Psi})/\Psi$. This implies
	\begin{equation*}
		\fg^\Psi(\beta) = \inf g = g(\gamma^\star) = \frac{\Psi^2  \sqrt{1+\beta\Psi}}{\big(1+\beta\Psi -\sqrt{1+\beta\Psi}\big)\big(\sqrt{1+\beta\Psi}-1\big)}.
	\end{equation*}
	In case $\Psi = 0$, we have $\fg^0(\beta) = \ff^0(\beta) = 4/\beta^2$. This completes the proof.
\end{proof}

\begin{lemma}\label{lem::contraction_constants_reflected_bsde}
	Let $\Psi \in [0,\infty)$. Then
	\begin{equation*}
		\lim_{\beta \uparrow\uparrow\infty} \ff^\Psi(\beta) = 0 \; \text{\rm and} \; \lim_{\beta \uparrow\uparrow\infty} \beta \fg^\Psi(\beta) = \Psi.
	\end{equation*}
	Furthermore, for each $i \in \{1,2,3\}$, the constant $M^\Psi_i(\beta)$ is decreasing in $\beta$ and
	\begin{equation*}
		\lim_{\beta\uparrow\uparrow\infty} M^\Psi_1(\beta) =\max\{1,\Psi\}\Psi, \; \lim_{\beta\uparrow\uparrow\infty} M^\Psi_2(\beta) = \Psi \; \text{and} \; \lim_{\beta\uparrow\uparrow\infty} M^\Psi_3(\beta) = \max\{1,\Psi\}\Psi.
	\end{equation*}
\end{lemma}

\begin{proof}
	It is clear that $(1+\beta\Psi)/\beta$, $\ff^\Psi(\beta)$ and $\fg^\Psi(\beta)$ are decreasing in $\beta$. Moreover, $(1+\beta\Psi)/\beta$ converges to $\Psi$, $\ff^\Psi(\beta)$ converges to zero and $\beta\fg^\Psi(\beta)$ converges to $\Psi$ by \Cref{lem::analysis_ff_fg} as $\beta$ tends to infinity. The stated limits of $M^\Psi_1$, $M^\Psi_2$ and $M^\Psi_3$ thus follow immediately.
\end{proof}

The following result can be deduced similarly, we thus omit its proof.

\begin{lemma}\label{lem::contraction_constants_bsde}
	For each $i \in \{1,2,3\}$, the constant $\widetilde M^\Psi(\beta)$ is decreasing in $\beta$, and
	\begin{equation*}
		\lim_{\beta\rightarrow\infty} \widetilde M^\Psi_1(\beta) =
			\max\{1,\Psi\}\Psi,
		\; 
		\lim_{\beta\rightarrow\infty} \widetilde M^\Psi_2(\beta) = \Psi, \; \text{\rm and} \; 
		 \lim_{\beta\rightarrow\infty} \widetilde M^\Psi_3(\beta) = \max\{1,\Psi\}\Psi.
	\end{equation*}
\end{lemma}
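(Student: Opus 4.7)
The plan is to mirror the proof of Lemma \ref{lem::contraction_constants_reflected_bsde} and reduce the whole statement to elementary real-variable analysis of the scalar building blocks $\ff^\Psi$, $\fg^\Psi$, $(1+\beta\Psi)/\beta$, and $4/\beta$. From the explicit formulas in Lemma \ref{lem::analysis_ff_fg} one immediately reads off that $\ff^\Psi(\beta) = 4/\beta^2 + 4\Psi/\beta$ and $(1+\beta\Psi)/\beta = 1/\beta + \Psi$ are manifestly strictly decreasing in $\beta$ with limits $0$ and $\Psi$, respectively. The monotonicity and limit of $\fg^\Psi$ are exactly what is recorded in Lemma \ref{lem::contraction_constants_reflected_bsde}, so they can be invoked without redoing the work.

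Next, I would observe that $\beta \mapsto \max\{1,(1+\beta\Psi)/\beta\}$ is non-increasing as the pointwise maximum of the constant $1$ with a strictly decreasing positive function, and that its limit as $\beta\to\infty$ is $\max\{1,\Psi\}$.

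Given these preliminaries, the monotonicity claims become immediate: every summand appearing in $\widetilde M_1^\Psi$, $\widetilde M_2^\Psi$, and $\widetilde M_3^\Psi$ is a non-negative, non-increasing function of $\beta$ (the products $\max\{1,(1+\beta\Psi)/\beta\}\,\fg^\Psi(\beta)$ in $\widetilde M_1^\Psi$ and $\widetilde M_3^\Psi$ being products of two such factors), and each of the three expressions contains at least one strictly decreasing summand, namely $4/\beta$ for $i\in\{1,3\}$ and $\ff^\Psi$ for $i=2$. Since a finite sum of non-increasing functions with at least one strictly decreasing summand is strictly decreasing, the monotonicity assertions follow. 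Passing to the limit termwise then gives
\begin{align*}
\lim_{\beta\to\infty}\widetilde M_1^\Psi(\beta) &= 0 + 0 + \max\{1,\Psi\}\cdot\Psi = \Psi\max\{1,\Psi\}, \\
\lim_{\beta\to\infty}\widetilde M_2^\Psi(\beta) &= 0 + \Psi = \Psi, \\
\lim_{\beta\to\infty}\widetilde M_3^\Psi(\beta) &= 0 + \max\{1,\Psi\}\cdot\Psi = \Psi\max\{1,\Psi\}.
\end{align*}

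Since the argument is essentially a transcription of the elementary monotone-convergence facts already used in Lemma \ref{lem::contraction_constants_reflected_bsde}, there is no substantive obstacle; this is precisely why the author writes that the result can be \emph{deduced similarly} and omits the proof.
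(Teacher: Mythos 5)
Your proof is correct and follows exactly the route the paper intends: the paper omits the proof of this lemma, stating it is deduced similarly to \Cref{lem::contraction_constants_reflected_bsde}, and your termwise monotonicity and limit argument for $\ff^\Psi$, $\fg^\Psi$, $4/\beta$, and $\max\{1,(1+\beta\Psi)/\beta\}$ is precisely that deduction. All three limits check out against the definitions of $\widetilde M^\Psi_1$, $\widetilde M^\Psi_2$, and $\widetilde M^\Psi_3$.
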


We now prove the technical lemmata of \Cref{sec::optimal_stopping}.

\begin{proof}[Proof of \Cref{lem::snell_reformulation}]
	We clearly have $L = L_{\cdot \land T}$, $\P$--a.s., and since $-|M| - 1 \leq L \leq \xi_{\cdot \land T}^+ + \int_0^T |f_u| \d C_u + |M| + 1,$ up to $\P$--indistinguishability, we find using \eqref{eq::integrability_optimal_stopping} and Doob's $\L^2$-inequality that
	\begin{equation*}
		\E\bigg[\sup_{s \in [0,T]} |L_s|^2\bigg] = \E\bigg[\sup_{s \in [0,\infty]} |L_s|^2\bigg] < \infty.
	\end{equation*}
	Next, it is clear that $L \geq J$ and thus $V(S) \leq {\esssup_{\tau \in \cT_{\smalltext{S}\smalltext{,}\smalltext{\infty}}}}^{\cG_\smalltext{S}} \E[L_\tau |\cG_S], \; \text{$\P$--a.s.}, \; S \in \cT_{0,\infty}.$ We turn to the converse inequality. Fix $S \in \cT_{0,\infty}$, let $\tau \in \cT_{S,\infty}$ and let $\hat\tau \coloneqq \tau \mathbf{1}_{\{J_\smalltext{\tau} \geq L_\smalltext{\tau}\}} + \infty \mathbf{1}_{\{J_\smalltext{\tau} < L_\smalltext{\tau}\}} \in \cT_{S,\infty}.$ Then
	\begin{align*}
		\E[L_{\tau}| \cG_S] = \E[L_{\tau} \mathbf{1}_{\{J_\smalltext{\tau} \geq L_\smalltext{\tau}\}} + \big(M_\tau - \mathbf{1}_{\{\tau < T\}}\big)\mathbf{1}_{\{J_\smalltext{\tau} < L_\smalltext{\tau}\}}| \cG_S]  
		&\leq \E[L_\tau \mathbf{1}_{\{J_\smalltext{\tau} \geq L_\smalltext{\tau}\}} + M_\tau\mathbf{1}_{\{J_\smalltext{\tau} < L_\smalltext{\tau}\}}| \cG_S] \\
		&\leq \E[J_\tau \mathbf{1}_{\{J_\smalltext{\tau} \geq L_\smalltext{\tau}\}} + \E[J_\infty | \cG_\tau]\mathbf{1}_{\{J_\smalltext{\tau} < L_\smalltext{\tau}\}}| \cG_S] \\
		&= \E[J_\tau \mathbf{1}_{\{J_\smalltext{\tau} \geq L_\smalltext{\tau}\}} + J_\infty\mathbf{1}_{\{J_\smalltext{\tau} < L_\smalltext{\tau}\}}| \cG_S] = \E[J_{\hat\tau}| \cG_S] \leq V(S), \; \text{$\P$--a.s.},
	\end{align*}
	and therefore ${\esssup^{\cG_\smalltext{S}}_{\tau \in \cT_{\smalltext{S}\smalltext{,}\smalltext{\infty}}}} \E\big[ L_\tau \big| \cG_S\big] \leq V(S), \; \text{$\P$--a.s.}, S \in \cT_{0,\infty},$ which completes the proof.
\end{proof}

\begin{proof}[Proof of \Cref{lem::supermartingale_family}]
	We only show that $V(S) \in \L^2(\cG_S)$, for each $S \in \cT_{0,\infty}$, as the rest follows from Proposition 1.3 and Proposition 1.5 in \cite{kobylanski2012optimal}\footnote{The fact that the filtration is complete and the horizon is finite in \cite{kobylanski2012optimal} is not relevant for the proof of their Proposition 1.3 and Proposition 1.5.} together with the argument in \cite[Footnote 4]{grigorova2020optimal}. Fix $S \in \cT_{0,\infty}$, and note that
	\begin{equation*}
		-\E\big[|\xi_T|\big| \cG_S\big] - \E\bigg[\int_0^T|f_u|\d C_u \bigg| \cG_S\bigg] \leq V(S) \leq \E\bigg[ |\xi_T| + \sup_{u \in [0,T)} \xi^+_{u} + \int_0^T |f_u| \d C_u \bigg|\cG_S\bigg], \; \text{$\P$--a.s.},
	\end{equation*}
	implies
	\begin{equation*}
		|V(S)| \leq \E\bigg[ |\xi_T| + \sup_{u \in [0,T)} \xi^+_u + \int_0^T |f_u| \d C_u \bigg|\cG_S\bigg], \; \text{$\P$--a.s.},
	\end{equation*}
	from which we deduce that $V(S) \in \L^2(\cG_S)$ by \eqref{eq::integrability_optimal_stopping}. This completes the proof.
\end{proof}

\begin{proof}[Proof of \Cref{lem::snell_aggregation}]
	The existence of the process $V = (V_t)_{t \in [0,\infty]}$ is a mere application \Cref{lem::supermartingale_family} and \cite[Appendix I, Remark 23(b)]{dellacherie1982probabilities}. Next, let $S \in \cT_{0,\infty}$. Since $V_{T} = V(T) = V(S \lor T) = V_{S \lor T}$, $\P$--a.s., we find
	\begin{equation*}
		V_{S \land T} = V_S \mathbf{1}_{\{S \leq T\}} + V_T \mathbf{1}_{\{S > T\}} = V_S\mathbf{1}_{\{S \leq T\}} + V_{S \lor T}\mathbf{1}_{\{S > T\}} = V_S, \; \text{$\P$--a.s.}
	\end{equation*}
	In particular, $V_\cdot = V_{\cdot \land T}$ up to $\P$--indistinguishability by \Cref{prop::optional_ineq}.
	
	\medskip
	The fact that $V$ is in $\cS^2_T$ can be argued as follows. Let $N = (N_t)_{t \in [0,\infty]}$ be the martingale satisfying
	\begin{equation*}
		N_S = \E\Bigg[|\xi_T| + \sup_{u \in [0,T)}|\xi^+_u| + \int_0^T|f_u|\d C_u \bigg| \cG_S\bigg], \; \text{$\P$--a.s.}, \; S \in \cT_{0,\infty}.
	\end{equation*} 
	Note that $N$ is square-integrable by \eqref{eq::integrability_optimal_stopping}. As in the proof of \Cref{lem::supermartingale_family}, we find that
	\begin{equation*}
		|V_S| \leq \E\Bigg[|\xi_T| + \sup_{u \in [0,T)}|\xi^+_u| + \int_0^T|f_u|\d C_u \bigg| \cG_S\bigg] = N_S,\; \text{$\P$--a.s.}, \; S \in \cT_{0,\infty},
	\end{equation*}
	Thus $|V| \leq N$ up to $\P$--indistinguishability by \Cref{prop::optional_ineq}, and with Doob's $\L^2$-inequality for martingales, we find
	\begin{equation*}
		\E\bigg[\sup_{s \in [0,\infty]} |V_{s \land T}|^2\bigg] \leq \E\bigg[\sup_{s \in [0,\infty]} |N_s|^2\bigg] \leq 4 \E\big[|N_\infty|^2\big] < \infty,
	\end{equation*}
	which completes the proof.
\end{proof}

\begin{proof}[Proof of \Cref{lem::snell_indicator}]
	We start with $(i)$. Let $S \in \cT_{0,\infty}$. Note that on $\{S < T\}$, we have $\mathbf{1}_{\{V_\smalltext{S} = L_\smalltext{S}\}} = \mathbf{1}_{\{V_\smalltext{S} = J_\smalltext{S}\}}$, $\P$--a.s., since $V_S \geq M_S > M_S - 1$, $\P$--almost surely. On $\{S \geq T\}$, we have that $V_S = V_T = J_T$, $\P$--a.s., and therefore $\mathbf{1}_{\{V_\smalltext{S} = L_\smalltext{S}\}}= \mathbf{1}_{\{V_\smalltext{S} = J_\smalltext{S}\}}$, $\P$--almost surely. 
	
	\medskip
	We turn to $(ii)$. Let $S \in \cT^p_{0,\infty}$. On $\{S \leq T\}$, we have $\mathbf{1}_{\{V_{\smalltext{S}\smalltext{-}} = \bar L_\smalltext{S}\}} = \mathbf{1}_{\{V_{\smalltext{S}\smalltext{-}}=\bar J_\smalltext{S}\}}$ since $V_{S-} \geq M_{S-} > M_{S-} - 1$, $\P$--almost surely. On $\{S > T\}$, we have $V_{S-} = V_T = J_T = J_{S-}$, $\P$--a.s., and therefore $\mathbf{1}_{\{V_{\smalltext{S}\smalltext{-}} = \bar L_\smalltext{S}\}} = \mathbf{1}_{\{V_{\smalltext{S}\smalltext{-}}=\bar J_\smalltext{S}\}}$, $\P$--a.s., which completes the proof.
\end{proof}

\section{Miscellaneous}

\begin{lemma}\label{lem::stoch_exp_rules}
	Let $A$ and $B$ be two optional, real-valued processes with $\P$--{\rm a.s.} right-continuous and non-decreasing paths such that $A_0 = B_0 = 0$, $\P$--almost surely. Then the following holds:
	\begin{enumerate}
		\item[$(i)$] $\cE(A)^{-1} = \cE(-\overline A)$, where $\overline A = A - \sum_{s \in (0,\cdot]} \frac{(\Delta A_s)^2}{1+\Delta A_s};$
		\item[$(ii)$] $\cE(A)^{-1}\cE(B) = \cE(C)$, where $C = B^c - A^c + \sum_{s \in (0,\cdot]} \frac{\Delta B_s - \Delta A_s}{1+\Delta A_s};$
		\item[$(iii)$] $\cE(A)^{1/2} = \cE(D)$, where $D = \frac{1}{2} A^c + \sum_{s \in (0,\cdot]} \big(\sqrt{1+\Delta A_s}-1\big).$
	\end{enumerate}
\end{lemma}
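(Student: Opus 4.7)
The plan is to verify all three identities by direct computation using the explicit product representation of the stochastic exponential. Since $A$ and $B$ are $\P$-a.s.\ right-continuous and non-decreasing with $A_0 = B_0 = 0$, their paths are $\P$-a.s.\ of locally finite variation, so they are semimartingales, and by \cite[Theorem~I.4.61]{jacod2003limit} the stochastic exponential admits the product representation
\begin{equation*}
	\cE(A)_t = \mathrm{e}^{A_t}\prod_{s\in(0,t]}(1+\Delta A_s)\mathrm{e}^{-\Delta A_s} = \mathrm{e}^{A^c_t}\prod_{s\in(0,t]}(1+\Delta A_s), \; t \in [0,\infty), \; \text{$\P$--a.s.},
\end{equation*}
where the second equality uses $A_t = A^c_t + \sum_{s \in (0,t]}\Delta A_s$ together with the fact that this jump sum converges absolutely (bounded by $A_t$). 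The same representation applies to $B$, $-\overline A$, $C$ and $D$ once we check that each of these processes is a semimartingale whose continuous part and jumps are as computed below; this is immediate because $\overline A$, $C$ and $D$ are built by continuous-part manipulations and by summing functions of $\Delta A$, $\Delta B$ whose absolute sums are controlled by $A$ and $B$ respectively.

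For $(i)$, the continuous part of $-\overline A$ is $-A^c$ and its jumps satisfy
\begin{equation*}
	1+\Delta(-\overline A)_s = 1 - \frac{\Delta A_s}{1+\Delta A_s} = \frac{1}{1+\Delta A_s},
\end{equation*}
so the product formula gives $\cE(-\overline A)_t = \mathrm{e}^{-A^c_t}\prod_{s\in(0,t]}(1+\Delta A_s)^{-1} = \cE(A)_t^{-1}$. For $(ii)$, the continuous part of $C$ is $B^c - A^c$ and $1+\Delta C_s = (1+\Delta B_s)/(1+\Delta A_s)$, so
\begin{equation*}
	\cE(C)_t = \mathrm{e}^{B^c_t - A^c_t}\prod_{s\in(0,t]}\frac{1+\Delta B_s}{1+\Delta A_s} = \cE(B)_t\,\cE(A)_t^{-1}.
\end{equation*}
For $(iii)$, the continuous part of $D$ is $\tfrac{1}{2}A^c$ and $1+\Delta D_s = \sqrt{1+\Delta A_s}$, whence
\begin{equation*}
	\cE(D)_t = \mathrm{e}^{A^c_t/2}\prod_{s\in(0,t]}\sqrt{1+\Delta A_s} = \cE(A)_t^{1/2}.
\end{equation*}

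There is no serious obstacle: the only step requiring attention is the decomposition $\cE(A) = \mathrm{e}^{A^c}\prod(1+\Delta A)$, which relies on absolute convergence of the jump sums. Since $A$ and $B$ are non-decreasing with finite values $\P$-a.s., the inequalities $\sum_{s\in(0,t]}\Delta A_s \leq A_t < \infty$ and $\sum_{s\in(0,t]}\Delta B_s \leq B_t < \infty$ guarantee this, and the analogous control for $\overline A$, $C$, $D$ follows from $|\Delta \overline A_s| \leq \Delta A_s$, $|\Delta C_s| \leq \Delta A_s + \Delta B_s$ and $|\Delta D_s| \leq \tfrac{1}{2}\Delta A_s$ (via $\sqrt{1+x}-1 \leq x/2$ for $x \geq 0$). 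With absolute convergence in hand, the three identities reduce to the elementary pointwise computations above.
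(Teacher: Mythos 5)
Your proof is correct, but it takes a different route from the paper's. The paper handles $(i)$ by citing \cite[Lemma 4.4]{cohen2012existence}, proves $(ii)$ via Yor's product formula $\cE(X)\cE(Y)=\cE(X+Y+[X,Y])$ applied to $X=-\overline A$ and $Y=B$ (which then requires computing the covariation $[\overline A,B]$ and separating continuous from purely discontinuous parts), and proves $(iii)$ by squaring $\cE(D)$ with the same product formula and matching the result against $\cE(A)$. You instead reduce everything to the single closed-form representation $\cE(A)=\mathrm{e}^{A^{c}}\prod_{s\in(0,\cdot]}(1+\Delta A_s)$, valid here because the processes are of finite variation with absolutely summable jumps, and then check that the continuous parts and the factors $1+\Delta(\cdot)_s$ of $-\overline A$, $C$ and $D$ are exactly $-A^c$ and $(1+\Delta A_s)^{-1}$, $B^c-A^c$ and $(1+\Delta B_s)/(1+\Delta A_s)$, $\tfrac12 A^c$ and $\sqrt{1+\Delta A_s}$, respectively. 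Your approach is more self-contained and uniform: it dispenses with the external reference for $(i)$ and with Yor's formula altogether, at the modest cost of justifying the rearrangement $\mathrm{e}^{A}\prod(1+\Delta A_s)\mathrm{e}^{-\Delta A_s}=\mathrm{e}^{A^{c}}\prod(1+\Delta A_s)$, which you do correctly via the bounds $\sum_{s\le t}\Delta A_s\le A_t<\infty$, $|\Delta C_s|\le\Delta A_s+\Delta B_s$ and $0\le\sqrt{1+x}-1\le x/2$. The paper's route generalises more readily beyond finite-variation integrators, where no such closed form is available, but for the lemma as stated both arguments are complete.
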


\begin{proof}
Assertion $(i)$ follows from \cite[Lemma 4.4]{cohen2012existence}. We prove $(ii)$. The product formula for stochastic exponentials yields $\cE(A)^{-1}\cE(B) = \cE(C),$ where $C = \overline A + B + [\overline A,B]$. By differentiating the continuous part $C^c$ of $C$ from the purely discontinuous part $C^d$, we can explicitly write
\begin{align*}
			C &= -\overline A + B - [\overline A,B] \\
			&= -A^c + B^c + \sum_{s \in (0,\cdot]} \bigg(-\Delta A_s + \frac{(\Delta A_s)^2}{1+\Delta A_s} + \Delta B_s - \bigg(\Delta A_s - \frac{(\Delta A_s)^2}{1+\Delta A_s}\bigg)\Delta B_s\bigg) \\
			&= B^c - A^c + \sum_{s \in (0,\cdot]} \bigg(-\frac{\Delta A_s}{1+\Delta A_s} + \frac{\Delta B_s + \Delta A_s \Delta B_s}{1+\Delta A_s} - \frac{\Delta A_s \Delta B_s}{1+\Delta A_s}\bigg) = B^c - A^c + \sum_{s \in (0,\cdot]} \frac{\Delta B_s - \Delta A_s}{1+\Delta A_s}.
\end{align*}
Assertion $(iii)$ follows by squaring $\cE(D)$, using the product formula for stochastic exponentials, and then checking that this coincides with $\cE(A)$. This completes the proof.
\end{proof}

\begin{proposition}\label{lem::leb_stj}
	Let $A$ be a non-decreasing, $[0,\infty]$-valued function with the conventions $A_{0-} \coloneqq 0$, $A_{\infty-} \coloneqq \lim_{t \uparrow\uparrow \infty} A_t$ and $A_\infty \coloneqq \infty$.  Let $R = (R_t)_{t\in[0,\infty]}$ and $L = (L_t)_{t\in[0,\infty]}$ be defined by
		\begin{equation*}
			R_t \coloneqq 
			\begin{cases}
				\inf\{ s \in [0,\infty) : A_s > t\}, \; t \in [0,\infty), \\
				\infty, \; t = \infty, 
			\end{cases}
		\end{equation*}
		with the conventions $\inf \varnothing = \infty$ and $R_{0-} = 0$, and $L_t \coloneqq R_{t-}$, $t\in[0,\infty]$. For any $t\in [0,\infty]$
		\begin{enumerate}
			\item[$(i)$] $t \leq A_{L_\smalltext{t}} = A_{R_{\smalltext{t}\smalltext{-}}} \leq A_{R_\smalltext{t}};$
			\item[$(ii)$] $A_{L_\smalltext{t}-} = A_{R_{\smalltext{t}\smalltext{-}}-} \leq A_{R_{\smalltext{t}}-} \leq t;$
			\item[$(iii)$] for any $s\in[0,\infty]$, $t \leq A_s$ if and only if $L_t \leq s;$
			\item[$(iv)$] for any $s\in[0,\infty]$, if $t < A_s$, then $R_t \leq s$ $($the other direction is false in general, and equality is possible$);$
			\item [$(v)$] for any function $f$ on $[0,\infty)$ which is non-negative and Borel-measurable, if $A$ is finite on $[0,\infty)$, then
			\begin{equation*}
				\int_{[0,\infty)} f(s) \d A_s = \int_{[0,\infty)} f(L_s) \mathbf{1}_{\{L_\cdot < \infty\}}(s) \d s = \int_{[0,\infty)} f(R_s) \mathbf{1}_{\{R_\cdot < \infty\}}(s) \d s;
			\end{equation*}
			\item[$(vi)$] for any non-decreasing, Borel-measurable, $\ell$--sub-multiplicative\footnote{This means that $\ell \in (0,\infty)$ and
				\begin{equation*}
					g(x+y) \leq \ell g(x)g(y),\; (x,y) \in [0,\infty)^2.
				\end{equation*}} $g : [0,\infty) \longrightarrow [0,\infty)$, and if $A$ is finite on $[0,\infty)$
				\begin{equation*}
					\int_{(0,t]} g(A_s) \d A_s \leq \ell g\bigg( \max_{\{s : L_\smalltext{s} < \infty\}} \Delta A_{L_\smalltext{s}} \bigg) \int_{(A_\smalltext{0}, A_\smalltext{t}]} g(s) \d s.
				\end{equation*}
		\end{enumerate}
\end{proposition}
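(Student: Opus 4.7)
My plan is to treat $R$ as the right-continuous generalized inverse of $A$ (so that $L = R_{-}$ is its left-continuous version), and to work out the six claims in order, as each feeds into the next. For parts (i)--(iv), the key observation is that, by the very definition $R_t = \inf\{s : A_s > t\}$, we have the level-set identity $\{s : A_s > t\} = (R_t, \infty)$ (using right-continuity of $A$, which I will assume is tacit). From this: for (iv), any $s$ with $A_s > t$ lies in $(R_t,\infty)$, so $R_t \leq s$. For (iii), $t \leq A_s$ iff $s \notin \{u : A_u < t\} \cup \{u : A_u \leq t\text{ strict}\}$, which unpacks (via the analogous identity for the strict-inequality sub-level set) to $L_t \leq s$. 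For (i), the inequality $A_{L_t} \leq A_{R_t}$ is monotonicity of $A$ applied to $L_t = R_{t-} \leq R_t$; to prove $t \leq A_{L_t}$, take $u_n \uparrow\uparrow t$ so $L_t = \lim R_{u_n}$, and observe that for any $s > R_{u_n}$ one has $A_s > u_n$; passing to the limit $s \downarrow\downarrow R_{u_n}$ (right-continuity of $A$) then $n \to \infty$ gives $A_{L_t} \geq t$. Part (ii) follows symmetrically, using left-continuity of $L$ and the fact that $A_{s} \leq t$ on $[0, R_t)$ by definition of $R_t$.

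For (v), I would set up the change of variables via a Fubini argument on $\Omega \times [0,\infty)$ with the product measure $\mathrm{d}A_s \otimes \mathrm{d}r$. By (iii), $\{(s,r) : L_r \leq s\} = \{(s,r) : r \leq A_s\}$. For an indicator $f = \mathbf{1}_{(a,b]}$, both sides of the identity reduce to $A_b - A_a$ via the same layer-cake computation (on the right, $\int \mathbf{1}_{\{a < L_r \leq b\}} \mathbf{1}_{\{L_r < \infty\}}\, \mathrm{d}r$ equals the Lebesgue measure of $(A_a, A_b]$ by (iii) and the fact, verifiable from the definition, that $\{r : L_r > a\} = (A_a, \infty)$). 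Standard approximation by simple functions and monotone convergence extends this to all non-negative Borel $f$, proving the first equality; the second equality holds because $L$ and $R$ differ only at countably many points, which have Lebesgue measure zero.

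For (vi), I would apply (v) to $f(s) = g(A_s)\mathbf{1}_{(0,t]}(s)$, obtaining $\int_{(0,t]} g(A_s)\,\mathrm{d}A_s = \int_{\{L_\cdot<\infty\}} g(A_{L_r})\,\mathbf{1}_{(0,t]}(L_r)\,\mathrm{d}r$. By (i) and (ii), $A_{L_r-} \leq r \leq A_{L_r}$, so the ``defect'' satisfies $A_{L_r} - r \leq \Delta A_{L_r}$. Since $g$ is non-decreasing and $\ell$-submultiplicative,
\[
g(A_{L_r}) = g\bigl(r + (A_{L_r}-r)\bigr) \leq \ell\, g(r)\, g(A_{L_r} - r) \leq \ell\, g(r)\, g(\Delta A_{L_r}) \leq \ell\, g(r)\, g\!\left(\max_{\{s:L_s<\infty\}} \Delta A_{L_s}\right).
\]
Finally, $\{r : 0 < L_r \leq t\} \subseteq (A_0, A_t]$ by (i)--(iii), and integration yields the stated inequality.

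The main technical obstacle, I expect, will be being precise about the measure-theoretic bookkeeping at the endpoints $0$ and $\infty$ in part (v): the conventions $A_{0-}=0$, $A_{\infty-} = \limsup$, $A_\infty = \infty$, together with the possibility that $A$ jumps at $0$ or that $A_\infty$ is infinite, force one to carefully distinguish $\{L_\cdot < \infty\}$, $\{L_\cdot = 0\}$, and $\{L_\cdot > 0\}$ when identifying image measures. Everything else reduces to clean manipulations of the inversion identity (iii), but this endpoint analysis is where the proof needs the most care.
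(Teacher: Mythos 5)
Your proposal is correct, and it reconstructs exactly the standard argument: the paper itself gives no proof of this proposition, deferring $(i)$--$(v)$ to \cite[pages 119--120]{dellacherie1982probabilities} and \cite[pages 21--22]{he1992semimartingale} and $(vi)$ to the proof of \cite[Lemma 2.14]{papapantoleon2018existence}, and those references argue precisely via the generalized inverse, the pushforward of Lebesgue measure under $L$, and the sandwich $A_{L_r-}\leq r\leq A_{L_r}$ combined with sub-multiplicativity, as you do. Two minor points: you are right that right-continuity of $A$ must be assumed tacitly (without it $(i)$ fails), and your level-set identity should read $\{s: A_s>t\}=[R_t,\infty)$ or $(R_t,\infty)$ according to whether $A_{R_t}>t$, though nothing downstream of it is affected since you only use $R_t\leq s$ whenever $A_s>t$.
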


\begin{proof}
	For $(i)$ through $(v)$ see \cite[page 119--120]{dellacherie1982probabilities} or \cite[page 21--22]{he1992semimartingale}, for $(vi)$ see the proof of Lemma 2.14 in \cite{papapantoleon2018existence}.
\end{proof}

Since we have not been able to locate a reference for the following result, we prove it for the convenience of the reader.

\begin{proposition}\label{prop::optional_ineq}
	Suppose that $X = (Y_t)_{t \in [0,\infty]}$ and $Y = (Y_t)_{t \in [0,\infty]}$ are two optional processes for which $X_S \leq Y_S$, $\P${\rm--a.s.}, holds for each $S \in \cT_{0,\infty}$. Then $X \leq Y$ holds up to $\P$-evanescence.
\end{proposition}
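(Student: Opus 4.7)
The plan is to apply the optional cross-section theorem to the set where $X$ strictly exceeds $Y$. Define
\[
D \coloneqq \bigl\{(\omega,t) \in \Omega \times [0,\infty) : X_t(\omega) > Y_t(\omega)\bigr\},
\]
which belongs to $\cO(\G)$ since both $X$ and $Y$ are optional. Applying the hypothesis to the constant stopping time $S \equiv \infty$ yields $\P[X_\infty > Y_\infty] = 0$, so $D \cap (\Omega \times \{\infty\})$ is $\P$-null; hence it suffices to prove that $D$ itself is $\P$-evanescent, i.e.\ that $\P^\ast\bigl(\pi_\Omega(D)\bigr) = 0$, where $\pi_\Omega$ is the canonical projection onto $\Omega$.

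I would argue by contradiction. Assume $\P^\ast\bigl(\pi_\Omega(D)\bigr) = a > 0$. Since $\cO(\G) \subseteq \cO(\G^\P)$, the set $D$ is also $\cO(\G^\P)$-measurable, and $\G^\P$ satisfies the usual conditions. By the optional cross-section theorem (see \cite[Theorem IV.84]{dellacherie1978probabilities}), there exists a $\G^\P$-stopping time $T^\P$ with $\llbracket T^\P \rrbracket \subseteq D$ and $\P[T^\P < \infty] \geq a/2 > 0$. By \Cref{lem::compl_filt}.$(i)$, one may choose a $\G$-stopping time $T$ satisfying $T = T^\P$, $\P$-almost surely. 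Then $T \in \cT_{0,\infty}$ and on the event $\{T = T^\P\} \cap \{T^\P < \infty\}$, which has probability at least $a/2$, one has $(\omega, T(\omega)) \in D$ and thus $X_T(\omega) > Y_T(\omega)$. This yields $\P[X_T > Y_T] > 0$, contradicting the standing hypothesis $\P[X_S \leq Y_S] = 1$ for every $S \in \cT_{0,\infty}$.

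The only minor technical obstacle is that $\G$ is only assumed to be right-continuous, not $\P$-complete, so the optional section theorem cannot be directly invoked under $\G$; this is resolved by first applying the theorem in the $\P$-augmented filtration $\G^\P$, and then transferring the resulting stopping time back to $\G$ via \Cref{lem::compl_filt}. Once this translation is in place, the argument is a standard contradiction against the hypothesis tested on the constructed stopping time.
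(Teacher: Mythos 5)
Your proof is correct and follows essentially the same route as the paper's: both reduce to the set $\{X>Y\}$ in $\Omega\times[0,\infty)$, invoke the optional cross-section theorem \cite[Theorem IV.84]{dellacherie1978probabilities} to produce a stopping time whose graph lies in that set, and handle $t=\infty$ separately via the constant stopping time. The only difference is that you route the section theorem through the augmented filtration $\G^\P$ and transfer back via \Cref{lem::compl_filt}, which is a slightly more careful treatment of the fact that $\G$ is not assumed $\P$-complete, whereas the paper applies the theorem to $\G$ directly with an $\varepsilon$-argument instead of a contradiction.
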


\begin{proof}
	Let $\varepsilon > 0$, and let $A \coloneqq \{(\omega,t) \in \Omega \times [0,\infty) : X_s(\omega) > Y_s(\omega) \}$, which is an optional subset of $\Omega \times [0,\infty)$. By \cite[Theorem IV.84]{ dellacherie1978probabilities}, there is a $\G$--stopping time $S$ such that for all $\omega \in \Omega$ with $S(\omega) < \infty$, we have $(\omega,S(\omega)) \in A $ and $\P[S < \infty] \geq \P[\pi_\Omega(A)] - \varepsilon.$ Here, $\pi_\Omega(A)$ is the projection of $A$ onto $\Omega$, which is $\cG^U_\infty$-measurable. Since $\P[S < \infty] \leq \P[S < \infty, X_S > Y_S] = 0$ and $\varepsilon > 0$ was arbitrary, it follows that $\P[\pi_\Omega(A)] = 0$. This, with $\P[X_\infty > Y_\infty] = 0$, implies the claim.
\end{proof}

\begin{lemma}\label{lem::cond_doob}
	Suppose that $M = (M_t)_{t \in [0,\infty]}$ is a non-negative, $\P${\rm--a.s.} right-continuous, $(\F,\P)$-martingale with $M_\infty \in \L^2(\cG_\infty)$. Then
	\begin{equation*}
		\E \bigg[ \sup_{s \in [t,\infty]} M^2_s \bigg| \cG_t \bigg] \leq 4 \E[ M^2_\infty | \cG_t ], \; \textnormal{$\P$--a.s.}, \; t \in [0,\infty].
	\end{equation*}
\end{lemma}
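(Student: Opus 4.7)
The plan is to derive the conditional inequality from the classical (unconditional) $\L^2$-Doob inequality by a standard localisation of the probability measure to sets in $\cG_t$. Fix $t\in[0,\infty]$. By right-continuity of the paths of $M$, the supremum $\sup_{s\in[t,\infty]} M_s$ coincides $\P$--a.s. with the supremum over a countable dense subset of $[t,\infty)$ together with the point $\infty$, and is therefore $\cG_\infty$-measurable (and non-negative).

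Fix $A\in\cG_t$ with $\P[A]>0$, and consider the conditional probability measure $\P_A[\,\cdot\,]\coloneqq \P[\,\cdot\,|A]$ on $(\Omega,\cG)$. The restricted process $(M_s)_{s\in[t,\infty]}$ is a non-negative, right-continuous $\P_A$-martingale with respect to $(\cG_s)_{s\in[t,\infty]}$, since for $t\leq u\leq v$ and $B\in\cG_u$ one has $\E^{\P_A}[M_v\mathbf 1_B]=\P[A]^{-1}\E[M_v\mathbf 1_{B\cap A}]=\P[A]^{-1}\E[M_u\mathbf 1_{B\cap A}]=\E^{\P_A}[M_u\mathbf 1_B]$, using that $A\in\cG_t\subseteq\cG_u$. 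Applying the classical Doob $\L^2$-inequality (which is available for right-continuous non-negative martingales on $[t,\infty]$) under $\P_A$ yields
\begin{equation*}
\E^{\P_A}\!\bigg[\sup_{s\in[t,\infty]}M_s^2\bigg]\leq 4\,\E^{\P_A}[M_\infty^2],
\end{equation*}
which, multiplied by $\P[A]$, reads
\begin{equation*}
\E\!\bigg[\sup_{s\in[t,\infty]}M_s^2\,\mathbf 1_A\bigg]\leq 4\,\E[M_\infty^2\,\mathbf 1_A].
\end{equation*}
The displayed inequality trivially holds for $A$ with $\P[A]=0$ as well, and the random variables under consideration are $\P$--a.s. non-negative, so by the characterisation of conditional expectation, this extends to
\begin{equation*}
\E\!\bigg[\sup_{s\in[t,\infty]}M_s^2\,\Big|\,\cG_t\bigg]\leq 4\,\E[M_\infty^2\,|\,\cG_t],\quad\text{$\P$--a.s.}
\end{equation*}

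No step here should be particularly delicate: the only mild care needed is to ensure the supremum is a bona fide random variable (handled by right-continuity and reduction to a countable set) and that the classical Doob inequality under $\P_A$ can be invoked; the latter follows because $M$ remains a right-continuous non-negative martingale under $\P_A$ for every $A\in\cG_t$. Alternatively, one could prove the same bound by first applying the conditional maximal weak-type inequality $\lambda\,\P[\sup_{s\in[t,T]}M_s\geq\lambda\,|\,\cG_t]\leq\E[M_T\mathbf 1_{\{\sup_{s\in[t,T]}M_s\geq\lambda\}}\,|\,\cG_t]$, integrating $2\lambda\,d\lambda$ from $0$ to $\infty$ via Fubini, using the conditional Cauchy--Schwarz inequality, and finally passing to the limit $T\uparrow\uparrow\infty$ by monotone convergence; but the measure-change argument above is shorter and avoids the $2\lambda\,d\lambda$ bookkeeping.
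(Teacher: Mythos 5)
Your proof is correct. The measure-change step is sound: for $A\in\cG_t$ with $\P[A]>0$, the process $(M_s)_{s\in[t,\infty]}$ is indeed a non-negative, right-continuous martingale under $\P_A$ for the filtration $(\cG_s)_{s\geq t}$ (the verification you give is exactly right, and $\P_A\ll\P$ preserves a.s.\ right-continuity), so the classical closed-interval Doob $\L^2$-inequality applies; multiplying by $\P[A]$ and using that both random variables are non-negative with finite expectation (take $A=\Omega$ to see $\E[\sup_s M_s^2]\leq 4\E[M_\infty^2]<\infty$) gives the conditional inequality by the usual characterisation. This is, however, a genuinely different route from the paper's. The paper invokes a \emph{trajectorial} (pathwise) Doob inequality — Proposition 2.1 of Acciaio et al.\ — which bounds $\max_k M_{t_k}^2$ along a finite subdivision $t=t_0<\cdots<t_N=\infty$ by $4M_{t_N}^2-2M_{t_0}^2$ minus a discrete stochastic-integral term $4\sum_\ell \max_{k\leq\ell}M_{t_k}(M_{t_{\ell+1}}-M_{t_\ell})$; after checking integrability of the cross terms, that term has vanishing conditional expectation given $\cG_{t_0}$ by the martingale property, and the conditional bound follows directly, with the passage to the full supremum handled by conditional Fatou. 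Your approach buys a shorter and more elementary argument relying only on the classical unconditional inequality; the paper's approach buys a proof that never changes measure and makes the source of the constant $4$ completely explicit at the pathwise level. Either is acceptable here.
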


\begin{proof}
	First, the condition that $M_\infty \in \L^2(\cG_\infty)$ implies already that $M_t \in \L^2(\cG_t)$ for each $t \in [0,\infty)$. Moreover, the result is trivial for $t = \infty$. So we fix $t \in [0,\infty)$, and consider a subdivision $t = t_0 < t_1 < \cdots  < t_N = \infty$. By Proposition 2.1 in \cite{acciaio2013trajectorial}, we have that
	\begin{equation*}
		\max_{k \in \{0,\ldots,N\}} M^2_{t_\smalltext{k}} \leq -4 \sum_{\ell = 0}^{N-1}  \max_{k \in \{0,\ldots,\ell\}} M_{t_\smalltext{k}} \big(M_{t_\smalltext{\ell+1}} - M_{t_\smalltext{\ell}}\big) - 2 M^2_{t_\smalltext{0}} + 4 M^2_{t_\smalltext{N}}.
	\end{equation*}
	For each $\ell \in \{0,1,\ldots,N-1\}$,
	\begin{equation*}
		\bigg|\max_{k \in \{0,\ldots,\ell\}} M_{t_\smalltext{k}} \big(M_{t_\smalltext{\ell+1}} - M_{t_\smalltext{\ell}}\big)\bigg| \leq \frac{1}{2}\max_{k \in \{0,\ldots,\ell\}} M^2_{t_\smalltext{k}} + \frac{1}{2}\big(M_{t_\smalltext{\ell+1}} - M_{t_\smalltext{\ell}}\big)^2,
	\end{equation*}
	and the right-hand side in the previous inequality is integrable. Therefore, by the martingale property of $M$, we deduce that
	\begin{align*}
		\E\bigg[ \max_{k \in \{0,\ldots,N\}} M^2_{t_\smalltext{k}} \bigg| \cG_{t_0} \bigg] \leq \E\bigg[ -4 \sum_{\ell = 0}^{N-1}  \max_{k \in \{0,\ldots,\ell\}} M_{t_\smalltext{k}} \big(M_{t_\smalltext{\ell+1}} - M_{t_\smalltext{\ell}}\big) \bigg| \cG_{t_\smalltext{0}} \bigg] - \E\big[ 2 M^2_{t_\smalltext{0}} \big| \cG_{t_\smalltext{0}}\big] + 4 \E\big[ M^2_{t_\smalltext{N}} \big| \cG_{t_\smalltext{0}} \big] \leq 4 \E\big[ M^2_{t_\smalltext{N}} \big| \cG_{t_\smalltext{0}} \big].
	\end{align*}
	The claim then follows by approximating the supremum of $M$ on $[t,\infty]$ and then using Fatou's lemma for conditional expectations, which completes the proof.
\end{proof}

\begin{lemma}\label{lem::increas_cond_mean}
	Let $L = (L_t)_{t \in [0,\infty]}$ and $J = (J_t)_{t\in[0,\infty]}$ be two product-measurable processes whose $\P$--almost all paths admit limits from the left on $(0,\infty]$. Suppose that $\E[\sup_{t \in [0,\infty]}|L_t|] + \E[\sup_{t \in [0,\infty]}|J_t|] < \infty$ and that $\E[L_t|\cG_t] \leq \E[J_t|\cG_t]$, $\P$--a.s., $t \in [0,\infty]$. Then $\E[L_{t-}|\cG_{t-}] \leq \E[J_{t-}|\cG_{t-}]$, $\P$--a.s., $t \in [0,\infty]$, where $L_{0-} \coloneqq J_{0-} \coloneqq 0$ and $\cG_{0-} \coloneqq \cG_0$.
\end{lemma}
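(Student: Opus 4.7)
\medskip
\textbf{Proof plan.} The idea is to approximate $L_{t-}$ and $J_{t-}$ from the left along a deterministic sequence, exploit the pointwise hypothesis at each approximating time, and then pass to the limit in both the integrand and the conditioning $\sigma$-algebra using a standard martingale-convergence argument. The case $t = 0$ is trivial by the conventions $L_{0-} = J_{0-} = 0$ and $\cG_{0-} = \cG_0$, so fix $t \in (0,\infty]$ and choose a sequence $(s_n)_{n \in \N} \subseteq [0,\infty)$ with $s_n \uparrow\uparrow t$ (for $t = \infty$, take $s_n = n$). By hypothesis, $\E[L_{s_n}|\cG_{s_n}] \leq \E[J_{s_n}|\cG_{s_n}]$, $\P$--a.s., for each $n \in \N$.

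\medskip
The key step is to show that $\E[L_{s_n}|\cG_{s_n}] \longrightarrow \E[L_{t-}|\cG_{t-}]$ in $\L^1$ as $n \to \infty$, and likewise for $J$. Since $\P$--almost every path of $L$ admits left limits on $(0,\infty]$, we have $L_{s_n} \longrightarrow L_{t-}$, $\P$--a.s., and the integrability assumption $\E[\sup_{s \in [0,\infty]}|L_s|] < \infty$ upgrades this convergence to $\L^1$ via dominated convergence. Writing
\[
\big\|\E[L_{s_n}|\cG_{s_n}] - \E[L_{t-}|\cG_{t-}]\big\|_{\L^\smalltext{1}} \leq \big\|\E[L_{s_n} - L_{t-}|\cG_{s_n}]\big\|_{\L^\smalltext{1}} + \big\|\E[L_{t-}|\cG_{s_n}] - \E[L_{t-}|\cG_{t-}]\big\|_{\L^\smalltext{1}},
\]
the first summand is bounded by $\|L_{s_n} - L_{t-}\|_{\L^\smalltext{1}}$ and thus vanishes, while the second summand vanishes by L\'evy's upward theorem, once we know that the $\sigma$-algebra generated by $\cup_{n \in \N} \cG_{s_n}$ equals $\cG_{t-}$. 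For $t \in (0,\infty)$ this identification is immediate from the definition of $\cG_{t-}$ recalled in \Cref{sec::stochastic_basis}, and for $t = \infty$ it uses the standing assumption $\cG_\infty = \cG_{\infty-}$.

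\medskip
The same argument applied to $J$ yields $\E[J_{s_n}|\cG_{s_n}] \longrightarrow \E[J_{t-}|\cG_{t-}]$ in $\L^1$. Along a common subsequence these $\L^1$-convergences become $\P$--a.s. convergences, so passing to the limit in the pointwise inequality at each $s_n$ gives $\E[L_{t-}|\cG_{t-}] \leq \E[J_{t-}|\cG_{t-}]$, $\P$--a.s. The only mildly delicate point is the L\'evy upward convergence, which hinges on the identification of $\cG_{t-}$ as the $\sigma$-algebra generated by $\cup_{n \in \N} \cG_{s_n}$; all remaining steps are routine.
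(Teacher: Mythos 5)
Your argument is correct and follows essentially the same route as the paper: approximate $t$ from the left along a deterministic sequence $s_n \uparrow\uparrow t$, use dominated convergence on the integrands (via the domination by $\sup_t|L_t|$ and $\sup_t|J_t|$) together with the identification $\sigma(\cup_{n}\cG_{s_\smalltext{n}}) = \cG_{t-}$, and pass to the limit in the inequality at each $s_n$. The only cosmetic difference is that you close the argument with an $\L^1$ triangle inequality, L\'evy's upward theorem and a subsequence extraction, whereas the paper tests against sets $A \in \cup_{s<t}\cG_s$ and invokes the $\pi$--$\lambda$ theorem before concluding almost sure convergence; both versions rest on exactly the same two ingredients.
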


\begin{proof}
	The assertion trivially holds for $t = 0$. So suppose that $t \in (0,\infty]$. Let $(t_n)_{n \in \N} \subseteq [0,t)$ be a sequence that converges increasingly to $t$. We have
	\begin{equation*}
		\lim_{n \rightarrow \infty}\E\big[|\E[L_{t_\smalltext{n}}-L_{t-}|\cG_{t_\smalltext{n}}]|\big] \leq \lim_{n \rightarrow \infty} \E\big[|L_{t_\smalltext{n}}-L_{t-}|\big] = 0.
	\end{equation*}
	Here the last equality follows by dominated convergence. Therefore, upon choosing a suitable subsequence of $(t_n)_{n \in \N}$ if necessary, we have
	\begin{equation*}
		\lim_{n \rightarrow \infty}\E[L_{t_\smalltext{n}}|\cG_{t_\smalltext{n}}] = \E[L_{t-}|\cG_{t-}], \; \text{$\P$--a.s.}
	\end{equation*}
	The same argument applied to $\E[J_{t_\smalltext{n}}|\cG_{t_\smalltext{n}}]$, $n \in \N$, and upon choosing a further subsequence if necessary, then yields
	\begin{equation*}
		\E[L_{t-}|\cG_{t-}] = \lim_{n \rightarrow \infty}\E[L_{t_\smalltext{n}}|\cG_{t_\smalltext{n}}] \leq \lim_{n \rightarrow \infty}\E[J_{t_\smalltext{n}}|\cG_{t_\smalltext{n}}]  = \E[J_{t-}|\cG_{t-}], \; \text{$\P$--a.s.},
	\end{equation*}
	which completes the proof.
\end{proof}

\begin{proposition}\label{prop::pred_left_limit}
	Let $\xi = (\xi_t)_{t \in [0,\infty]}$ be $(\cG_t)_{t \in [0,\infty]}$-predictable. Then the process $\overline\xi = (\overline\xi_t)_{t \in [0,\infty]}$ defined by
\begin{equation*}
	\overline\xi_0 \coloneqq \xi_0, \; \text{\rm and} \; \overline\xi_t \coloneqq \limsup_{s \uparrow\uparrow t} \xi_s \; \text{\rm for $t \in (0,\infty]$},
\end{equation*}
is $(\cG^U_t)_{t \in [0,\infty]}$-predictable.
\end{proposition}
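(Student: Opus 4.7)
The plan is to first reduce to the case where $\xi$ is bounded. Let $\phi:[-\infty,\infty]\to[0,1]$ be a continuous strictly increasing bijection. Then $\phi\circ\xi$ is bounded and $(\cG_t)$-predictable, and by continuity and monotonicity of $\phi$ one has $\overline{\phi\circ\xi}_t=\phi(\overline\xi_t)$ for every $t\in[0,\infty]$. Since $\phi^{-1}$ is Borel, $\overline\xi$ is $(\cG^U_t)$-predictable if and only if $\overline{\phi\circ\xi}$ is. Hence I may assume that $\xi$ takes values in a bounded interval.

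Next, I express $\overline\xi$ as a countable operation on $\xi$. For $t\in(0,\infty)$ and integer $n\geq 1$, set
\[
\xi^{(n)}_t\coloneqq\sup\big\{\xi_s:s\in[0,\infty),\ (t-1/n)^+<s<t\big\},
\]
and for $t=\infty$ set $\xi^{(n)}_\infty\coloneqq\sup\{\xi_s:s\in(n,\infty)\}$. By definition of the left-$\limsup$, $\overline\xi_t=\inf_{n\geq 1}\xi^{(n)}_t$ for every $(\omega,t)\in\Omega\times(0,\infty]$, while $\overline\xi_0=\xi_0$ is trivially $\cG_0$-measurable. Since a countable infimum of $(\cG^U_t)$-predictable processes is $(\cG^U_t)$-predictable, the problem reduces to showing that each $\xi^{(n)}$ is $(\cG^U_t)$-predictable.

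To handle $\xi^{(n)}$, I would invoke a projection/capacity argument. Fix $c\in\R$ and set $W_n(t)\coloneqq((t-1/n)^+,t)$ if $t<\infty$ and $W_n(\infty)\coloneqq(n,\infty)$. Then
\[
\{(\omega,t)\in\Omega\times(0,\infty]:\xi^{(n)}_t(\omega)>c\}
\]
is exactly the projection onto $\Omega\times(0,\infty]$ (eliminating the $s$-coordinate) of
\[
B_c\coloneqq\big\{(\omega,s,t)\in\Omega\times[0,\infty)\times(0,\infty]:\xi_s(\omega)>c,\ s\in W_n(t)\big\},
\]
which belongs to $\overline\cP(\G)\otimes\cB((0,\infty])$ because $\{\xi>c\}\in\overline\cP(\G)$ and the constraint $s\in W_n(t)$ is Borel in $(s,t)$. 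The projection/capacity theorem underlying \cite[Proposition 2.21]{karoui2013capacities} then yields that this projection is analytic with respect to $\overline\cP(\G)$ and hence lies in the predictable $\sigma$-algebra $\overline\cP(\G^U)$ of the universally completed filtration. Taking the countable infimum over $n$ delivers the conclusion.

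The main obstacle is this last identification: passing from universal measurability of a projection, which is all the classical capacity theorem directly supplies on $\Omega\times[0,\infty]$, to genuine membership of the projection in $\overline\cP(\G^U)$. I expect this to rest on the standing assumption $\cG^U_\infty\subseteq\cG$ together with a careful analysis of the generators of $\overline\cP(\G^U)$ in terms of predictable stopping times in the universally completed filtration, following the same philosophy as the proof of \cite[Proposition 2.21]{karoui2013capacities}.
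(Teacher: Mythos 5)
Your decomposition $\overline\xi_t=\inf_{n}\xi^{(n)}_t$ with $\xi^{(n)}_t=\sup\{\xi_s: (t-1/n)^+<s<t\}$ is pointwise correct, but the proof does not close, and the gap is exactly where you flag it: the step from ``the projection of $B_c$ is analytic over $\overline\cP(\G)$, hence universally measurable'' to ``the projection belongs to $\overline\cP(\G^U)$.'' These are different $\sigma$-algebras. The measurable projection theorem (and the capacity argument behind \cite[Proposition 2.21]{karoui2013capacities}) delivers membership in the \emph{universal completion of} $\overline\cP(\G)$, or, when the suprema are taken over $s<t$, adaptedness of $\xi^{(n)}$ to $(\cG^U_t)$; neither of these is predictability with respect to any filtration. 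The standing assumption $\cG^U_\infty\subseteq\cG$ only guarantees that such suprema are honest random variables; it does not upgrade a universally measurable subset of $\Omega\times[0,\infty]$ to a predictable one. Nor can you recover predictability of $\xi^{(n)}$ from path regularity: because the window $((t-1/n)^+,t)$ moves with $t$, the map $t\longmapsto\xi^{(n)}_t$ is in general neither left- nor right-continuous, so adaptedness plus regularity is unavailable. As it stands, each $\xi^{(n)}$ is only shown to be $(\cG^U_t)$-adapted and jointly measurable, which does not yield predictability of the limit.

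The paper's proof avoids this by choosing approximants whose left endpoint is \emph{frozen on dyadic blocks}: $\xi^n_t\coloneqq\sum_{k}\big(\sup_{s\in[k2^{-n},t)}\xi_s\big)\mathbf{1}_{(k2^{-n},(k+1)2^{-n}]}(t)$ (plus the tail term). For a fixed left endpoint $a$, the map $t\longmapsto\sup_{s\in[a,t)}\xi_s$ is non-decreasing and left-continuous in $t$, and it is $\cG^U_t$-measurable by \cite[Proposition 2.21]{karoui2013capacities}; since the deterministic indicators are also left-continuous, each $\xi^n$ is a left-continuous $(\cG^U_t)$-adapted process and hence $(\cG^U_t)$-predictable, and $\overline\xi=\lim_n\xi^n$ pointwise because the frozen left endpoints increase to $t$. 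If you want to salvage your argument, replace your moving windows by this dyadic freezing (or otherwise produce left-continuous adapted approximants); the projection route would require a genuinely different and substantially harder result than the one you cite.
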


\begin{proof}
	For each $n \in \mathbb{N}$, define $\xi^n = (\xi^n_t)_{t \in [0,\infty]}$ by $\xi^n_0 \coloneqq \xi_0$ and for $t \in (0,\infty]$ by
	\begin{equation*}
		\xi^n_t \coloneqq \bigg(\sup_{s \in [n,t)} \xi_s \bigg)\mathbf{1}_{(n,\infty]}(t) + \sum_{k = 0}^{n2^\smalltext{n}-1} \bigg(\sup_{s \in [k2^{\smalltext{-}\smalltext{n}},t)}\xi_s\bigg) \mathbf{1}_{(k2^{\smalltext{-}\smalltext{n}}, (k+1)2^{\smalltext{-}\smalltext{n}}]}(t).
	\end{equation*}
	Each $\xi^n$ is $(\cG^U_t)_{t \in [0.\infty]}$-adapted by \cite[Proposition 2.21]{karoui2013capacities} and left-continuous on $(0,\infty]$. The claim now follows from the fact that $\overline\xi$ is the limit of the sequence $(\xi^n)_{n \in \N}$, which completes the proof.
\end{proof}

\begin{proposition}\label{prop::equiv_integ_xi}
	Let $\xi = (\xi_t)_{t \in [0,\infty]}$ be a real-valued, optional process, $T$ be a stopping time. Suppose that $\xi_\cdot = \xi_{\cdot \land T}$. Then $\sup_{s \in [0,T]}|\xi_s|$ is $\cG^U_\infty$-measurable and for each $p \in (1,\infty)$
	\begin{equation}\label{eq::ineq_esssup_sup}
		\E\bigg[ \underset{\tau \in \cT_{\smalltext{0}\smalltext{,}\smalltext{T}}}{\rm ess\,sup}^{\cG_\smalltext{\infty}}|\xi_\tau|^p \bigg] \leq \E\bigg[\sup_{s \in [0,T]} |\xi_s|^p\bigg] \leq \bigg(\frac{p}{p-1}\bigg)^p \E\bigg[ \underset{\tau \in \cT_{\smalltext{0}\smalltext{,}\smalltext{T}}}{\rm ess\,sup}^{\cG_\smalltext{\infty}}|\xi_\tau|^p \bigg]
	\end{equation}
\end{proposition}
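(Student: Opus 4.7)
Write $X\coloneqq\sup_{s\in[0,T]}|\xi_s|$ and $Y\coloneqq\operatorname*{ess\,sup}_{\tau\in\cT_{0,T}}^{\cG_\infty}|\xi_\tau|$. The measurability of $X$ follows from the assumption $\xi=\xi^T$ together with \cite[Proposition 2.21]{karoui2013capacities} (as in \Cref{rem::measurability_of_sup}), applied to the product-measurable process $|\xi|\mathbf{1}_{\llbracket 0,T\rrbracket}$. The first inequality in \eqref{eq::ineq_esssup_sup} is immediate, since for every $\tau\in\cT_{0,T}$ we have $|\xi_\tau|\leq X$, $\P$--a.s., and thus $Y\leq X$, $\P$--a.s.

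The plan for the second inequality is to establish a weak-type maximal inequality in the spirit of Doob, with the rôle of the martingale value at the terminal time played by $Y$. Namely, I would first show that for every $\lambda\in(0,\infty)$,
\begin{equation*}
\lambda\,\P[X>\lambda]\leq \E\big[Y\mathbf{1}_{\{X>\lambda\}}\big].
\end{equation*}
To obtain this, fix $\lambda\in(0,\infty)$ and consider the optional set $D_\lambda\coloneqq\{(\omega,t)\in\Omega\times[0,\infty]:|\xi_t(\omega)|>\lambda,\ t\leq T(\omega)\}$, whose projection onto $\Omega$ is exactly $\{X>\lambda\}$. By the optional section theorem \cite[Theorem IV.84]{dellacherie1978probabilities}, for every $\varepsilon>0$ there exists a stopping time $\tau_\varepsilon$ whose graph lies in $D_\lambda\cup\llbracket\infty\rrbracket$ and satisfies $\P[\tau_\varepsilon<\infty]\geq \P[X>\lambda]-\varepsilon$. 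Replacing $\tau_\varepsilon$ by $\tau_\varepsilon\wedge T$ where it is infinite yields an element $\widetilde\tau_\varepsilon\in\cT_{0,T}$ for which $|\xi_{\widetilde\tau_\varepsilon}|>\lambda$ on $\{\tau_\varepsilon<\infty\}\subseteq\{X>\lambda\}$. Since $|\xi_{\widetilde\tau_\varepsilon}|\leq Y$, $\P$--a.s., we obtain
\begin{equation*}
\lambda\big(\P[X>\lambda]-\varepsilon\big)\leq \lambda\,\P[\tau_\varepsilon<\infty]\leq \E\big[|\xi_{\widetilde\tau_\varepsilon}|\mathbf{1}_{\{\tau_\varepsilon<\infty\}}\big]\leq \E\big[Y\mathbf{1}_{\{X>\lambda\}}\big],
\end{equation*}
and letting $\varepsilon\downarrow 0$ gives the desired weak-type inequality.

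From the weak-type inequality, the $L^p$ bound follows by the classical Fubini--Hölder argument. Assuming first that $\E[X^p]<\infty$, Tonelli's theorem yields
\begin{equation*}
\E[X^p]=p\int_0^\infty\lambda^{p-1}\P[X>\lambda]\,\d\lambda\leq p\int_0^\infty\lambda^{p-2}\E\big[Y\mathbf{1}_{\{X>\lambda\}}\big]\,\d\lambda=\frac{p}{p-1}\E\big[Y X^{p-1}\big],
\end{equation*}
and Hölder's inequality together with rearrangement produces $\E[X^p]^{1/p}\leq\frac{p}{p-1}\E[Y^p]^{1/p}$. The a priori assumption $\E[X^p]<\infty$ can be removed by truncating: apply the argument above to $X_n\coloneqq X\wedge n$ and $Y_n\coloneqq Y\wedge n$ (the weak-type inequality survives the truncation since $\{X_n>\lambda\}=\{X>\lambda\}$ for $\lambda<n$ and is trivial otherwise, and $Y_n\leq Y$), obtain the bound with $X_n,Y_n$ in place of $X,Y$, and conclude by monotone convergence. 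The main technical step is thus the application of the optional section theorem that turns the path supremum into an expression testable against stopping times; the rest is routine manipulation.
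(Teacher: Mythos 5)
Your proof is correct, but it takes a genuinely different route from the one in the paper. The paper introduces the non-negative martingale $M_S=\E\big[\operatorname{ess\,sup}_{\tau\in\cT_{0,T}}^{\cG_\infty}|\xi_\tau|\,\big|\,\cG_S\big]$, observes that $|\xi_S|\leq M_S$, $\P$--a.s., for every $S\in\cT_{0,T}$, upgrades this to $|\xi|\leq M$ up to indistinguishability via the optional cross-section argument (\Cref{prop::optional_ineq}), and then simply invokes Doob's $\L^p$-inequality for $M$, with $M_\infty$ equal to the essential supremum. You instead bypass the dominating martingale entirely and prove the weak-type maximal inequality $\lambda\,\P[X>\lambda]\leq\E[Y\mathbf{1}_{\{X>\lambda\}}]$ directly, by sectioning the optional set $\{|\xi|>\lambda\}\cap\llbracket 0,T\rrbracket$ and testing against the resulting stopping times, and then run the classical Tonelli--H\"older--truncation machinery to pass to the $\L^p$ bound. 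Both arguments ultimately rest on the optional section theorem (the paper's is hidden inside \Cref{prop::optional_ineq}); the paper's version is shorter because it delegates the analytic work to Doob's inequality, while yours is self-contained, re-derives the Doob mechanism with $Y$ in the r\^ole of the terminal value, and yields the weak-type $(1,1)$ estimate as a by-product. Two small remarks: the truncation of $Y$ is unnecessary (truncating $X$ alone suffices, since $Y$ only appears linearly on the majorising side), and, as in the paper's own \Cref{prop::optional_ineq}, the section theorem is stated on $\Omega\times[0,\infty)$, so the contribution of the point $t=\infty$ on $\{T=\infty\}$ should strictly be split off and handled by the deterministic time $\infty\in\cT_{0,T}$; this is routine and does not affect the argument.
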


\begin{proof} 
	Fix $p \in (1,\infty)$. Since $\xi_{\cdot \land T}$ is optional, and thus $\cB([0,\infty]) \otimes \cG_\infty$-measurable, we have that $\sup_{s \in [0,T]} |\xi_s| = \sup_{s \in [0,\infty]} |\xi_{s \land T}|$ is $\cG^U_\infty$-measurable by an application of \cite[Proposition 2.21]{karoui2013capacities}. The first claimed inequality follows from
	\begin{equation*}
		\underset{\tau \in \cT_{\smalltext{0}\smalltext{,}\smalltext{T}}}{\rm ess\,sup}^{\cG_\smalltext{\infty}} |\xi_\tau|^p \leq \sup_{s \in [0,T]} |\xi_s|^p, \; \text{$\P$--a.s.}
	\end{equation*}
	Next, we suppose without loss of generality, that the right hand side in \eqref{eq::ineq_esssup_sup} is finite, otherwise the second inequality trivially holds. Let $M = (M_t)_{t \in [0,\infty]}$ be the non-negative martingale satisfying
	\begin{equation*}
		M_S = \E\bigg[ \underset{\tau \in \cT_{\smalltext{0}\smalltext{,}\smalltext{T}}}{\rm ess\,sup}^{\cG_\smalltext{\infty}} |\xi_\tau| \bigg| \cG_S \bigg], \; \text{$\P$--a.s.}, \; S \in \cT_{0,\infty}.
	\end{equation*}
	Note that $|\xi_S| \leq M_S$, $\P$--a.s., for each $S \in \cT_{0,\infty}$ and thus in particular $\sup_{s \in [0,T]} |\xi_s| = \sup_{s \in [0,\infty]} |\xi_{s \land T}| \leq \sup_{s \in [0,\infty]} |M_s|$
	by \Cref{prop::optional_ineq}. By using Doob's inequality for martingales, we thus find
	\begin{equation*}
		\E\bigg[\sup_{s \in [0,T]} |\xi_s|^p\bigg] \leq \E\bigg[\sup_{s \in [0,\infty]} M_s^p\bigg] \leq \bigg(\frac{p}{p-1}\bigg)^p\E[ M_\infty^p ] = \bigg(\frac{p}{p-1}\bigg)^p \E\bigg[\underset{\tau \in \cT_{\smalltext{0}\smalltext{,}\smalltext{T}}}{\rm ess\,sup}^{\cG_\smalltext{\infty}} |\xi_\tau|^p \bigg],
	\end{equation*}
	which concludes the proof.
\end{proof}

{\small
\bibliography{bibliographyDylan}}

\end{document}